\definecolor{darkblue}{rgb}{0,0,0.4} 
\newtheorem{thm}{Theorem}[section]
\newtheorem{theorem}[thm]{Theorem}
\newtheorem{corollary}[thm]{Corollary}
\newtheorem{lemma}[thm]{Lemma}
\newtheorem{proposition}[thm]{Proposition}
\newtheorem{conjecture}[thm]{Conjecture}
\newtheorem{question}[thm]{Question}
\theoremstyle{definition}
\newtheorem{definition}[thm]{Definition}
\theoremstyle{remark}
\newtheorem{remark}[thm]{Remark}
\newtheorem{example}[thm]{Example}
\numberwithin{equation}{section}
\newcommand{\td}{\widetilde}
\renewcommand{\theta}{\vartheta}
\newcommand{\mi}{\mu}
\newcommand{\SSone}{S^1\times S^2} 
\newcommand{\SSr}{\#^r(S^1\times S^2)}  
\newcommand{\set}[1]{\left\{#1\right\}}
\newcommand{\ceiling}[1]{\left\lceil#1\right\rceil}
\DeclareMathOperator{\lk}{lk}
\DeclareMathOperator{\writhe}{wr}
\renewcommand{\wr}{\writhe}
\DeclareMathOperator{\br}{br}
\DeclareMathOperator{\selflinking}{sl}
\renewcommand{\sl}{\selflinking}
\DeclareMathOperator{\Hopf}{Hopf}
\DeclareMathOperator{\Wh}{Wh}
\newcommand{\tw}{{tw}}
\newcommand{\de}{\partial}
\newcommand{\sm}{\setminus}
\newcommand{\Z}{\mathbb{Z}}
\newcommand{\Q}{\mathbb{Q}}
\newcommand{\R}{\mathbb{R}}
\newcommand{\C}{\mathbb{C}}
\newcommand{\F}{\mathbb{F}}
\newcommand{\nbd}{\mathcal{N}} 
\newcommand{\del}{\partial}
\newcommand{\CP}{\mathbb{CP}^2} 
\newcommand{\bCP}{\overline{\mathbb{CP}^2}}
\newcommand{\intceil}[1]{\left\lceil#1\right\rceil}
\newcommand{\xistd}{\xi_{\mathit{std}}}
\mathchardef\mhyphen="2D
\newcommand{\BartoKh}{\mathrm{K}}
\newcommand{\OKCbmnobrackets}{\mathrm{KC}}
\newcommand{\OLC}[2][]{\mathrm{KC}_{\mathrm{Lee}}^{#1}(#2)}
\newcommand{\OLCnobrackets}{\mathrm{KC}_{\mathrm{Lee}}}
\newcommand{\OLCrrnnobrackets}{\mathrm{K}\widehat{\mathrm{C}}_{\mathrm{Lee}}}
\newcommand{\OdKC}[2][]{\mathrm{KC}'^{#1}( #2 )}
\newcommand{\OdKCp}[2][]{\mathrm{KC}'^{#1}\left( #2 \right)}
\newcommand{\OKC}[2][]{\mathrm{KC}^{#1}(#2)}
\newcommand{\OKCp}[2][]{\mathrm{KC}^{#1} \! \left( #2 \right)}
\newcommand{\CSharp}{\mathrm{C}^{\#}}
\newcommand{\CSharpSub}[1]{\mathrm{C}^{\#}_{#1}}
\newcommand{\CSharpLee}{\mathrm{C}^{\#}_{\mathrm{Lee}}}
\newcommand{\OCSharpLee}{\BartoKh\mathrm{C}^{\#}_{\mathrm{Lee}}}
\newcommand{\KC}[2][]{\mathrm{C}^{#1}(#2)} 
\newcommand{\LCp}[2][]{\mathrm{C}_{\mathrm{Lee}}^{#1} \left( #2 \right)} 
\newcommand{\LC}[2][]{\mathrm{C}_{\mathrm{Lee}}^{#1}(#2)} 
\newcommand{\LCnobrackets}{\mathrm{C}_{\mathrm{Lee}}} 
\newcommand{\wLCnobrackets}{\widehat{\mathrm{C}}_{\mathrm{Lee}}}
\newcommand{\LCrrn}[2][]{\widehat{\mathrm{C}}_{\mathrm{Lee}}^{#1} \left( #2 \right)} 
\newcommand{\LCrrnnobrackets}{\widehat{\mathrm{C}}_{\mathrm{Lee}}}
\newcommand{\dKC}[2][]{\mathrm{C}'^{#1}( #2 )} 
\newcommand{\dKCrrn}{\widehat{\mathrm{C}}'} 
\newcommand{\Kh}[2][]{\mathrm{Kh}^{#1}( #2 )} 
\newcommand{\Lh}[2][]{\mathrm{Kh}_{\mathrm{Lee}}^{#1} ( #2 )} 
\newcommand{\dKh}[2][]{\mathrm{Kh}'^{#1}( #2 )} 
\newcommand{\dKhnobrackets}{\mathrm{Kh}'} 
\newcommand{\KCbm}[2][]{\mathrm{KC}^{#1}\left( #2 \right) }  
\newcommand{\HH}[2][]{\mathit{HH}^{#1} \left( #2 \right) } 
\newcommand{\HHnobrackets}{\mathit{HH}}
\newcommand{\dKCbm}[2][]{\mathrm{KC}'^{#1}\left( #2 \right) }  
\newcommand{\dHH}[2][]{\mathit{HH}'^{#1} \left( #2 \right) } 
\newcommand{\LCbm}[2][]{\mathrm{KC}^{#1}_{\mathrm{Lee}} \left( #2 \right) }  
\newcommand{\LHH}[2][]{\mathit{HH}_{\mathrm{Lee}}^{#1} \left( #2 \right) } 
\newcommand{\TL}{\mathrm{TL}} 
\newcommand{\Kob}{\mathrm{Kob}} 
\newcommand{\dTL}{\mathrm{TL}'} 
\newcommand{\h}{\mathrm{h}} 
\newcommand{\q}{\mathrm{q}} 
\newcommand{\Lk}[1][D]{#1(\vec{k})} 
\newcommand{\Lzero}[1][D]{#1(\vec{0})} 
\newcommand{\thru}[1]{\mathrm{th}(#1)} 
\renewcommand{\d}{\mathfrak{s}} 
\newcommand{\e}{\varepsilon} 
\newcommand{\s}{\mathfrak{s}} 
\newcommand{\x}{\mathbf{x}} 
\newcommand{\y}{\mathbf{y}} 
\newcommand{\z}{\mathbf{z}} 
\newcommand{\w}{\mathbf{w}} 
\newcommand{\sav}{s_{\mathrm{av}}} 
\newcommand{\Pl}{\mathcal{P}} 
\newcommand{\id}{\operatorname{id}} 
\newcommand{\taut}{\tau_{\operatorname{top}}} 
\newcommand{\taub}{\tau_{\operatorname{bot}}}
\newcommand{\FT}{\mathcal{T}} 
\newcommand{\infFT}{\mathcal{T}^\infty} 
\newcommand{\m}[1]{-{#1}} 
\newcommand{\caps}{\mathfrak{C}} 
\newcommand{\caprefl}[1]{\widehat{#1}} 
\newcommand{\gSD}{g_{S^1 \times B^3}} 
\newcommand{\gDS}{g_{B^2 \times S^2}} 
\newcommand{\Fp}{F_{p,p}} 
\newcommand{\Fpp}[1]{F_{#1,#1}} 
\newcommand{\Seif}{\varsigma} 
\newcommand{\DT}{\sigma} 
\newcommand{\Diff}{\operatorname{Diff}} 
\begin{document}

\title[A generalization of Rasmussen's invariant]{A generalization of Rasmussen's invariant,\\with applications to surfaces in some four-manifolds}

\author[Manolescu]{Ciprian Manolescu}%
\address{Department of Mathematics, Stanford University, Stanford, CA 94305}%
\email{\href{mailto:cm5@stanford.edu}{cm5@stanford.edu}}
\thanks{CM was supported by NSF Grant DMS-1708320}

\author[Marengon]{Marco Marengon}%
\address{Alfr\'ed R\'enyi Institute of Mathematics, Budapest, Hungary}%
\thanks{MM was supported by NSF FRG Grant DMS-1563615.}
\email{\href{mailto:marengon@renyi.hu}{marengon@renyi.hu}}%

\author[Sarkar]{Sucharit Sarkar}
\address{Department of Mathematics, University of California, Los Angeles, CA 90095}
\thanks{SS was supported by NSF Grant DMS-1643401.}
\email{\href{mailto:sucharit@math.ucla.edu}{sucharit@math.ucla.edu}}

\author[Willis]{Michael Willis}
\address{Department of Mathematics, Stanford University, Stanford, CA 94305}%
\thanks{MW was supported by NSF FRG Grant DMS-1563615.}
\email{\href{mailto:msw188@stanford.edu}{msw188@stanford.edu}}

\begin{abstract} 
We extend the definition of Khovanov-Lee homology to links in connected sums of $S^1 \times S^2$'s, and construct a Rasmussen-type invariant for null-homologous links in these manifolds. For certain links in $S^1 \times S^2$, we compute the invariant by reinterpreting it in terms of Hochschild homology. As applications, we prove inequalities relating the Rasmussen-type invariant to the genus of surfaces with boundary in the following four-manifolds: $B^2 \times S^2$, $S^1 \times B^3$, $\CP$, and various connected sums and boundary sums of these. We deduce that Rasmussen's invariant also gives genus bounds for surfaces inside homotopy $4$-balls obtained from $B^4$ by Gluck twists. Therefore, it cannot be used to prove that such homotopy $4$-balls are non-standard.
\end{abstract}
\maketitle

\tableofcontents
\section{Introduction} \label{sec:Intro}

In \cite{Kh}, Khovanov developed his homology theory for links in $S^3$, as a categorification of the Jones polynomial. Lee \cite{Lee} constructed a deformation of Khovanov homology, which was later used by Rasmussen \cite{Rasmussen} to extract a numerical knot invariant, denoted $s$. Rasmussen showed that his invariant gives bounds on the slice genus, and thus managed to give a combinatorial proof of Milnor's conjecture about the slice genus of torus knots. 

An open question is to extend Khovanov homology to links inside other three-manifolds. So far, there are variants of Khovanov homology for links in $I$-bundles over surfaces \cite{APS, Gabrovsek}, in $\SSone$ \cite{Roz}, and in connected sums $\SSr$ \cite{MW}. See also \cite{MWW} for a general proposal.

In this paper we build on the work of the fourth author in \cite{MW} to extend Khovanov-Lee homology to links in $\SSr$. We can work over any ring $R$ where $2$ is invertible. For links in $S^3$, we keep track of Lee's deformation of the Khovanov complex through a variable $t$, as in \cite{Kh2}; the result is a complex over $R[t]$, whose homology is denoted $\dKhnobrackets$. For links $L\subset \SSr$, we represent them by diagrams $D$ in the plane with $r$ one-handles attached, as on the left of Figure~\ref{fig:InsertTwists}. Given such a diagram and a vector $\vec k = (k_1, \dots, k_r) \in \Z^r$, we get a diagram $D(\vec k)$ (for a link in $S^3$) by inserting $k_j$ copies of the full twist in place of the $j^{\text{th}}$ one-handle. Provided that the homology class of the link is $2$-divisible, one can show that the deformed Khovanov-Lee homology $\dKh{D(\vec k)}$, after a suitable grading shift, stabilizes in every given degree by taking $k_1, \dots, k_r$ sufficiently large. We define $\dKh{D}$ to be the stable limit of $ \dKh{D(\vec k)}$.

\begin{theorem}
\label{thm:deformedKh}
Let $D_1,D_2$ be two diagrams for an oriented link $L\subset M=\SSr$ such that $[L] \in H_1(M;\Z)$ is 2-divisible.  Then the homologies $\dKh{D_1}$ and $\dKh{D_2}$ are isomorphic up to grading shifts. Furthermore, if $[L]=0\in H_*(M;\Z)$, then these gradings shifts are zero, so the deformed Khovanov-Lee homology $\dKh{L}$ is a well-defined link invariant up to isomorphism, as a bi-graded module over $R[t]$.
\end{theorem}

\begin{figure}
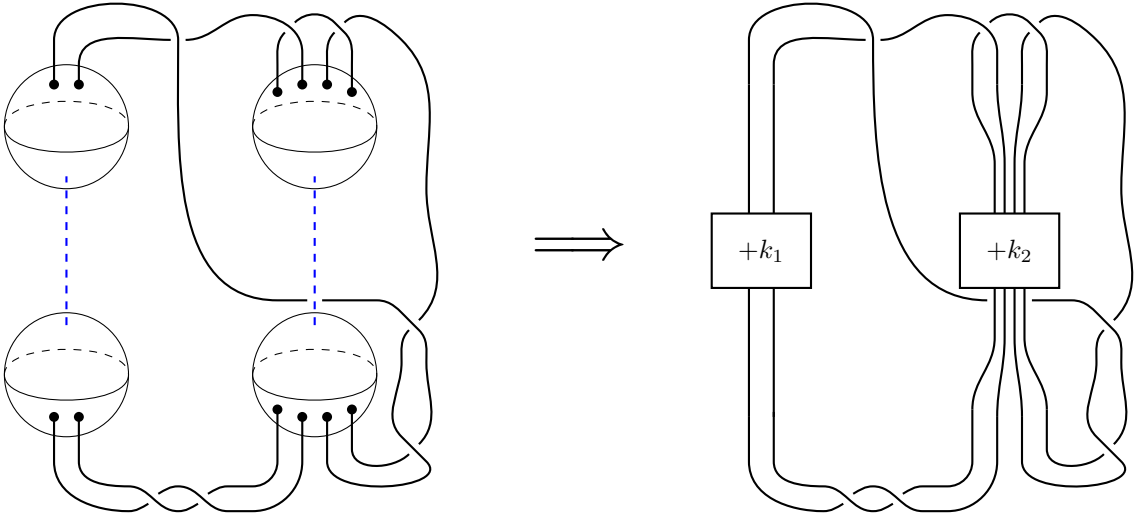

\centering
\LinMr
\hspace{.3in}
\resizebox{.5in}{!}{$\Longrightarrow$}
\hspace{.3in}
\LkinMr{+k_1}{+k_2}
\caption{On the left is an example of a diagram $D$ for a link
  $L \subset \#^2(\SSone)$.  The blue dashed lines indicate
  corresponding surgery spheres; a basepoint is chosen on each dashed line.  We
  convert this into the diagram $D(k_1, k_2)$ on the right by
  inserting several full twists in place of the
  dashed lines at the basepoints.}
\label{fig:InsertTwists}
\end{figure}

In the following, we set $R =\Q$ for simplicity. If we set $t=1$ in the deformed Khovanov-Lee complex, and then take homology, we obtain Lee's homology $\Lh{L}$. The Lee homology of any link $L\subset S^3$ was computed in \cite{Lee}: it contains one copy of $\Q$ for each possible orientation $o$ of $L$, and is in fact generated by cycles $\s_o$ corresponding to oriented resolutions of $L$. For links in $\SSr$, we have the following calculation.

 \begin{theorem}
 \label{thm:computeLee}
Let  $L\subset\SSr$ be a link such that $[L]$ is $2$-divisible. Suppose $L$ is represented by a diagram $D$. Let $O(L)$ denote the set of orientations of $L$ for which $L$ would be null-homologous in $\SSr$.  Then the Lee homology of $L$ is
\[\Lh{L}\cong \Q^{|O(L)|},\]
generated by cycles $\s_o$ corresponding to oriented resolutions of certain finite approximations $D(\vec{k})$ according to the orientations in $O(L)$.
\end{theorem}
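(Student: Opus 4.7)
The plan is to use the stabilization definition of $\dKh{L}$ from Theorem~\ref{thm:deformedKh} and specialize by setting $t=1$. Since Lee homology is the homology of the $t=1$ reduction of the deformed complex, and the stabilization is already at the chain level, one obtains $\Lh{L}$ as the stable limit of $\Lh{D(\vec k)}$ as the entries of $\vec k$ tend to infinity, with the same quantum grading shifts used to define $\dKh{L}$. The theorem then reduces to identifying the standard Lee generators on the approximations $D(\vec k)$ that survive this limit.

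At each finite stage, $D(\vec k)$ is an honest link diagram in $S^3$, so by Lee's theorem $\Lh{D(\vec k)} \cong \Q^{2^{c(\vec k)}}$ with a canonical basis of cycles $\s_{o'}$ indexed by orientations $o'$ of $D(\vec k)$. Because full-twist insertion preserves the strand-pairing through each one-handle, the components of $D(\vec k)$ are in canonical bijection with the components of $L$, and hence orientations of $D(\vec k)$ correspond to orientations of $L$. Under the chain maps relating $D(\vec k)$ to $D(\vec k + \vec e_j)$ (adding one more full twist at the $j$-th handle), Lee generators of matched orientations are sent to Lee generators up to a unit scalar, so the question descends to tracking how the quantum grading of the class represented by $\s_{o(\vec k)}$ interacts with the stabilization grading shift.

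The core computation is that inserting one additional full twist at the $j$-th handle on strands oriented with $n_j^+$ pointing one way and $n_j^-$ the other changes the writhe and the Seifert-circle count of $D(\vec k)$ by an amount depending only on $n_j^+$ and $n_j^-$, and this change exactly cancels the stabilization shift of Theorem~\ref{thm:deformedKh} precisely when $n_j^+ = n_j^-$ at every handle. Balance at every handle is exactly the condition $[L]_o = 0 \in H_1(\SSr;\Z)$, i.e.\ $o \in O(L)$. Thus each $o \in O(L)$ yields a class $\s_o \in \Lh{L}$ whose shifted quantum grading is stable in $\vec k$, while for $o \notin O(L)$ the shifted grading diverges and the corresponding class vanishes in the limit. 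Combined with the linear independence of Lee generators at every finite level, this shows that $\{\s_o\}_{o\in O(L)}$ is a basis of $\Lh{L}$, giving $\Lh{L}\cong \Q^{|O(L)|}$.

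The main obstacle is the grading-shift calculation in the last step: one must verify that the stabilization cobordism sends canonical Lee cycles to canonical Lee cycles (in particular that the full-twist braid acts diagonally on the Lee basis on a collection of oriented parallel strands), and carefully bookkeep writhe, Seifert circles, and signs through each handle region so that the resulting quantum-grading discrepancy agrees with the shift prescribed by Theorem~\ref{thm:deformedKh} exactly under the handle-balance condition.
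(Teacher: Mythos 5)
Your overall structure---finite approximation via $\Lk$, appealing to Lee's classification in $S^3$, tracking the Lee generators $\s_o$, and arguing that survival is equivalent to homological balance at every handle---is the same strategy the paper uses. But the mechanism you invoke for the vanishing of the classes with $o\notin O(L)$ is not the right one, and as written the crucial step does not go through.

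You track the \emph{quantum} grading of $\s_o$ (writhe and Seifert-circle count) and assert that because this quantity diverges as $\vec k\to\infty$ for unbalanced orientations, ``the corresponding class vanishes in the limit.'' That inference is unjustified, and in fact false as stated. The limiting complex $\dKC{D}$ is built by stabilizing the truncations $\CSharp(\Lk)_{\geq d}$, which are truncations in \emph{homological} degree: for each fixed $d$, the portion of the complex in homological degree $\geq d$ becomes independent of $k$ once $k$ is large. Classes at fixed homological degree but arbitrarily negative quantum filtration level would survive perfectly well in this limit; recall that after setting $t=1$ the quantum grading is only a filtration, not a grading, and the filtration is unbounded below. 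So divergent quantum grading alone does not kill a class.

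The correct mechanism (which is what the paper's proof of Theorem~\ref{thm:Lee is copies of Q} records) is homological: for an orientation $o$ whose algebraic intersection number with sphere $S$ is $\eta\neq 0$, the oriented resolution $\delta_{o,S}$ sits in homological degree $-\tfrac{k}{2}\eta^2$ of the renormalized full-twist complex $\LCrrnnobrackets(\FT_n^k)$, while that complex only agrees with $\LC{\infFT_n}$ in homological degrees strictly greater than $1-2k$. Since $-\tfrac{k}{2}\eta^2 > 1-2k$ holds for some $k$ if and only if $\eta^2<4$, i.e.\ $\eta=0$ (using $2$-divisibility), the generators with $o\notin O(L)$ are pushed to homological degree $-\infty$, outside every stable truncation, and this is why they drop out. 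Your proof would be repaired by replacing the writhe/Seifert-circle bookkeeping with this homological-degree computation; the conclusion you are after (survival $\Leftrightarrow$ $n_j^+=n_j^-$ at every handle $\Leftrightarrow$ $o\in O(L)$) is then what actually follows.
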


Now let $L\subset\SSr$ be null-homologous, with orientation $o$ and opposite orientation $\overline{o}$. Let $D$ be a diagram for $L$. Following Rasmussen \cite{Rasmussen} and Beliakova-Wehrli \cite{beliakova-wehrli}, we define the \emph{$s$-invariant} of $D$ to be
\[s(D):= \frac{ \q([\s_o+\s_{\bar{o}}]) + \q([\s_o-\s_{\bar{o}}])}{2},\]
where $\q$ denotes the quantum filtration.

\begin{theorem}
\label{thm:s invt well defined}
If $D_1$ and $D_2$ are two diagrams for the same oriented, null-homologous link $L$ in $\SSr$, then $s(D_1)=s(D_2)$ and thus $s$ gives a well-defined invariant $s(L)$ of null-homologous links $L$ in $\SSr$.
\end{theorem}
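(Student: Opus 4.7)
The plan is to combine Theorem \ref{thm:deformedKh} with a compatibility argument showing that the isomorphism it produces carries Lee generators to Lee generators, up to sign and Lee-exact terms. Since $L$ is null-homologous, the grading shift in Theorem \ref{thm:deformedKh} vanishes, so any two diagrams $D_1, D_2$ for $L$ determine a bi-graded isomorphism $\Phi\colon \dKh{D_1} \to \dKh{D_2}$. Specializing at $t=1$ gives a filtered isomorphism $\Lh{D_1} \to \Lh{D_2}$ that preserves the $\q$-filtration strictly.

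First I would unpack the proof of Theorem \ref{thm:deformedKh}: $\Phi$ is assembled, move by move, out of filtered chain homotopy equivalences between finite approximations $\dKC{D_1(\vec k)}$ and $\dKC{D_2(\vec k)}$. The relevant moves are planar isotopy, the three Reidemeister moves, a short list of moves that interact with the surgery data (sliding a strand across a surgery sphere, sliding a basepoint along a dashed line), and the stabilization maps relating the complexes for different values of $\vec k$. For planar isotopy and Reidemeister moves, the classical computations of Rasmussen and their refinement by Beliakova--Wehrli show that the induced map sends the oriented-resolution cycle $\s_o$ for $D_1$ to the oriented-resolution cycle on the new diagram, up to sign and a Lee-exact term. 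For the surgery-related moves and the stabilization maps, the analogous statement should follow from a local computation on the two canonical idempotents attached to each resolution circle, since inserting a full twist or moving a basepoint preserves the coloring dictated by $o\in O(L)$.

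Combining these, for each orientation $o \in O(L)$ the map $\Phi$ sends $[\s_o]$ and $[\s_{\bar{o}}]$ for $D_1$ to the corresponding classes for $D_2$, up to signs. Any sign ambiguity is absorbed by the symmetrization
\[
s(D) = \tfrac{1}{2}\bigl(\q([\s_o+\s_{\bar{o}}]) + \q([\s_o-\s_{\bar{o}}])\bigr),
\]
so $\Phi$ preserves both $\q$-filtrations and therefore $s(D_1)=s(D_2)$.

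The main obstacle is the local verification in the second step for the handle-type moves and for the stabilization maps underpinning the definition of $\dKh{D}$. A slicker alternative that avoids an explicit move-by-move check is to invoke Theorem \ref{thm:computeLee}: the Lee homology is $|O(L)|$-dimensional with the distinguished basis $\{[\s_o]\}_{o \in O(L)}$, so any bi-graded isomorphism must permute these basis elements up to scalars. An orientation-tracking argument along the moves then identifies the permutation with the identity, or with the involution $o \leftrightarrow \bar{o}$, which is harmless for $s$.
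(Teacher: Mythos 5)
Your main strategy matches the paper's: reduce to the move-by-move construction of the equivalence in Theorem~\ref{thm:deformedKh}, then check that each elementary chain homotopy equivalence carries oriented-resolution Lee cycles to unit multiples of oriented-resolution Lee cycles. That reduction is right, as is the observation that once this preservation is established, the (strictly) filtered isomorphism forces $\q([\s_{o_1}])=\q([\s_{o_2}])$, and since $s(D)=\q([\s_o])+1$, the $s$-invariants agree. However, there are two real problems.

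First, the verification you wave at --- ``for the surgery-related moves and the stabilization maps, the analogous statement should follow from a local computation on the two canonical idempotents'' --- is where essentially all of the content of the theorem lives, and it does not reduce to a local computation in the usual sense. The finite-approximation isomorphisms are not chain homotopy equivalences of the full complexes: they only identify truncations of the simplified complexes $\CSharp(D(\vec k))$ with the limiting complex, so one must trace how a \emph{multicone} of local simplifications acts on Lee cycles. Moreover the surgery-wrap move does not reduce to Reidemeister moves on any finite approximation at all; the paper has to work in the truncated stable range where all infinite-twist diagrams are split and argue via the machinery of Lemma~\ref{lem:simps on Lee} (in particular property \ref{it:htpy is cob}, showing the relevant homotopy components are oriented cobordisms composed with unoriented saddles, hence kill Lee cycles) and Theorem~\ref{thm:oriented res is correct}. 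None of this is a computation on idempotents on resolution circles, and without it the proof has a genuine gap.

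Second, the ``slicker alternative'' is incorrect. Knowing $\Lh{D}\cong\F^{|O(L)|}$ with distinguished basis $\{[\s_o]\}$ does not imply that an arbitrary (bi-)graded isomorphism permutes this basis up to scalars: the basis vectors are not separated by grading (indeed $\q([\s_o])=\q([\s_{\bar o}])$ for every $o$), and after setting $t=1$ the quantum direction is only a filtration, not a grading, so there is no grading constraint to invoke. You would still need the move-by-move ``orientation-tracking'' argument to pin down what the isomorphism does to the basis, which is exactly what the slicker route was supposed to avoid. In effect the slicker alternative collapses back into the main approach, and the main approach as written has the gap described above.

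Minor note: it is not necessary to symmetrize over $\q([\s_o+\s_{\bar o}])$ and $\q([\s_o-\s_{\bar o}])$ to absorb sign ambiguity; since $\q([\s_o])=\q([\s_{\bar o}])=s_{\min}=s-1$, knowing the isomorphism sends $[\s_{o_1}]$ to a unit multiple of $[\s_{o_2}]$ already gives $s(D_1)=s(D_2)$ directly. The symmetrization argument is fine, but the more direct route is cleaner and is what the paper uses.
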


The $s$-invariant of a link in $\SSr$ is closely related to that of links in $S^3$:
\begin{theorem}
\label{thm:fda}
\label{prop:s invt by finite approx}
Let $L\subset\SSr$ be an oriented, null-homologous link represented by a diagram $D$ with $n_D^+$ positive crossings. The $s$-invariant of $L$ can be computed via finite approximation as $s(L)=s(\Lk)$ where $\vec{k}=(k,\cdots,k)$ with $k \geq \intceil{\frac{n_D^++2}{2}}$.
\end{theorem}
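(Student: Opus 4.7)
The plan is to compare the stable invariant $s(L)$, computed from the canonical homology classes $[\s_o \pm \s_{\bar o}]$ in $\dKh{L}$, with the classical Rasmussen invariant $s(D(\vec k))$, computed from the analogous canonical classes in $\dKh{D(\vec k)}$. Since $L$ is null-homologous, Theorem~\ref{thm:deformedKh} gives a canonical bigraded identification $\dKh{D(\vec k)} \cong \dKh{L}$ for $\vec k$ sufficiently large (no grading shift is needed). Consequently it suffices to find a threshold $k_0$ beyond which this identification already holds at the specific bigradings occupied by $\s_o$ and $\s_{\bar o}$, and to verify that $k_0 = \lceil (n_D^+ + 2)/2 \rceil$ suffices.

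My first step would be to establish naturality of the canonical cycles under adding full twists. Inserting an additional positive full twist in the $j$-th handle, the oriented resolution of that full twist---performed with respect to the balanced orientation that makes $[L]=0$---consists of parallel arcs. Hence the oriented-resolution cycle $\s_o$ in $\dKC{D(\vec k+\vec e_j)}$ is obtained from the cycle $\s_o$ in $\dKC{D(\vec k)}$ by attaching additional arcs labelled by the appropriate Lee idempotents, and the stabilization chain map from the proof of Theorem~\ref{thm:deformedKh} carries one to the other, up to a predictable quantum shift.

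The second step is a quantum-filtration comparison. The stabilization chain map is filtered, so the filtration $\q([\s_o \pm \s_{\bar o}])$ computed in $\dKh{D(\vec k)}$ is monotone in $\vec k$. For the reverse inequality I would use an explicit genus-zero cobordism in $S^1 \times B^3$ (or a connected sum thereof) between $D(\vec k+\vec 1)$ and $D(\vec k)$, together with the functoriality of Lee homology and Rasmussen's genus bounds, to show that the filtration cannot further increase once $\vec k$ passes the threshold. The explicit value $k_0 = \lceil (n_D^+ + 2)/2 \rceil$ emerges from a grading count: each inserted full twist contributes a uniform number of positive crossings at the handle and a known quantum shift, so once the number of twists exceeds roughly half of $n_D^+$, the newly added positive crossings dominate and pin down the filtration.

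The main obstacle is nailing down the precise constant $\lceil (n_D^+ + 2)/2 \rceil$ rather than settling for some arbitrary sufficiently large bound. This requires delicate grading arithmetic, essentially matching the grading shift of Theorem~\ref{thm:deformedKh} under one additional full twist against the quantum grading of $\s_o + \s_{\bar o}$ as a chain in $\dKC{D(\vec k)}$, using Rasmussen's formula expressing $s$ in terms of $\q(\s_o)$, $n^+_{D(\vec k)}$, and $n^-_{D(\vec k)}$. Assembling these ingredients and carrying out the arithmetic should yield exactly the stated bound.
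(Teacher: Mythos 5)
Your first step---naturality of the canonical cycles under stabilization---is indeed the heart of the matter, and it is essentially what the paper establishes in Theorem~\ref{thm:oriented res is correct}. But you treat it as routine, and it is not: the identification of $\Lh{D}$ with $\Lh{\Lk}$ is not induced by ``attaching arcs'' to a fixed complex, but by a long composition of multicone simplifications (delooping, Gaussian elimination, crossing-removing Reidemeister moves) taking $\OLCnobrackets(\Lk)$ to the truncated limiting complex. One must check (Lemma~\ref{lem:simps on Lee}) that each such local move sends a Lee generator to a unit multiple of a Lee generator, \emph{and} that the homotopy correction terms appearing in the multicone formulas vanish on Lee generators (this uses that the homotopies are oriented cobordisms composed with unoriented saddles). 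Once this is done, the proof is immediate and needs none of your second step: the finite approximation equivalence is a filtered degree-$0$ map identifying the Lee subspaces $\langle \s_o\rangle$, $\langle \s_{\bar o}\rangle$ together with their filtration levels, and $s$ is defined purely from those levels, so $s(D)=s(\Lk)$. The threshold $k\geq\intceil{\frac{n_D^++2}{2}}$ is not the outcome of ``delicate grading arithmetic'' against quantum shifts (for null-homologous $L$ all shifts vanish, cf.\ Remark~\ref{rmk:no shifts for nullhomologous}); it is just Corollary~\ref{cor:KC'(L) finite approx} with $d=-1$, i.e.\ the smallest $k$ for which the truncated complex agrees with the limit through one homological degree below the Lee generators.

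Your second step, as written, does not work. There is no ``stabilization chain map'' from $\LC{\Lk}$ to $\LC{\Lk[{\vec k+\vec e_j}]}$ in the construction to which a monotonicity argument could be attached; the comparison between different $k$ goes through the common simplified complex, not through maps between the finite approximations. More seriously, the reverse inequality you propose is based on a cobordism that cannot live where you place it: $D(\vec k)$ and $D(\vec k+\vec 1)$ are links in $S^3$, and the natural cobordism between them (inserting a full twist on a balanced collection of strands) lives in $\bCP$ minus two balls (Remark~\ref{rem:GCC}), not in $S^1\times B^3$ or $I\times S^3$, so Rasmussen's genus bound does not apply. The inequality you actually need there is Theorem~\ref{thm:addFT}, which in this paper is \emph{derived from} the finite approximation theorem (via Theorems~\ref{thm:sLpp} and~\ref{thm:weakadj}), so invoking it here would be circular. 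Finally, even granting monotonicity plus an eventual bound, that only shows $s(\Lk)$ is eventually constant; it does not identify the limit with $s(L)$ as defined from the stable complex, nor does it locate the stabilization point at $\intceil{\frac{n_D^++2}{2}}$.
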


Just as the usual $s$ gives constraints on the topology of link cobordisms in $I \times S^3$, our invariant $s$ does this for cobordisms in $I \times \SSr$.

\begin{theorem}
\label{thm:GenusBoundCylinders}
\label{cor:cob inequality}
Consider an oriented cobordism $\Sigma\subset I \times \SSr$ from a link $L_1$ to a second link $L_2$. Suppose that every component of $\Sigma$ has a boundary component in $L_1$. Then,  we have an inequality of $s$-invariants
\[s(L_2) - s(L_1) \geq  \chi(\Sigma),\]
where $\chi(\Sigma)$ denotes the Euler characteristic of $\Sigma$.
\end{theorem}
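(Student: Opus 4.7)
The plan is to reduce the inequality to the classical Rasmussen-type cobordism inequality for links in $S^3$, exploiting the finite approximation result (Theorem~\ref{prop:s invt by finite approx}). The key observation is that both endpoint invariants $s(L_1)$ and $s(L_2)$ can be replaced by the $s$-invariants of finite approximations in $S^3$, and the cobordism $\Sigma$ itself admits a natural finite approximation $\Sigma(\vec{k}) \subset I \times S^3$ of the same Euler characteristic.

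More concretely, fix diagrams $D_1,D_2$ for $L_1,L_2$ and choose a movie presentation of $\Sigma$, i.e.\ a sequence of diagrams interpolating between $D_1$ and $D_2$ via Reidemeister moves and elementary Morse moves (births, deaths, saddles), where every frame is a planar tangle together with the same $r$ pairs of surgery spheres. Choose $k$ large enough so that Theorem~\ref{prop:s invt by finite approx} applies to both $D_1$ and $D_2$, giving $s(L_i)=s(L_i(\vec{k}))$ for $i=1,2$. Now insert $k$ copies of the full twist at the basepoint of each surgery region uniformly throughout the movie. The result is a movie of planar diagrams representing a cobordism $\Sigma(\vec{k}) \subset I \times S^3$ from $L_1(\vec{k})$ to $L_2(\vec{k})$. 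Because the twist regions appear in product neighborhoods $I \times D^2$ of the original surgery arcs, inserting them corresponds to an isotopy of embedded strands and does not alter the underlying surface; in particular $\Sigma(\vec{k})$ is diffeomorphic to $\Sigma$, so $\chi(\Sigma(\vec{k}))=\chi(\Sigma)$, and the components are in natural bijection. Consequently, the hypothesis that every component of $\Sigma$ meets $L_1$ passes to $\Sigma(\vec{k})$ and $L_1(\vec{k})$.

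With the finite approximation in hand, we invoke the Beliakova--Wehrli generalization of Rasmussen's cobordism inequality \cite{beliakova-wehrli} (whose version for links with the same component hypothesis is the classical analog of what we want): for the cobordism $\Sigma(\vec{k})$ in $I \times S^3$,
\[ s\bigl(L_2(\vec{k})\bigr) - s\bigl(L_1(\vec{k})\bigr) \;\geq\; \chi\bigl(\Sigma(\vec{k})\bigr). \]
Combining this with $s(L_i)=s(L_i(\vec{k}))$ and $\chi(\Sigma(\vec{k}))=\chi(\Sigma)$ yields $s(L_2)-s(L_1) \geq \chi(\Sigma)$, as desired.

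The main technical obstacle is rigorously justifying the approximation step for cobordisms: one needs to know that a movie for $\Sigma$ can always be presented so that its moves are supported away from the surgery tubes (or at least can be isotoped to be so), and then that inserting the twist regions coherently through every frame produces a genuine smooth cobordism in $I \times S^3$ diffeomorphic to the original. This is essentially a relative version of the finite approximation argument used for links, and its verification forms the bulk of what needs to be checked; by contrast, the Euler characteristic bookkeeping and the application of the $S^3$ inequality are then routine.
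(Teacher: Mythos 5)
Your reduction is sound for the local Morse moves and for isotopies supported away from the handles, and the Euler-characteristic and connectivity bookkeeping at the end is fine. But the step you yourself flag as ``the main technical obstacle'' is a genuine gap, and the statement you hope for there is false: a cobordism in $I \times \SSr$ cannot in general be presented by a movie whose moves avoid the surgery regions. The geometric intersection of the time-slices with a surgery sphere $\{*\}\times S^2$ can change along $\Sigma$ (this is forced whenever $\Sigma \cap (I\times S^2_i)$ contains a closed circle or an arc with both endpoints at the same end of the cobordism, e.g.\ when $L_1$ and $L_2$ meet the sphere in different numbers of points), and isotopies in $\SSr$ include handle-interacting moves such as the surgery-wrap move. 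For such frames, ``inserting $k$ full twists uniformly'' is not an isotopy of diagrams in $S^3$: the finite approximations before and after a wrap move are genuinely different links in $S^3$ (the wrapped strand becomes linked with the $k$-fold twisted cable), so consecutive twisted frames are no longer related by any elementary cobordism move, and $\Sigma(\vec k)$ is simply not defined as a surface in $I\times S^3$, let alone one diffeomorphic to $\Sigma$. Morally, inserting a full twist is the effect of a $(\mp 1)$-surgery, so coherently twisting a movie that crosses the spheres lands you in a blown-up $4$-manifold rather than in $I\times S^3$ --- which is precisely the phenomenon Theorem~\ref{thm:strongadj} exploits, at the cost of extra boundary components of type $\Fp(1)$ whose $s$-invariants must then be computed.

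The paper closes this gap algebraically rather than geometrically. Theorem~\ref{thm:cob map on Lee} constructs a filtered map $\phi_\Sigma\colon \Lh{L_1}\to\Lh{L_2}$ of filtration degree $\chi(\Sigma)$ by composing maps for elementary pieces: the handle-interacting isotopy pieces are handled by the multicone and Lee-generator-tracking machinery from the invariance proofs (Lemma~\ref{lem:simps on Lee}, Theorem~\ref{thm:oriented res is correct}, and the surgery-wrap analysis in Theorem~\ref{thm:s invt well defined}), while only the local Morse moves are treated by passing to a finite approximation and quoting Rasmussen's computations. The connectivity hypothesis then forces the sum in Equation~\eqref{eq:cob map on Lee} to consist of a single term with unit coefficient, and the inequality follows from the filtration degree. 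To repair your argument you would need to prove the inequality separately for each handle-interacting elementary move (which essentially amounts to redoing this algebraic work), or else restrict to cobordisms disjoint from $I\times(\coprod_i\{*_i\}\times S^2)$, which yields a strictly weaker theorem.
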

When $r=0$ (that is, for links in $S^3$), the inequality in Theorem~\ref{thm:GenusBoundCylinders}  was proved by Beliakova and Wehrli \cite{beliakova-wehrli}.

For links in a single copy of $S^1\times S^2$, there is an alternative method for calculating $s$, using Hochschild homology of Khovanov's tangle invariants. One could consider $(2p,2p)$-tangles in $I\times D^2$ which are being glued `end to end' to form a link in $S^1 \times D^2$. To such a tangle $Z\subset I\times D^2$, Khovanov \cite{KhTangles} defined a complex of bimodules $\OKCbmnobrackets(Z)$ over  the arc algebra $H_p$. Rozansky \cite{Roz} showed that the limiting complex $\OKCbmnobrackets(\infFT_{2p})$ of $H_p$-bimodules assigned to the infinite twist $\infFT_{2p}$ (the formal limit of finitely many full twists) gives a projective resolution of the identity bimodule $\OKCbmnobrackets(I)$.  As such, if $L\subset S^1\times S^2$ is formed by taking the closure of $Z\subset I\times D^2$ and collapsing $\partial D^2$, then 
\[\Kh{L}\cong \HHnobrackets(\OKCbmnobrackets(Z)),\]
where $\HHnobrackets$ denotes Hochschild homology. All of this continues to work for the deformed Khovanov-Lee complexes.

The Hochschild homology interpretation can be applied to compute the $s$-invariant of the link $\Fp \subset \SSone$ which is the union of $2p$ fibers $S^1 \times \{x_i\}$, for distinct $x_1, \dots, x_{2p} \in S^2$, where $p$ of the fibers are oriented in one direction and the other $p$ in the reverse direction. In other words, $\Fp$ is obtained from closing up the identity tangle in $I\times D^2$, which gives the identity bimodule over $H_p$. The link $\Fp$ is illustrated on the left side of Figure~\ref{fig:Fp}.

\begin{figure}
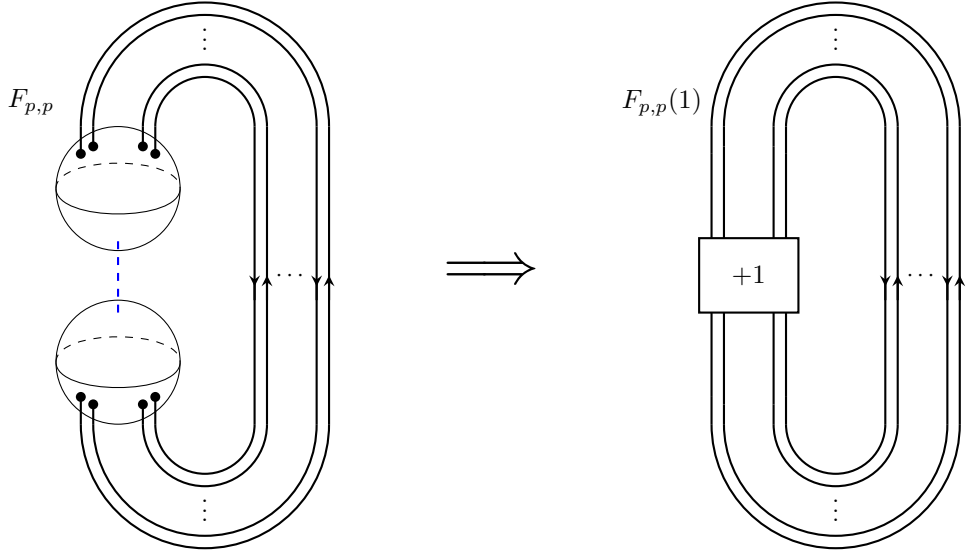

\centering
\FppinMr
\hspace{.5in}
\resizebox{.5in}{!}{$\Longrightarrow$}
\hspace{.25in}
\Fppone
\caption{The figure on the left shows the (null-homologous) link $\Fp\subset \SSone$. The figure on the right shows the link $\Fp(1) \subset S^3$.}
\label{fig:Fp}
\end{figure}

\begin{theorem}
\label{thm:s(Fp)}
For the link $\Fp \subset \SSone$, we have
\[s(\Fp)=1-2p.\]
\end{theorem}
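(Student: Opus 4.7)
The plan is to use Theorem~\ref{thm:fda} (finite approximation) to reduce the computation of $s(\Fp)$ to a Rasmussen-invariant calculation for a link in $S^3$, and then to evaluate that invariant by analyzing the canonical Lee cycles of the resulting diagram; alternatively, the preceding Hochschild discussion suggests working directly with the identification $\dKh{\Fp} \cong \dHH{\OKCbmnobrackets(I)}$.

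First, the standard planar diagram $D$ for $\Fp$ consists of $2p$ parallel arcs through the one-handle, with no crossings, so $n_D^+ = 0$. Theorem~\ref{thm:fda} then allows $k = 1$, yielding $s(\Fp) = s(\Fp(1))$, where $\Fp(1) \subset S^3$ is the braid closure of one full twist on $2p$ strands with $p$ strands oriented upward and $p$ downward (see the right side of Figure~\ref{fig:Fp}). Topologically this is the torus link $T(2p,2p)$, but with a mixed orientation.

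Second, I would compute $s(\Fp(1))$ directly. Each pair of strands crosses twice in one full twist, giving $2p(p-1)$ positive crossings (between like-oriented strands) and $2p^2$ negative crossings (between opposite-oriented strands), so the writhe is $-2p$. The oriented (Seifert) resolution takes $0$-resolutions at the positive crossings and $1$-resolutions at the negative crossings; a direct combinatorial analysis of the full twist in this resolution yields the number of Seifert circles. Combining this count with the standard formula for the quantum grading of a Lee cycle (in terms of writhe, Seifert circles, and the circle labeling determined by the orientation) gives $\q(\s_o)$ and $\q(\s_{\bar o})$; inserting these into
$$s(\Fp(1))=\frac{\q([\s_o+\s_{\bar o}])+\q([\s_o-\s_{\bar o}])}{2}$$
should produce $1-2p$.

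The main obstacle I expect is the filtration-theoretic control at the end: a priori, the homology classes $[\s_o\pm\s_{\bar o}]$ could admit representatives of strictly higher quantum filtration through cancellations in the Lee differential, and ruling this out is the delicate step. The cleanest way to circumvent this difficulty is the alternate Hochschild route: under $\dKh{\Fp}\cong\dHH{H_p}$, the Lee cycles $\s_o$ and $\s_{\bar o}$ correspond to explicit trace classes in the Lee-deformed arc algebra $H_p$, whose quantum filtrations can be read off directly from the Lee basis without having to exclude cancellations, and the claimed equality $s(\Fp)=1-2p$ follows.
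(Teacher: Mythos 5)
There is a genuine gap, and it sits exactly where you predicted but points in the opposite direction from what you describe. Your reduction to $s(\Fp(1))$ via Theorem~\ref{thm:fda} is valid (it is Theorem~\ref{thm:sLpp} run in reverse; the paper goes the other way, deducing $s(\Fp(1))$ from $s(\Fp)$). The problem is the ``direct combinatorial analysis'' of $\Fp(1)$. The standard chain-level formula gives $\q(\s_o)=w-|S|$, where $w$ is the writhe and $|S|$ the number of Seifert circles; with your (correct) count $w=-2p$ this is $-2p-|S|$, which is \emph{strictly less} than the value $s_{\min}=-2p$ the theorem requires. Already for $p=1$, where $\Fp(1)$ is the negative Hopf link, one gets $\q(\s_o)=-4$ while $s_{\min}=-2$. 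So the issue is not that higher-filtration representatives of $[\s_o\pm\s_{\bar o}]$ must be ruled out: such representatives genuinely exist (the diagram is far from positive, having $2p^2$ negative crossings), and the entire content of the theorem is to determine \emph{exactly} how much higher the filtration level of the class is than that of the chain. The naive count only yields the lower bound $s(\Fp(1))\geq 1-2p-|S|$, and no mechanism is offered for the matching computation.

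Your fallback is the route the paper actually takes, but as sketched it does not close the gap. The filtration level of a class in $\LHH[0]{I}$ is still a supremum over representatives modulo the image of the Hochschild differential, so the cancellation problem does not disappear by passing to ``trace classes.'' The paper's proof replaces the infinite-twist projective resolution by the bar resolution $B$ of the identity bimodule, identifies the degree-zero part of the closed-up complex $\overline{B}$ as $\bigoplus_a \q^{-p}\BartoKh\bigl(\overline{\caprefl{a}a}\bigr)$, exhibits the all-$x$ cycle $\x_a$ in filtration level $-2p$, and then uses the augmentation chain map $\pi$ to $\BartoKh(\overline{I})$ (built from comultiplication saddles) to show that $\x_a$ cannot be homologous to anything of strictly higher filtration: if $\x_a-\z=d\w$ with $\z$ higher, then $0=\pi d\w$ forces $\x_I$ to lie in higher filtration, a contradiction. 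That detection argument via $\pi$ is the crux, and it is absent from your proposal; without it, or some substitute, neither of your two routes produces the value $1-2p$.
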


The main idea in the proof of Theorem~\ref{thm:s(Fp)}  is to use Hochschild homology, and replace the infinite twist projective resolution by a more conventional `bar-like' projective resolution. The latter has a relatively simple map to the identity tangle that aids in computations.

Interestingly, by combining Theorems~\ref{thm:fda} and~\ref{thm:s(Fp)}, we are able to calculate the $s$-invariant of a link in $S^3$, which seems otherwise inaccessible. Precisely, we consider the link $\Fp(1)$, which is a finite approximation to $\Fp$, and is in fact the torus link $T(2p,2p)$ with $p$ strands oriented one way, and $p$ strands oriented the other way; cf. the right side of Figure~\ref{fig:Fp}. 

\begin{theorem}
\label{thm:sLpp}
For the link $\Fp(1) \subset S^3$, we have
\[
s(\Fp(1)) = 1-2p.
\]
\end{theorem}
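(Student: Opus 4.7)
The plan is to obtain Theorem~\ref{thm:sLpp} as a direct consequence of Theorems~\ref{thm:fda} and~\ref{thm:s(Fp)}, exactly as suggested by the remark preceding the statement. The key observation is that the standard diagram $D$ for $\Fp \subset \SSone$ depicted on the left of Figure~\ref{fig:Fp} consists of $2p$ parallel vertical strands passing through a single surgery region, with no crossings at all in the planar picture. Consequently, the number of positive crossings of $D$ is $n_D^+ = 0$.

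Applying Theorem~\ref{thm:fda} with this diagram, the finite-approximation bound becomes
\[
k \;\geq\; \intceil{\tfrac{n_D^+ + 2}{2}} \;=\; \intceil{\tfrac{0 + 2}{2}} \;=\; 1,
\]
so that already $k = 1$ suffices. Hence $s(\Fp) = s(\Fp(1))$. Combining this equality with the Hochschild-homology computation $s(\Fp) = 1 - 2p$ provided by Theorem~\ref{thm:s(Fp)} yields $s(\Fp(1)) = 1 - 2p$, which is exactly the claim.

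From the perspective of writing the proof, there is essentially no obstacle: all the substantive work has been packaged into the two theorems cited. The difficult input is Theorem~\ref{thm:s(Fp)}, whose proof requires the Hochschild-homology reinterpretation of the $s$-invariant for links in $\SSone$ and the comparison between Rozansky's infinite-twist projective resolution and a bar-type resolution of the identity $H_p$-bimodule. What makes Theorem~\ref{thm:sLpp} noteworthy is not the proof itself but the fact that the combined mechanism produces a computation of the $s$-invariant of a family of links $T(2p,2p)$ in $S^3$ (with a specific orientation) which seems hard to access by purely $S^3$-based techniques, since the finite approximation inside $S^3$ is genuinely used in only one direction.
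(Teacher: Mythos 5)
Your proposal is correct and takes exactly the same route as the paper: the paper's proof is the single sentence that combines Theorem~\ref{thm:s(Fp)} with Theorem~\ref{prop:s invt by finite approx}, noting $n^+_D=0$ so that $k=1$ suffices. Nothing to add.
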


Rasmussen's $s$-invariant has two close cousins: the Ozsv\'ath-Szab\'o $\tau$ invariant from knot Floer homology \cite{os-tau, RasmussenThesis}, and the Kronheimer-Mrowka $s^{\sharp}$ invariant from instanton homology \cite{KM, KMerr}. All of these are concordance invariants, but $\tau$ and $s^{\sharp}$ fit nicely  into more general theories (Heegaard Floer and instanton, respectively), which give invariants for $3$- and $4$-dimensional manifolds. In the absence of such a theory for Khovanov homology, we can still try to  probe four-dimensional properties of the $s$ invariant, with the hope that it would give us a clue about how Khovanov homology extends (or not) to more general manifolds. For example, Piccirillo's recent proof that the Conway knot is not slice \cite{Pic} relies essentially on the fact that the $s$ invariant does not behave as $\tau$ does with respect to $0$-surgeries on certain  knots.

Let $X$ be a closed oriented smooth $4$-manifold with no 1-handles and no 3-handles. 
Suppose we are interested in surfaces $\Sigma \subset X \setminus B^4$ with boundary $\del \Sigma = L \subset S^3$. We remove the co-cores of the $2$-handles and reverse the orientation of $\Sigma$ to obtain a cobordism in $S^3 \times I$ from the mirror reverse $\m L$ to a cable $J$ of attaching spheres of the $2$-handles. Thus, we can get constraints on the $s$-invariant of $L$ by applying Theorem~\ref{thm:GenusBoundCylinders}, provided we can calculate the $s$-invariants of all possible $J$. Computing $s$-invariants for infinite families of links is in general a difficult problem. However, for $X=\bCP$, if $[\Sigma] = 0$, then the cables of the attaching sphere in which we are interested are the torus links $\Fp(1)$, and we can use Theorem~\ref{thm:sLpp}.

The $\tau$ and $s^{\sharp}$ invariants satisfy adjunction-type
inequalities with respect to surfaces in negative-definite smooth
$4$-manifolds; see \cite{os-tau, KM}.  Using the idea of reduction to
cylinders outlined above, we obtain a similar inequality for $s$,
albeit we can only prove it for $\#^t \bCP$ instead of more general
negative definite smooth $4$-manifolds. However, note that all simply
connected, negative definite smooth $4$-manifolds are homeomorphic to
$\#^t \bCP$ by Freedman's work \cite{Freedman}, and it is unknown
whether $\#^t \bCP$ admits exotic smooth structures.

\begin{theorem}
\label{thm:weakadj}
Consider an oriented cobordism $\Sigma\subset Z = (\#^t\bCP) \setminus (B^4 \sqcup B^4)$ from a null-homologous link $L_1 \subset S^3$ to a second null-homologous link $L_2 \subset S^3$.
Suppose that $[\Sigma] = 0$ in $H_2(Z, \de Z)$ and that every component of $\Sigma$ has a boundary component in $L_2$.
Then, we have an inequality of $s$-invariants
\[s(L_1) - s(L_2) \geq \chi(\Sigma),\]
where $\chi(\Sigma)$ denotes the Euler characteristic of $\Sigma$.
\end{theorem}

By specializing Theorem \ref{thm:weakadj} to the case $L_1 = \varnothing$ (for which by convention we have $s(\varnothing) = 1$), we obtain a statement which is formally identical to the adjunction inequalitites for $\tau$ and $s^\sharp$~\cite{os-tau, KM}, in the case of nullhomologous $\Sigma$.

\begin{corollary}[Adjunction inequality for $s$]
\label{cor:adjunction}
Let $W = (\#^t \bCP) \setminus B^4$ for some $t \geq 0$. Let $L \subset \del W= S^3$ be a link, and $\Sigma \subset W$ a properly, smoothly embedded oriented surface with no closed components, such that $\del \Sigma =L$ and $[\Sigma]=0 \in H_2(W, \del W)$. Then 
\[
s(L) \leq 1-\chi(\Sigma).
\]
\end{corollary}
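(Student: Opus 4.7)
My plan is to deduce the corollary by specializing Theorem \ref{thm:weakadj} (or the stronger form referred to in the paragraph preceding the corollary) to the case $L_1 = \varnothing$, $L_2 = L$. First, I would view $\Sigma \subset W = (\#^t \bCP) \setminus B^4$ as a cobordism inside the closed manifold $Z = \#^t \bCP$: pick a small $4$-ball $B' \subset W$ disjoint from $\Sigma$, and regard $Z$ as $W$ with its boundary capped off by the original $B^4$, so that $Z \setminus (B^4 \sqcup B')$ is a cobordism from the empty link on $\partial B'$ to $L$ on the original $\partial W = S^3$, containing $\Sigma$.

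The hypotheses of Theorem \ref{thm:weakadj} are then routine to verify: both $L_1 = \varnothing$ and $L_2 = L$ are null-homologous (the former trivially, the latter by assumption); $[\Sigma] = 0$ in the relevant relative $H_2$ follows from the hypothesis $[\Sigma] = 0 \in H_2(W, \partial W)$, since $B'$ is carved out of the interior of $W$ disjoint from $\Sigma$; and the assumption that $\Sigma$ has no closed components together with $\partial \Sigma = L$ guarantees that every component of $\Sigma$ has boundary in $L_2 = L$. The theorem then yields
\[
s(\varnothing) - s(L) \geq \chi(\Sigma).
\]

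To complete the deduction, I would use the normalization $s(\varnothing) = 1$. The defining formula $s(D) = (\q([\s_o + \s_{\bar o}]) + \q([\s_o - \s_{\bar o}]))/2$ degenerates for the empty link because its two canonical generators coincide, so $\s_o - \s_{\bar o} = 0$ has no well-defined quantum grading and this value must be fixed separately; the choice $s(\varnothing) = 1$ is forced by the standard disjoint-union relation $s(L \sqcup U) = s(L) - 1$ together with $s(U) = 0$. Substituting back into the displayed inequality gives $s(L) \leq 1 - \chi(\Sigma)$. The only mildly delicate step in the whole argument is pinning down this normalization in a way that is compatible with the extension of $s$ developed earlier in the paper; everything else is a direct specialization of the theorem.
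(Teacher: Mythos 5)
Your proof is correct and follows the same route as the paper's: the authors deduce Corollary~\ref{cor:adjunction} by specializing Theorem~\ref{thm:weakadj} (equivalently Theorem~\ref{thm:strongadj} with $r=0$) to $L_1=\varnothing$ and invoking $s(\varnothing)=1$, which is exactly what you do. The additional detail you supply (choosing a small ball $B'\subset W$ disjoint from $\Sigma$ so that $W\setminus B'$ becomes the required cobordism, checking that $[\Sigma]$ remains zero after removing $B'$, and pinning down the normalization $s(\varnothing)=1$ from the disjoint-union relation) is all consistent with the paper's conventions and correctly fills in steps the paper leaves implicit.
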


In particular, for knots we get that $s(K) \leq 2g$, where $g$ is the genus of $\Sigma$. Ozsv\'ath and Szab\'o \cite{os-tau} proved that the same inequality holds with $2\tau(K)$ instead of $s(K)$. While $s=2\tau$ for many families of knots, there are examples where the two invariants differ. The first such example, found by Hedden and Ording in \cite{HeddenOrding}, was the $2$-twisted positive Whitehead double of the right-handed trefoil, $\Wh^+(T_{2,3}, 2)$, for which $\tau=0$ but $s=2$. 

\begin{corollary}
\label{cor:D+}
The knot $\Wh^+(T_{2,3}, 2)$ does not bound a null-homologous smooth disk in $(\#^t \bCP) \setminus B^4$, for any $t \geq 0$.
\end{corollary}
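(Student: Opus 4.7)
The plan is to derive Corollary \ref{cor:D+} directly from the adjunction inequality given in Corollary \ref{cor:adjunction}, combined with the known computation of $s$ for the Hedden--Ording knot.

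First, I would argue by contradiction: suppose that $K = \Wh^+(T_{2,3}, 2)$ bounds a smooth, properly embedded disk $\Sigma \subset W := (\#^t \bCP) \setminus B^4$ with $[\Sigma] = 0 \in H_2(W, \de W)$. Since $\Sigma$ is a disk, it has no closed components and $\chi(\Sigma) = 1$. Applying Corollary \ref{cor:adjunction} to $\Sigma$ would then yield
\[
s(K) \leq 1 - \chi(\Sigma) = 0.
\]

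Next, I would recall the computation of Hedden and Ording \cite{HeddenOrding}, which establishes $s(\Wh^+(T_{2,3}, 2)) = 2$. Plugging this into the inequality above gives $2 \leq 0$, a contradiction. Therefore no such null-homologous smooth disk can exist.

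No step is really an obstacle here, since both ingredients are already in place: Corollary \ref{cor:adjunction} is the main tool being applied, and the value $s(\Wh^+(T_{2,3}, 2)) = 2$ is an input from the literature. The only thing to verify is that the hypotheses of Corollary \ref{cor:adjunction} are satisfied, i.e.\ that the disk $\Sigma$ has no closed components (automatic, since it is a disk with non-empty boundary $K$) and is null-homologous rel boundary (this is precisely the assumption we are negating). Thus the proof reduces to a direct substitution into the adjunction inequality, which is why the statement is recorded as a corollary rather than a theorem.
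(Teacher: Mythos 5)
Your proof is correct and is precisely the argument the paper intends: the corollary follows by applying Corollary \ref{cor:adjunction} to a hypothetical null-homologous disk (so $\chi(\Sigma)=1$, giving $s(K)\leq 0$) and contradicting the Hedden--Ording computation $s(\Wh^+(T_{2,3},2))=2$, which the paper cites in the sentence immediately preceding the statement. The paper leaves this derivation implicit, but your reasoning matches it exactly, including the observation that the no-closed-components hypothesis is automatic for a disk with nonempty boundary.
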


Other obstructions to bounding null-homologous disks in negative
definite manifolds were given by Cochran, Harvey and Horn in
\cite[Proposition 1.2]{CHH}. These include the $d$-invariants of the
double branched cover and of $\pm 1$ surgeries on the knot, which are
$0$ for $\Wh^+(T_{2,3}, 2)$ by \cite[Theorems 3, 4, 8]{Tange}. Another
obstruction is the Tristram-Levine signature, which is again $0$ in
our case by \cite[Theorem 2]{Litherland}. So none of these invariants
can be used to prove Corollary~\ref{cor:D+}.

Another application of Theorem~\ref{thm:weakadj} involves the notion of
generalized crossing change, which was introduced by Cochran and
Tweedy in~\cite{Positive}. Given a diagram $D$ of a link $L$ in $S^3$,
a \emph{generalized crossing change from positive to negative} (also
called \emph{adding a generalized negative crossing}) consists of
adding a positive full twist on $D$, so that the number of strands
going up and the number of strands going down are the same. See Figure
\ref{fig:GCC}. Note that when only two strands are involved
(one going up and one going down) this operation is the usual crossing
change from positive to negative (combined with a Reidemeister II
move).

Here we warn the reader against a possible source of confusion. Note that a \emph{negative} crossing change (i.e., from positive to negative) can be realized by inserting a \emph{positive} full twist on two oppositely oriented strands. (It can also be realized by inserting a \emph{negative} full twist on two coherently oriented strands, but that is irrelevant here.) Analogously, adding a generalized \emph{negative} crossing change can be realized by inserting a \emph{positive} full twist on $2n$ strands, $n$ oriented one way and $n$ oriented the other way.

\begin{figure}
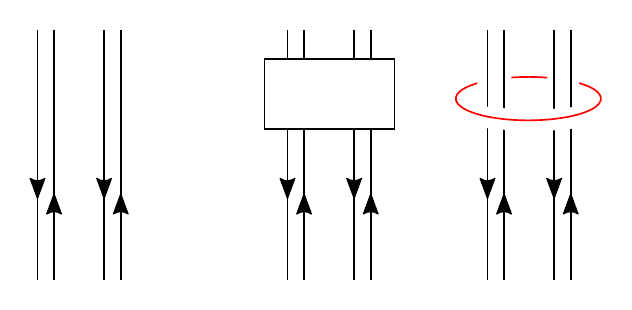
\caption{Adding a generalized negative crossing.}
\label{fig:GCC}
\end{figure}

\begin{theorem}
\label{thm:addFT}
Suppose that a link $L^{\tw} \subset S^3$ is obtained from $L$ by adding a generalized negative crossing. Then $s(L^{\tw}) \leq s(L)$.
\end{theorem}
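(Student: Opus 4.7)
The plan is to realize the passage from $L$ to $L^{\tw}$ as a null-homologous concordance inside $\bCP \setminus (B^4 \sqcup B^4)$ and then invoke Theorem~\ref{thm:weakadj} with $t = 1$.

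Place a small unknot $U$ in $S^3 \times \{1\} \subset S^3 \times I$ so that it encircles the $2p$ strands of $D$ where the positive full twist is to be added, and attach a $4$-dimensional $2$-handle $h$ along $U$ with framing $-1$ to form $W := (S^3 \times I) \cup h$. Both boundary components of $W$ are $3$-spheres, since $(-1)$-surgery on the unknot returns $S^3$, and the standard blow-down formula identifies the image of $L \subset S^3 \times \{1\}$ under the diffeomorphism $\partial^+ W \cong S^3$ with exactly $L^{\tw}$: erasing a $(-1)$-framed unknot inserts a positive full twist on the strands passing through its Seifert disk. The intersection form of $W$ is $(-1)$, so $W$ is diffeomorphic to $\bCP \setminus (B^4 \sqcup B^4)$.

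Let $\Sigma := L \times I \subset S^3 \times I \subset W$ be the product concordance, an oriented cobordism from $L \subset \partial^- W$ to $L^{\tw} \subset \partial^+ W$ with $\chi(\Sigma) = 0$. A generator of $H_2(W) \cong \Z$ is represented by the embedded $2$-sphere $S$ formed from the core of $h$ and a Seifert disk $\Delta \subset S^3 \times \{1\}$ for $U$. Because $\Sigma$ is disjoint from $h$ and meets $\Delta$ only in the $2p$ points of $L \cap \Delta$, and because the defining hypothesis of a generalized crossing change gives $p$ strands of each orientation through $\Delta$, we obtain $[\Sigma] \cdot [S] = p - p = 0$, hence $[\Sigma] = 0 \in H_2(W, \partial W)$.

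Fill in the two boundary $S^3$'s of $W$ by $4$-balls to embed $\Sigma$ in $Z := \bCP$. Both $L$ and $L^{\tw}$ are trivially null-homologous in $S^3$, every component of $\Sigma$ has a boundary on $L^{\tw}$, and $[\Sigma] = 0 \in H_2(Z, \partial Z)$. Theorem~\ref{thm:weakadj} applied with $L_1 := L$ and $L_2 := L^{\tw}$ then yields
\[
s(L) - s(L^{\tw}) \ \geq \ \chi(\Sigma) \ = \ 0,
\]
which is the desired inequality. The only step of real content, where the balanced ``$p$ up, $p$ down'' hypothesis is used, is the verification that the concordance $\Sigma$ represents the trivial relative homology class; the rest of the argument is a direct packaging of Kirby calculus into the adjunction-type inequality already established.
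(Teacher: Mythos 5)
Your proof is correct and follows essentially the same route as the paper: the paper's Remark~\ref{rem:GCC} records exactly the null-homologous concordance in $\bCP\setminus(B^4\sqcup B^4)$ that you construct explicitly via the $(-1)$-framed blow-down, and the conclusion is then drawn from the adjunction-type cobordism inequality (Theorem~\ref{thm:weakadj}, or its two-sided version Theorem~\ref{thm:weakadjpm}) just as you do. The only cosmetic slip is the phrase about filling in the two boundary spheres with $4$-balls --- one keeps $W\cong\bCP\setminus(B^4\sqcup B^4)$ as the cobordism to which the theorem applies --- but this does not affect the argument.
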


Consider now the untwisted positive Whitehead double of the left-handed trefoil, $\Wh^+({T_{2,-3}},0)$. It is still an open problem to determine whether $\Wh^+({T_{2,-3}},0)$ is slice or not, since all gauge-theoretical invariants vanish for it. This is a very special case of a question  \cite[Problem 1.38]{Kirby} asking whether $\Wh^+(K,0)$ is slice only when $K$ is slice.

One can use a computer program to check that $s=0$ for $\Wh^+({T_{2,-3}},0)$. Hence, $s$ cannot detect whether this knot is slice. Theorem~\ref{thm:addFT} gives a more conceptual proof that $s=0$ in this case. In fact, this holds for a more general family of Whitehead doubles:

\begin{theorem}
\label{thm:sWh+=0}
Let $K \subset S^3$ be a knot that can be turned into the unknot by doing only crossing changes from negative to positive. (For example, $K$ could be a negative knot.) Then, $s(\Wh^+(K, 0)) = 0$.
\end{theorem}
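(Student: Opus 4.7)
The plan is to establish both $s(\Wh^+(K,0)) \geq 0$ and $s(\Wh^+(K,0)) \leq 0$. The lower bound will come from iterating Theorem~\ref{thm:addFT} along the given unknotting sequence; the upper bound will come from constructing a properly embedded, null-homologous disk for $\Wh^+(K,0)$ in $(\#^t\bCP)\setminus B^4$ for appropriate $t$ and invoking Corollary~\ref{cor:adjunction}.

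For the lower bound I fix an unknotting sequence $K = K_0 \to K_1 \to \cdots \to K_n = U$ in which each arrow is a single negative-to-positive crossing change. Draw each $\Wh^+(K_i,0)$ by replacing a diagram of $K_i$ with its $0$-framed $2$-cable together with a positive Whitehead clasp. Since the Whitehead pattern has winding number zero, the $2$-cable at every generic cross section consists of two strands of opposite orientation, so at each crossing of the $K_i$-diagram the corresponding region of $\Wh^+(K_i,0)$ contains four cable strands arranged as two going up and two going down. The negative-to-positive crossing change on $K_i$ translates, at the cable level, into the insertion of a positive full twist on these four balanced strands --- precisely the addition of a generalized negative crossing in the sense of Figure~\ref{fig:GCC}. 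Theorem~\ref{thm:addFT} then gives $s(\Wh^+(K_{i+1},0)) \leq s(\Wh^+(K_i,0))$, and iterating yields $s(\Wh^+(U,0)) \leq s(\Wh^+(K,0))$. The Whitehead pattern in a trivially embedded solid torus has a two-crossing diagram (with opposite-sign clasp crossings), and since every two-crossing knot diagram represents the unknot, $\Wh^+(U,0) = U$; hence $s(\Wh^+(K,0)) \geq 0$.

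For the upper bound the unknotting sequence first produces an immersed disk $D_K \subset B^4$ with $\partial D_K = K$ and $n$ double points, all of negative sign. Taking the $0$-framed $2$-cable of $D_K$ and joining the two parallel sheets by a ribbon-embedded band realizing the positive Whitehead clasp produces an immersed disk $\Sigma \subset B^4$ bounding $\Wh^+(K,0)$, whose $4n$ transverse double points (two within each parallel copy of $D_K$ and two cross-copy intersections, localized near each of the $n$ double points of $D_K$) all inherit the negative sign. Blowing up each such double point with a copy of $\bCP$ gives a smoothly embedded disk $\tilde\Sigma$ in $(\#^{4n}\bCP)\setminus B^4$. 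A sign analysis shows that each negative double point contributes zero algebraic intersection of $\tilde\Sigma$ with its exceptional divisor, so $\tilde\Sigma$ is null-homologous. Corollary~\ref{cor:adjunction} then yields $s(\Wh^+(K,0)) \leq 1 - \chi(\tilde\Sigma) = 0$.

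The main obstacle is the key identification in the second paragraph: carefully verifying that the four cable strands near each $K$-crossing genuinely form the balanced $2{+}2$ orientation configuration needed to apply Theorem~\ref{thm:addFT}. This requires tracing the zero-winding Whitehead pattern through a crossing and checking both possible local orientations of the two $K$-strands. The upper bound argument is more standard in spirit but still relies on delicate sign bookkeeping --- both to confirm null-homology of the proper transform $\tilde\Sigma$ after blow-up, and to realize the Whitehead clasp as a ribbon-embedded (rather than smoothly embedded) band in $B^4$ so that the only singularities of $\Sigma$ are the $4n$ negative double points inherited from $D_K$.
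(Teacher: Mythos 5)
There is a sign error in your lower-bound argument that actually inverts the inequality, and a second sign error in your upper-bound argument. Taken together, you never establish $s(\Wh^+(K,0)) \geq 0$.

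\textbf{Lower bound.} You claim that the negative-to-positive crossing change $K_i \to K_{i+1}$ translates, at the cable level, into adding a \emph{positive} full twist on the four balanced cable strands. This is false. One must also account for the $0$-framing of the cable: when $\wr$ increases by $2$, the Whitehead twist box must change by $-2$, and once this framing correction is folded into the local picture, the net operation is a \emph{negative} full twist on the four balanced strands, i.e.\ adding a \emph{generalized positive} crossing (this is the content of Figure~\ref{fig:WhiteheadDoubleB} and the sentence about ``the additional two negative twists compensated by the change in writhe''). With the corrected sign, Theorem~\ref{thm:addFT} reads the other way and yields $s(\Wh^+(K_i,0)) \leq s(\Wh^+(K_{i+1},0))$, hence $s(\Wh^+(K,0)) \leq 0$ --- an \emph{upper} bound, not the lower bound you wanted. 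Your claimed direction already fails on a test case: $T_{2,3}$ is obtained from the unknot by a single negative-to-positive crossing change, so your inequality would force $s(\Wh^+(T_{2,3},0)) \leq s(U) = 0$, contradicting $s(\Wh^+(T_{2,3},0)) = 2$.

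\textbf{Upper bound.} Your blow-up construction also has a sign problem. The immersed disk $D_K$ has $n$ double points, each of negative sign, and you replace $D_K$ by a $2$-cable together with a clasp band. Because the Whitehead pattern has winding number zero, the two parallel copies of $D_K$ carry \emph{opposite} orientations. Near each original double point, the four resulting cable double points therefore have signs $(-,-,+,+)$: the two ``within-copy'' intersections $S_1^a\cap S_2^a$ and $S_1^b\cap S_2^b$ keep the original negative sign, but the two cross-copy intersections $S_1^a\cap S_2^b$ and $S_1^b\cap S_2^a$ each involve exactly one orientation reversal and hence flip to positive. So the $4n$ double points are $2n$ positive and $2n$ negative, not all negative. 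To resolve them by blow-ups while keeping the proper transform null-homologous, you would need $2n$ copies of $\CP$ as well as $2n$ copies of $\bCP$, and the resulting manifold is no longer negative definite, so Corollary~\ref{cor:adjunction} cannot be invoked. (One cannot simply cancel the opposite-sign pairs: that would show $\Wh^+(K,0)$ is slice in $B^4$, which fails already for $K=T_{2,3}$.)

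\textbf{What is missing.} After correcting the sign in the second paragraph, you have two upper-bound arguments and no lower bound. The paper's lower bound is a genuinely different observation that your plan does not contain: changing a single \emph{positive} crossing in the Whitehead clasp to a negative one unknots $\Wh^+(K,0)$, so by Remark~\ref{rem:Ltw} the knot is strongly H-slice in $\CP$; equivalently $\m\Wh^+(K,0)$ is strongly H-slice in $\bCP$, whence $s(\m\Wh^+(K,0)) \leq 0$, i.e.\ $s(\Wh^+(K,0)) \geq 0$. You need to add this (or some other) lower-bound input; your current plan establishes at most $s(\Wh^+(K,0)) \leq 0$.
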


Freedman, Gompf, Morrison, and Walker \cite{mnm} proposed the following approach to disproving the smooth four-dimensional Poincar\'e conjecture: Find a homotopy $4$-sphere $X$ and a knot $K\subset S^3$ that is slice in $X$ (i.e., bounds a smoothly embedded disk in $X \setminus B^4$), and prove that is not slice in $S^4$ by showing that $s(K) \neq 0$. More generally, one could try to find a link $L$ that is strongly slice in $X$ (i.e., bounds a collection of disjoint, smoothly embedded disks in  $X \setminus B^4$), and show that $s(L) \neq 1-|L|$, where $|L|$ is the number of components of $L$; such a link cannot be strongly slice in $S^4$.

It follows from Corollary \ref{cor:adjunction} that this strategy cannot work for a large class of potential counterexamples to the smooth 4D Poincar\'e conjecture, namely those obtained by Gluck twists. 
\begin{corollary}
\label{cor:Glucktwist}
Let $X$ be a homotopy 4-sphere obtained by a Gluck twist on a $2$-knot in $S^4$. If a link $L \subset S^3$ is strongly slice in $X$, then $s(L) = 1-|L|$.
\end{corollary}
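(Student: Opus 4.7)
My plan is to reduce the corollary to Corollary~\ref{cor:adjunction} by embedding the Gluck twist homotopy four-ball smoothly into a standard connected sum of copies of $\bCP$, so that the slicing disks $D_1,\dots,D_n$ become null-homologous disks in the ambient standard manifold. The starting point is Wall's theorem: since $X$ is a simply connected smooth four-manifold with trivial intersection form, there is some $k\geq 0$ with $X\,\#\,\#^{k}(S^2\times S^2) \cong \#^{k}(S^2\times S^2)$. Using the classical blow-up relation $S^2\times S^2\,\#\,\bCP\cong \CP\,\#\,2\bCP$, one can convert these $S^2\times S^2$ stabilizations into connected sums with copies of $\CP$ and $\bCP$, producing a diffeomorphism of the form $X\,\#\,Z\cong Z$, where $Z$ is a closed oriented four-manifold of a type for which the paper's adjunction-style genus bounds apply (at minimum, a connected sum containing enough $\bCP$'s to invoke Corollary~\ref{cor:adjunction}, possibly together with $\CP$ summands handled by the paper's generalization to connected/boundary sums of its building blocks).

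Given such a stabilization, the pair $(X\setminus B^4,\;\Sigma=\bigsqcup_i D_i)$ embeds as an oriented codimension-zero submanifold of $(Z\setminus B^4,\;\Sigma)$ containing the boundary $S^3\supset L$. The disks $D_i$ remain disjointly embedded, and $[\Sigma]=0$ in $H_2(Z\setminus B^4,\partial)$ because $H_2(X\setminus B^4)=0$ and the disks do not meet any of the stabilizing summands. Applying Corollary~\ref{cor:adjunction} (or its connected-sum generalization) to $\Sigma$, whose Euler characteristic equals $n=|L|$, yields the upper bound $s(L)\leq 1-|L|$.

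For the matching lower bound, I would run the same argument on the mirror link $\bar L$, which is strongly slice in $-X$, itself a Gluck twist homotopy four-sphere, now on the mirror $2$-knot. This produces $s(\bar L)\leq 1-|\bar L|=1-|L|$. Combining with the link mirror formula $s(\bar L)=-s(L)+2(1-|L|)$, a standard consequence of the duality between the Khovanov--Lee complexes of a link and its mirror, I then extract $s(L)\geq 1-|L|$. The two inequalities pin down $s(L)=1-|L|$, which is the content of the corollary.

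The main obstacle is the first step: realizing $X\setminus B^4$ inside a standard piece without disturbing the homological triviality of the slicing surface. The delicate issue is that $X\,\#\,\bCP$ is not known to be smoothly diffeomorphic to $\bCP$ for a general Gluck twist, so one must work through Wall stabilization and keep careful track of which $\bCP$ versus $\CP$ summands are actually introduced, invoking the full family of adjunction bounds the paper establishes for boundary and connected sums of $B^2\times S^2$, $S^1\times B^3$, and $\CP$. Once this topological reduction is in place, the remainder of the argument is essentially the same two-sided bound that proves $s(L)=1-|L|$ for links strongly slice in the standard $B^4$.
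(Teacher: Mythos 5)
Your proposal correctly identifies the overall shape of the argument (get upper bounds on both $s(L)$ and $s(\m L)$, then pinch against a lower-bound inequality), but the first step as you've set it up does not work, and it is precisely because you have disbelieved the key fact that makes the paper's proof short. You write that ``$X\,\#\,\bCP$ is not known to be smoothly diffeomorphic to $\bCP$ for a general Gluck twist.'' It \emph{is} known: this is a classical property of Gluck twists, stated in \cite[Exercise 5.2.7(b)]{GS}, and it is exactly what the paper invokes. Given $X \# \bCP \cong \bCP$ and $X\#\CP\cong\CP$, one immediately has that $L$ is strongly H-slice in $\bCP$ and in $\CP$, hence both $L$ and $\m L$ are strongly H-slice in $\bCP$, and Corollary~\ref{cor:sHsCP} applies directly. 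Your Wall-stabilization route, by contrast, produces $X \# Z \cong Z$ with $Z$ of the form $\#^a\CP \# \#^b\bCP$ with $a>0$, since each $S^2\times S^2$ summand unavoidably converts into a $\CP$ together with $\bCP$'s under the blow-up relation. Such a $Z$ has $b_2^+ > 0$, and Corollary~\ref{cor:adjunction} (and Theorem~\ref{thm:strongadj}) apply only to connected sums of $\bCP$'s: the whole adjunction argument rests on the computation $s(\Fp(1)) = 1-2p$ for the cables arising from tubing off $\mathbb{CP}^1$'s inside $\bCP$, and nothing in the paper gives the analogous bound when positive $\CP$ summands are present. There is no ``generalization to connected/boundary sums of its building blocks'' of the kind you appeal to.

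A secondary inaccuracy: the ``link mirror formula'' $s(\m L) = -s(L) + 2(1-|L|)$ is false as an equality. For the positive Hopf link it predicts $s(\Hopf^-) = -1 - 2 = -3$, whereas $s(\Hopf^-) = -1$. The correct statement is only the inequality $s(L) + s(\m L) \geq 2(1-|L|)$, which is \cite[Equation~(10)]{beliakova-wehrli} and is what the paper actually uses in the proof of Theorem~\ref{thm:sHsS4}. Your argument only needs the inequality, so the logic would survive with this corrected, but it should not be stated as an identity or attributed to ``duality of Khovanov--Lee complexes.''
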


A key ingredient in the proof of Corollary~\ref{cor:Glucktwist} is that, by a property of Gluck twists, we have $X \# \CP \cong \CP$ and $X \# \bCP \cong \bCP$. Thus, we can use what we know about the behavior of $s$ with respect to surfaces in $\bCP \setminus B^4$. 

Many other properties of the $s$-invariants in $S^3$ and in $\SSr$ are established in Sections~\ref{sec:sS3} and \ref{sec:sMr}. Let us mention a few of them here. For a null-homologous link $L$ in $ \#^r(\SSone)$, we define $s_-(L) = s(L)$ and $s_+(L) = -s(\m L)$. (Note that for knots $K \subset S^3$, we have $s_-(K)=s_+(K)$ by \cite[Proposition 3.9]{Rasmussen}.) 

First, we have a connected sum formula:
\begin{theorem}
\label{thm:ConnSumSSr}
If $L_1$ and $L_2$ be null-homologous links in $\#^{r_1}(\SSone)$ and $\#^{r_2}(\SSone)$, respectively, each with a single basepoint. Then
$$ s_\pm(L_1 \# L_2) = s_\pm(L_1) + s_\pm(L_2).$$
where $L_1\# L_2$ is the null-homologous link in $\#^{r_1+r_2}(\SSone)$ obtained by connected summing $L_1$ and $L_2$ at the basepoints.
\end{theorem}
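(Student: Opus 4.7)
The plan is to reduce this to the analogous connected-sum formula for $s_\pm$-invariants of links in $S^3$, which will have been proven earlier in the paper (in Section~\ref{sec:sS3}), via the finite approximation result Theorem~\ref{thm:fda}.

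Concretely, I would choose diagrams $D_i$ for $L_i$ in $\#^{r_i}(\SSone)$ with the single basepoint marked, and form $D := D_1 \# D_2$, the diagram for $L_1 \# L_2$ in $\#^{r_1+r_2}(\SSone)$ obtained by splicing at the two basepoints. The first (and essentially only) structural observation is that finite approximation commutes with this splicing: because the full twists are inserted at basepoints chosen on the dashed one-handle lines, which can be taken disjoint from the splicing region on the link itself, we have $D(\vec{k}) = D_1(\vec{k}_1) \# D_2(\vec{k}_2)$ as diagrams in $S^3$, for any $\vec{k} = (\vec{k}_1, \vec{k}_2) \in \Z^{r_1} \times \Z^{r_2}$.

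Next, I would take $\vec{k} = (k, \ldots, k)$ with $k \geq \intceil{(n_D^+ + 2)/2}$; this single choice of $k$ simultaneously satisfies the bound of Theorem~\ref{thm:fda} for $L_1$, $L_2$, and $L_1 \# L_2$, since $n_{D_i}^+ \leq n_D^+$. Applying Theorem~\ref{thm:fda} at both ends and the $S^3$ connected-sum formula in the middle gives
\[
s_-(L_1 \# L_2) = s_-(D(\vec{k})) = s_-(D_1(\vec{k}_1)) + s_-(D_2(\vec{k}_2)) = s_-(L_1) + s_-(L_2).
\]
For the $s_+$ statement, I would invoke that mirroring commutes with connected sum, i.e.\ $\m{L_1 \# L_2} = \m{L_1} \# \m{L_2}$, and deduce
\[
s_+(L_1 \# L_2) = -s_-(\m{L_1} \# \m{L_2}) = -s_-(\m{L_1}) - s_-(\m{L_2}) = s_+(L_1) + s_+(L_2).
\]

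The main obstacle is really the $S^3$-analogue of the formula, which is a standard but nontrivial property whose proof would combine the cobordism inequality (Theorem~\ref{thm:GenusBoundCylinders} in the case $r = 0$) with a chain-level comparison of the canonical Lee generators under the merge/split saddle cobordisms between $L_1 \# L_2$ and $L_1 \sqcup L_2$. Everything else in the above reduction is routine bookkeeping about compatibility of finite approximation with the connected-sum operation on diagrams.
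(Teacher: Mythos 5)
Your proposal is correct and follows essentially the same route as the paper: the paper proves this as item (3) of Proposition~\ref{prop:propertiesS1S2} by reducing to the $S^3$ connected-sum formula (Theorem~\ref{thm:cs}) via the finite-approximation/stabilization results (Proposition~\ref{prop:stab}, itself packaging Theorem~\ref{thm:fda}), exactly as you do. One minor note: your parenthetical sketch of how the $S^3$ formula itself should be proved leans on the cobordism inequality, which by itself only recovers the Beliakova--Wehrli estimate $s(L_1)+s(L_2)-2\leq s(L_1\#L_2)\leq s(L_1)+s(L_2)$; the paper's actual proof of Theorem~\ref{thm:cs} instead uses Rasmussen's short exact sequences relating $\Lh{L_1\#L_2}$, $\Lh{L_1}\otimes\Lh{L_2}$, and $\Lh{L_1\# r(L_2)}$ together with a chain-level tracking of Lee generators to close the gap, so your mention of a ``chain-level comparison of the canonical Lee generators'' is the right ingredient, but it is carried out through the exact-sequence maps rather than through merge/split cobordism maps alone.
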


For knots in $S^3$, this was proved in \cite[Proposition 3.11]{Rasmussen}. However, even the case of links in $S^3$ appears to be new.

Second, the $s_-$ and $s_+$ invariants give genus bounds for surfaces in the boundary connected sums $\natural^r(S^1 \times B^3)$ and $\natural^r(B^2 \times S^2)$. For a null-homologous link $L \subset \SSr$, we denote by $\gSD(L)$ the minimum genus of a properly embedded surface $\Sigma \subset \natural^r(S^1 \times B^3)$ with $\de \Sigma = L \subset M$. We define $\gDS$ similarly, but using surfaces $\Sigma \subset \natural^r(B^2 \times S^2)$.

\begin{theorem}
\label{thm:genus bounds}
Let $L\neq\varnothing$ be a null-homologous $\ell$-component link in $ \#^r(\SSone)$. Then
\begin{align*}
s_-(L) &\leq 2\gDS(L) + \ell - 1\\
\rotatebox[origin=c]{-90}{$\leq$} \hspace{2ex} & \hspace{10ex} \rotatebox[origin=c]{-90}{$\leq$} \\
s_+(L) &\leq 2\gSD(L) + \ell - 1
\end{align*}
\end{theorem}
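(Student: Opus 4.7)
\begin{idea}
The theorem is a conjunction of four inequalities, which I would prove separately.

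For the bound $s_+(L) \leq 2\gSD(L) + \ell - 1$, I would start with a genus-$g$ surface $\Sigma \subset \natural^r(S^1 \times B^3)$ bounding $L$ and isotope it off a small interior $4$-ball, placing $\Sigma$ inside $\natural^r(S^1 \times B^3) \setminus B^4$. The dual handle decomposition presents this as a cobordism from $\SSr$ to $S^3$, built by attaching $r$ three-handles to $\SSr \times I$ along essential $2$-spheres. Since the $3$-handle co-cores are $1$-dimensional, generic position (in a $4$-manifold, $2+1-4<0$) makes $\Sigma$ disjoint from them, and deformation retracting the complement onto $\SSr \times I$ yields a cobordism $\Sigma' \subset \SSr \times I$ from $L$ at the bottom to $\varnothing$ at the top with $\chi(\Sigma') = \chi(\Sigma)$. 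Applying Theorem~\ref{thm:GenusBoundCylinders} (which by the underlying Beliakova--Wehrli-style argument gives the inequality for both $s_-$ and $s_+$) with the convention $s_\pm(\varnothing) = 1$ yields $s_\pm(L) \leq 1 - \chi(\Sigma) = 2g + \ell - 1$, giving in particular the $s_+$ bound.

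For $s_-(L) \leq 2\gDS(L) + \ell - 1$ the dual co-cores are $2$-dimensional and cannot be avoided generically, so the previous approach breaks down. Instead, I would use finite approximation. Given $\Sigma \subset \natural^r(B^2\times S^2)$ made transverse to the $2$-handle co-cores $C_1,\dots,C_r$ with $n_i := |\Sigma\cap C_i|$, and an interior $4$-ball disjoint from $\Sigma$, removing open tubular neighborhoods of the $C_i$ and of the $4$-ball produces a sub-cobordism diffeomorphic to $S^3 \times I$ (since a $2$-handle with its co-core removed deformation retracts to its attaching region). In this cylinder $\Sigma$ becomes a surface whose top boundary is $L(\vec 0) \subset S^3$, identified via the surgery, together with $n := \sum n_i$ meridional unknots in the interior. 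Pushing those unknots to the bottom via cylinders and capping them with disks in the reinstated $4$-ball yields $\td\Sigma \subset B^4$ with $\del\td\Sigma = L(\vec 0)$ and $\chi(\td\Sigma) = \chi(\Sigma)$. The classical Rasmussen bound gives $s(L(\vec 0)) \leq 2g + \ell - 1$, and combining with Theorem~\ref{thm:addFT} (which yields $s(L(\vec k)) \leq s(L(\vec 0))$ for $\vec k > 0$) and Theorem~\ref{thm:fda} (which yields $s(L) = s(L(\vec k))$ for $\vec k$ large) concludes.

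The two vertical comparisons remain. The inequality $s_-(L) \leq s_+(L)$ follows from a general algebraic argument about the interplay of the quantum filtration with the two canonical Lee classes $[\s_o \pm \s_{\bar o}]$, reflecting the fact that the ``lower'' class has filtration at most that of the ``upper''. The inequality $\gDS(L) \leq \gSD(L)$ should follow from a topological conversion: using the decomposition $S^4 = \natural^r(S^1 \times B^3) \cup_{\SSr} \natural^r(B^2 \times S^2)$, any genus-$g$ surface in $\natural^r(S^1 \times B^3)$ bounding $L$ can be pushed through the interface into $\natural^r(B^2 \times S^2)$, or alternatively via the common finite approximation that reduces both problems to bounding $L(\vec 0)$ in $B^4$. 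The main obstacle is the finite-approximation step for the $\gDS$ bound, where careful bookkeeping is required to show that the cutting-and-capping procedure preserves $\chi$ regardless of the intersection numbers $n_i$.
\end{idea}
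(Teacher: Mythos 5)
Your proposal has genuine gaps in both horizontal inequalities, and the handling of $s_-(L) \leq s_+(L)$ is too vague to assess. For the bound $s_+(L) \leq 2\gSD(L)+\ell-1$: the cobordism $\Sigma'\subset I\times\SSr$ from $L$ to $\varnothing$, with every component meeting $L$, yields $s_-(L)\leq 1-\chi(\Sigma')$ but only a \emph{lower} bound on $s_+$. The $s_+$ version of the cobordism inequality (Corollary~\ref{cor:cobS1S2}, second bullet) reads $s_+(L_1)-s_+(L_2)\geq\chi(\Sigma)$ when every component meets $L_1$, so with $L_1=L$, $L_2=\varnothing$ you obtain $s_+(L)\geq 1+\chi(\Sigma')$ --- the opposite direction. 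To get an upper bound one must place $L$ in the $L_2$ slot; the paper does this by puncturing each component of $\Sigma$ (one small disk per component), producing a cobordism from the unlink $U_\sigma$ to $L$ with every component meeting $U_\sigma$, rather than isotoping $\Sigma$ off an interior $4$-ball.

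For the bound $s_-(L) \leq 2\gDS(L)+\ell-1$, the cutting-and-capping construction of $\td\Sigma\subset B^4$ is not justified and I do not believe it works. After removing tubular neighborhoods of the $2$-handle co-core disks and a $4$-ball from $\natural^r(B^2\times S^2)$, the new boundary circles of the cut surface lie on $S^3\times\{1\}$ (not ``in the interior''), and they are $0$-framed \emph{longitudinal} push-offs of the attaching circles $K_i$ rather than meridians. Since $L(\vec 0)$ threads the tubular neighborhoods of the $K_i$, these push-off circles are generically linked with $L(\vec 0)$ and cannot be capped off by disks disjoint from the rest of the surface; the claim $\chi(\td\Sigma)=\chi(\Sigma)$ is therefore unsupported. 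The paper instead removes neighborhoods of the $S^2$-cores $\{0\}\times S^2$ (joined by arcs), staying in $I\times\SSr$; the meridional circles there form the links $\Fpp{p_i}\subset(\SSone)_i$, the cobordism inequality (Theorem~\ref{thm:GenusBoundCylinders}) applies, and the essential extra input is the explicit computation $s(\Fp)=1-2p$ of Theorem~\ref{thm:s(Fp)}. Your route does not bypass this input either: the statement $s(L(\vec k))\leq s(L(\vec 0))$ you cite (Theorem~\ref{thm:addFT}) already depends on Theorem~\ref{thm:sLpp}. Finally, $s_-(L)\leq s_+(L)$ is not a formal consequence of the filtration algebra of the two canonical Lee classes in a single $\Lh{L}$: it compares filtration levels across $\Lh{L}$ and $\Lh{\m L}$, and the paper's proof of Proposition~\ref{prop:propertiesS1S2} passes through finite approximation, Theorem~\ref{thm:addFT}, and a connected-sum argument in $S^3$. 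Your treatment of $\gDS(L)\leq\gSD(L)$ via the $1$-dimensional spine does match Lemma~\ref{lem:isotopeSigma}.
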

Hedden and Raoux~\cite{HR} proved an analogue of Theorem \ref{thm:genus bounds} for the $\tau$ invariants from Heegaard Floer homology.

Finally, let $\xistd$ be the standard (tight) contact structure in $\SSr$. We will use our generalized $s$-invariant to give a combinatorial proof of the slice Bennequin inequality in the case for null-homologous transverse links in $(\SSr, \xistd)$, relative to the Stein filling $\natural^r(S^1 \times B^3)$. This is a particular case of the general slice Bennequin inequality in Stein fillings due to Lisca-Mati\'c \cite{SB2}. Whereas Lisca-Mati\'c used gauge theory, our proof is similar to Shumakovitch's combinatorial proof of the slice-Bennequin inequality in $S^3$ \cite{SB3}, which is based on the $s$-invariant.

\begin{theorem}
\label{thm:sliceBennequin}
Let $L \neq \varnothing$ be a null-homologous transverse link in $(\SSr, \xistd)$. Then
\[
\sl(L)+1 \leq s_+(L).
\]
\end{theorem}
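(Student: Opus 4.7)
The plan is to adapt Shumakovitch's combinatorial proof \cite{SB3} of the slice Bennequin inequality in $(S^3, \xistd)$, which hinges on a direct computation of the quantum grading of the canonical Lee generator associated to the oriented resolution of a braid-closure diagram.

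First, I would realize $L$ as a closed braid in a compatible open book decomposition of $(\SSr, \xistd)$. Since $(\SSr, \xistd)$ is Stein-fillable by $\natural^r(S^1 \times B^3)$, it carries such an open book, and a transverse Markov-type theorem lets us put $L$, after transverse isotopy, into closed-braid position. The resulting diagram $D$ is braid-like in the plane with $r$ dashed lines: all strands are consistently oriented, and the null-homology assumption forces $p_j$ up-strands and $p_j$ down-strands through the $j$-th dashed line. A direct computation of the Euler number of $\xistd$ restricted to the natural Seifert surface built from the pages and the crossing-bands then yields the braid formula
\[
\sl(L) = \wr(D) - n(D),
\]
where $n(D)$ is the total number of braid strands, equivalently the number of Seifert circles of $D$.

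The main step is to establish a Rasmussen-type lower bound
\[
\wr(D) - n(D) + 1 \leq s_+(L),
\]
from which $\sl(L) + 1 \leq s_+(L)$ is immediate. I would do this by computing the quantum grading of the canonical Lee cycle $\s_o$ of Theorem~\ref{thm:computeLee}, corresponding to the oriented resolution of $D$. In the finite approximation $D(\vec{k})$, each positive full twist on $2p_j$ balanced strands contributes $-2p_j$ to the writhe, so the classical Rasmussen formula gives the oriented-resolution cycle a quantum grading of $\wr(D) - n(D) + 1 - 2k\sum_j p_j$ in $\dKh{D(\vec{k})}$. The grading shift built into the definition of $\dKh{L}$ as the stable limit (Theorems~\ref{thm:deformedKh},~\ref{thm:fda}) must exactly cancel this $-2k\sum_j p_j$ term, yielding $\q(\s_o) = \wr(D) - n(D) + 1$ in $\dKh{L}$. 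A parallel computation for $\s_{\bar{o}}$, combined with the definition $s_+(L) = -s(\m L)$ (unpacked via the Beliakova--Wehrli averaging formula applied to $\m L$), then produces the required lower bound for $s_+$.

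The main obstacle is the grading-shift bookkeeping, namely verifying that the shift in the definition of $\dKh{L}$ exactly compensates for the $-2k\sum_j p_j$ writhe correction from the inserted full twists, and that the resulting stabilized grading $\q(\s_o) = \wr(D) - n(D) + 1$ yields a lower bound specifically for the combinations $\q(\s_o \pm \s_{\bar{o}})$ governing $s_+(L)$ (as opposed to only $s_-(L)$, which one could alternatively combine with a monotonicity $s_-(L) \leq s_+(L)$ extracted from Theorem~\ref{thm:genus bounds}). A secondary technical point is the existence of the braid-closure representation for null-homologous transverse links in $(\SSr, \xistd)$: while this should follow from a Bennequin-type theorem for the compatible open book, it may require a direct argument in our setting.
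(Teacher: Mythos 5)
Your setup (braiding $L$ with respect to the standard open book and invoking the formula $\sl(L)=\wr(\beta)-\br(\beta)$, which is Kawamuro's Equation~\eqref{eq:KP}) matches the paper. But the main step has a genuine gap, and it is not a bookkeeping issue that can be patched: computing the quantum filtration level of the canonical Lee cycles $\s_o,\s_{\bar o}$ of $L$ itself (whether in $\dKh{L}$ or in a positive finite approximation $\Lk$) produces a lower bound for $s_-(L)=s(L)$, not for $s_+(L)=-s(\m L)$. There is no mechanism in your argument that bounds $s(\m L)$ \emph{from above}, which is what a lower bound on $s_+$ requires. Worse, the intermediate statement your computation would establish, namely $\sl(L)+1\leq s_-(L)$, is \emph{false}: the paper exhibits the counterexample $\DT(\Wh^+)$ (the braid closure in Figure~\ref{fig:transverseS1S2} has $\sl=1$ but $s_-=0$) immediately after the proof. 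For the same reason your fallback of combining the computation with the monotonicity $s_-(L)\leq s_+(L)$ cannot work, since the first link in that chain already fails. A secondary error: for null-homologous links the renormalization shifts are zero (Remark~\ref{rmk:no shifts for nullhomologous}, since all $\eta_i=0$), so there is no shift available to cancel the $-2k\sum_j p_j$ writhe contribution of the inserted twists; the chain-level grading $\q(\s_{o,k})$ genuinely drifts to $-\infty$ with $k$, and stabilization of $s$ occurs only at the level of the homology class via the simplified complexes of Theorem~\ref{thm:oriented res is correct}.

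The paper's route avoids all of this by never evaluating Lee generators in the twisted diagrams. One passes to the \emph{untwisted} approximation $L(\vec 0)\subset S^3$ (the closure of the same braid $\beta$ in $S^3$), notes $\sl(L)=\sl(L(\vec 0))$ by Kawamuro, applies Shumakovitch's Theorem~\ref{thm:SB3} in $S^3$ to get $\sl(L(\vec 0))+1\leq s_-(L(\vec 0))\leq s_+(L(\vec 0))$, and finally uses $s_+(D(\vec 0))\leq s_+(L)$ (Proposition~\ref{prop:stab}\eqref{it:stab4}). That last monotonicity is the crucial input you are missing; it comes from the generalized-crossing-change inequality (Theorem~\ref{thm:addFT}), i.e., ultimately from the adjunction result in $\bCP$, since $D(-\vec k)$ and $D(\vec 0)$ differ by adding generalized negative crossings. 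If you want to argue more in the spirit of Shumakovitch's chain-level computation, the paper's alternative proof does so via Theorem~\ref{thm:positivity2} applied to the positivized braid $\beta^+$, descending to $\beta$ one crossing change at a time using Proposition~\ref{prop:propertiesS1S2}\eqref{it:propertiesS1S2-CC}; note that even there the positivity computation is carried out for $s_+$ via the stabilization $s_+(D(\vec 0))\leq s_+(L)$, not by evaluating generators in $\dKh{L}$ directly.
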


\begin{corollary}[Slice Bennequin inequality in $\SSr$]
\label{cor:sliceBennequin}
Let $L$ be a null-homologous transverse link in $(\SSr, \xistd)$, and let $\Sigma$ be a properly embedded surface in $\natural^r(S^1 \times B^3)$ with no closed components and $\de \Sigma = L$. Then
\[
\sl(L) \leq -\chi(\Sigma).
\]
\end{corollary}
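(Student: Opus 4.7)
The plan is to chain together Theorems~\ref{thm:sliceBennequin} and~\ref{thm:genus bounds}. In the special case where $\Sigma$ is connected of genus $g$ with $\ell = |L|$ boundary components, the two theorems combine directly as
\[
\sl(L) + 1 \;\leq\; s_+(L) \;\leq\; 2\gSD(L) + \ell - 1 \;\leq\; 2g + \ell - 1 \;=\; -\chi(\Sigma) + 1,
\]
where the middle inequality comes from Theorem~\ref{thm:genus bounds}, the third uses that $\Sigma$ is a candidate for the minimum defining $\gSD(L)$, and the final equality is the Euler characteristic of a connected surface with boundary. This chain immediately yields $\sl(L) \leq -\chi(\Sigma)$.

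For general $\Sigma$ with no closed components, I would reduce to the connected case by decomposing $\Sigma = \bigsqcup_i \Sigma_i$ into its connected components and writing $L_i := \de \Sigma_i$. Each $L_i$ is nonempty (since $\Sigma_i$ is not closed), inherits a transverse structure as a sublink of $L$, and is null-homologous in $\SSr$: indeed, $L_i$ bounds the surface $\Sigma_i \subset \natural^r(S^1 \times B^3)$, and the inclusion $\SSr \hookrightarrow \natural^r(S^1 \times B^3)$ induces an isomorphism on first homology. The connected case then applies to each pair $(L_i, \Sigma_i)$ to give $\sl(L_i) \leq -\chi(\Sigma_i)$.

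Finally, I would assemble these component-wise bounds using the disjoint-union formula
\[
\sl\!\left( \bigsqcup_i L_i \right) = \sum_i \sl(L_i) + 2 \sum_{i<j} \lk(L_i, L_j),
\]
together with the vanishing of the pairwise linking numbers $\lk(L_i, L_j)$ in $\SSr$. This vanishing is a standard consequence of the identification between linking in the boundary of a 4-manifold and intersection of bounding surfaces in the interior: $\lk(L_i, L_j) = \Sigma_i \cdot \Sigma_j$, which is zero because the $\Sigma_i$ are pairwise disjoint. Summing then yields $\sl(L) = \sum_i \sl(L_i) \leq -\sum_i \chi(\Sigma_i) = -\chi(\Sigma)$. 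The main subtlety in the argument is thus the reduction from disconnected to connected surfaces, which hinges on this linking-equals-intersection principle to control the cross-terms in the self-linking formula.
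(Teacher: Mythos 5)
Your proof is correct, but it reduces the general case to the connected case by a genuinely different mechanism than the paper. The paper's base case is a \emph{knot}: for $\ell=1$ the chain $\sl(L)+1 \leq s_+(L) \leq 2\gSD(L) \leq 2g(\Sigma) = 1-\chi(\Sigma)$ is exactly your connected case (a surface with no closed components bounding a knot is automatically connected). For $\ell>1$ the paper then attaches $\ell-1$ positively half-twisted bands to the braid $\beta$, producing a knot $K$ bounding $\Sigma'$ with $\chi(\Sigma')=\chi(\Sigma)-(\ell-1)$, and uses Kawamuro's formula \eqref{eq:KP} to get $\sl(K)=\sl(L)+(\ell-1)$; everything stays inside the braid-theoretic framework and no linking numbers ever appear. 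Your route instead splits $\Sigma$ into components and reassembles via $\sl(L)=\sum_i\sl(L_i)+2\sum_{i<j}\lk(L_i,L_j)$ with vanishing cross-terms. This works, but it imports two facts the paper does not state: (i) the sublink formula for $\sl$ in $(\SSr,\xistd)$, which does follow from \eqref{eq:KP} and the corresponding $S^3$ formula once one checks $\lk_{\SSr}(L_i,L_j)=\lk_{S^3}(L_i(\vec 0),L_j(\vec 0))$ for null-homologous sublinks; and (ii) the identity $\lk(L_i,L_j)=\Sigma_i\cdot\Sigma_j$, which is \emph{not} a consequence of disjointness alone in an arbitrary filling --- it requires $H_2$ of the filling to vanish (it fails, for instance, for disks in $\bCP\setminus B^4$ whose boundaries are Hopf fibers). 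Since $H_2\bigl(\natural^r(S^1\times B^3)\bigr)=0$, your argument goes through, but you should flag this hypothesis explicitly: it is precisely the point that would break if $\natural^r(S^1\times B^3)$ were replaced by $\natural^r(B^2\times S^2)$, which is the open question the paper raises right after this corollary. The paper's band-summing reduction sidesteps the linking-number analysis entirely, at the cost of having to track how the bands change $\wr$, $\br$, and $\chi$.
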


\medskip
{\bf Organization of the paper.} In Section~\ref{sec:Lee complex and s-invt} we define the deformed Khovanov-Lee complexes for links in $\SSr$ and prove Theorems~\ref{thm:deformedKh} and  \ref{thm:computeLee}. In Section~\ref{sec:s_invt} we define the $s$-invariant of links in $\SSr$, and prove Theorems~\ref{thm:s invt well defined}, \ref{thm:fda}, and \ref{thm:GenusBoundCylinders}. In Section~\ref{sec:Hochschild} we give the reinterpretation in terms of Hochschild homology. We use this in Section~\ref{sec:Lpp} to prove Theorems~\ref{thm:s(Fp)} and \ref{thm:sLpp}, about the $s$-invariants of the links $\Fp$ and $\Fp(1)$.
In Section~\ref{sec:adj} we prove Theorem \ref{thm:weakadj}, the adjunction inequality (Corollary \ref{cor:adjunction}), its application to Gluck twists (Corollary~\ref{cor:Glucktwist}). In Section~\ref{sec:sS3} we establish several properties of the invariants $s_+$ and $s_-$ for links in $S^3$; these include Theorems~\ref{thm:addFT} and \ref{thm:sWh+=0}. In Section~\ref{sec:sMr} we give several properties of the $s$-invariants in $\SSr$; this is where we prove Theorems~\ref{thm:ConnSumSSr}, \ref{thm:genus bounds} and \ref{thm:sliceBennequin}, as well as Corollary~\ref{cor:sliceBennequin}. Lastly, in Section~\ref{sec:Problems} we discuss some open problems.

\medskip
{\bf Acknowledgements.} We would like to thank Paolo Ghiggini, Matthew Hedden, Ko Honda, Peter Lambert-Cole, Robert Lipshitz, JungHwan Park, and Katherine Raoux for helpful conversations. We would also like to thank the referees for their helpful comments and suggestions.
\section{Lee's deformation in $\SSr$} \label{sec:Lee complex and s-invt}

\subsection{Notation and conventions}\label{sec:notation}
We will use the following notation throughout this section.
\begin{figure}
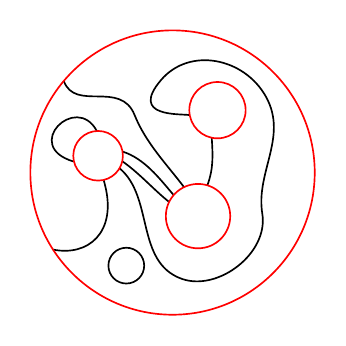
\caption{A planar tangle, giving an operation in a planar algebra. The asterisks denote the basepoints.}
\label{fig:planar}
\end{figure}

\begin{itemize}
\item $R$ will denote a commutative, unital ring with 2 being invertible. All our invariants will depend on $R$ but, to avoid clutter, we will drop it from the notation.
\item Our complexes $C^*$ will have differentials that increase the homological grading by one, so they are cochain complexes. However, by a slight abuse of terminology (which appears in some of the literature on Khovanov homology), we will refer to them as {\em chain complexes}, and talk about their {\em homology}.
\item Recall from  \cite{JonesPlanar} or \cite[Section 5]{BN} that a planar algebra $\Pl$ consists of a collection of $R$-modules $\Pl_n$ for each $n \geq 0$, and multilinear operations
\[
Z_{T}: \Pl_{n_1} \otimes \dots \otimes \Pl_{n_k} \to \Pl_{n_0},
\]
one for each planar tangle $T$ as in Figure~\ref{fig:planar}. Here,
$T$ is a picture consisting of a totally ordered collection of input
disks inside an output disk---with the boundary of each disk
decorated with a basepoint---and non-intersecting strings between
them, with $2n_1, \dots, 2n_k$ endpoints on the input disks, and
$2n_0$ on the output disk. Planar tangles are considered up to
basepoint-preserving isotopy. The operations $Z_T$ are required to
satisfy a composition axiom.
\item $\Kob$ will denote Bar-Natan's planar algebra (p.1465 from
  \cite{BN}). The $R$-module $\Kob_n$ (which is $\Kob(2n)$ in
  Bar-Natan's notation) consists of complexes of (formal linear
  combinations of) crossingless tangles in a disk with $2n$
  endpoints. The morphisms in the complexes are given by matrices of
  (formal linear combinations of) cobordisms between the tangles,
  modulo the relations in \cite[Section 4.1.2]{BN}. The Bar-Natan
  relations admit the following well-known simplification using dotted
  cobordisms and a formal variable $t$: A dot on a surface represents
  a one-handle attached to the surface near the dot, divided by $2$;
  the formal variable $t$ represents a sphere with three dots (that
  is, one-eighth of a genus-three surface); these are illustrated in
  the first row of Figure~\ref{fig:relations}. The original Bar-Natan
  relations then translate to the three relations in the second row of
  Figure~\ref{fig:relations}---the sphere relation, the dotted sphere
  relation, and the neck-cutting relation. A useful consequence of
  these relations is that any two dots on the same connected component
  of a surface may be removed at the cost of multiplying the surface
  by $t$; therefore, we can view our planar algebra to be over $R[t]$.
\item The generators (crossingless diagrams) of a complex $C^*$ in
  $\Kob$ will be denoted as $\delta\in C^*$. For such a diagram, the
  notations $\h_C(\delta)$ and $\q_C(\delta)$ will denote the
  homological and quantum gradings of $\delta$ in $C^*$; the
  subscripts will be omitted if there is no cause for confusion.
\item The notation $\h^a$ and $\q^b$ will indicate upward shifts in homological and $\q$-degree.
\item We define the deformed Temperley-Lieb category $\dTL_{n}$ to be $\Kob_{n}$, when we want to think of the diagrams as being inside a rectangle with $n$ points on the top and $n$ on the bottom. By setting $t=0$ we obtain the category $\TL_n$ of morphisms from $n$ to $n$ in the 2-category $\mathbb{TL}$ from \cite[Section 2.3]{KhTangles}.
\item For a generator $\delta \in \dTL_{n}$, the notation $\thru{\delta}$ will denote the through-degree of the diagram, that is, the number of strands that pass from the top boundary to the bottom boundary.
\item Given an oriented tangle $T$, the notation $\dKC{T}$ will indicate the universal Khovanov complex over $R[t]$ of $T$ in $\Kob$ as constructed by Bar-Natan in \cite{BN}. By specifying $t=0$ in $\dKC{T}$ we get a complex denoted $\KC{T}$, and by specifying $t=1$ we get a complex denoted $\LC{T}$.
\item We use $\BartoKh$ to denote the universal Khovanov functor from
  $\Kob_0$ to the category of complexes of $R[t]$-modules, taking the
  empty diagram to the ground ring $R[t]$. Due to the delooping
  isomorphism \cite{Nao-kh-universal}, $\BartoKh$ sends a diagram
  $\delta$ (with $h(\delta)=q(\delta)=0$) to
  \[
    \bigotimes_{C\in\delta}\big(\q R[t]\oplus\q^{-1}R[t]\big),
  \]
  where the tensor product is over the circles in $\delta$.  We denote
  the generator of $\q R[t]$ by $1$ (denoted $v_+$ in
  \cite{Kh,Rasmussen}) and the generator of $\q^{-1} R[t]$ by $x$
  (denoted $v_-$ in \cite{Kh,Rasmussen}).
\item For a diagram $D$ representing a link $L \subset S^3$, the
  homology of $\OdKC{D}$ will be denoted by $\dKh{L}$. If we set $t=0$ in $\OdKC{D}$ we obtain the usual Khovanov complex $\BartoKh \KC{D}$, whose homology is the Khovanov homology $\Kh{L}$. By setting $t=1$ in $ \OdKC{D}$ we get the Lee complex $\BartoKh \LC{D}$, whose homology is the Lee homology $\Lh{L}$.

\item Let $D$ be a diagram for a link $L \subset S^3$. If $o$ is an
  orientation of $L$, we let $\delta_o \in \dKC{D}$ be the oriented
  resolution of $D$. We let $\s_o \in \OdKC{D}$ be the Lee generator
  associated to $\delta_o$ by the procedure in \cite{Lee}.  That is,
  we assign to each oriented circle $C$ in $\delta_o$ the number of
  circles which enclose $C$.  To this number we add one if $C$ is
  oriented counter-clockwise.  If we call the resulting number $z(C)$,
  then the circle $C$ is assigned the element 
  \[
    g_C:=(-1)^{z(C)}1+x\in \q R[t]\oplus\q^{-1}R[t].
  \]
  The Lee generator $\s_o$ for $\delta_o$ is then the tensor
  product $$\s_o:=\bigotimes_{C\in\delta_o} g_C.$$
\item The notation $\FT_n$ will be used to denote a right-handed full twist braid on $n$ strands.  Multiple full twists are denoted $\FT_n^k$.  We will also use the notation $\infFT_n$ to indicate a formal infinite full twist.
\item A link diagram $D$ for $L \subset \SSr$ is a picture of the link in the standard Kirby diagram for $\SSr$ with $r$ one-handles, as on the left of Figure \ref{fig:InsertTwists}. Given such a diagram $D$ and a vector $\vec{k}\in\Z^r$, the notation $\Lk$ indicates the diagram (for a link in $S^3$) built from $D$ by connecting the $n_i$ endpoints on each attaching sphere and inserting a copy of the full twists $\FT_{n_i}^{k_i}$ as illustrated in Figure \ref{fig:InsertTwists}.   
\end{itemize}

 \begin{figure}
   \centering
   \begin{tikzpicture}[scale=0.7]

     \begin{scope}
       \draw[thick] (0,0) circle (1);
       \begin{scope}[yscale=0.35]
         \draw[thick,dashed] (1,0) arc (0:180:1);
         \draw[thick] (-1,0) arc (180:360:1);
       \end{scope}
       \node at (1.6,0) {$=0$};
     \end{scope}

     \begin{scope}[xshift=5cm]
       \draw[thick] (0,0) circle (1);
       \begin{scope}[yscale=0.35]
         \draw[thick,dashed] (1,0) arc (0:180:1);
         \draw[thick] (-1,0) arc (180:360:1);
       \end{scope}
       \node at (0,0.7) {$\bullet$};
       \node at (1.6,0) {$=1$};
     \end{scope}

     \begin{scope}[xshift=13cm,yshift=4cm]
       \draw[thick] (0,0) circle (1);
       \begin{scope}[yscale=0.35]
         \draw[thick,dashed] (1,0) arc (0:180:1);
         \draw[thick] (-1,0) arc (180:360:1);
       \end{scope}
       \node at (-0.3,0.7) {$\bullet$};
       \node at (0,0.7) {$\bullet$};
       \node at (0.3,0.7) {$\bullet$};
       \node at (-1.6,0) {$t:=$};
     \end{scope}

     \begin{scope}[xshift=10cm,yscale=0.35]
       \foreach \i in {0,3,6}{
         \draw[thick] (\i,5) circle (1);
         \draw[thick,dashed] (1+\i,-5) arc (0:180:1);
         \draw[thick] (-1+\i,-5) arc (180:360:1);
       }
       \foreach \i in {3,6}{
         \draw[thick] (1+\i,-5) to[looseness=7,out=90,in=90] (-1+\i,-5);
         \draw[thick] (1+\i,5) to[looseness=7,out=-90,in=-90] (-1+\i,5);
       }
       \node at (3,2) {$\bullet$};
       \node at (6,-2) {$\bullet$};
       \draw[thick] (-1,-5) --++(0,10);
       \draw[thick] (1,-5) --++(0,10);
       \node at (1.5,0) {$=$};
       \node at (4.5,0) {$+$};
     \end{scope}
       
     \begin{scope}[xshift=2cm,yshift=4cm,yscale=0.8]
       \foreach \i in {0,5}{
         \draw[thick] (-2+\i,-1) -- (1+\i,-1) -- (2+\i,1) -- (-1+\i,1) -- cycle;
       }
       \node at (0,0) {$\bullet$};
       \node at (2.5,0) {$:=\frac{1}{2}$};
       \draw[thick] (5+0.3,0) arc (0:180:0.3);
       \draw[thick] (5+0.6,0) arc (0:180:0.6);

     \end{scope}
     
   \end{tikzpicture}
 \caption{The top row describes the short-hand notations: a dot for a
   one-handle divided by $2$, and $t$ for a triply dotted sphere. The
   bottom row describes the Bar-Natan's relations in terms of dotted
   cobordisms.}
\label{fig:relations}
\end{figure}
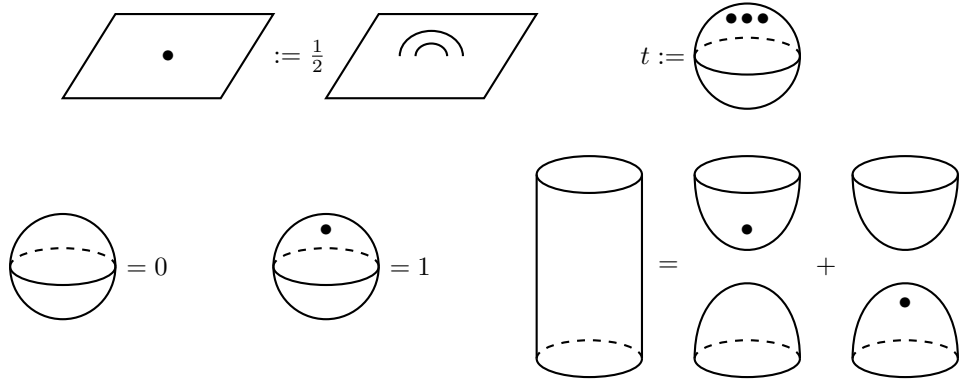

\subsection{The deformed complex and Lee homology}\label{sec:constructing deformed cx}
In \cite{MW}, the fourth author generalized the results of Rozansky in \cite{Roz} to define Khovanov complexes for diagrams $D$ of links $L$ in $M:=\SSr$, provided that $[L] \in H_1(M;\Z)$ is 2-divisible.  The construction goes as follows. We pass from $D$ to link diagrams $\Lk\subset S^3$ as in Figure \ref{fig:InsertTwists}, consider a suitably renormalized version of $\KC{\Lk}$, and take a limit as $k\rightarrow\infty$.  The key point is the construction of a well-defined complex $\KC{\infFT_n}\in\TL_n$ for the infinite full twist $\infFT_n$ on any even number $n=2p$ of strands.  Such complexes can then be inserted into the proper places in the diagram for $L$ using the planar algebra operations. This gives a complex $\KC{D}$. If $D_1$ and $D_2$ are two diagrams for the same link $L \subset M$, then the complexes $\OKC{D_1}$ and $\OKC{D_2}$ have isomorphic homologies.

Now for a diagram $D$ of a link $L$ in $S^3$, the Lee homology
$\Lh{L}$ and the spectral sequence relating it with $\Kh{L}$ (as well
as the $s$-invariants for $L$ when the ground ring is a field) are
determined by constructing the deformed Khovanov complex $\dKC{D}$.
Thus the main step towards defining a Lee homology and $s$-invariant
for links in $M$ is to construct deformed Khovanov complexes $\dKC{D}$
for their diagrams. In turn, this relies on constructing a complex
$\dKC{\infFT_n}$ for the infinite twist in $\dTL_n$ over $R[t]$.

\begin{theorem}\label{thm:KC'(infFT)}
Fix an even integer $n=2p$.  Let $\FT_{n,o}$ denote the full twist $\FT_n$ equipped with an orientation $o$, and let $\eta_o$ denote the difference between the number of upward and downward pointing strands in $\FT_{n,o}$.
\begin{itemize}
\item For any $k>0$, there exists a complex $\CSharp(\FT_n^k)\in\dTL_n$ (independent of orientations), supported in non-positive homological grading, that is chain homotopy equivalent to a shifted Khovanov complex
\[\CSharp(\FT_n^k) \simeq \dKCrrn(\FT_{n,o}^k) := \h^{-\frac{k}{2}\eta_o^2} \q^{-\frac{3k}{2}\eta_o^2} \dKC{\FT_{n,o}^k}.\]
Furthermore, all diagrams $\delta$ in the truncated complex $\CSharp(\FT_n^k)_{\geq 1-2k}$ are split $(\thru{\delta}=0)$ and contain no disjoint circles.
\item There exists a well-defined semi-infinite complex $\dKC{\FT_n^\infty}$ (independent of orientations) that satisfies, for any fixed homological degree $d\leq 0$,
\[\dKC{\FT_n^\infty}_{\geq d} = \CSharp(\FT_n^k)_{\geq d}\]
for all $k \geq \intceil{\frac{1-d}2}$.  In particular, all diagrams $\delta\in\dKC{\infFT_n}$ satisfy $\h(\delta)\leq 0$ and $\thru{\delta}=0$.
\end{itemize}
\end{theorem}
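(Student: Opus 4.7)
My plan is to mimic the MW/Rozansky construction of $\CSharp(\FT_n^k)$ for the undeformed Khovanov complex, but carry it out throughout in Bar-Natan's universal dotted category over $R[t]$ (i.e.\ using $\dKC{-}$ rather than $\KC{-}$). I would begin by fixing any orientation $o$ on $\FT_n^k$ and writing down the deformed Khovanov complex $\dKC{\FT_{n,o}^k}$ in $\dTL_n$ over $R[t]$. The grading shift $\h^{-\frac{k}{2}\eta_o^2}\q^{-\frac{3k}{2}\eta_o^2}$ in the definition of $\dKCrrn(\FT_{n,o}^k)$ is designed to absorb the writhe/oriented-resolution dependence on $o$: changing the orientation of some strands alters the writhe and the position of the oriented resolution in the cube, and a direct bookkeeping of how this shifts homological and quantum gradings should show these changes are exactly cancelled by the shift formula. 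Hence $\dKCrrn(\FT_{n,o}^k)$ is orientation-independent up to canonical isomorphism, which is what the notation $\CSharp(\FT_n^k) \simeq \dKCrrn(\FT_{n,o}^k)$ is recording.

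\textbf{Simplification.} The main work is then to simplify $\dKCrrn(\FT_{n,o}^k)$ via a sequence of deloopings and Gaussian eliminations so that the truncation in homological degrees $\geq 1-2k$ contains only split ($\thru{\delta}=0$) diagrams with no disjoint circles. The delooping isomorphism rewrites any diagram containing a disjoint circle as a direct sum of copies of the simpler diagram with appropriate dot decorations and $\q$-shifts; one then searches for pairs of generators connected by an isomorphism in the differential and cancels them. The critical combinatorial fact, which I would take from MW and check extends to the deformed setting, is that in the cube of resolutions of $\FT_n^k$ the non-split (positive through-degree) resolutions and the split resolutions with extra disjoint circles are precisely those that get eliminated in the top $2k-1$ homological degrees. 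Since the pattern of which generators pair up under Gaussian elimination depends only on the geometry of the resolutions and saddle maps, not on $t$, the same cancellation template used in MW produces $\CSharp(\FT_n^k)$ as a complex in $\dTL_n$ over $R[t]$, supported in non-positive homological degree, with the claimed through-degree behaviour above $1-2k$.

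\textbf{Stabilization and main obstacle.} To pass to the infinite complex, I would construct compatible chain maps $\CSharp(\FT_n^{k+1}) \to \CSharp(\FT_n^k)$ (or vice versa) coming from the natural inclusion of complexes under adding one more full twist, and argue that in every fixed homological degree $d\leq 0$, these become identities once $k \geq \intceil{\frac{1-d}{2}}$. Defining $\dKC{\FT_n^\infty}$ as the stable value of $\CSharp(\FT_n^k)_{\geq d}$ for each $d$ then automatically gives the semi-infinite complex with $\h(\delta)\leq 0$ and $\thru{\delta}=0$. The step I expect to be the main obstacle is verifying that the Gaussian eliminations used in MW's proof really do go through without modification over $R[t]$: one must check that the pivot entries after each delooping and application of the sphere, dotted-sphere, and neck-cutting relations of Figure~\ref{fig:relations} remain units in $R[t]$ (and in particular that dots introduced during simplification, which get converted to powers of $t$, do not obstruct any cancellations), so that the combinatorial description of the surviving generators and the stabilization rate $\intceil{\frac{1-d}{2}}$ transfer verbatim from the undeformed setting to the deformed one.
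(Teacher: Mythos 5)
Your approach matches the paper's: both port the MW/Rozansky construction to $\dTL_n$ over $R[t]$ via delooping and Gaussian elimination, and both argue the resulting simplification template is insensitive to the deformation. Where you flag as the main obstacle the need to re-check that pivots remain units over $R[t]$ after dots get converted to powers of $t$, the paper disposes of this more decisively by observing that the only difference between $\TL_n$ and $\dTL_n$ is the value assigned to a doubly-dotted closed surface, and that this value is never invoked anywhere in the MW construction --- closed spheres (with zero or two dots) are still zero in $\dTL_n$, so Naot delooping and the Reidemeister I/II deformation retracts go through verbatim.
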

\begin{proof}
The only difference between $\TL_n$ and $\dTL_n$ is the value assigned to two dots on a connected component of a cobordism.  A careful reading of the construction in $\TL_n$ in \cite[Section 2]{MW} or \cite[Section 8]{Roz} shows that this value is never used, and so the exact same construction works here.  We provide a short summary below.

The complexes $\CSharp(\FT_n^k)$ are obtained from $\dKCrrn(\FT_{n,o}^k)$ by a sequence of multi-cone simplifications utilizing delooping isomorphisms \cite{Nao-kh-universal}  and crossing removing Reidemeister I and II moves.  Note that such Reidemeister I and II moves keep producing very strong deformation retracts \cite[Definition 2.9]{MW} in $\dTL_n$ (closed spheres are still zero), while Naot's delooping isomorphism also holds true in $\dTL_n$ (closed spheres with two dots are still zero).  The complex $\dKC{\FT_n^\infty}$ is built as a limiting complex of the $\CSharp(\FT_n^k)$; as $k\rightarrow\infty$, the complexes stabilize, allowing for truncations to be approximated via complexes associated to finite twists.  The details are worked out in \cite[Section 2]{MW}.  The grading shifts indicated there are easily shown to match the ones here with a careful count of positive and negative crossings within a full twist.
\end{proof}

\begin{corollary}\label{cor:KC'(L) finite approx}
Let $D$ be a diagram for an oriented link $L\subset M=\SSr$ such that $[L]\in H_1(M;\Z)$ is 2-divisible.  Let $n^+_D$ denote the number of positive crossings in $D$.  For any $k>0$, let $\vec{k}=(k,\dots,k)$ and define $\CSharp(\Lk)\in\Kob$ to be the complex resulting from inserting $\CSharp(\FT_{n_i}^k)$ in place of each $\FT_{n_i}^k$ in the diagram for $\Lk$ (see Figure \ref{fig:InsertTwists}). Then:
\begin{itemize}
\item The complex $\CSharp(\Lk)\in\Kob$ is chain homotopy equivalent to a shifted Khovanov complex
\[\CSharp(\Lk)\simeq \dKCrrn(\Lk) := \h^{-\sum \frac{k}{2}\eta_i^2} \q^{-\sum\frac{3k}{2}\eta_i^2} \dKC{\Lk}\]
where the sums are taken over all attaching spheres, with each $\eta_i$ denoting the algebraic intersection number of $L$ with the corresponding sphere.
\item There exists a well-defined semi-infinite complex $\dKC{D}\in\Kob$ that satisfies, for any fixed homological degree $d$,
\[\dKC{D}_{\geq d} = \CSharp(\Lk)\]
for any $\vec{k}=(k,\dots,k)\in\Z^r$ with
\[k\geq\intceil{\frac{n^+_D+1-d}{2}}.\]
\end{itemize}
In particular, the homology
\[
\dKh{D} := H^*(\OdKC{D})
\]
can be computed in any finite homological degree $d$ by computing the (shifted) Khovanov homology of a corresponding link $\Lk$ for $k \geq \intceil{\frac{n^+_D+1-d}{2}}$.
\end{corollary}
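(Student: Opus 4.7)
The plan is to deduce both bullets from Theorem~\ref{thm:KC'(infFT)} applied locally in each twist region, leveraging the planar algebra structure on $\Kob$. The diagram $\Lk$ arises naturally as a planar algebra composition: view $D$ as the outer planar tangle, with its $r$ dashed surgery spheres marking $r$ input boxes, and slot the finite-twist diagrams $\FT_{n_i}^k$ into those boxes. Correspondingly, the universal Khovanov complex $\dKC{\Lk}$ decomposes as a planar algebra composition of the outer complex coming from $D$ (assembled from the standard two-term complexes at each of its $n^+_D + n^-_D$ crossings) with the inner complexes $\dKC{\FT_{n_i}^k}$.

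For the first bullet, I would apply Theorem~\ref{thm:KC'(infFT)} in each input box to obtain chain homotopy equivalences $\CSharp(\FT_{n_i}^k) \simeq \dKCrrn(\FT_{n_i,o_i}^k)$, where $o_i$ is the restriction of the orientation $o$ to the $i^{\text{th}}$ twist strip. Since planar algebra operations in $\Kob$ preserve chain homotopy equivalences (cf.~\cite{BN}), inserting these equivalences simultaneously into the outer complex for $D$ yields the desired chain homotopy equivalence $\CSharp(\Lk) \simeq \dKCrrn(\Lk)$. The accumulated grading shift is additive under planar algebra composition, so it sums to $\h^{-\sum k\eta_i^2/2}\q^{-\sum 3k\eta_i^2/2}$, where $\eta_{o_i}$ (the signed strand count through the $i^{\text{th}}$ twist) is identified with the algebraic intersection of $L$ with the $i^{\text{th}}$ surgery sphere.

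For the second bullet, I would define $\dKC{D}$ by inserting the semi-infinite complexes $\dKC{\FT_{n_i}^\infty}$ from Theorem~\ref{thm:KC'(infFT)} into the outer complex coming from $D$. This makes sense as a semi-infinite object because each $\dKC{\FT_{n_i}^\infty}$ is supported in non-positive homological degrees, so only finitely many summands contribute in any bounded range. The stabilization claim is then pure degree bookkeeping: every resolution $\delta$ of $\CSharp(\Lk)$ (or of $\dKC{D}$) has homological grading $\h(\delta)=\h(\delta^\circ)+\sum_i \h(\delta_i)$, where $\delta^\circ$ is a resolution of the crossings of $D$ (so $\h(\delta^\circ) \leq n^+_D$) and each $\delta_i$ is a diagram from the corresponding twist complex (so $\h(\delta_i) \leq 0$). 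Hence the condition $\h(\delta) \geq d$ forces $\h(\delta_i) \geq d - n^+_D$ for every $i$, and Theorem~\ref{thm:KC'(infFT)} guarantees $\CSharp(\FT_{n_i}^k)_{\geq d-n^+_D} = \dKC{\FT_{n_i}^\infty}_{\geq d-n^+_D}$ whenever $k \geq \intceil{(1-(d-n^+_D))/2} = \intceil{(n^+_D+1-d)/2}$. Recomposing via the planar algebra then gives $\dKC{D}_{\geq d} = \CSharp(\Lk)_{\geq d}$ in this range, and the concluding computational statement about $\dKh{D}$ is immediate.

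The main obstacle is confirming that the planar algebra composition interacts cleanly with the truncation operator $(\cdot)_{\geq d}$ and with the chain homotopy equivalences from Theorem~\ref{thm:KC'(infFT)} when $r$ input boxes are filled simultaneously, and doing the accounting carefully enough to extract the precise constant $\intceil{(n^+_D+1-d)/2}$. This is built into the planar algebra philosophy of \cite{BN} and parallels the treatment in \cite[Section 2]{MW}, but is the technical heart of the argument; once it is in place, both bullets follow formally from the corresponding facts about the infinite twist.
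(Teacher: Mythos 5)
Your proposal is correct and takes essentially the same approach as the paper: the paper's proof likewise defines $\dKC{D}$ by inserting the infinite-twist complexes into the planar composition determined by $D$, notes that the right-most homological degree of the finite part is $n^+_D$, and deduces the stabilization bound from Theorem~\ref{thm:KC'(infFT)}. Your degree bookkeeping $\h(\delta)=\h(\delta^\circ)+\sum_i\h(\delta_i)$ with $\h(\delta^\circ)\leq n^+_D$ and $\h(\delta_i)\leq 0$, giving $\h(\delta_i)\geq d-n^+_D$ for each $i$, is exactly the argument the paper gestures at by citing the finite-approximation construction in [MW, Section 3] and Figure~\ref{fig:basic stab diag}.
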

\begin{proof}
This is the construction of \cite[Section 3]{MW}, applied now to the deformed complexes over $R[t]$.  As $k\rightarrow\infty$, we limit towards inserting $\dKC{\FT_{n_i}^\infty}$ into each spot, building the semi-infinite complex $\dKC{D}$.  The shifts are taking into account all of the full twists that have been inserted.  The minimal value for $k$ is determined by noting that the right-most homological grading available when we consider only crossings in $\Lzero$ (ignoring the full twists) is $n^+_D$.  The limiting procedure for Theorem \ref{thm:KC'(infFT)} fixes right-most degrees as homological degree zero in the (properly simplified and shifted) complexes for the full twists.  See Figure \ref{fig:basic stab diag} for an illustration.
\end{proof}

\begin{remark}\label{rmk:no shifts for nullhomologous}
Note that if $L$ is nullhomologous in $\SSr$, then all of the shifts in Corollary \ref{cor:KC'(L) finite approx} are zero and there is no need to renormalize any of the finite approximation complexes $\dKC{\Lk}$ when studying $\dKh{D}$.
\end{remark}

\begin{figure}
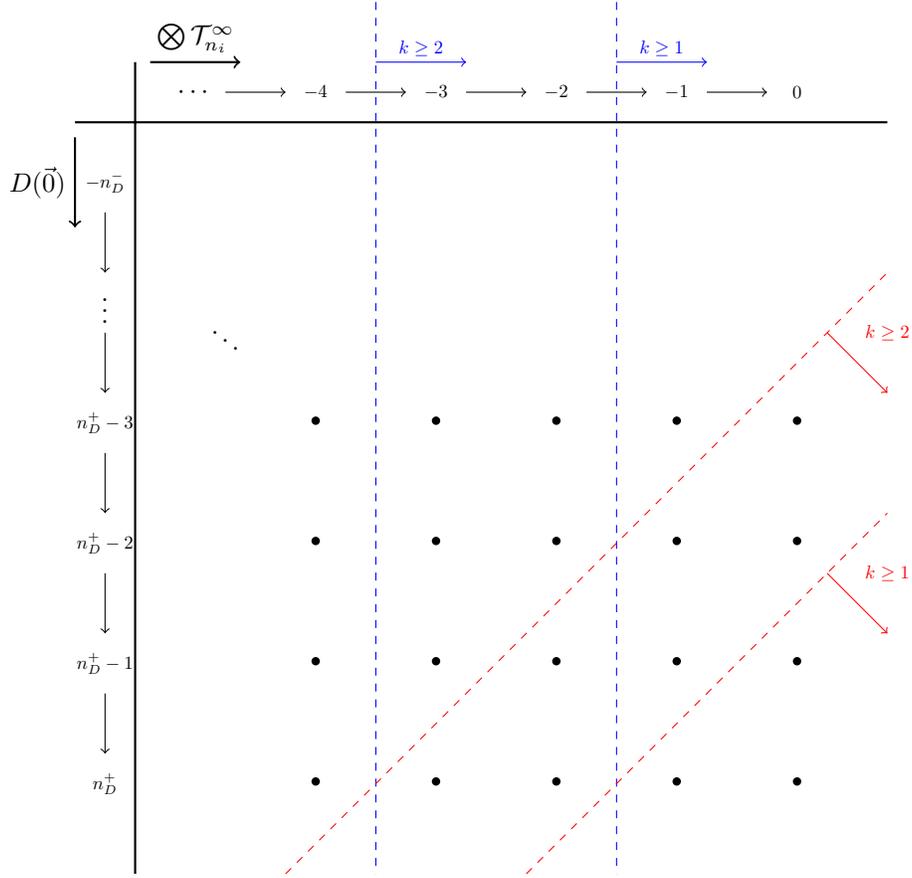

\BasicStabDiag
\caption{The diagram above represents $\dKC{D}$ as a bicomplex, with the horizontal direction representing the differentials coming from the infinite twists, and the vertical direction representing the differentials coming from the other crossings of $D$ (i.e., the crossings present in $\Lzero$).  The vertical dashed lines indicate how the finite full twists $\FT^k$ stabilize for various values of $k$ according to Theorem \ref{thm:KC'(infFT)}, which impose the stabilization for the full complex of $\Lk$ indicated by the diagonal dashed lines.}
\label{fig:basic stab diag}
\end{figure}

{
\renewcommand{\thethm}{\ref{thm:deformedKh}}
\begin{theorem}
Let $D_1,D_2$ be two diagrams for an oriented link $L\subset M=\SSr$ such that $[L] \in H_1(M;\Z)$ is 2-divisible.  Then the homologies $\dKh{D_1}$ and $\dKh{D_2}$ are isomorphic up to grading shifts. Furthermore, if $[L]=0\in H_*(M;\Z)$, then these gradings shifts are zero, so the deformed Khovanov-Lee homology $\dKh{L}$ is a well-defined link invariant up to isomorphism, as a bi-graded module over $R[t]$. 
\end{theorem}
\addtocounter{thm}{-1}
}
\begin{proof}
Once more, a careful reading of the invariance proof from \cite[Section 3.3]{MW} reveals that the value of two dots is never used.  The symbols $\eta_i$ appearing in the degree shifts in that paper represent the algebraic intersection numbers of $L$ with the various attaching spheres. When $[L] =0$ in $H_*(M;\Z)$, such grading shifts vanish.
\end{proof}

\begin{remark}
\label{rmk:deformedKh unnatural}
The isomorphisms of Theorem \ref{thm:deformedKh} might depend on a choice of finite approximations (which in turn determine finite approximation diagrams $D_1(k),D_2(k)$).  See also Remark \ref{rmk:Lee may be unnatural} and Section \ref{sec:naturality}.
\end{remark}

In view of Theorem~\ref{thm:deformedKh}, when $[L]$ is $2$-divisible, we can use the notation 
\[
\dKh{L}:=\dKh{D},
\]
where $D$ is any diagram for $L$. When $[L] \neq 0$, we do this with the understanding that $\dKh{L}$ is only well-defined up to isomorphism and possible grading shift.

The Lee complex $\LC{D}$ is defined by taking $\dKC{D}$ and setting
the variable $t=1$ in the ground ring $R[t]$ as usual.  Similarly, for diagrams of the form $\Lk$ we have a shifted Lee complex 
\[\LCrrn{\Lk}:= \h^{-\sum \frac{k}{2}\eta_i^2} \q^{-\sum\frac{3k}{2}\eta_i^2} \LC{\Lk},\]
and a simplified complex $\CSharpLee(\Lk)\simeq \LCrrnnobrackets(\Lk)$ as in Corollary \ref{cor:KC'(L) finite approx}.  The Lee homology is denoted 
\[ 
  \Lh{D} := H^*(\OLC{D}).
\]
The same proof as that of Theorem \ref{thm:deformedKh} shows that $\Lh{D}$ is independent of the diagram up to isomorphism. This justifies the notation $\Lh{L}$, which we will use from now on.

The following result subsumes Theorem~\ref{thm:computeLee} announced in the Introduction.

\begin{theorem}\label{thm:Lee is copies of Q}
Suppose our ground ring $R$ is a field $\F$ of characteristic not equal to two.  Let $D$ be a diagram for an oriented link $L$ in $M=\SSr$ with $n^+_D$ positive crossings, such that $[L] \in H_1(M;\Z)$ is 2-divisible. Let $O(L)$ denote the set of orientations $o$ of $L$ for which the re-oriented link $L_o$ (having diagram $D_o$) would be null-homologous.  Let $a:=\max_{o\in O(L)}(n^+_{D_o})$.  Then
\[
\Lh{D}\cong\F^{|O(L)|}\cong {H^*\bigl(\OLCrrnnobrackets(\Lk)\bigr)}_{\geq n^+_D-a} 
\]
for $\vec{k}=(k,\dots,k)$ with $k\geq\intceil{\frac{a+2}{2}}$.
\end{theorem}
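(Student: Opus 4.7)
The plan is to combine the finite-approximation machinery with Lee's classical computation for links in $S^3$. Specializing Corollary \ref{cor:KC'(L) finite approx} to the Lee setting (i.e., setting $t=1$) gives, for any fixed homological degree $d$ and any $k \geq \intceil{(n^+_D + 1 - d)/2}$, an isomorphism between $\Lh{D}$ and $H^*\bigl(\OLCrrnnobrackets(\Lk)\bigr)$ in degree $d$. This reduces the theorem to understanding where the Lee generators of $\Lk$ sit inside the shifted complex $\OLCrrnnobrackets(\Lk)$ for $k$ large.

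By Lee's theorem, $H^*\bigl(\OLCnobrackets(\Lk)\bigr) \cong \F^{2^{|L|}}$, with basis the Lee classes $\s_o$, one for each orientation of $\Lk$ (equivalently, of $L$). The key calculation is the location of $\s_o$ inside $\OLCrrnnobrackets(\Lk)$. Since $\s_o$ lies in homological degree $0$ of the Lee complex $\OLCnobrackets(\Lk_o)$ associated to the orientation $o$, the standard orientation-change identity $\h_{\OLCnobrackets(\Lk_{o_0})}(\s_o) = n^+_{\Lk_{o_0}} - n^+_{\Lk_o}$, the writhe formula $\eta_{i,o}^2 - n_i$ for a right-handed full twist on $n_i$ strands with net flow $\eta_{i,o}$, and the shifts in the definition of $\OLCrrnnobrackets(\Lk)$ combine to yield the clean, $o_0$-independent formula
\[
\h_{\OLCrrnnobrackets(\Lk)}(\s_o) \;=\; n^+_D - n^+_{D_o} - \sum_i \tfrac{k}{2}\, \eta_{i,o}^2 .
\]
The shift $-\sum \tfrac{k}{2}\eta_{i,o_0}^2$ built into $\OLCrrnnobrackets$ exactly cancels the $o_0$-dependent portion of the twist writhe, leaving only the $o$-dependent intersection numbers.

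Because $[L]$ is $2$-divisible, every $\eta_{i,o}$ is even. For $o \in O(L)$ all intersection numbers vanish, so $\h(\s_o) = n^+_D - n^+_{D_o}$ lies in the interval $[n^+_D - a, n^+_D]$. For $o \notin O(L)$ at least one $\eta_{i,o}^2 \geq 4$, hence $\h(\s_o) \leq n^+_D - 2k$; the hypothesis $k \geq \intceil{(a+2)/2}$ forces this bound to be strictly below $n^+_D - a$. Consequently the truncation $H^*\bigl(\OLCrrnnobrackets(\Lk)\bigr)_{\geq n^+_D - a}$ picks out precisely the $|O(L)|$ Lee classes $\s_o$ with $o \in O(L)$, giving the second isomorphism. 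The first isomorphism then follows degree-by-degree from the finite approximation: for $d \geq n^+_D - a$ it matches the degree-$d$ part of $\Lh{D}$ with the truncation, while for $d < n^+_D - a$ one increases $k$ further as required by the finite-approximation bound and observes that every non-$O(L)$ class is pushed strictly below $d$, so the degree-$d$ part of $\Lh{D}$ vanishes.

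The main technical obstacle is verifying the clean formula for $\h_{\OLCrrnnobrackets(\Lk)}(\s_o)$: one must track three interacting grading contributions---the Khovanov $-n^-$ shift, the orientation-dependent writhe of each added full twist, and the $-\sum \tfrac{k}{2}\eta_{i,o_0}^2$ normalization appearing in the definition of $\OLCrrnnobrackets$---and check that their sum depends only on $o$ (and $D$, $k$), not on the reference orientation $o_0$. Once that cancellation is established, the rest of the proof reduces to a size comparison between the $\h(\s_o)$ values for $o \in O(L)$ and for $o \notin O(L)$.
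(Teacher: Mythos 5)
Your proof is correct and follows essentially the same route as the paper's: reduce to the finite approximation via Corollary \ref{cor:KC'(L) finite approx}, invoke Lee's theorem for $\Lk$, and locate the Lee generators $\s_o$ in the shifted complex to see that exactly those with $o\in O(L)$ survive in the truncation range. The only (cosmetic) difference is that you compute the global degree $\h_{\OLCrrnnobrackets(\Lk)}(\s_o)=n^+_D-n^+_{D_o}-\sum_i\frac{k}{2}\eta_{i,o}^2$ directly, whereas the paper localizes the same bookkeeping to each twist region and its $1-2k$ stability threshold.
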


\begin{proof}
According to Corollary \ref{cor:KC'(L) finite approx}, $\dKh{L}$ can be approximated in finite homological range by the shifted complex $\dKCrrn(\Lk)$ for some $\vec{k}\in\Z^r$; since the Lee complex is determined by setting $t=1$ in $\dKC{D}$, the same statement holds for $\Lh{L}$.  Since $\Lk$ is a diagram for a link in $S^3$, Lee's results \cite{Lee} show that $\Lh{\Lk}\cong\F^{b}$, where we have one copy of $\F$ for each possible orientation of $\Lk$.  The question becomes which of these orientations give rise to copies of $\F$ that are within the stable homological range.  Noting that an orientation of $\Lk$ is equivalent to an orientation for $L$, we claim that the only surviving copies of $\F$ are those for which the correspondingly oriented $L$ was null-homologous in $M$.

To show this, we fix an orientation $o$ of $L$ and a single attaching sphere $S$, and let $n=2p$ and $\eta$ be the geometric and algebraic intersection numbers of the oriented $L$ with the chosen sphere $S$, respectively. For any $\vec{k}$, Lee's arguments find a generator $[\s_o]$ of $\Lh{\Lk}$ coming from the oriented resolution of $\Lk$. When restricted to the full twists corresponding to $S$, this oriented resolution gives a diagram $\delta_{o, S} \in\LCrrnnobrackets(\FT_n^k)$, where $k$ is the entry of $\vec{k}$ for the chosen sphere.  Since the oriented resolution of a tangle occurs in homological degree zero,
after shifts we see
%
\[\h_{\LCrrn{\FT_n^k}}(\delta_{o, S}) = -\frac{k}{2}\eta^2.\]

Meanwhile, the complex $\LCrrnnobrackets(\FT_n^k)$ approximates $\LC{\infFT_n}$ only in homological degrees strictly greater than $1-2k$.  From this, some simple algebra reveals that $k$ was irrelevant, and that $\delta_{o, S}$ could be found in the stable range if and only if $|\eta|<2$, or equivalently $\eta=0$ and $\h_{\LCrrnnobrackets(\FT_n^k)}(\delta_{o, S})=0$.

Thus we see that, for any orientation $o\notin O(L)$, the Lee generator $[\s_o]$ of $\Lh{\Lk}$ will never sit in the stable homological range, being `pushed out towards $-\infty$' in the limit as our twists become infinite.  Meanwhile, for any orientation $o\in O(L)$, the Lee generator $[\s_o]$ has corresponding diagrams $\delta_{o, S}$ sitting in homological grading zero
in each of the renormalized complexes $\LCrrnnobrackets(\FT_n^k)$ for the twists, regardless of $n,k$, or the original orientation of $L$.  When these complexes are stitched together in our planar algebra with the finite complex coming from the crossings in $D$, the oriented resolution generator $\delta_o\in\wLCnobrackets(\Lk)$ will have 
\[
\h_{\LCrrn{\Lk}}(\delta_o)=n^+_D - n^+_{D_o}.
\]
This is in the stable range for computing homology as soon as the complex has stabilized through homological degree $n^+_D - n^+_{D_o} - 1$; if we wish to find the smallest $k$ that guarantees stability for all such $o\in O(L)$, we define $a$ as above and use the bound in Corollary \ref{cor:KC'(L) finite approx}.
\end{proof}

\begin{remark}
\label{rmk:Lee may be unnatural}
The proof above requires a choice of finite approximation $k$ to provide the first isomorphism of Theorem \ref{thm:Lee is copies of Q}; this is similar to the situation in Theorem \ref{thm:deformedKh} and Remark \ref{rmk:deformedKh unnatural}. However, we will show that this isomorphism is independent of $k$ in Section \ref{sec:naturality}.
\end{remark}

Finally, we record a simple property of our homology theories.  Given an oriented link $L\subset\SSr$, we let $r(L)$ denote the \emph{reverse} of $L$, obtained from $L$ by reversing the orientation on each component.

\begin{proposition}\label{prop:Kh(r(L))}
For any $L\subset\SSr$, we have
\[\dKh{r(L)} \cong \dKh{L},\quad \Lh{r(L)}\cong\Lh{L}.\]
\end{proposition}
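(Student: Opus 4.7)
The plan is to reduce this to the elementary observation that simultaneously reversing the orientations on all components of a link diagram preserves the sign of every crossing. Indeed, the sign of a crossing is determined by the ordered pair (over-strand, under-strand), and flipping both arrows leaves the determinant unchanged. Consequently, for a classical link diagram $D'$ in $S^3$, the numbers $n^+_{D'}$ and $n^-_{D'}$ are unchanged under orientation reversal, and Bar-Natan's deformed Khovanov complex $\dKC{D'}$ (which depends on a diagram only through its underlying unoriented projection and the counts $n^\pm_{D'}$ used for normalization) is literally the same complex for $D'$ and for its reverse.

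Next I would check that this identification passes through the finite-approximation construction of $\dKC{D}$ for diagrams in $\SSr$. If $D$ is a diagram for $L \subset \SSr$, then the same underlying planar picture $D$ also presents $r(L)$, and for any $\vec k \in \Z^r$ the finite approximation $\Lk$ for $r(L)$ is just $\Lk$ with every arrow reversed. In particular, the twist tangles $\FT^{k_i}_{n_i}$ inserted at each attaching region are the same pieces in both cases (the twist is a piece of unoriented tangle), so the complex $\CSharp(\Lk)$ from Corollary \ref{cor:KC'(L) finite approx} is insensitive to the distinction between $L$ and $r(L)$. The grading shifts in that corollary are of the form $\h^{-\sum \frac{k}{2}\eta_i^2} \q^{-\sum \frac{3k}{2}\eta_i^2}$; since $\eta_i(r(L)) = -\eta_i(L)$, these shifts are unchanged. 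Taking the limit $k \to \infty$, the semi-infinite complex $\dKC{D}$ is therefore the same for $L$ and for $r(L)$.

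From this, both claims follow immediately: $\dKh{r(L)} \cong \dKh{L}$ as bi-graded $R[t]$-modules (up to the same grading shifts for $r(L)$ as for $L$, in the case where $[L]$ is only 2-divisible rather than null-homologous), and setting $t=1$ yields $\Lh{r(L)} \cong \Lh{L}$. I do not expect a serious obstacle here; the only point that needs verification is the invariance of the $\eta_i^2$ normalizations under $\eta_i \mapsto -\eta_i$, which is automatic. The proposition is essentially a bookkeeping remark that records, for later use (for instance, when comparing $s_+$ and $s_-$), that our invariants cannot distinguish an oriented link from its total reverse.
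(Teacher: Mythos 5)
Your proof is correct and takes essentially the same approach as the paper's: both rest on the observation that the Khovanov/Lee complexes are built from the unoriented diagram together with the crossing counts $n^\pm$ (and the $\eta_i^2$ normalizations), all of which are invariant under reversing every component's orientation. Your version simply spells out the finite-approximation bookkeeping that the paper's one-line proof leaves implicit.
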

\begin{proof}
Both homology theories are built combinatorially from a link diagram without regard for orientation, except for the counts of negative and positive crossings which are preserved under reversal.
\end{proof}


\subsection{Behavior under diffeomorphisms}
Let $\phi_1: S^1 \to S^1$ be the conjugation (reflection across the horizontal axis), and $\phi_2: S^2 \to S^2$ be reflection across the equatorial plane. Consider the following diffeomorphisms of $\SSone$:
\begin{itemize}
\item the Dehn twist along some $\{*\} \times S^2$, denoted $\sigma$;
\item the composition $\rho = (\phi_1 \times \id) \circ (\id \times \phi_2)$. In the standard surgery diagram, this is isotopic to rotation by $\pi$ about an axis perpendicular to the plane of the projection;
\item the reflection $R = \id \times \phi_2$. In the standard surgery diagram, this corresponds to a reflection fixing the plane of the projection. 
\end{itemize}
For $M=\SSone$, it is well-known that the group $\pi_0(\Diff^+(M))$ of orientation-preserving diffeomorphisms (up to isotopy) is $\Z_2\times\Z_2$, generated by $\sigma$ and $\rho$. If we include all (not necessarily orientation preserving) diffeomorphisms, the group $\pi_0(\Diff(M))$ is $\Z_2\times\Z_2\times \Z_2$, generated by $\sigma, \rho,$ and $R$.

More generally, for $M=\SSr$, the mapping class group $\pi_0(\Diff^+(M))$ can be understood using the methods in \cite[Section 2]{HW}. The natural map $\pi_0(\Diff^+(M)) \to \operatorname{Aut}(\pi_1(M, x))$ is surjective, and its kernel is generated by Dehn twists along two-spheres. Furthermore, $\pi_1(M, x)$ is the free group on $r$ generators, and its automorphism group is generated by Nielsen transformations \cite{Nielsen}. We conclude that $\pi_0(\Diff^+(M))$ is generated by the following:
\begin{itemize}
\item the Dehn twist $\sigma_i$ in one of the summands $(\SSone)_i$;
\item the rotation $\rho_i$ in one of the summands $(\SSone)_i$, obtained as follows. Since $\SSone$ is connected, we can isotope $\rho$ to fix a point $x$, and in fact to fix pointwise a ball $B$ around $x$. We let $B$ be the ball where we connect $(\SSone)_i$ to the rest of the manifold, and extend the diffeomorphism by the identity to the other summands;
\item for every permutation of the summands $\SSone$, a diffeomorphism inducing that permutation;
\item handle slides, i.e. viewing $\SSr$ as the boundary of a four-dimensional handlebody made of a zero-handle and $r$ one-handles, we slide a one-handle over another. 
\end{itemize}
The group $\pi_0(\Diff(M))$ is generated by the above, together with the orientation-reversing diffeomorphism $m$ (for ``mirror'') induced by reflecting via $R$ in each summand.  To get a diagram for $m(L)$ in our standard picture of $\SSr$, we change all the crossings in a standard diagram for $L$.  We will also use the notation $-L := r(m(L))$ for the mirror reverse of $L$.

\begin{theorem}\label{thm:dKC preserved by even diffeos}
Let $L\subset M=\SSr$ be a link that is 2-divisible in homology.  Then $\dKh{L}$ and $\Lh{L}$ are preserved under orientation-preserving self-diffeomorphisms up to grading shifts that vanish if $[L]=0$ in $H_1(M)$.  That is, for any $\Phi\in\pi_0(\Diff^+(M))$ of $M$, we have
\[ \dKh{\Phi(L)} \cong \h^a\q^b \dKh{L}, \quad \Lh{\Phi(L)} \cong \h^a\q^b \Lh{L}\]
with both $a,b=0$ in the case that $L$ was null-homologous in $M$.
\end{theorem}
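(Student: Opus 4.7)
The plan is to check the isomorphism separately for each of the four classes of generators of $\pi_0(\Diff^+(M))$ listed immediately before the theorem, namely permutations of the summands, the rotations $\rho_i$ in a single summand, the Dehn twists $\sigma_i$ along belt spheres, and the handle slides of one $1$-handle over another. Since the grading shifts $\h^a\q^b$ combine additively under composition of diffeomorphisms, verifying the statement on generators suffices. For each such $\Phi$ I will produce an explicit diagram $D'$ for $\Phi(L)$ from a diagram $D$ for $L$, and then compare $\dKC{D}$ with $\dKC{D'}$ either directly or through their finite approximations $\CSharp(\Lk)$ provided by Corollary~\ref{cor:KC'(L) finite approx}.

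Three of the four generators I expect to dispatch quickly. Permutations merely relabel the attaching spheres, and the planar algebra construction of $\dKC{D}$ is symmetric in the spheres, so no shift arises and the complexes are literally identical. For a rotation $\rho_i$, one can isotope so that $\rho_i$ acts on the $i$-th summand of the diagram as a planar rotation by $\pi$; since the infinite full twist complex $\dKC{\infFT_n}$ is invariant under the $\pi$-rotation of the underlying braid, the resulting complex $\dKC{D'}$ is isomorphic to $\dKC{D}$ up to a planar isotopy of the remaining tangles. For the Dehn twist $\sigma_i$ along the $i$-th belt sphere, the local effect is to insert one additional full twist $\FT_{n_i}$ at that attaching sphere, so that $\sigma_i(L)(\vec{k}) = L(\vec{k}+e_i)$ as link diagrams in $S^3$; Corollary~\ref{cor:KC'(L) finite approx} then identifies the stable limits in each fixed homological degree, with the shifts controlled entirely by $\eta_i$.

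The main obstacle will be the handle slide, which genuinely rearranges the diagram by re-routing strands from one attaching sphere across another. My strategy is to adapt the invariance argument from \cite[Section 3.3]{MW} to the deformed setting. That argument establishes that two Kirby diagrams for the same link---in particular, any two differing by a $1$-handle slide, with the compensating re-routing of strands of $L$---give chain homotopy equivalent complexes up to explicit shifts in terms of the intersection numbers $\eta_i$ and $\eta_j$ of $L$ with the two attaching spheres involved. As already observed in the proof of Theorem~\ref{thm:deformedKh}, that argument never uses the Bar-Natan relation evaluating two dots on a single connected component of a cobordism, so the same chain of simplifications goes through over the ground ring $R[t]$ and delivers the desired isomorphism for $\dKC{D}$. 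This is where the bulk of the work sits, but no new technical ideas beyond those of \cite{MW} should be required.

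Finally, the grading shifts produced in each of the four cases above are functions of the intersection numbers $\eta_i$ of $L$ with the attaching spheres, hence vanish identically when $[L]=0$ in $H_1(M;\Z)$. Specializing $t=1$ throughout yields the corresponding statement for $\Lh{L}$, completing the argument.
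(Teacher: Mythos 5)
Your overall strategy --- reduce to the four types of generators of $\pi_0(\Diff^+(M))$ and treat each via the finite approximations of Corollary~\ref{cor:KC'(L) finite approx} --- is exactly the paper's, and your treatments of permutations, rotations, and Dehn twists agree with the ones given there. The problem is the handle slide, which you correctly identify as the bulk of the work but then dispose of by asserting that the invariance argument of \cite[Section 3.3]{MW} already covers two diagrams differing by a $1$-handle slide. It does not: that section (and the proof of Theorem~\ref{thm:deformedKh}, which quotes it) establishes invariance under the moves relating two diagrams of a \emph{fixed} link in $\SSr$ --- isotopies, Reidemeister moves in finite approximations, and surgery-wrap moves --- whereas a handle slide is a self-diffeomorphism of $M$ that genuinely re-routes strands of $L$ through a different attaching sphere and is not among those moves. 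This is precisely why the paper states diffeomorphism invariance as a separate theorem and devotes most of its proof to the handle-slide case.

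The missing content is the following. After the slide, the second attaching sphere carries $\FT_{n_1+n_2}^k$, which one decomposes by isotopy into $\FT_{n_1}^k$ and $\FT_{n_2}^k$ stacked with a cabled two-strand twist $c_{12}\mhyphen\FT_2^k$. One then replaces $\dKC{\FT_{n_1}^k}$ at the first sphere by $\CSharp(\FT_{n_1}^k)$ and uses that, in the stable range, every diagram $\delta$ there is split ($\thru{\delta}=0$), so the truncated complex is a multicone each of whose terms contains a split $\delta$; each such term can be unwound through $\FT_{n_1}^k$ and the cabling by crossing-removing Reidemeister I and II moves until no crossings remain. Finally one must verify that all of these local equivalences can be assembled with consistent signs (using the techniques of \cite[Lemmas 2.26 and 3.12]{MW}, comparing ``pulling the split half of $\delta$ down'' with ``pulling a fixed crossingless matching up'') before invoking \cite[Corollary 2.14]{MW} to identify the result with the corresponding truncation for the original diagram. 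None of this follows from the well-definedness of $\dKh{L}$ for a fixed link, so as written your proposal has a genuine gap at this step, even though the tools you point to are the right ones.
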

\begin{proof}
Let $D$ denote a diagram for $L$ and let $\vec{k}=(k,\dots,k)$ for $k$ sufficiently large.
With the help of Corollary \ref{cor:KC'(L) finite approx}, it is enough to check how the generators of $\pi_0(\Diff^+(M))$ affect the homology of the finite approximation diagrams $\Lk$.

In the case of a rotational generator $\Phi=\rho_i$, or a transposition $\Phi=\tau_{i,i+1}$ (which is enough to generate all permutations), we have an obvious isotopy in $S^3$ between the diagrams of the finite approximations of $L$ and $\Phi(L)$ as illustrated in Figures \ref{fig:rot} and \ref{fig:trans}.
Meanwhile, a Dehn twist $\sigma_i$ simply adds a full twist onto the link diagram near the attaching sphere, effectively changing $k$ to $k\pm 1$.  The stabilization of the \emph{shifted} homology of $\Lk$ shows that this has no effect besides a possible grading shift, which is zero when the algebraic intersection number of $L$ with the $i^{\text{th}}$ attaching sphere is zero.

\begin{figure}
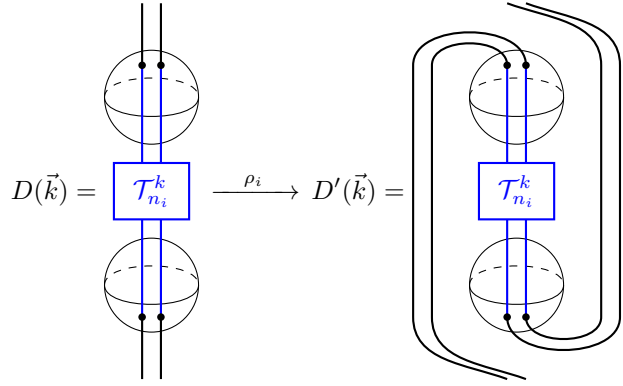

\[\Lk=\LkPreRot \, \xrightarrow{\quad\rho_i\quad} \, \Lk[D']=\LkPostRot\]
\caption{If $\Lk$ above is the finite approximation diagram computing $\dKh{L}$ in some degree, then $\Lk[D']$ is the finite approximation diagram computing $\dKh{\Phi(L)}$ for a rotation $\Phi=\rho_i$.  Note that the rest of the link can be isotoped away from this local picture.  There is a clear isotopy in $S^3$ relating $\Lk$ and $\Lk[D']$, since any full twist diagram is preserved under a rotation by $\pi$.
In this and other figures in this subsection we have drawn the surgery spheres, but these are not meant to be identified, i.e.~the diagrams are for links in $S^3$.
}
\label{fig:rot}
\end{figure}

\begin{figure}
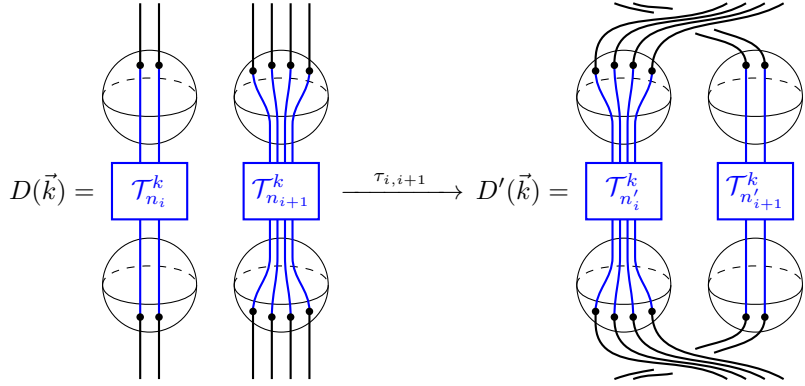

\[\Lk=\LkPreTrans \,\xrightarrow{\quad\tau_{i,i+1}\quad}\, \Lk[D']=\LkPostTrans\]
\caption{If $\Lk$ above is the finite approximation diagram computing $\dKh{L}$ in some degree, then $\Lk[D']$ is the finite approximation diagram computing $\dKh{\Phi(L)}$ for a transposition $\Phi=\tau_{i,i+1}$ (again, the rest of the link has been isotoped away from this local picture).  Note that $\tau_{i,i+1}$ has interchanged the roles of $i$ and $i+1$ in the indexing of the handles, so that $n'_i=n_{i+1}$ and vice-versa.  There is a clear isotopy in $S^3$ relating $\Lk$ and $\Lk[D']$.}
\label{fig:trans}
\end{figure}

Finally, we have the case where $\Phi$ denotes a handle slide.  Up to a permutation, we can assume the handle slide is between the first and second handles in our standard diagram, and we can begin with an isotopy moving other strands of $L$ away from the path of the handle slide.  In this way we can illustrate $\Phi$ as in Figure \ref{fig:hslide1}.

\begin{figure}
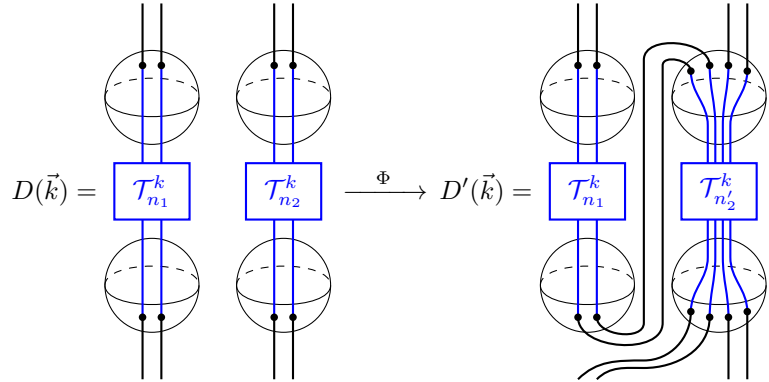

\[\Lk=\LkPreHslide \, \xrightarrow{\quad \Phi \quad} \, \Lk[D']=\LkPostHslide\]
\caption{If $\Lk$ above is the finite approximation diagram computing $\dKh{L}$ in some degree, then $\Lk[D']$ is the finite approximation diagram computing $\dKh{\Phi(L)}$ for a handle slide $\Phi$ (again, the rest of the link has been isotoped away from this local picture).  We see that $n_1$ has been preserved but $n'_2=n_1+n_2$.}
\label{fig:hslide1}
\end{figure}

From this point forward our goal is to manipulate the complex $\dKC{\Lk[D']}$ in order to show that its truncation has the same homology as the corresponding truncation of $\dKC{\Lk}$. 
On the left, we replace $\dKC{\FT_{n_1}^k}$ with the chain homotopic simplified complex $\CSharp(\FT_{n_1}^k)$.  Meanwhile on the right, we use the isotopy

\[
\begin{tikzpicture}[baseline={([yshift=-.7ex]current bounding box.center)},x=1.5em,y=-2em]
 \foreach \i in {1,...,4} 
  {
     \draw[thick] 
         (\i-1,0) -- (\i-1,0+.1)
         (\i-1,0+2) -- (\i-1,0+1.9);
  }
 \draw (1-1-.1,0+.1) rectangle (4-1+.1,0+1.9);
 \node at (2.5-1,0+1){$\FT_{n_1+n_2}^k$};
\end{tikzpicture}
\cong
\begin{tikzpicture}[baseline={([yshift=-.7ex]current bounding box.center)},x=1.5em,y=-2em]
\BboxOnly[0]{1}{2}{$\FT_{n_1}^k$}
\BboxOnly[0]{3}{4}{$\FT_{n_2}^k$}
\Bbox[1]{4}{1}{4}{$c_{12}\mhyphen\FT_2^k$}
\end{tikzpicture},
\]
where $c_{12}\mhyphen\FT_2^k$ denotes $k$ full twists on two strands which are then cabled (with blackboard framing) with $n_1$ and $n_2$ strands, respectively.  Finally, we isotope the copy of $\FT_{n_2}^k$ further away from the local picture and focus on the deformed Khovanov complex assigned to the rest of the diagram.  See Figure \ref{fig:hslide2}. 
The local picture (ignoring the far right $\FT_{n_2}^k$) will be denoted $D''$. The resulting complex for the local picture will be denoted $\CSharpSub{1}(D'')$, where the subscript $1$ indicates that we're using the $\CSharp$ complex for $\FT_{n_1}^k$ only.

\begin{figure}
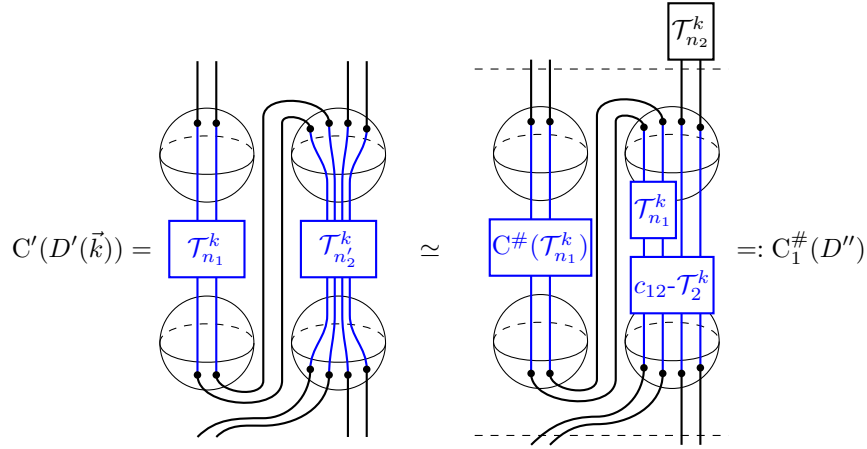

\[\dKC{\Lk[D']} = \LkPostHslide \quad\simeq\quad \LkPostPostHslide =: \CSharpSub{1}(D'') \]
\caption{We simplify the complex of $\dKC{\Lk[D']}$, obtained from Figure \ref{fig:hslide1}, by simplifying $\dKC{\FT_{n_1}^k}$ into $\CSharp(\FT_{n_1}^k)$ and performing some isotopies on $\FT_{n_1+n_2}^k$.  Abusing notation slightly, we omit the $\dKC{\cdot}$ notation in the pictures while allowing $\CSharp(\cdot)$ to indicate that part of the complex has been simplified.  The twists $\FT_{n_2}^k$ are drawn further out to indicate that, from here, we will simplify the complex assigned to the diagram within the dotted lines which we denote $\CSharpSub{1}(D'')$.}
\label{fig:hslide2}
\end{figure}

Now $\CSharpSub{1}(D'')$ is a complex in $\TL_{n_1+n_2}$ which can be used to compute $\dKh{\Phi(L)}$ in certain homological gradings.  By construction (see \cite[Section 2]{MW}), the complex $\CSharp(\FT_{n_1})$ appearing within Figure \ref{fig:hslide2} contributes only split diagrams $\delta\in\TL_{n_1}$ with $\thru{\delta}=0$ to the relevant homological gradings.  Thus we can view the truncation $\CSharpSub{1}(D'')_{tr}$ as a multicone over the truncated $\CSharp(\FT_{n_1}^k)$ where every term in the multicone involves a diagram with a split $\delta$ in the place of $\CSharp(\FT_{n_1}^k)$ in Figure \ref{fig:hslide2}.

However, any such diagram can clearly be simplified; namely, $\delta$
is split into a top portion and a bottom portion, say $\delta^{top}$
and $\delta^{bot}$, and the bottom portion $\delta^{bot}$ can be carried
along the path of the handle slide, unwinding both $\FT_{n_1}^k$ and
$(c_{12}\mhyphen\FT_2)^k$ (using only crossing-removing Reidemeister I
and II moves).  The resulting diagram has \emph{no crossings
  remaining}, as illustrated in Figure \ref{fig:hslide3}.

\begin{figure}
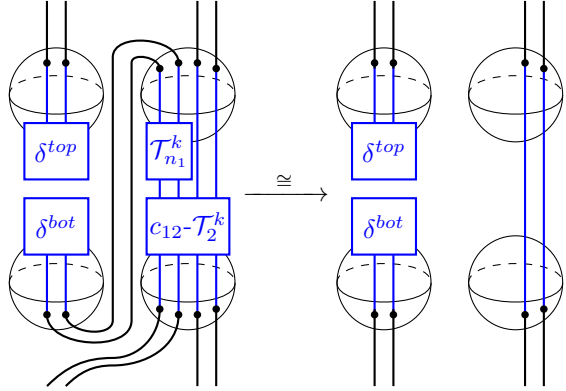

\[\LkPostPostHslideDelta \xrightarrow{\quad \cong \quad} \LkDelta\]
\caption{Every diagram $\delta$ in the truncated complex $\CSharp(\FT_{n_1}^k)$ is split into two halves, say $\delta^{top}$ and $\delta^{bot}$.  This $\delta$ contributes a term in the multicone decomposition for $\CSharpSub{1}(D'')_{tr}$ according to the diagram on the left, which can be simplified into the diagram on the right having no remaining crossings.}
\label{fig:hslide3}
\end{figure}


The goal now is to use the simplification of Figure \ref{fig:hslide3} on each individual term in the multicone $\CSharpSub{1}(D'')_{tr}$, while simultaneously leaving the multicone maps (dotted cobordisms) between such terms unchanged.  The general statement allowing such a simplification is contained in \cite[Corollary 2.14]{MW}.  In short, if our dotted cobordisms between terms commute with our intended Reidemeister simplifications up to isotopy, then Bar-Natan's functoriality arguments guarantee that we can perform the simplification up to possible sign changes in the multicone maps.  If in addition we can assign signs to our intended Reidemeister simplifications in a suitable manner, the sign changes can all be cancelled.

Because our diagrams are split with no disjoint circles, it is not hard to see that our Reidemeister simplifications commute with any of our dotted cobordisms up to isotopy as desired (see also \cite[Lemma 2.26]{MW}).  The central idea behind fixing the signs stems from Rozansky \cite[Section 7.3]{Roz} and is expanded upon via several lemmas in \cite{MW}.  First, we fix a constant ``bottom'' set of crossingless matchings 
\[\gamma:=
\begin{tikzpicture}[baseline={([yshift=-.7ex]current bounding box.center)},x=.25cm,y=-.22cm]
\foreach \n in {0,2,5}
{
\draw[thick]
(\n,0) -- (\n,1) to[out=-90,in=-90] (\n+1, 1) -- (\n+1,0);
}
\draw
(4.1,0.7) node[scale=.7] {\dots};
\draw[decorate,decoration={brace}]
(-.3,-.3)--(6.3,-.3) node [midway,above] {$n_1$};
\end{tikzpicture}
\]
to append to the bottom left part of $D''$.  Having this bottom set of matchings gives us two choices on how to perform the Reidemeister simplifications---we may either ``pull $\delta^{bot}$ downwards'' or ``pull $\gamma$ upwards.''  Both choices involve untwisting the $n_1$ left-most strands, together with unwrapping them about the $n_2$ right-most strands.  The difference between these two choices provides the desired signs that will cancel the sign changes coming from our initial simplifications.  The details for verifying this cancellation are described in the proofs of \cite[Lemmas 2.25, 2.26]{MW} for the untwisting, together with \cite[Lemmas 3.11, 3.12]{MW} for the unwrapping (note that in the unwrapping case, the two choices are only isotopic up to ``looping'' disjoint circles around the $n_2$ strands; fortunately, such loopings induce identity maps on our complexes).

Thus according to \cite[Corollary 2.14]{MW}, the entire truncated complex $\CSharpSub{1}(D'')_{tr}$ is homotopy equivalent to the truncated $\CSharp(\FT_{n_1}^k)$ together with $n_2$ extra strands on the right.  When we stitch this complex back together with the rest of the diagram, we will recover the $\FT_{n_2}^k$ that we left waiting ``above'' our complex and thus will build a complex homotopy equivalent to a truncation of $\dKC{\Lk[D']}$, as desired (up to degree shifts which vanish for null-homologous links; compare the Finger Move in \cite[Section 3]{MW}).

Finally, all of these manipulations were done in the deformed complex without regard for the value of $t$, and thus will work just as well for the Lee homology as well.
\end{proof}

We note here that Theorem \ref{thm:dKC preserved by even diffeos} is not sufficient to conclude that $\pi_0(\Diff^+(M))$ acts on $\dKh{L}$ in a well-defined manner, due once again to the choices of finite approximations $k$ required throughout.

\subsection{Lee generators in $\OLC{\infFT_n}$ and $\OLC{D}$}
\label{sec:naturality}

In this section we will show that, for any diagram $D$ of a link $L\subset\SSr$ (that is 2-divisible in $H_1(\SSr)$), the isomorphism $\Lh{D}\cong\F^{|O(L)|}$ of Theorem \ref{thm:Lee is copies of Q} is natural.

Recall that $O(L)$ is defined to be the set of orientations for which $L$ would be null-homologous in $\SSr$.  We will actually show a stronger and more crucial statement: that for any orientation $o\in O(L)$, the summand $\F_o$ of $\Lh{D}\cong\F^{|O(L)|}$ is generated by a specific cycle $\s_o\in\LC{D}$ which is determined by an oriented diagram $\delta_o$ corresponding to an oriented resolution of $D$.  We refer to such cycles $\s_o$ as Lee generators.

Furthermore, for any finite approximation $\LCrrnnobrackets(\Lk)$ used to compute this homology in $S^3$, the summand $\F_o=\langle \s_o\rangle$ of $\Lh{D}$ corresponds to the summand $\F_{o,k}=\langle \s_{o,k} \rangle$ of $\Lh{\Lk}$, where $\s_{o,k}$ is the Lee generator coming from the actual oriented resolution of $\delta_{o,k}$ of $\Lk$.

Since $\Lh{D}$ is built by inserting limiting complexes of infinite twists into a diagram, we will show that the Lee generators are `preserved' in a certain sense by the types of simplifications used during the limiting process.  Because we will be simplifying multicones, we need to keep track not only of the chain maps involved in simplifying a local picture, but also the homotopies that make these chain maps into equivalences.

\begin{lemma}\label{lem:simps on Lee}
Suppose $D$ is a link diagram (for a link in $S^3$) with orientation $o$, and corresponding oriented resolution $\delta_o$ and Lee generator $\s_o$.  Let $\psi:\LC{D}\rightarrow \psi(\LC{D})$ be a chain homotopy equivalence (with chain homotopy inverse $\td\psi$) induced by one of three types of local simplifications:
\begin{enumerate}[label=(\Roman*)]
\item \label{it:R1} a Reidemeister I move which eliminates a negative crossing;
\item \label{it:R2} a crossing-removing Reidemeister II move;
\item \label{it:FT2} a simplification of the full twist on two strands, as indicated (ignoring grading shifts)
\[
\LCp{
\begin{tikzpicture}[baseline={([yshift=-.7ex]current bounding box.center)},x=.33cm,y=-.4cm]
  \Bsigma[0]{2}{1}
  \Bsigma[1]{2}{1}
 \end{tikzpicture}
}
\simeq
\left(
 \begin{tikzpicture}[baseline={([yshift=-.7ex]current bounding box.center)},x=.33cm,y=-.4cm]
  \Bsigma[0]{2}{0}
  \Bsigma[1]{2}{0}
 \end{tikzpicture}
\longrightarrow
 \begin{tikzpicture}[baseline={([yshift=-.7ex]current bounding box.center)},
  x=.33cm,y=-.4cm]
  \Bcupcap[0.5]{2}{1}
  \draw[thick]
  (0,0)--(0,0.5)
  (0,1.5) -- (0,2)
  (1,0)--(1,0.5)
  (1,1.5)--(1,2);
  \end{tikzpicture}
\longrightarrow
 \begin{tikzpicture}[baseline={([yshift=-.7ex]current bounding box.center)},
  x=.33cm,y=-.4cm]
  \Bcupcap[0.5]{2}{1}
  \draw[thick]
  (0,0)--(0,0.5)
  (0,1.5) -- (0,2)
  (1,0)--(1,0.5)
  (1,1.5)--(1,2);
 \end{tikzpicture}
\right).
\]
\end{enumerate}
Then $\psi(\delta_o)$ is an oriented diagram which we shall call $\delta_{o'}$, with corresponding Lee generator $\s_{o'}$.  In case $\ref{it:FT2}$, $\delta_{o'}$ sits at the far left end of the simplified complex if the strands were oriented the same way, and at the far right end if they were oppositely oriented.

Furthermore, $\psi$ satisfies the following properties:
\begin{enumerate}[label=(\roman*)]
\item \label{it:Lee to Lee} on $\OLC{D}$, $\psi$ `preserves Lee subspaces' in the sense that $\psi(\s_o)$ is a unit multiple of $\s_{o'}$;
\item \label{it:no horiz diff on Lee} any component of the differential on $\delta_{o'}$ in the simplified complex $\psi(\LC{D})$ is a saddle;
\item \label{it:htpy is cob} we can assume that each non-zero component of the homotopy on $\LC{D}$ making $\td\psi\circ\psi\simeq I$ is induced by an oriented planar cobordism.
\end{enumerate}
\end{lemma}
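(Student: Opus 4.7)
The proof is entirely local: by the planar algebra structure, each simplification in \ref{it:R1}--\ref{it:FT2} only affects its local picture, so the plan is to reduce to assuming $D$ consists only of the indicated local crossings. In each case I would exhibit an explicit chain map $\psi$, its chain homotopy inverse $\td\psi$, and a chain homotopy $H$ built from Bar-Natan's standard simplification toolkit: delooping \cite{Nao-kh-universal} followed by Gaussian elimination of acyclic subcomplexes. Since the Lee generator is defined purely cobordism-theoretically (via $g_C = (-1)^{z(C)} + x$ on each oriented circle of the oriented resolution), and the maps $\psi$ in each case either cap off an extra circle produced by a Reidemeister I deloop or project a Khovanov cube onto a distinguished surviving vertex, I expect $\psi(\s_o)$ to be a unit multiple of the Lee generator $\s_{o'}$ of the simplified oriented diagram, establishing \ref{it:Lee to Lee}.

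More concretely: for \ref{it:R1}, the negative Reidemeister I creates an extra split circle $C'$ in $\delta_o$, and after delooping the factor $g_{C'} = (-1)^{z(C')}+x$ of $\s_o$ is carried by the Gaussian-elimination map onto the element matching $g_{C'}$, recovering $\s_{o'}$ up to sign. For \ref{it:R2}, after delooping the oriented resolution is exactly the surviving vertex of the strong deformation retract (for both parallel and anti-parallel orientations), and the retraction acts trivially on $\s_o$. For \ref{it:FT2}, I would use the explicit 3-term simplification of $\LC{\FT_2}$ described in \cite[Section 2]{MW}: parallel orientation makes both crossings positive, so $\delta_o$ sits at the $(0,0)$ cube vertex, the left endpoint of the simplified complex; anti-parallel orientation makes both crossings negative, so $\delta_o$ sits at the $(1,1)$ vertex, the right endpoint. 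In each case $\psi$ carries $\s_o$ to $\pm \s_{o'}$.

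Property \ref{it:no horiz diff on Lee} should follow because in all three simplified complexes the only differentials touching $\delta_{o'}$ are the elementary saddles inherited from the original Khovanov differential; the dotted correction terms appear only in the differentials between delooped strata, which get cancelled by the Gaussian elimination. For \ref{it:htpy is cob}, the homotopies produced by the delooping-plus-elimination recipe are matrices of (possibly dotted) planar cobordisms, and restricting to components acting on resolutions in the orbit of $\delta_o$, each such component connects oriented resolutions by oriented saddles, so the orientation $o$ on $D$ pulls back to a well-defined orientation on the cobordism.

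The main obstacle will be consistent sign bookkeeping in \ref{it:Lee to Lee} (the unit in the ``unit multiple of $\s_{o'}$'' must be recorded carefully so that it propagates correctly when the lemma is iterated inside the planar-algebra simplifications of Section~\ref{sec:naturality}), together with identifying $\delta_{o'}$ as ``far left'' versus ``far right'' in \ref{it:FT2} in agreement with the grading conventions used in Theorem~\ref{thm:KC'(infFT)}. Both tasks are routine given the signed delooping formulas in \cite{Nao-kh-universal} and the explicit simplification diagrams of \cite[Section 2]{MW}, so no new machinery should be required.
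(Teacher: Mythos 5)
Your overall strategy (local reduction, delooping plus Gaussian elimination, appeal to Rasmussen/Lee for \ref{it:R1} and \ref{it:R2}) matches the paper's proof in spirit, and your identification of the location of $\delta_{o'}$ in \ref{it:FT2} (left for parallel, right for anti-parallel, corresponding to the $(0,0)$ and $(1,1)$ cube vertices) is correct. However, there is a genuine computational gap in the anti-parallel half of \ref{it:FT2}. You assert that ``$\psi$ carries $\s_o$ to $\pm\s_{o'}$'' in both cases, but the paper's proof shows that while the parallel (blue) map is the identity on $\s_o$, the anti-parallel (red) map sends $\s_o$ to $\pm 2\,\s_{o'}$, not $\pm 1\,\s_{o'}$. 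This factor of $2$ arises from how the delooping decomposition interacts with the sum $(-1)^{z(C)} + x$ on the circle created at the $(1,1)$ vertex, and it is precisely why the hypothesis that $2$ be invertible in $R$ is needed here; as the paper points out, this is the one step that cannot be lifted directly from Rasmussen's $S^3$ arguments, which never involve a sum of cobordisms. Your proposal silently bypasses the step that makes the lemma delicate.

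There is also a weaker issue in your treatment of \ref{it:htpy is cob}. You argue that the homotopies are ``matrices of (possibly dotted) planar cobordisms'' and that the orientation pulls back along ``oriented saddles.'' This is muddled: saddles between oriented resolutions are generically \emph{not} orientable, which is exactly why, in the downstream application in Theorem~\ref{thm:oriented res is correct}, composing a homotopy with a saddle produces an unorientable cobordism that kills the Lee generator. The correct observation (and the one the paper makes) is much more specific: in all three simplification types, the nonzero homotopies are births or deaths of a single disjoint circle (up to sign), which are manifestly oriented disk cobordisms. Your reasoning does not establish this, and should be replaced by the explicit identification of the homotopy in each case. Relatedly, your claim that dotted correction terms ``get cancelled by the Gaussian elimination'' for \ref{it:no horiz diff on Lee} is not the right argument for the anti-parallel case; there the statement is simply that $\delta_{o'}$ sits at the far right, so there are no outgoing local differentials at all.
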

\begin{proof}
For Reidemeister simplifications of the form \ref{it:R1} and \ref{it:R2}, the orientation $o'$ is clearly inherited form $o$.  The relevant cobordism maps $\psi$ are defined in \cite{BN} where Property \ref{it:htpy is cob} is also shown (in fact in both cases, the homotopies are simply births and deaths of a single disjoint circle up to a sign---this fact is then used in \cite{MW} to prove that such maps $\psi$ are so-called very strong deformation retracts).  Property \ref{it:Lee to Lee} is checked by Rasmussen in \cite{Rasmussen}, while Property \ref{it:no horiz diff on Lee} is clear since the simplified complexes are still complexes for link diagrams, which have only saddles as differentials.


We illustrate simplifications of the form \ref{it:FT2} with the following diagram.
\begin{center}
\LeegenFTtwo
\end{center}
The oriented resolution $\delta_o$ on the top row is indicated in blue on the left when the strands are similarly oriented, and in red on the right when they are oppositely oriented.  The resulting $\delta_{o'}$ on the bottom row uses the indicated resolution while maintaining $o$ away from this local picture.  The simplification of the complex utilizes Naot's delooping isomorphism \cite[Definition 3.2]{Nao-kh-universal} and Gaussian elimination as in \cite[Lemma 3.2]{BNfast}.  The reader can check that the blue map is just the identity on $\s_o$, while the red map induces multiplication by $\lambda=\pm 2$, which we have demanded to be a unit in our ground ring (this is the only check that requires the use of a sum of cobordisms, so that one cannot use Rasmussen's arguments immediately).  Property \ref{it:no horiz diff on Lee} is clear in both cases (the red case has no local outward differentials from $\delta_{o'}$, while the blue case has only a saddle; all differentials from crossings away from the simplification are also saddles), and Property \ref{it:htpy is cob} can be checked by hand---the only non-zero homotopy is the death cobordism on the disjoint circle (up to a sign).
\end{proof}

Lemma \ref{lem:simps on Lee} characterizes how certain chain homotopy equivalences treat Lee generators, as well as how the homotopies involved treat them. With all of this in place, we can state and prove the main theorem of this section.

\begin{theorem}\label{thm:oriented res is correct}
Let $D$ be a diagram of a null-homologous link $L$ in $M=\SSr$.  Then the orientation $o$ of $D$ gives rise to a well-defined oriented resolution diagram $\delta_o\in\dKC[0]{D}$ satisfying the following properties.
\begin{itemize}
\item The Lee generator $\s_o$ corresponding to $\delta_o$ generates a summand $\F_o$ in $\Lh[0]{D}$ when $t=1$.  If we expand $\LC{D}$ as a multicone along the infinite twist complexes involved, then the oriented diagram giving rise to $\delta_o$ sits in the far right end of the multicone (that is to say, $\h_{\LC{\infFT}}(\delta_o)=0$).
\item When $\vec{k}=(k,\dots,k)$ with $k>0$, the oriented resolution diagram $\delta_{o,k}\in\dKC[0]{\Lk}$ has corresponding Lee generator $\s_{o,k}$ that gets mapped to a unit multiple of a Lee generator $\s_{o,k}^\#\in \OLCnobrackets^\#(\Lk)$ corresponding to a stable copy of $\delta_o$ sitting in $(\CSharp)^0(\Lk)$.  When $k\geq\intceil{\frac{n^+_D + 2}{2}}$ so that we are in the stable range for computing homology, the finite approximation Lee homology subspace $\langle[\s_{o,k}^\#]\rangle\subset \Lh{\Lk}$ is identified with the Lee homology subspace $\langle[\s_o]\rangle\subset \Lh{D}$.
\end{itemize}
See Figure \ref{fig:oriented res is correct} for an illustration of the situation.
\end{theorem}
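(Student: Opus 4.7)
The plan is to define $\delta_o \in \dKC[0]{D}$ and its Lee generator $\s_o \in \LC[0]{D}$ by tracking, for $k$ sufficiently large, the obvious oriented resolution $\delta_{o,k} \in \dKC[0]{\Lk}$ through the chain homotopy equivalence $\dKC{\Lk}\simeq \CSharp(\Lk)$ built out of the local simplifications of Lemma \ref{lem:simps on Lee}. Since $L$ is null-homologous, Remark \ref{rmk:no shifts for nullhomologous} ensures no renormalization shifts are required, and Corollary \ref{cor:KC'(L) finite approx} expresses $\LC{D}$ as a stable limit of these simplified finite approximations.

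Fix $k\geq\intceil{(n_D^++2)/2}$ and begin with $\s_{o,k}$. The equivalence $\dKC{\Lk}\simeq \CSharp(\Lk)$ is a composition of local Reidemeister I, Reidemeister II, and $\FT_2$-simplifications inside each twist region. Iterating Lemma \ref{lem:simps on Lee}\ref{it:Lee to Lee}, the image of $\s_{o,k}$ in $\CSharpLee(\Lk)$ is a unit multiple of a Lee generator $\s_{o,k}^\#$ corresponding to some oriented diagram $\delta_{o,k}^\# \in \CSharp(\Lk)$. The argument in the proof of Theorem \ref{thm:Lee is copies of Q} already shows that, because $o$ makes $L$ null-homologous (so $\eta_i=0$ at each attaching sphere), the local contribution to $\delta_{o,k}^\#$ inside each twist region $\FT_{n_i}^k$ sits in homological grading zero, so $\delta_{o,k}^\#$ lies in the stable truncation that computes $\dKC{D}$. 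This determines a well-defined oriented diagram $\delta_o \in \dKC[0]{D}$ sitting in the far right end of the $\infFT$-multicone (i.e., $\h_{\LC{\infFT}}(\delta_o)=0$), with Lee generator $\s_o$.

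Naturality in $k$ is then automatic: by Theorem \ref{thm:KC'(infFT)}, the truncations of $\CSharp(\FT_{n_i}^k)$ for different $k$ agree on their common stable range, so for any $k$ above the threshold the diagrams $\delta_{o,k}^\#$ all correspond to the same stable $\delta_o$ and the cycles $\s_{o,k}^\#$ are identified with $\s_o$, giving the claimed identification of Lee subspaces $\langle[\s_{o,k}^\#]\rangle \cong \langle[\s_o]\rangle$. That $[\s_o]$ is nonzero and generates a direct summand $\F_o$ of $\Lh[0]{D}$ then follows from Theorem \ref{thm:Lee is copies of Q}: the isomorphism $\Lh{D}\cong \F^{|O(L)|}$ there is realized, in any stable range, precisely by the Lee generators coming from oriented resolutions of $\Lk$ for $o\in O(L)$. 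The main technical delicacy is the unit multiples introduced by Lemma \ref{lem:simps on Lee}\ref{it:Lee to Lee} at the $\FT_2$-simplifications involving oppositely oriented strands, where a factor of $\lambda=\pm 2$ appears; this is precisely why the invertibility of $2$ in $R$ is needed in order for the composite $\s_{o,k}\mapsto \s_{o,k}^\#$ to remain a unit multiple of a Lee generator after arbitrarily many simplifications.
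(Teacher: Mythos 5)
Your overall architecture matches the paper's: define $\delta_o$ by pushing the oriented resolution of $\Lk$ through the simplification $\dKC{\Lk}\simeq\CSharp(\Lk)$, use the $\eta_i=0$ computation from Theorem \ref{thm:Lee is copies of Q} to see that the result lands in the stable (homological degree zero, through-degree zero) range, and use stability of the truncations in $k$ to get well-definedness. However, there is a genuine gap in the central step where you claim that ``iterating Lemma \ref{lem:simps on Lee}\ref{it:Lee to Lee}'' sends $\s_{o,k}$ to a unit multiple of $\s_{o,k}^\#$. The equivalence $\Psi:\OLCnobrackets(\Lk)\to\OCSharpLee(\Lk)$ is \emph{not} literally a composition of the single-complex maps $\psi$; it is a composition of multicone equivalences, and each of these has components consisting of $\psi$ \emph{together with alternating combinations of the chain homotopies and the multicone differentials} (this is \cite[Proposition 2.10]{MW}). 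Item \ref{it:Lee to Lee} alone controls only the $\psi$-component; it says nothing about whether the correction terms kill $\s_{o,k}$ or contaminate its image. This is precisely why Lemma \ref{lem:simps on Lee} also records items \ref{it:no horiz diff on Lee} and \ref{it:htpy is cob}: the paper's argument is that each nonzero homotopy is an oriented planar cobordism, each outgoing multicone differential on an oriented resolution is an unoriented saddle, and their composites are therefore nonorientable cobordisms, which annihilate Lee generators. Without this step your Equation-\eqref{eq:Phi(s)}-type claim is unproved, and the rest of the argument (including the identification of the Lee subspaces under finite approximation) does not follow.

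A smaller point: you identify the ``main technical delicacy'' as the unit $\lambda=\pm2$ appearing in the $\FT_2$-simplification. That is indeed why $2$ must be invertible, but it is handled entirely by item \ref{it:Lee to Lee} and is not where the difficulty lies; the real work, which your proposal omits, is the vanishing of the homotopy/differential correction terms described above. Once that is supplied, your treatment of the first bullet (stable position of $\delta_o$) and of the $k$-independence is consistent with the paper's.
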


\begin{figure}
\[
\begin{tikzpicture}[y=27em]
\node[draw,rectangle](A) at (0,0) {\oriresKClim};
\node[draw,rectangle](B) at (0,-1) {\oriresLeelim};
\draw[->] (A) -- (B) node[midway,right] {Apply $\BartoKh$ and set $t=1$};
\end{tikzpicture}
\]
\caption{Given a diagram $D$ for an oriented link $L\subset\protect\SSr$, every diagram $\Lk$ has an oriented resolution $\delta_{o,k}\in\protect\dKC{\Lk}$ with corresponding Lee generator $\protect\s_{o,k}$.  We also have an oriented resolution $\delta_{o,k}^\#$ (with corresponding Lee generator $\s_{o,k}^\#$) in the equivalent complex $\CSharp(\Lk)$ via replacing each $\FT_{n_i}^k$ with an oriented diagram $\epsilon_{n_i,o}$; this is independent of $k$, and can be denoted $\delta_o$ (with corresponding $\s_o$).  The equivalence maps $\s_{o,k}\mapsto\s_{o,k}^\#$ (up to a unit multiple, omitted for clarity).  Finally, the truncations $\CSharp(\Lk)_{\geq d}$ limit to produce $\dKC{D}$.  Since all of the $\delta_{o,k}$ are in homological grading zero, they can be found in the truncated complex, and become well-defined as soon as $k=\intceil{\frac{n^+_D + 2}{2}}$.}
\label{fig:oriented res is correct}
\end{figure}

\begin{proof}
The proof will be handled in two phases, corresponding to the two boxes of Figure \ref{fig:oriented res is correct} which should be referenced throughout.  First we will see how an orientation of $D$ determines a certain oriented diagram $\delta_o$ via the limiting process defining $\LC{D}$.  Then we will see how this limiting process treats Lee generators to see how the corresponding $\s_o\in\OLC{D}$ generates a summand of homology in $\Lh{D}$, and how this summand relates to summands in the finite approximations $\Lh{\Lk}$.

Recall that $\LC{D}$ is built by inserting infinite twist complexes $\LC{\infFT_{n_i}}$ into the diagram $D$.  These infinite twist complexes are themselves built by performing multicone simplifications on the complexes $\LC{\FT_{n_i}^k}$ to transform them into simplified complexes $\CSharpLee(\FT_{n_i}^k)$, which then fit into a sequence of inclusions (after truncation)
\begin{equation}\label{eq:FT limiting process}
\CSharpLee(\FT_{n_i})_{\geq -1} \hookrightarrow \CSharpLee(\FT_{n_i}^2)_{\geq -3} \hookrightarrow \CSharpLee(\FT_{n_i}^3)_{\geq -5} \hookrightarrow \cdots
\end{equation}
which limit to give $\LC{\infFT_{n_i}}$.  We claim that an orientation $o$ on $\FT_{n_i}$ determines a specific oriented diagram $\epsilon_{n_i,o}\in\CSharpLee(\FT_{n_i})$ that is then preserved throughout this limiting process.  We then define $\delta_o\in\LC{D}$ to be the diagram formed by replacing each infinite twist complex by $\epsilon_{n_i,o}$ and taking the oriented resolution of the other crossings in $D$.

To see how each $\epsilon_{n_i,o}$ is defined, we fix an attaching sphere $i$ and consider the simplification 
\[\LC{\FT_{n_i}} \xrightarrow{\Psi} \CSharpLee(\FT_{n_i}).\]
As described in \cite[Section 2.4]{MW}, $\Psi$ is achieved by simplifying complexes within multicones via maps $\psi$ of the forms \ref{it:R1},\ref{it:R2},\ref{it:FT2} from Lemma \ref{lem:simps on Lee} (see \cite[Section 2.4]{MW}).  According to Lemma \ref{lem:simps on Lee}, if we focus on simplifying only complexes coming from oriented diagrams in our multicone (starting from the initial oriented $\FT_{n_i}$), the single complex maps $\psi$ will compose to reach an oriented diagram, which is unaffected by simplifying the remainder of the diagrams in the multicone to reach $\CSharpLee(\FT_{n_i})$.  We define $\epsilon_{n_i,o}\in(\CSharpLee)^0(\FT_{n_i})$ to be this diagram; abusing notation slightly, we can write this as
\[\epsilon_{n_i,o}:=\psi(\delta_{n_i,o}),\]
where $\delta_{n_i,o}\in\LC{\FT_{n_i}}$ is the usual oriented resolution.  We know that $\h_{\CSharpLee(\FT_{n_i})}(\epsilon_{n_i,o})=0$ because the homological grading is always zero for the oriented resolution, and for null-homologous links there are no grading shifts in the simplification/limiting procedure.  This in turn implies that the through-degree of $\epsilon_{n_i,o}$ is always zero, since every resolution in the stable range has through-degree $0$ (see Theorem \ref{thm:KC'(infFT)}).
See Figure \ref{fig:simp FT4} for an example of this simplification process in the case of $n_i=4$ strands with a specific orientation.

\begin{figure}
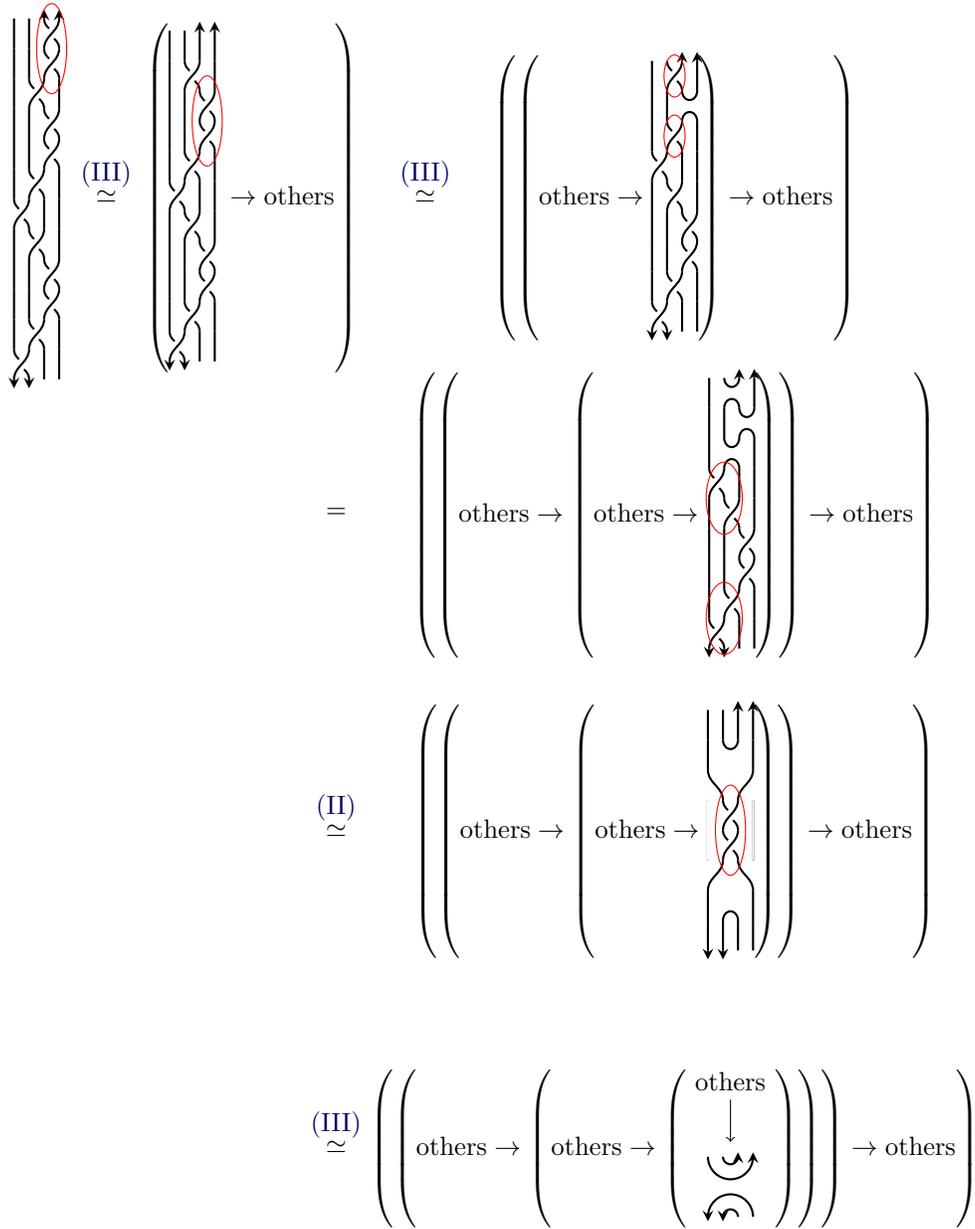

\[\FTfourEx\]
\caption{We keep track of the various oriented diagrams arrived at throughout the simplification $\protect\LC{\FT_4}\rightarrow\protect\CSharpLee(\FT_4)$ for the given orientation.  At each step we circle the crossings about to be locally simplified in the multicone.  The map types (in the notation of Lemma \ref{lem:simps on Lee}) are indicated, except for an equality where we have expanded the complex along the two chosen crossings in the typical way (0-resolution $\rightarrow$ 1-resolution).  
Having performed these simplifications, we can proceed to simplify the `others' without affecting the oriented diagram which we denote $\epsilon_{4,o}$.}
\label{fig:simp FT4}
\end{figure}

All of this was for a local full twist; for the full diagram we let $\delta^\#_{o,1}\in\CSharpLee(D(\vec{1}))$ denote the oriented diagram formed by placing $\epsilon_{n_i,o}$ in place of each $\FT_{n_i}$ in the diagram $D(\vec{1})$, and taking the oriented resolution of the rest of the crossings in $D$.  Abusing notation once more, we can write this as
\[\delta^\#_{o,1}:=\psi(\delta_{o,1}),\]
where $\delta_{o,1}\in\LCnobrackets(D(\vec{1}))$ is the usual oriented resolution.


Now for the limiting procedure, since $\h_{\CSharpLee(\FT_{n_i})}(\epsilon_{n_i,o})=0$, we have that $\epsilon_{n_i,o}$ is already in the stable range after one full twist, and thus it remains constant throughout the sequence of inclusions \eqref{eq:FT limiting process}.  Put another way, the simplifications for $\LC{\FT_{n_i}^k}=\LC{\FT_{n_i}^{k-1}}\otimes\LC{\FT_{n_i}}$ are inductively defined so that $\LC{\FT_{n_i}^{k-1}}$ is simplified first, and any diagram there with through-degree zero (such as $\epsilon_{n_i,o}$) is concatenated with $\FT_{n_i}$, and this concatenation is further simplified via an `untwisting' of the strands using maps $\psi$ of types \ref{it:R1} and \ref{it:R2} from Lemma \ref{lem:simps on Lee} to arrive back at the same diagram (i.e. $\epsilon_{n_i,o}\cdot\FT_{n_i}$ is simplified back to $\epsilon_{n_i}$).  See \cite[Section 2.5]{MW} for further details.  Again passing to the full diagram and taking oriented resolutions of the other crossings of $D$, we have a constant sequence of oriented diagrams $\delta^\#_{o,k}\in\CSharpLee(\FT_{n_i}^k)$ which limits to give the oriented diagram $\delta_o\in\LC{D}$.

We now pass on to phase two, where we investigate how all of these multicone simplifications and limiting procedures affect Lee homology.  If we continue to use the symbol $\Psi$ to denote the simplification $\OLCnobrackets(\Lk)\xrightarrow{\simeq}\OCSharpLee(\Lk)$ for any $k$, and we let $\s_{o,k}$ and $\s_{o,k}^\#$ denote the Lee generators assigned to the oriented diagrams $\delta_{o,k}$ and $\delta_{o,k}^\#$ respectively, then we claim that
\begin{equation}\label{eq:Phi(s)}
\Psi(\s_{o,k})=\lambda_k \s_{o,k}^\#,
\end{equation}
where $\lambda_k\in R$ is some unit that may depend on $k$.  Recall however that $\delta_{o,k}^\# = \delta_o$ for all $k$, so in fact this claim will imply all of the statements in the theorem.

To prove Equation \eqref{eq:Phi(s)}, recall that $\Psi$ (for any $k$) is a composition of multicone maps induced by single-complex simplifications $\psi$ of types covered by Lemma \ref{lem:simps on Lee}.  At any stage of this process, the chain homotopy equivalence from one multicone to the next consists of the corresponding map $\psi$ together with certain alternating combinations of homotopies and multicone differentials (see \cite[Proposition 2.10]{MW}).  Lemma \ref{lem:simps on Lee} shows firstly that the single-complex maps $\psi$ will send Lee generators to unit multiples of corresponding Lee generators throughout (item \ref{it:Lee to Lee}), and secondly that the combinations of homotopies and multicone differentials will always induce the zero map on any Lee generator because our homotopies are single oriented cobordisms (item \ref{it:htpy is cob}) which then must be composed with \emph{unoriented} saddles (item \ref{it:no horiz diff on Lee}) to give unorientable cobordisms which must induce zero maps on Lee generators.
(The fact that unorientable cobordisms induce zero maps in Lee homology follows from \cite[Proposition 4.1]{Rasmussen}. While the result is stated there only for orientable cobordisms, the same proof works verbatim for unorientable cobordisms too.)
Thus the composition $\Psi$ satisfies Equation \eqref{eq:Phi(s)} and the proof is concluded.\end{proof}


\begin{remark}\label{rmk:Lee is copies of Q naturally}
Although Theorem \ref{thm:oriented res is correct} is stated in terms of a single orientation on $L$, it is not hard to see that every orientation $o\in O(L)$ can be treated in the same way, each giving rise to an oriented resolution $\delta_o$ and corresponding Lee generator $\s_o$.  Because the span of these Lee generators is preserved under the finite approximation isomorphism, we can immediately conclude that the set $\{[\s_o]\,|\,o\in O(L)\}$ is linearly independent and provides a natural isomorphism $\Lh{L}\cong \F^{|O(L)|}$ as in Theorem \ref{thm:Lee is copies of Q}.
\end{remark}

The upshot of Theorem \ref{thm:oriented res is correct} and Remark \ref{rmk:Lee is copies of Q naturally} is that we can treat the Lee homology of a link in $\SSr$ in much the same way that we treat the Lee homology of links in $S^3$.  That is to say, the Lee homology for a link diagram $D$ is broken into summands generated by cycles corresponding to certain ``oriented resolutions'' of $D$.  Furthermore, this arrangement respects the finite approximation procedure of Corollary \ref{cor:KC'(L) finite approx} (up to unit multiples), so that we can study these generators by studying the behavior of genuine Lee generators corresponding to oriented resolutions of the link diagrams $\Lk$ for links in $S^3$.
\section{A generalization of Rasmussen's invariant}
\label{sec:s_invt}

\subsection{Defining the $s$-invariant for links in $\SSr$}
Let $R=\F$ be a field of characteristic not equal to $2$. Given a diagram $D$ of a null-homologous link $L\subset \SSr$, the deformed complex $\dKC{D}$ is $\q$-graded if we assign the variable $t$ in the ground ring a $\q$-grading of $\q(t):=-4$.
In this way, when $t$ is set equal to $1$ the Lee complex $\LC{D}$ is a $\q$-filtered complex, and this filtration descends to the homology $\Lh{L}\cong \F^{|O(L)|}$.
We denote the filtration level of $x \in \LC{D}$ by $\q(x)$. If $x$ is a cycle we also denote the filtration level of the homology class $[x]$ by $\q([x]) \geq \q(x)$.
According to Theorem \ref{thm:oriented res is correct}, the orientation of $L$ gives rise to a specific diagram $\delta_o\in\LC[0]{D}$ whose corresponding Lee generator $\s_o\in\OLC[0]{D}$ generates one summand $\F_o$; similarly the opposite orientation $\bar{o}$ for $L$ (for which $L$ would still be null-homologous) also gives rise to a diagram $\delta_{\bar{o}}$ and Lee generator $\s_{\bar{o}}$ generating $\F_{\bar{o}}$.  As in the work of Beliakova and Wehrli \cite[Section 7.1]{beliakova-wehrli} generalizing the arguments in \cite{Rasmussen}, we can make the following definition.

\begin{definition}\label{def:s invt}
Given a diagram $D$ of a null-homologous link $L\subset \SSr$ with orientation $o$ and opposite orientation $\bar{o}$, we define the \emph{$s$-invariant} of $D$ (with coefficients in $\F$) to be
\[s_{\F}(D):= \frac{ \q([\s_o+\s_{\bar{o}}]) + \q([\s_o-\s_{\bar{o}}])}{2}.\]
\end{definition}

When $\F=\Q$, we simply write $s(D)$ for $s_{\Q}(D)$. From now on, for ease of notation, we will restrict to the case $\F=\Q$. However, all the results in this paper still hold for the other invariants $\s_{\F}$.

\begin{remark}
\label{rem:new definition of s}
As described in the proof of Theorem \ref{thm:oriented res is correct}, the diagram $\delta_o$ for $D$ can be viewed as an oriented resolution of a link diagram in $S^3$ obtained from $\Lk$ by replacing each $\FT_{n_i}^k$ with $\epsilon_{n_i,o}$.  The same can clearly be said for $\delta_{\bar{o}}$, and note that $\epsilon_{n_i,\bar{o}}=\epsilon_{n_i,o}$.  Thus we have that $\q([\s_o+\s_{\bar{o}}])$ and $\q([\s_o-\s_{\bar{o}}])$ differ by two, as they do for links in $S^3$; cf.~\cite[Proposition 3.3]{Rasmussen} and \cite[Section 7.1]{beliakova-wehrli}.  Following their notation we will denote the lesser of these two quantities by $s_{\min}$, and the larger by $s_{\max}$.
Thus we have
\begin{equation}
\label{eq:new definition of s}
\q([\s_o]) = \q([\s_{\bar{o}}]) = s_{\min}.
\end{equation}
\end{remark}

\begin{theorem}
\label{thm:s invt finite approx}
Given a diagram $D$ of a null-homologous oriented link $L\subset \SSr$, the $s$-invariant of $D$ can be computed via finite approximation as $s(D)=s(\Lk)$ where $\vec{k}=(k,\cdots,k)$ with $k\geq\intceil{\frac{n_D^++2}{2}}$.
\end{theorem}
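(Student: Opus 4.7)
The plan is to transport the $\q$-filtration data needed to compute $s(D)$ through the stabilization map produced by Theorem~\ref{thm:oriented res is correct}, so that it matches the data defining $s(\Lk)$. The key structural facts I will use are: because $L$ is null-homologous, Remark~\ref{rmk:no shifts for nullhomologous} shows that the simplification equivalence $\Psi\colon \OLC{\Lk}\xrightarrow{\simeq}\OCSharpLee(\Lk)$ is $\q$-filtered with no grading shift; and Corollary~\ref{cor:KC'(L) finite approx} together with Theorem~\ref{thm:oriented res is correct} yields a $\q$-graded identification in the stable range which, for the homology at homological degree zero where the Lee generators live, holds precisely when $k\geq\intceil{\frac{n_D^+ + 2}{2}}$.

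Under these identifications, Theorem~\ref{thm:oriented res is correct} applied to both $o$ and $\bar o$ yields units $\lambda_k^o,\lambda_k^{\bar o}\in\Q^\times$ with $\Psi(\s_{o,k}) = \lambda_k^o\,\s_o$ and $\Psi(\s_{\bar o,k}) = \lambda_k^{\bar o}\,\s_{\bar o}$, where on the right $\s_o$ and $\s_{\bar o}$ are viewed inside $\OCSharpLee(\Lk)=\OLC{D}$ via the stable-range identification. The main step is to check that $\lambda_k^{\bar o}=\pm\lambda_k^o$. Tracing the construction of $\Psi$ in the proof of Theorem~\ref{thm:oriented res is correct}, $\Psi$ is built by composing local moves of the three types from Lemma~\ref{lem:simps on Lee}: types~\ref{it:R1} and~\ref{it:R2} contribute the scalar $\pm 1$ on any Lee generator, while a type~\ref{it:FT2} move contributes $\pm 1$ on a pair of parallel strands and $\pm 2$ on a pair of antiparallel strands. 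Since reversing all orientations simultaneously preserves the parallel/antiparallel pattern of every local pair of strands, the absolute values of these unit contributions agree step by step for $o$ and for $\bar o$, so the accumulated units match up to a sign.

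Writing $\lambda_k^{\bar o}=\epsilon\lambda_k^o$ for some $\epsilon\in\{\pm 1\}$, and using that $\Psi$ is a $\q$-filtered chain homotopy equivalence, we obtain
\[
  \q\bigl([\s_{o,k}\pm\s_{\bar o,k}]\bigr) = \q\bigl([\lambda_k^o(\s_o\pm\epsilon\s_{\bar o})]\bigr) = \q\bigl([\s_o\pm\epsilon\s_{\bar o}]\bigr),
\]
since scaling a cycle by a nonzero rational does not change the $\q$-filtration of its homology class. Adding the $+$ and $-$ equations, the sign $\epsilon$ merely permutes the two right-hand terms, so the totals are equal to $\q([\s_o+\s_{\bar o}])+\q([\s_o-\s_{\bar o}])$; dividing by two and comparing with Definition~\ref{def:s invt} applied to $\Lk$ and to $D$ gives $s(\Lk)=s(D)$. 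The main obstacle I expect is the common-magnitude step, because $\s_o$ and $\s_{\bar o}$ are not unit multiples of one another (the factors $(-1)^{z(C)}$ in Lee's rule flip under full orientation reversal); one must confirm that the scaling produced by each local simplification depends only on the unoriented combinatorics of the move, after which the sign ambiguity is harmless.
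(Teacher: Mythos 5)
Your proof is correct and relies on the same key input as the paper's argument, namely Theorem~\ref{thm:oriented res is correct}: the finite-approximation chain homotopy equivalence sends $\s_{o,k}$ and $\s_{\bar o,k}$ to unit scalar multiples of $\s_o$ and $\s_{\bar o}$, and is $\q$-filtered of degree zero in the null-homologous case by Remark~\ref{rmk:no shifts for nullhomologous}. Where you diverge is in how you close the argument. You work directly from the average formula $s=\tfrac12\bigl(\q([\s_o+\s_{\bar o}])+\q([\s_o-\s_{\bar o}])\bigr)$, and this genuinely forces you to prove $\lambda_k^{\bar o}=\pm\lambda_k^o$: without control on the ratio of the two units, $\lambda^o\s_o\pm\lambda^{\bar o}\s_{\bar o}$ could both miss the one-dimensional $s_{\max}$-subspace and the identity would fail. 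So the worry you flag is real, and your parallel/antiparallel observation closes it correctly (the sequence of simplifying moves is an unoriented datum, and at each FT2 step the co- versus counter-orientation of the two strands, which alone determines whether the magnitude is $1$ or $2$, is unchanged under simultaneous reversal). The paper's one-line proof sidesteps this entirely by appealing to the fact, recorded in the remark preceding Definition~\ref{def:s invt}, that $\q([\s_o])=s_{\min}$, so that $s(D)=\q([\s_o])+1$: since $\Psi([\s_{o,k}])$ is a unit multiple of $[\s_o]$ and $\Psi$ is a degree-zero filtered isomorphism, $\q([\s_{o,k}])=\q([\s_o])$ and hence $s(\Lk)=s(D)$ with no comparison between the two units at all. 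Equivalently, $s$ depends only on the two filtration jumps of the two-dimensional subspace spanned by $[\s_o]$ and $[\s_{\bar o}]$, which is carried to the corresponding subspace by a filtered isomorphism. Your version is sound, but the extra lemma you prove is work the simpler characterization of $s$ makes unnecessary.
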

\begin{proof}
As described in Theorem \ref{thm:oriented res is correct} (see Figure \ref{fig:oriented res is correct}), $\delta_o$ and $\delta_{\bar{o}}$ can be identified with oriented resolutions of link diagrams $\Lk\subset S^3$ in the finite approximation for $\Lh{L}$.  The finite approximation isomorphism preserves Lee subspaces, and so must preserve the $s$-invariant.\end{proof}

{
\renewcommand{\thethm}{\ref{thm:s invt well defined}}
\begin{theorem}
If $D_1$ and $D_2$ are two diagrams for the same oriented null-homologous link $L$ in $M = \SSr$, then $s(D_1)=s(D_2)$ and thus $s$ gives a well-defined invariant $s(L)$ of oriented null-homologous links $L$ in $M$.
\end{theorem}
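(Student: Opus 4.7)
The strategy is to reduce the claim to the finite-approximation setting and then extract it from the chain-level invariance of Theorem~\ref{thm:deformedKh}, with careful tracking of Lee generators. By Theorem~\ref{thm:s invt finite approx}, for $\vec{k}=(k,\dots,k)$ with
\[
k \geq \max\bigl\{\intceil{\tfrac{n_{D_1}^+ + 2}{2}}, \, \intceil{\tfrac{n_{D_2}^+ + 2}{2}}\bigr\},
\]
one has $s(D_i) = s(D_i(\vec{k}))$ for $i=1,2$. The finite approximations $D_1(\vec{k})$ and $D_2(\vec{k})$ are links in $S^3$, but they need not be isotopic, so invariance of $s$ in $S^3$ does not directly apply. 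Instead, the proof of Theorem~\ref{thm:deformedKh}, inherited from \cite{MW}, provides an explicit $\q$-filtered chain homotopy equivalence $\phi\colon \LC{D_1(\vec{k})} \to \LC{D_2(\vec{k})}$, with no grading shifts because $L$ is null-homologous.

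The central step is to verify that $\phi$ preserves Lee generators up to unit multiples: $\phi(\s_o) = \lambda_o \s_o'$ and $\phi(\s_{\bar{o}}) = \lambda_{\bar{o}} \s_{\bar{o}}'$ for some units $\lambda_o, \lambda_{\bar{o}} \in \Q^\times$, where $\s_o', \s_{\bar{o}}'$ denote the Lee generators for $D_2(\vec{k})$. The equivalence $\phi$ is a composition of local moves: planar Reidemeister moves, $\FT_2$-simplifications, delooping isomorphisms, and moves that isotope strands across surgery spheres (which, after inserting twists, reduce to combinations of Reidemeister moves and $\FT_2$-simplifications in the twist regions). The first three kinds are covered directly by Lemma~\ref{lem:simps on Lee}. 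For the sphere-crossing moves, arguments analogous to those in Theorem~\ref{thm:dKC preserved by even diffeos} and Theorem~\ref{thm:oriented res is correct} show that the multicone simplifications involved preserve Lee generators: the intermediate homotopies are single oriented cobordisms, and their compositions with the unoriented saddle components of the Lee differential on oriented resolutions must vanish.

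Given Lee-generator preservation, we conclude as follows. Since $\phi$ is $\q$-filtered, $\q([\phi(\s_o \pm \s_{\bar{o}})]) = \q([\s_o \pm \s_{\bar{o}}])$, and $\phi(\s_o \pm \s_{\bar{o}}) = \lambda_o \s_o' \pm \lambda_{\bar{o}} \s_{\bar{o}}'$. These two classes lie in the $2$-dimensional subspace $V \subset \Lh{D_2(\vec{k})}$ spanned by $[\s_o']$ and $[\s_{\bar{o}}']$, and are linearly independent because $\lambda_o, \lambda_{\bar{o}}$ are units. Inside $V$, the two generators both have $\q$-filtration $s_{\min}$, while $\{\q([\s_o'+\s_{\bar{o}}']), \q([\s_o'-\s_{\bar{o}}'])\} = \{s_{\min}, s_{\max}\}$ with $s_{\max}=s_{\min}+2$. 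Hence the unique $1$-dimensional subspace of $V$ at filtration $s_{\max}$ contains at most one of any pair of linearly independent classes, so any such pair whose filtrations differ by $2$ (which is the case here, since $\phi$ is filtered and this property already holds for $D_1(\vec{k})$) must realize the multiset $\{s_{\min}, s_{\max}\}$. Therefore $\q([\phi(\s_o + \s_{\bar{o}})]) + \q([\phi(\s_o - \s_{\bar{o}})]) = \q([\s_o'+\s_{\bar{o}}']) + \q([\s_o'-\s_{\bar{o}}'])$, giving $s(D_1(\vec{k})) = s(D_2(\vec{k}))$ and thus $s(D_1)=s(D_2)$.

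The main obstacle is the verification of Lee-generator preservation for the sphere-crossing moves appearing in the invariance proof of \cite{MW}. While Lemma~\ref{lem:simps on Lee} handles the basic Reidemeister and $\FT_2$-type moves directly, the sphere-crossing moves require the more delicate multicone bookkeeping developed in the proofs of Theorem~\ref{thm:dKC preserved by even diffeos} and Theorem~\ref{thm:oriented res is correct}, tracking oriented resolutions and the sign conventions for homotopies through each stage of the simplification. Once this bookkeeping is in place, the rest of the argument reduces to the filtered-linear-algebra observation above.
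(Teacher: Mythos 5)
Your overall strategy matches the paper's: reduce to finite approximations via Theorem~\ref{thm:s invt finite approx} and then argue that the equivalence preserves Lee generators up to units. The paper even makes the same reductions to Lemma~\ref{lem:simps on Lee} and to Theorem~\ref{thm:oriented res is correct}. However, there is a genuine gap in your treatment of the surgery-wrap move.

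You assert that the proof of Theorem~\ref{thm:deformedKh} provides an explicit $\q$-filtered chain homotopy equivalence $\phi\colon \LC{D_1(\vec{k})} \to \LC{D_2(\vec{k})}$, with the sphere-crossing moves ``after inserting twists, reduc[ing] to combinations of Reidemeister moves and $\FT_2$-simplifications in the twist regions.'' This is false for the surgery-wrap (finger) move: pushing a strand through a surgery sphere changes the number $n_i$ of strands passing through the corresponding twist region by two, so the finite-approximation diagrams $D_1(\vec{k})$ and $D_2(\vec{k})$ are in general \emph{not} isotopic links in $S^3$, and no chain homotopy equivalence between $\LC{D_1(\vec{k})}$ and $\LC{D_2(\vec{k})}$ exists. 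The paper isolates this as the delicate case: the comparison for the surgery-wrap move has to happen at the level of the truncated limiting complexes in $\Kob$, using the multicone simplifications of \cite[Lemma 3.12]{MW} (which only involve type \ref{it:R2} maps on the split diagrams coming from the infinite twists), and the induced map preserves Lee generators up to units but need not be a chain homotopy equivalence. Your phrasing suggests you can always work inside the finite-approximation $S^3$ complexes, which fails precisely here. For the Reidemeister-type moves, your argument is essentially the paper's (and in fact for those moves the simpler observation that $s(D_1(\vec{k}))=s(D_2(\vec{k}))$ by Rasmussen--Beliakova--Wehrli invariance in $S^3$ would suffice, making the filtered-linear-algebra paragraph unnecessary); the surgery-wrap case is where the real content lies and where your proposal, as written, would not close.
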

\addtocounter{thm}{-1}
}
\begin{proof}
Let $o$ be the orientation of $L$, inducing orientations $o_1$ and $o_2$ on the diagrams $D_1$ and $D_2$ respectively.  As in \cite{beliakova-wehrli}, one must ensure that there exists an isomorphism $\Lh{D_1}\cong\Lh{D_2}$ which sends $[\s_{o_1}]$ to a unit scalar multiple of $[\s_{o_2}]$, i.e.~that link isotopies preserve Lee subspaces in homology.

Any two such diagrams are related by a sequence of moves as described in Section 3.1 in \cite{MW}.  For almost every type of move, we pass to a finite approximation and then use Reidemeister moves for diagrams describing links in $S^3$.  If we set the following values
\[
n^+_{\max}:=\max(n^+_{D_1},n^+_{D_2}), \qquad k:=\intceil{\frac{n^+_{\max}+2}{2}}, \qquad d:= n^+_{\max} - 2k + 1,
\]
this amounts to the following diagram between the $k^\text{th}$ rows of the Lee complexes for $D_1$ and $D_2$ in Figure \ref{fig:oriented res is correct} (again unit multiples have been omitted).
\[
\oriresLeeIsotopy
\]
The Reidemeister moves induce a chain homotopy equivalence $\rho^*$ which maps $\s_{o_1,k}$ to a unit multiple of $\s_{o_2,k}$ modulo boundaries (omitted from the diagram for clarity, as in Figure \ref{fig:oriented res is correct}) via the work in \cite{Rasmussen} and \cite{beliakova-wehrli}.  This in turn induces a map $\rho^\#_{\geq d}$ between the truncated, simplified complexes which maps $\s_{o_1}$ to a unit multiple of $\s_{o_2}$ modulo boundaries.  Although $\rho^\#_{\geq d}$ may no longer be a chain homotopy equivalence, it preserves our oriented resolutions up to unit multiples which is all we need.  Any such choice of $k$ allows us to compose $\rho^\#_{\geq d}$ with the finite approximation isomorphisms (which also preserve Lee generators up to unit multiples via Theorem \ref{thm:oriented res is correct}) to choose an isomorphism
\[\Lh{D_1} \cong \Lh{\Lk[D_1]} \cong \Lh{\Lk[D_2]} \cong \Lh{D_2}\]
which preserves Lee subspaces as required (the unit multiples involved may depend on $k$).

The only move that is not handled so easily is the so-called surgery-wrap move, discussed in Theorem 3.15 in \cite{MW}.  A careful reading of Lemma 3.12 in that paper shows that surgery-wrap moves require working with the truncated finite approximation (where all of the diagrams coming from the full twists are split) and simplifying multicones using only transformations of the type \ref{it:R2} from Lemma \ref{lem:simps on Lee}.  Thus the proof that surgery-wrap moves send Lee generators to unit multiples of each other follows precisely along the proof of Theorem \ref{thm:oriented res is correct}.   Note that, although there are typically shifts in both homological degree and $\q$-degree for such moves, both of those shifts vanish when the link is null-homologous. \end{proof}

\begin{remark}
Theorem~\ref{thm:fda} from the Introduction is basically a restatement of Theorem~\ref{thm:s invt finite approx}, taking into account Theorem~\ref{thm:s invt well defined}.
\end{remark}

\begin{remark}\label{rmk:lots of maps}
In the proof of Theorem \ref{thm:s invt well defined}, we use a specific value for $k$ to consider the finite approximation, but of course any larger value of $k$ could be chosen.  This means that we actually have infinitely many maps which can be used to represent the isomorphism $\Lh{D_1}\cong\Lh{D_2}$, one for each large enough $k$.  The proof shows that any one of these maps would send $[\s_{o_1}]$ to a unit multiple of $[\s_{o_2}]$, but the precise unit may be different for different values of $k$.
Nevertheless, just the existence of one such map is enough to show that the Lee subspace, and thus the value of $s$, is preserved.  The situation is similar for self-diffeomorphisms in Theorem \ref{thm:s invt diffeo and rev} below.
\end{remark}

\begin{theorem}\label{thm:s invt diffeo and rev}
Given a null-homologous link $L\subset\SSr$, we have
\[s(r(L))=s(L)=s(\Phi(L)),\]
where $r(L)$ denotes the reverse of $L$ and $\Phi$ denotes any orientation-preserving self-diffeomorphism of $\SSr$.
\end{theorem}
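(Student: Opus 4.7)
The plan is to split the theorem into two claims and treat them in turn: invariance under the reversal $r$, and invariance under any orientation-preserving self-diffeomorphism $\Phi$.

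For $s(r(L)) = s(L)$, the argument is direct from Definition~\ref{def:s invt}. A diagram for $r(L)$ is a diagram for $L$ with every orientation reversed, so the roles of $o$ and $\bar o$ are exchanged: the oriented-resolution diagrams satisfy $\delta_o^{r(L)} = \delta_{\bar o}^L$ and $\delta_{\bar o}^{r(L)} = \delta_o^L$, hence the corresponding Lee generators satisfy $\s_o^{r(L)} = \s_{\bar o}^L$ and $\s_{\bar o}^{r(L)} = \s_o^L$. Because the quantum filtration satisfies $\q(-x) = \q(x)$, the expression in Definition~\ref{def:s invt} is manifestly symmetric under the interchange of $o$ and $\bar o$, so $s(r(D)) = s(D)$.

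For $s(\Phi(L)) = s(L)$, the plan is to apply Theorem~\ref{thm:dKC preserved by even diffeos}, which already yields $\Lh{\Phi(L)} \cong \Lh{L}$ (with vanishing grading shifts because $L$ is null-homologous), and then to strengthen this isomorphism to one that preserves Lee subspaces, i.e.\ that sends $[\s_o^L]$ to a unit multiple of $[\s_{\Phi_*(o)}^{\Phi(L)}]$ and similarly for the opposite orientation. Once this is in place, the quantum filtrations match and the desired equality of $s$-invariants follows immediately from Definition~\ref{def:s invt}. It suffices to verify the Lee-preserving claim on each generator of $\pi_0(\Diff^+(\SSr))$ separately. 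For a rotation $\rho_i$ or a transposition $\tau_{i,i+1}$, the finite approximation diagrams in Figures~\ref{fig:rot} and~\ref{fig:trans} are related by an explicit isotopy in $S^3$, which decomposes into Reidemeister moves; by the argument already used in the proof of Theorem~\ref{thm:s invt well defined}, each such move preserves Lee generators up to a unit multiple. For a Dehn twist $\sigma_i$, the effect on a finite approximation is merely to replace $k$ by $k \pm 1$, and the naturality of Lee generators through the finite-approximation isomorphism, established in Theorem~\ref{thm:oriented res is correct}, then gives the claim.

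The main obstacle will be the handle-slide case, which requires a closer reading of the multicone argument in the proof of Theorem~\ref{thm:dKC preserved by even diffeos}. The key observation is that every single-complex simplification invoked there (delooping-based simplifications of $\CSharp(\FT_{n_1}^k)$, the unwinding of split diagrams along the sliding arc, and the Reidemeister~I and~II moves used in the $\delta^{bot}$-untwisting and $\gamma$-unwrapping steps) is of type~\ref{it:R1}, \ref{it:R2}, or~\ref{it:FT2} in the sense of Lemma~\ref{lem:simps on Lee}. By items~\ref{it:Lee to Lee}, \ref{it:no horiz diff on Lee}, and~\ref{it:htpy is cob} of that lemma, each such move sends Lee generators to unit multiples of Lee generators, with all intervening homotopies induced by oriented cobordisms. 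Assembling these via the multicone combination formula of \cite[Proposition~2.10]{MW}, exactly as in the final paragraph of the proof of Theorem~\ref{thm:oriented res is correct}, any composition of an oriented-cobordism homotopy with an unoriented saddle differential produces an unorientable cobordism, which acts as zero on Lee generators. Hence the handle-slide equivalence preserves Lee subspaces, and combining all four cases yields the full diffeomorphism invariance.
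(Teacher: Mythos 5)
Your proposal is correct and follows essentially the same strategy as the paper's proof: reversal invariance via the symmetry of the Lee complex and the $s$-formula in $o$ versus $\bar o$ (the paper cites Proposition~\ref{prop:Kh(r(L))} for the same observation), and diffeomorphism invariance by tracking Lee generators through the generator-by-generator argument of Theorem~\ref{thm:dKC preserved by even diffeos}, using Lemma~\ref{lem:simps on Lee} and the framework of Theorem~\ref{thm:oriented res is correct} to handle the multicone simplifications in the handle-slide case. Your slightly fuller enumeration of the single-complex move types (including \ref{it:FT2} for the initial $\CSharp$ reduction, which the paper treats as already covered by Theorem~\ref{thm:oriented res is correct}) is a harmless elaboration, not a deviation.
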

\begin{proof}
For reversals, the diagrammatic description of $\Lh{L}$ makes the statement clear (see Proposition \ref{prop:Kh(r(L))}).  

For $\Phi$, we proceed as in the proof of Theorem \ref{thm:s invt well defined} above, except now we follow along the arguments in the proof of Theorem \ref{thm:dKC preserved by even diffeos} instead of those from \cite[Section 3]{MW}. For rotations and transpositions, the invariance of $\Lh{L}$ used isotopies of finite approximation diagrams, which work just as in the proof of Theorem \ref{thm:s invt well defined}. For Dehn twists we have the re-indexing, which preserves Lee generators without shifts in the null-homologous case.  Finally for handle slides, a careful reading of the proof of Theorem \ref{thm:dKC preserved by even diffeos} shows that we use multicone simplifications that incorporate only crossing-removing Reidemeister I and II moves (both included in Lemma \ref{lem:simps on Lee}), allowing the proof of the invariance under surgery-wrap moves (by following along the lines of the proof of Theorem \ref{thm:oriented res is correct}) to carry over to handle slides without serious modification.
\end{proof}

\subsection{Disjoint unions}
\label{sec:disj union}

\begin{proposition}\label{prop:disj union}
Consider null-homologous links $L_i \subset \#^{k_i} (\SSone)$, for $i=1, \dots, m$, and let
$$ L = \coprod_i L_i \subset \#^{\sum k_i}  (\SSone).$$
Then
$$ s(L) = \sum s(L_i) - m.$$
\end{proposition}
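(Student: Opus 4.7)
The plan is to reduce via finite approximation to the analogous disjoint-union formula for links in $S^3$, and then prove the $S^3$ case from the tensor-product structure of the Lee complex.

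First I would choose diagrams $D_i$ for $L_i \subset \#^{k_i}(\SSone)$ in standard Kirby form and assemble them into a single diagram $D$ for $L$ in $\#^{\sum k_i}(\SSone)$ by placing the $D_i$ in disjoint regions of the plane, with disjoint attaching spheres. For $\vec{k}=(k,\dots,k)$ with $k\geq\max_i\intceil{(n^+_{D_i}+2)/2}$, the finite approximation $D(\vec{k})\subset S^3$ is literally the disjoint union $D_1(\vec{k}_1)\sqcup\cdots\sqcup D_m(\vec{k}_m)$ of the pieces' finite approximations. By Theorem~\ref{thm:s invt finite approx}, this simultaneously computes $s(L)=s(D(\vec{k}))$ and $s(L_i)=s(D_i(\vec{k}_i))$, reducing the claim to the corresponding formula for disjoint unions of links in $S^3$.

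For disjoint diagrams $D_1',D_2'\subset S^3$, the Lee complex decomposes as a $\q$-filtered tensor product $\OLC{D_1'\sqcup D_2'} \cong \OLC{D_1'} \otimes_{\Q} \OLC{D_2'}$; oriented resolutions split as disjoint unions, so the Lee generators factor as $\s_o^L=\s_o^{(1)}\otimes \s_o^{(2)}$ and $\s_{\bar o}^L=\s_{\bar o}^{(1)}\otimes \s_{\bar o}^{(2)}$. Setting $\alpha_\pm^{(i)}:=\s_o^{(i)}\pm\s_{\bar o}^{(i)}$, a direct expansion gives
\begin{align*}
\s_o^L+\s_{\bar o}^L &= \tfrac{1}{2}\bigl(\alpha_+^{(1)}\otimes\alpha_+^{(2)} + \alpha_-^{(1)}\otimes\alpha_-^{(2)}\bigr),\\
\s_o^L-\s_{\bar o}^L &= \tfrac{1}{2}\bigl(\alpha_+^{(1)}\otimes\alpha_-^{(2)} + \alpha_-^{(1)}\otimes\alpha_+^{(2)}\bigr).
\end{align*}
Since $\q(\alpha_+^{(i)})$ and $\q(\alpha_-^{(i)})$ are $s_{\min}(L_i')$ and $s_{\max}(L_i')=s_{\min}(L_i')+2$ (in some order), and since $\q$-filtrations add on tensor products of cycles that are pure in the associated graded, a short calculation gives $\q([\s_o^L+\s_{\bar o}^L])=s_{\min}(L_1')+s_{\min}(L_2')$ and $\q([\s_o^L-\s_{\bar o}^L])=s_{\min}(L_1')+s_{\min}(L_2')+2$. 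Averaging yields the two-link formula in $S^3$, and iterating $m-1$ times establishes the general $S^3$ formula, which combined with Step~1 completes the proof.

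The main subtlety is checking that no unwanted cancellation occurs in the two displayed sums at the minimum filtration level. This follows because the leading terms of $\alpha_+^{(1)}\otimes\alpha_+^{(2)}$ and $\alpha_-^{(1)}\otimes\alpha_-^{(2)}$ lie at different filtration levels (differing by $4$), while the leading terms of $\alpha_+^{(1)}\otimes\alpha_-^{(2)}$ and $\alpha_-^{(1)}\otimes\alpha_+^{(2)}$ are pure tensors in different orientation summands of $\Lh{L_1'\sqcup L_2'}\cong\Lh{L_1'}\otimes\Lh{L_2'}$ and hence are linearly independent in the associated graded. The remaining ingredients---the tensor-product decomposition of the Khovanov--Lee complex under disjoint planar union and its compatibility with the $\q$-filtration---are standard from the construction in Section~\ref{sec:constructing deformed cx}.
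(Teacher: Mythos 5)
Your reduction to finite approximations is exactly the paper's proof: the paper invokes Theorem \ref{prop:s invt by finite approx} and then simply cites Beliakova--Wehrli's Equation (8) for the disjoint-union formula in $S^3$, whereas you re-derive that $S^3$ formula from the filtered K\"unneth decomposition of the Lee complex. Your derivation is essentially Beliakova--Wehrli's own argument and is correct in outline, with two small wrinkles. First, which of the two pairs $\{\alpha_+^{(1)}\otimes\alpha_+^{(2)},\ \alpha_-^{(1)}\otimes\alpha_-^{(2)}\}$ and $\{\alpha_+^{(1)}\otimes\alpha_-^{(2)},\ \alpha_-^{(1)}\otimes\alpha_+^{(2)}\}$ has its two terms at filtration levels differing by $4$, and which has them at equal levels, depends on whether $\q([\alpha_+^{(i)}])$ equals $s_{\max}(L_i')$ for both $i$ or for only one of them; so the two non-cancellation arguments you give (levels $4$ apart, versus linear independence in the associated graded) should be applied symmetrically to whichever pair needs them, rather than assigned one to each pair as written. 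The average of the two resulting filtration levels is the same in either case, so the conclusion is unaffected. Second, to apply Theorem \ref{thm:s invt finite approx} to the combined diagram $D$ you need $k\geq\intceil{\frac{n^+_D+2}{2}}$ with $n^+_D=\sum_i n^+_{D_i}$, not merely the maximum of the individual thresholds; this is harmless since everything has stabilized for $k$ large.

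The one point you must confront: iterating $s(L_1'\sqcup L_2')=s(L_1')+s(L_2')-1$ yields $s(L)=\sum_i s(L_i)-(m-1)$, not $\sum_i s(L_i)-m$ as the proposition is stated, and your write-up asserts without comment that the iteration ``establishes'' the stated formula. In fact your computation is the correct one and the statement is off by one: the stated formula fails already for the $m$-component unlink (where $s(U_m)=1-m$ while $\sum_i s(U_1)-m=-m$), it is inconsistent with Proposition \ref{prop:propertiesS3}\eqref{it:old2}, and the proof of Theorem \ref{thm:strongadj} applies the proposition to a disjoint union of $t+1$ links with correction term $-t$. So the ``$-m$'' in the statement should read ``$-m+1$''; you should flag the discrepancy rather than paper over it.
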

\begin{proof}
This is a simple consequence of Theorem \ref{prop:s invt by finite approx}, which states that we can compute the relevant $s$-invariants via links in $S^3$.  The disjoint union described here is equivalent to taking the disjoint union of these finite approximation links, for which \cite[Equation (8)]{beliakova-wehrli} provide the desired formula.
\end{proof}

\subsection{Cobordisms in $I \times \SSr$}
In this section we analyze the effects that cobordisms of links in $I\times\SSr$ have on Lee homology, in order to conclude (following \cite{Rasmussen} and \cite{beliakova-wehrli}) that certain cobordisms between links $L_1$ and $L_2$ lead to bounds on the difference $s(L_2)-s(L_1)$.

\begin{theorem}\label{thm:cob map on Lee}
Let $M=\SSr$, and let $\Sigma\subset I \times M$ denote an oriented cobordism between oriented  links $L_1\subset \{0\}\times M$ and $L_2\subset \{1\}\times M$.  Fix an orientation $o_1$ on $L_1$ (not necessarily the original orientation) making $L_1$ null-homologous in $M$, and let $\s_{o_1}$ denote the corresponding Lee generator.  Denote by $O(\Sigma,o_1)$ the set of orientations $o$ of $\Sigma$ whose induced orientation $o|_1$ on $L_1$ is $\bar{o}_1$.

Then the cobordism $\Sigma$ induces a (possibly not natural) map $\phi_\Sigma:\Lh{L_1}\rightarrow\Lh{L_2}$, of filtration degree $\chi(\Sigma)$ (the Euler characteristic of $\Sigma$), such that
\begin{equation}\label{eq:cob map on Lee}
\phi_{\Sigma}([\s_{o_1}]) = \sum_{o\in O(\Sigma,o_1)} a_o [\s_{o|_2}]
\end{equation}
where for each $o\in O(\Sigma,o_1)$, $a_o$ is a unit in $R$, and $o|_2$ is the orientation on $L_2$ induced by $o$.
Here $\s_{o_1} \in \BartoKh\LC{D_1}$ and $\s_{o|_2} \in \BartoKh\LC{D_2}$ denote the Lee generators associated to $o_1$ and $o|_2$, for some diagrams $D_1$ and $D_2$ for the links $L_1$ and $L_2$.

Furthermore, when $L_1$ and $L_2$ are null-homologous, the map $\phi_{\Sigma}$ is filtered, of filtration degree $\chi(\Sigma)$ (the Euler characteristic of $\Sigma$).
\end{theorem}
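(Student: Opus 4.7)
The plan is to reduce to the known case of cobordisms in $I \times S^3$, where the analogous formula is due to Rasmussen~\cite{Rasmussen} and, in the link case, to Beliakova-Wehrli~\cite{beliakova-wehrli}. The principal tools will be the finite approximation of Corollary \ref{cor:KC'(L) finite approx} together with the identification of Lee generators in $\SSr$ with honest oriented resolutions in $S^3$ supplied by Theorem \ref{thm:oriented res is correct}.

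First I would present $\Sigma$ by a movie consisting of a sequence of diagrams $D_1 = D^{(0)}, D^{(1)}, \ldots, D^{(N)} = D_2$ for links in $\SSr$, with consecutive frames differing by a single elementary move (Reidemeister move, birth, death, or saddle). After a generic isotopy of $\Sigma$ in $I \times \SSr$, one can arrange for all such moves to occur in a compact region away from the attaching spheres of the standard Kirby diagram, so that none of them interact with the infinite-twist boxes. I would then choose $\vec{k} = (k, \ldots, k)$ with $k \geq \intceil{(n^+_{\max} + 2)/2}$, where $n^+_{\max}$ is the maximum of $n^+_{D^{(i)}}$ over all frames, so that the finite approximation computes Lee homology at every intermediate stage simultaneously.

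Applying the finite approximation $D \mapsto D(\vec{k})$ frame-by-frame then produces a movie of link diagrams in $S^3$, describing a cobordism $\Sigma(\vec{k}) \subset I \times S^3$ from $L_1(\vec{k})$ to $L_2(\vec{k})$ whose Euler characteristic equals that of $\Sigma$ (the added full-twist components contribute only cylinders). Bar-Natan's movie construction \cite{BN} assigns to $\Sigma(\vec{k})$ a chain map of Lee complexes of filtration degree $\chi(\Sigma)$, and Beliakova-Wehrli's theorem computes its action on Lee generators as $[\s_{o_1}] \mapsto \sum_o a_o [\s_{o|_2}]$, where the sum runs over orientations $o$ of $\Sigma(\vec{k})$ restricting to $\bar{o}_1$ on $L_1(\vec{k})$. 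I would then define $\phi_\Sigma$ by pre- and post-composing this $S^3$ cobordism map with the finite-approximation isomorphisms of Theorem \ref{thm:oriented res is correct}, which carry Lee generators to unit multiples of Lee generators. To conclude the formula \eqref{eq:cob map on Lee}, I would observe that orientations of $\Sigma(\vec{k})$ extending $\bar{o}_1$ are in canonical bijection with orientations of $\Sigma$ extending $\bar{o}_1$, since the extra cylindrical components inherit their orientation from $o_1$. The filtration-degree assertion in the ``furthermore'' clause follows because when both $L_1$ and $L_2$ are null-homologous the finite-approximation isomorphisms involve no grading shifts, cf.\ Remark~\ref{rmk:no shifts for nullhomologous}.

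The hardest step will be showing that the elementary moves of the movie can all be isotoped to occur away from the attaching regions of $\SSr$, so that a single uniform $\vec{k}$ suffices throughout. This should follow from a transversality/general-position argument on $\Sigma$, together with the flexibility of sliding Reidemeister moves and local handle attachments off any chosen codimension-zero subregion of the plane of projection. A secondary subtlety is that the resulting $\phi_\Sigma$ may depend on the choice of movie and of $\vec{k}$, and therefore need not be natural (as the statement already warns); however, its action on Lee generators depends only on the orientation data of $\Sigma$, which is precisely what is needed for the applications to $s$-invariants such as Theorem~\ref{cor:cob inequality}.
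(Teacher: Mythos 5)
Your overall reduction strategy---pass to finite approximations in $S^3$, invoke the known formula of Rasmussen and Beliakova--Wehrli there, and transport back along the finite-approximation isomorphisms of Theorem~\ref{thm:oriented res is correct}---is in the same spirit as the paper's argument. The paper, however, carries this out \emph{piecewise}: it decomposes $\Sigma$ into elementary cobordisms $\Sigma_i$ between diagrams $D_i$ and $D_{i+1}$, defines each $\phi_{\Sigma_i}$ individually using a finite approximation chosen for that move, and then composes; the $\SSr$-specific isotopy moves (including the surgery-wrap move) are handled by appealing to the proof of Theorem~\ref{thm:s invt well defined}. You instead try to construct a single cobordism $\Sigma(\vec k)\subset I\times S^3$ with one uniform $\vec k$, and that is where a genuine gap appears.

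The key unjustified step is your claim that ``one can arrange for all such moves to occur in a compact region away from the attaching spheres, so that none of them interact with the infinite-twist boxes.'' The list of elementary moves you allow (Reidemeister, birth, death, saddle) is incomplete for diagrams in $\SSr$: as the paper emphasizes in the proof of Theorem~\ref{thm:s invt well defined}, one must also allow $\SSr$-specific moves, most notably the surgery-wrap move, which inherently involves the handle regions and cannot be pushed off of them. More fundamentally, a generic movie of a cobordism in $I\times\SSr$ can change the number of strands passing through a given attaching sphere (for instance, when $L_1$ and $L_2$ have different minimal geometric intersection with some $\{*\}\times S^2$, or when the cobordism wraps around a handle). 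Such a change occurs exactly at the handle, not away from it, and it alters the structure of the inserted twist box $\FT_{n_i}^k$ mid-movie. At that point there is no canonical trivial-cylinder ``twist cobordism'' to fill in, and in particular your assertion that $\chi(\Sigma(\vec k))=\chi(\Sigma)$ because ``the added full-twist components contribute only cylinders'' breaks down. You flag this as ``the hardest step'' and sketch a transversality argument, but no such isotopy exists in general; the paper's piecewise construction, with a possibly different finite approximation for each elementary piece and a dedicated treatment of the surgery-wrap move via Lemma~\ref{lem:simps on Lee} and Theorem~\ref{thm:oriented res is correct}, is what resolves exactly this issue.
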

\begin{proof}
This is a simple generalization of Rasmussen's arguments in \cite{Rasmussen}.  We break our cobordism $\Sigma$ into elementary pieces $\Sigma_i$ between diagrams $D_i$ and $D_{i+1}$, and then check that each piece can be used to build a map satisfying
\[\phi_{\Sigma}([\s_{o_i}]) = \sum_{o\in O(\Sigma,o_i)} a_o [\s_{o|_{i+1}}].\]
If $\Sigma_i$ corresponds to a local isotopy, we can use a corresponding map from the proof of Theorem \ref{thm:s invt well defined}. (Note that in this case, $\Sigma_i$ is a cylinder with $\chi(\Sigma_i)=0$, and there is only one orientation in $O(\Sigma_i,o_i)$.)  As in Remark \ref{rmk:lots of maps}, there are actually many choices of such a map, but we need only choose one to label as $\phi_{\Sigma_i}$.  If $\Sigma_i$ corresponds to a Morse move (a cup, cap, or saddle), we can still choose $\phi_{\Sigma_i}$ by choosing a finite approximation diagram to act upon as in the proof of Theorem \ref{thm:s invt well defined}, and such maps have already been checked in \cite{Rasmussen}.  When we glue such cobordisms together, we compose maps leading to a double sum
\begin{align*}\phi_{\Sigma_{i+1}\cup \Sigma_i} ([\s_{o_i}]) &= \phi_{\Sigma_{i+1}}\circ\phi_{\Sigma_i} ([\s_{o_i}])\\
&= \sum_{o\in O(\Sigma_i,o_i)} a_o \phi_{\Sigma_{i+1}}([\s_{o|_{i+1}}])\\
&= \sum_{o\in O(\Sigma_i,o_i)} a_o \sum_{o'\in O(\Sigma_{i+1},o|_{i+1})} b_{o'} [\s_{o'|_{i+2}}]\\
&= \sum_{o'' \in O(\Sigma_{i+1}\cup \Sigma_i, o_i)} c_{o''} [\s_{o''|_{i+2}}]
\end{align*}
where the last line follows from using $o$ and $o'$ agreeing at $D_{i+1}$ to uniquely define an orientation $o''$ on all of $\Sigma_{i+1}\cup\Sigma_i$, while $c_{o''}=a_ob_{o'}$ is a unit since each of $a_o,b_{o'}$ were.
\end{proof}

\begin{remark}\label{rmk:lots of maps for cobs}
The reason we added the phrase ``possibly not natural'' in the statement of Theorem \ref{thm:cob map on Lee} is the following.
As in Remark \ref{rmk:lots of maps}, our elementary cobordisms only induce maps up to a choice of finite approximation.
Different choices may lead to different unit multiples during the proof of Theorem \ref{thm:cob map on Lee}.
Moreover, we have not checked invariance of the maps under movie moves.
\end{remark}


%
 
We now deduce Theorem \ref{thm:GenusBoundCylinders} along the lines of \cite{Rasmussen} and \cite{beliakova-wehrli}.

{
\renewcommand{\thethm}{\ref{thm:GenusBoundCylinders}}
\begin{theorem}
Consider an oriented cobordism $\Sigma\subset I \times \SSr$ from a link $L_1$ to a second link $L_2$. Suppose that every component of $\Sigma$ has a boundary component in $L_1$. Then,  we have an inequality of $s$-invariants
\[s(L_2) - s(L_1) \geq  \chi(\Sigma),\]
where $\chi(\Sigma)$ denotes the Euler characteristic of $\Sigma$.
\end{theorem}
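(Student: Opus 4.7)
The plan is to follow the classical Rasmussen~\cite{Rasmussen} and Beliakova--Wehrli~\cite{beliakova-wehrli} strategy: apply the filtered cobordism map on Lee homology to the canonical Lee generator $[\s_{o_1}]$ coming from the given orientation of $L_1$, and track its image.

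First, I would invoke Theorem~\ref{thm:cob map on Lee} with the orientation $o_1$ on $L_1$ taken to be the given one. Since both $L_1$ and $L_2$ are null-homologous, the resulting map $\phi_\Sigma \colon \Lh{L_1} \to \Lh{L_2}$ is filtered of filtration degree $\chi(\Sigma)$, and
\[
\phi_\Sigma([\s_{o_1}]) \;=\; \sum_{o \in O(\Sigma, o_1)} a_o \, [\s_{o|_2}],
\]
where $O(\Sigma, o_1)$ is the set of orientations of $\Sigma$ restricting to $\bar o_1$ on $L_1$ and each $a_o \in R$ is a unit.

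Next, I would identify $O(\Sigma, o_1)$ using the hypothesis. A connected oriented surface has exactly two orientations, each determined by the orientation induced on any single boundary circle. Since by hypothesis every component of $\Sigma$ meets $L_1$, each component admits a unique orientation whose $L_1$-boundary agrees with $\bar o_1$, namely the reverse of the orientation induced on it by the cobordism orientation of $\Sigma$. Assembling these, $O(\Sigma, o_1)$ is a singleton consisting of the globally reversed orientation, which restricts to $\bar o_2$ on $L_2$. Consequently
\[
\phi_\Sigma([\s_{o_1}]) \;=\; a\,[\s_{\bar o_2}]
\]
for some unit $a \in R$; in particular this image is nonzero.

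The conclusion now follows from the filtered-degree inequality on homology classes. Since $a$ is a unit,
\[
\q\bigl([\s_{\bar o_2}]\bigr) \;=\; \q\bigl(\phi_\Sigma([\s_{o_1}])\bigr) \;\geq\; \q\bigl([\s_{o_1}]\bigr) + \chi(\Sigma).
\]
By~\eqref{eq:new definition of s}, $\q([\s_{o_1}]) = s_{\min}(L_1)$ and $\q([\s_{\bar o_2}]) = s_{\min}(L_2)$, so $s_{\min}(L_2) \geq s_{\min}(L_1) + \chi(\Sigma)$. Combined with $s(L) = s_{\min}(L) + 1$ (because $s_{\max} = s_{\min} + 2$), this rearranges to $s(L_2) - s(L_1) \geq \chi(\Sigma)$.

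The main obstacle is the orientation-counting step. The hypothesis ``every component of $\Sigma$ has a boundary component in $L_1$'' is exactly what forces the sum in Theorem~\ref{thm:cob map on Lee} to collapse to the single term $a[\s_{\bar o_2}]$; without it, a component of $\Sigma$ whose boundary lies entirely in $L_2$ would contribute an independent orientation choice, so that the image of $[\s_{o_1}]$ could involve Lee generators of $L_2$ corresponding to orientations other than $\bar o_2$ (possibly with cancellations), and the bound on $s_{\min}(L_2)$ would be weakened or lost.
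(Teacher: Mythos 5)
Your proposal is correct and follows essentially the same route as the paper: invoke Theorem~\ref{thm:cob map on Lee}, observe that the hypothesis on components forces the sum in Equation~\eqref{eq:cob map on Lee} to collapse to a single unit-multiple term, and then use the filtration degree $\chi(\Sigma)$ together with Equation~\eqref{eq:new definition of s}. The only cosmetic difference is whether the surviving term is written as $[\s_{o_2}]$ or $[\s_{\bar o_2}]$, which is immaterial since both lie in filtration level $s_{\min}(L_2)$.
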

\addtocounter{thm}{-1}
}
\begin{proof}
Let $o_1$ and $o_2$ denote the orientations of $L_1$ and $L_2$, respectively. Since every component of $\Sigma$ has a boundary component in $L_1$, there can be at most one orientation $o$ on $\Sigma$ that induces the orientation $o_1$ on $L_1$. Thus, the sum in Equation \eqref{eq:cob map on Lee} contains only a single term, sending the Lee generator $[\s_{o_1}]\in\Lh{L_1}$ to a unit multiple of $[\s_{o_2}]\in\Lh{L_2}$.
Since $\phi_{\Sigma}$ is a filtered map of degree $\chi(\Sigma)$, and $s(L_i)$ is defined via the filtration level coming from $[\s_{o_i}]$ using Equation \eqref{eq:new definition of s}, the bound follows.
\end{proof}

\begin{remark}\label{rmk:BW wrong}
The opposite bound $s(L_1)-s(L_2)\geq \chi(\Sigma)$ is false as can be seen from the annulus cobordism $(\chi=0)$ from $L_1=U_2$, the two-component unlink ($s=-1$), to $L_2=\varnothing$, the empty link ($s=1$). (This false opposite bound was originally claimed in \cite{beliakova-wehrli}, but later withdrawn.)
\end{remark}

\section{Computing Lee homology in $\SSone$ via Hochschild homology}
\label{sec:Hochschild}

When $D$ is a diagram for a link $L$ in $M=\SSone$ and $\dKC{D}$ depends on inserting only a single copy of $\dKC{\infFT_n}$ (for even $n=2p$), there is an alternative way to view the construction of $\OdKC{D}$ via the Hochschild homology of certain complexes of bimodules over Khovanov's arc algebra \cite{KhTangles}.  Indeed, this version is how Rozansky defines $\OKC{D}$ in \cite{Roz}.  We begin by reviewing the definitions below.

Let $\caps_p$ denote the set of crossingless matchings on $n=2p$ endpoints (i.e., the set of $(0,2p)$-Temperley-Lieb diagrams).  After shifting quantum gradings by $p$, the arc algebra $H_p$ is generated by Khovanov complexes for concatenated diagrams of the form $a\caprefl{b}$ for $a,b\in\caps_p$, where $\caprefl{b}$ denotes the vertical reflection of $b$. We notate this visually as
\[H_p= \bigoplus_{a,b\in\caps_p}\q^{-p}\OKCp{ \capcap{a}{b}}.\]
Then any $(2p,2p)$-tangle $T$ in $I \times D^2$ determines a complex of $H_p$-bimodules \[\KCbm{T} := \bigoplus_{a,b\in\caps_p} \q^{-p}\OKC{aT\caprefl{b}},\] which we notate visually as
\[\KCbm{ \begin{tikzpicture}[baseline={([yshift=-.7ex]current bounding box.center)},x=1.7em,y=-1.7em] \Bboxst{$T$} \end{tikzpicture} }
 := \q^{-p}\bigoplus_{a,b\in\caps_p} \OKCp{ \capBcap{a}{T}{b} }.\]
(This is not an abuse of notation: The functor $\BartoKh$ was earlier defined only on $(0,0)$-Temperley-Lieb diagrams; here we are extending this to $(p,p)$-Temperley-Lieb diagrams by the above equation.)
The action of $c\caprefl{d}\in H_p$ on the left (respectively right) side of a summand is zero if $d \neq a$ (respectively $c \neq b$).  Meanwhile, $c\caprefl{a}$ acts by transforming $\OKC{aT\caprefl{b}}$ into $\OKC{cT\caprefl{b}}$ by a sequence of saddle maps $\caprefl{a}a\rightarrow I$, and similarly for the action on the other side.

\[ \OKCp{\capcap{c}{a}} \otimes \OKCp{\capBcap{a}{T}{b}} = 
\OKCp{
    \begin{tikzpicture}[baseline={([yshift=-.7ex]current bounding box.center)},x=1.7em,y=-1.7em]
      \node at (0,0) { \capcap{c}{a} };
      \node at (0,2) { \capBcap{a}{T}{b} };
    \end{tikzpicture}
  }
\xrightarrow{\text{saddles}}
\OKCp{
 \begin{tikzpicture}[baseline={([yshift=-.7ex]current bounding box.center)},x=1.7em,y=-1.7em]
  \BcapTopst[0]{$c$}
  \Bboxst[0]{$I$}
  \Bboxst[1]{$T$}
  \BcapBotst[2]{$\caprefl{b}$}
 \end{tikzpicture}
 }
\]
See \cite{KhTangles} for more details. (Note, Khovanov's quantum grading conventions in \cite{KhTangles} are opposite of his original conventions from \cite{Kh}; in this paper we are consistently following his original convention.) Then the key result is Theorem 6.7 in \cite{Roz}, where it is shown that the simplified full twist complexes stabilize to give a projective resolution of the identity bimodule $\KCbm{I}$; from this it is clear that for any $(2p,2p)$ tangle $T$, $\Kh{L}\simeq \HH{T}$ where $L$ is the closure of the $(2p,2p)$-tangle $T$ in $S^1 \times S^2$, and $\HH{T}$ denotes the Hochschild homology of the complex $\KCbm{T}$.

All of the definitions clearly generalize if we use the deformed complexes $\OdKC{aT\caprefl{b}}$ or the Lee complexes $\LCbm{aT\caprefl{b}}$ instead.

\begin{theorem}\label{thm:infFT is proj res}
Let $H_p'$ be the deformed Khovanov arc algebra.
Then, the limiting complex $\dKCbm{\infFT_{2p}}$ of $H_p'$-bimodules assigned to the infinite twist in the deformed category gives a projective resolution of the identity bimodule $\dKCbm{I}$, and similarly for the Lee complexes $\LCbm{\infFT_{2p}}$.
\end{theorem}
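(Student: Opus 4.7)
The plan is to follow Rozansky's strategy from \cite[Theorem 6.7]{Roz}, where the analogous statement is proved in the undeformed category. As already noted in the proof of Theorem \ref{thm:KC'(infFT)}, the deformed category $\dTL_{2p}$ differs from $\TL_{2p}$ only in that two dots on a connected component evaluate to $t$ rather than $0$; the sphere, dotted-sphere, neck-cutting, and delooping relations---which are the only relations Rozansky's construction uses---are identical in both settings. Thus the overall strategy transfers, and what needs to be checked is (a) projectivity of the terms and (b) the resolution property.

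For projectivity, I would appeal to Theorem \ref{thm:KC'(infFT)}, which guarantees that every diagram $\delta$ appearing in $\dKCbm{\infFT_{2p}}$ is split with $\thru{\delta} = 0$, hence of the form $a\caprefl{b}$ for $a, b \in \caps_p$. Khovanov's identification \cite{KhTangles} of $\BartoKh(a\caprefl{b})$ with the elementary projective bimodule $H_p' e_b \otimes_{R[t]} e_a H_p'$, where $e_a := a\caprefl{a}$, is a calculation purely in terms of the algebra structure of $H_p'$ and does not reference the two-dot evaluation, so it carries over verbatim. Hence every term of $\dKCbm{\infFT_{2p}}$ is a projective $H_p'$-bimodule.

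For the resolution property, I would produce a quasi-isomorphism $\dKCbm{\infFT_{2p}} \to \dKCbm{I}$. Using the stabilization from Theorem \ref{thm:KC'(infFT)}, in any fixed homological degree $d$ the complex agrees with $\CSharp(\FT_{2p}^k)$ for $k$ sufficiently large. The natural ``projection to the rightmost term'' map from this simplified finite-twist complex onto the identity tangle, together with the contracting homotopy on its mapping cone that Rozansky constructs in \cite[Section 6]{Roz}, yields the desired quasi-isomorphism. The contracting homotopy is built entirely from Naot's delooping isomorphism \cite{Nao-kh-universal} together with crossing-removing Reidemeister I and II simplifications, none of which depend on the two-dot relation, so the construction reproduces itself word-for-word in $\dTL_{2p}$. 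The Lee case then follows by specializing $t = 1$, since both projectivity of bimodules and the existence of chain-level contracting homotopies are preserved under the base change $R[t] \twoheadrightarrow R$, $t \mapsto 1$.

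The main obstacle I expect is not conceptual but bookkeeping: one has to walk through the explicit homotopies in the proof of \cite[Theorem 6.7]{Roz} step by step to confirm that the two-dot value never enters any computation. An alternative, perhaps cleaner route would be to bypass Rozansky's explicit homotopies and argue inductively on the homological degree, using the stabilization property of Theorem \ref{thm:KC'(infFT)} to build the quasi-isomorphism one truncation at a time, and verifying contractibility of the mapping cone via the same ``oriented resolution'' simplifications developed in Section \ref{sec:naturality}. Either route avoids introducing any new ingredient specific to the deformed setting.
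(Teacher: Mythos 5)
Your proposal follows essentially the same route as the paper's proof: cite Rozansky \cite[Theorem 6.7]{Roz}, use Theorem~\ref{thm:KC'(infFT)} to see that every diagram in the limit is split with through-degree zero (hence projective after applying $\BartoKh$, by \cite[Section 2.5]{KhTangles}), and observe that the construction never invokes the two-dot evaluation. The ``bookkeeping'' you flag but do not carry out is exactly what the paper makes explicit: the untwisting equivalence $u^*$ from $\BartoKh\CSharp(a\,\FT_{2p}^k\,\caprefl{b})$ to $\OdKC{a\,I\,\caprefl{b}}$ must be a map of \emph{bimodules}, and while one algebra action commutes with $u^*$ trivially, the other involves saddles that commute with the untwisting cobordism only up to isotopy, hence a priori only up to chain homotopy (with a sign that must be fixed consistently, via \cite[Theorem 7.17]{Roz} or \cite[Lemma 2.26]{MW}). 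The paper's clean way to dispatch this is a degree count: since $\BartoKh\CSharp(\FT_{2p}^k)$ is concentrated in non-positive homological degrees while $\OdKC{I}$ sits purely in degree zero, any degree-$(-1)$ homotopy vanishes, so the maps commute on the nose. That observation is the content the paper adds beyond ``Rozansky carries over verbatim,'' and it is worth recording explicitly rather than deferring to a step-by-step audit of Rozansky's homotopies.
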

\begin{proof}
We summarize and expand upon the main points of Rozansky's argument in \cite{Roz}, but using our simplified complexes $\BartoKh \CSharp(\FT_{2p}^k)$ in the deformed category instead.  We know from Theorem \ref{thm:KC'(infFT)} that the complex $\dKC{\infFT_{2p}}$ is comprised of split diagrams; these correspond to projective bimodules after applying $\BartoKh$ (see \cite[Section 2.5]{KhTangles}).  This complex is determined through any finite homological degree by truncated versions of the simplified complexes $\BartoKh \CSharp(\FT_{2p}^k)$.  The key point is to note that each of these complexes is itself chain homotopy equivalent to the identity bimodule $\dKCbm{I}$.

Indeed, if we ignore the two algebra actions for a moment, it is clear that for any $a,b\in\caps$, 
\[\BartoKh \CSharp\left( \capBcap{a}{\FT_{2p}^k}{b} \right) \simeq \OdKCp{ \capBcap{a}{\FT_{2p}^k}{b} } \stackrel{u^*}{\simeq}
\OdKCp{ \capBcap{a}{I}{b} }\]
via a simple untwisting $u$ of the link $a\FT_{2p}^k \caprefl{b}$.  Depending on which way we untwist, one of the two actions clearly commutes with this equivalence.  Meanwhile, the other action involves saddles $s$ between pairs of points, one of which will be involved in the untwisting $u$.  The cobordism $s\circ u$ is isotopic to $u \circ s$ (see \cite[Lemma 2.25]{MW}).
\[
\begin{tikzpicture}[baseline={([yshift=-.7ex]current bounding box.center)},x=8em,y=5em]
\node(A) at (-1,0)
{
 \begin{tikzpicture}[baseline={([yshift=-.7ex]current bounding box.center)},x=1.7em,y=-1.7em]
 \node at(0,0){ \capBcap{a}{\FT_{2p}^k}{b} };
 \node at (0.3,1.3){
    \begin{tikzpicture}[x=.5em,y=-.5em]
      \draw[->,red]
        (0,0) to[out=-90,in=180] (1,.5) to[out=0,in=-90] (2,0) to[out=90,in=0] (1,-.5);
    \end{tikzpicture}
    };
 \node[right,red] at (.6,1.3){$u$};
 \node at (0,3) { \capcap{b}{c} };
 \draw[blue,thick] (-.3,1.3)--(-.3,2.2);
 \node[left,blue] at (-.3,1.75) {$s$};
 \end{tikzpicture}
};

\node(B) at (0,1)
{
 \begin{tikzpicture}[baseline={([yshift=-.7ex]current bounding box.center)},x=1.7em,y=-1.7em]
 \node at(0,0){ \capBcap{a}{I}{b} };
 \node at (0,2) { \capcap{b}{c} };
 \end{tikzpicture}
};

\node(C) at (0,-1)
{
 \begin{tikzpicture}[baseline={([yshift=-.7ex]current bounding box.center)},x=1.7em,y=-1.7em]
 \node at(0,0){ \capBcap{a}{\FT_{2p}^k}{c} };
 \end{tikzpicture}
};

\node(D) at (1,0)
{
 \begin{tikzpicture}[baseline={([yshift=-.7ex]current bounding box.center)},x=1.7em,y=-1.7em]
 \node at(0,0){ \capBcap{a}{I}{c} };
 \end{tikzpicture}
};

\draw[->]
(A) -- (B) node[midway,above,red]{$u$};

\draw[->]
(A) -- (C) node[midway,above,blue]{$s$};

\draw[->]
(B) -- (D) node[midway,above,blue]{$s$};

\draw[->]
(C) -- (D) node[midway,above,red]{$u$};

\node at (0,0) {$\simeq$};

\end{tikzpicture}
\]
This implies that the corresponding maps on complexes are chain homotopic up to a sign, which we can arrange to be consistently positive (see \cite[Theorem 7.17]{Roz} or \cite[Lemma 2.26]{MW}).  Meanwhile, since the \emph{simplified} complex $\BartoKh \CSharp(\FT_{2p}^k)$ is supported in non-positive homological degrees, and $\OdKC{I}$ is supported purely in degree zero, one can quickly conclude that the homotopy must be zero as well, and thus the maps commute on the nose.  Therefore we do indeed have an equivalence of bimodules, and the stable limiting complex $\dKCbm{\infFT_{2p}}$ gives a projective resolution of $\dKCbm{I}$.  All of the same arguments work equally well when we set the deformation parameter $t:=1$, for the case of Lee homology.
\end{proof}

\begin{corollary}\label{cor:Hochschild}
Let $T$ be a $(2p,2p)$ tangle in $I \times D^2$, and let $L$ be the corresponding link in $\SSone$ formed by collapsing the boundary of $D^2$ to a point, and identifying the boundary points of $I$.  Let $\dKCbm{T}$ (respectively $\LCbm{T}$) be the complex of bimodules using the deformed (respectively Lee) complexes as above, with corresponding Hochschild homology $\dHH{T}$ (respectively $\LHH{T}$).  Then $\dKh{L}\cong \dHH{T}$ (respectively $\Lh{L}\cong \LHH{T}$).
\end{corollary}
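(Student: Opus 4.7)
The plan is to combine the projective resolution result of Theorem~\ref{thm:infFT is proj res} with the finite-approximation description of $\dKh{L}$ from Corollary~\ref{cor:KC'(L) finite approx}, via the standard interpretation of the closure operation as a Hochschild-style coend. Recall that for any bimodule $M$ over an algebra $A$, the Hochschild homology $\HHnobrackets(M)$ is computed as the homology of $M \otimes_{A^e} P$, where $A^e = A \otimes A^{\mathrm{op}}$ and $P \to A$ is any projective bimodule resolution of the diagonal $A$. Here we take $A = H_p'$ (the deformed arc algebra) and apply this with $M = \dKCbm{T}$.

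First I would set up the closure operation diagrammatically. Given a $(2p,2p)$-tangle $T \subset I \times D^2$, the closure in $\SSone$ is obtained by gluing the top $2p$ endpoints to the bottom $2p$ endpoints through a one-handle, so any diagram $D$ representing the closure $L$ can be drawn as $T$ with a single attaching sphere meeting the $2p$ joined strands. By Corollary~\ref{cor:KC'(L) finite approx}, $\OdKC{D}$ is computed (in any finite homological range) by inserting the infinite twist complex $\OdKC{\infFT_{2p}}$ in place of the attaching sphere; after applying $\BartoKh$ and identifying the boundary points on the two sides of the twist, this inserts $\dKCbm{\infFT_{2p}}$ into the bimodule tensor product with $\dKCbm{T}$. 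Explicitly, the closure identifies the summand $\OdKCp{aT\caprefl{b}}$ of $\dKCbm{T}$ with the summand $\OdKCp{b \infFT_{2p} \caprefl{a}}$ of $\dKCbm{\infFT_{2p}}$ by gluing, which is exactly the formula for $\dKCbm{T} \otimes_{H_p' \otimes (H_p')^{\mathrm{op}}} \dKCbm{\infFT_{2p}}$ (the idempotents $a\caprefl{a}$ and $b\caprefl{b}$ implementing the matching of ends).

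Next I would invoke Theorem~\ref{thm:infFT is proj res}: $\dKCbm{\infFT_{2p}}$ is a projective resolution of $\dKCbm{I}$ in the category of $H_p'$-bimodules, since each simplified finite-twist summand is built from split diagrams that correspond to projective bimodules (cf.~\cite[Section 2.5]{KhTangles}). Therefore
\[
\dHH{T} \;=\; H^*\!\left( \dKCbm{T} \otimes_{H_p' \otimes (H_p')^{\mathrm{op}}} \dKCbm{\infFT_{2p}} \right),
\]
and by the identification of the previous paragraph this is exactly $\dKh{L}$. The Lee statement $\Lh{L} \cong \LHH{T}$ follows verbatim by setting $t=1$, using the second half of Theorem~\ref{thm:infFT is proj res} which gives the analogous projective resolution in the Lee category.

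The main obstacle I expect is the careful bookkeeping in the second paragraph, namely verifying that the closure operation on complexes agrees with the bimodule tensor product over $H_p' \otimes (H_p')^{\mathrm{op}}$: one must check that the planar-algebra gluing of $\dKCbm{T}$ with $\dKCbm{\infFT_{2p}}$ (with the appropriate quantum shift $\q^{-p}$ per inserted arc algebra factor) really reproduces the Hochschild-style coend, including the sign conventions. This is essentially parallel to Rozansky's corresponding argument in \cite{Roz} for the undeformed Khovanov complex, and the key input that justifies carrying it through unchanged is precisely that Theorem~\ref{thm:infFT is proj res} and its proof never used the value of two dots on the same component, so all of Rozansky's bimodule identifications lift verbatim to $R[t]$ and, by specialization, to the Lee case.
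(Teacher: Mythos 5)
Your proposal is correct and takes essentially the same route as the paper's own proof: both rest on Theorem~\ref{thm:infFT is proj res} to realize $\dKCbm{\infFT_{2p}}$ as a projective resolution of the diagonal, identify the closure of $\infFT_{2p}\cdot T$ in $S^3$ with the bimodule tensor over $H_p'\otimes(H_p')^{\mathrm{op}}$ (the step the paper also leaves to Rozansky's framework), and conclude by the finite-approximation definition of $\dKh{L}$, with the Lee version following by setting $t=1$. Your write-up is a bit more explicit about the trace/coend bookkeeping, but conceptually the two arguments are the same.
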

\begin{proof}
The stable limiting complex $\dKCbm{\infFT_{2p}}$ gives a projective resolution of $\dKCbm{I}$, which indicates that the homology of $\dKCbm{\infFT_{2p}\cdot T}$ is precisely the Hochschild homology $\dHH{T}$.  But this is equivalent to taking the closure of $\infFT_{2p} \cdot T$ in $S^3$ and computing the usual Khovanov homology, which is precisely how $\dKh{L}$ is defined.  The statements for Lee homology follow from setting $t:=1$.
\end{proof}

\section{The $s$-invariant of the link $\Fp \subset \SSone$}
\label{sec:Lpp}

We now aim to compute the $s$-invariant of the specific oriented link $\Fp \subset \SSone$ depicted on the left of Figure \ref{fig:Fp}.  In words, $\Fp$ is the image of the $n=2p$ strand identity braid in $I \times S^2$ under the identification of the two boundary spheres $\{0\}\times S^2$ and $\{1\}\times S^2$, with $p$ strands oriented `upwards' and $p$ strands oriented `downwards'.

{
\renewcommand{\thethm}{\ref{thm:s(Fp)}}
\begin{theorem}
For the link $\Fp \subset \SSone$, we have
\[s(\Fp)=1-2p.\]
\end{theorem}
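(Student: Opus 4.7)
The plan is to compute $s(\Fp)$ via the Hochschild homology interpretation from Section \ref{sec:Hochschild}. By Corollary \ref{cor:Hochschild}, we may identify $\Lh{\Fp} \cong \LHH{I}$, where $I$ is the identity $(2p,2p)$-tangle, so that the Lee cycles $\s_o$ and $\s_{\bar o}$ live in the Hochschild complex of the Lee arc algebra $H_p^{\mathrm{Lee}}$ with coefficients in the identity bimodule. Since $s(\Fp)$ is determined by the quantum filtrations of $[\s_o \pm \s_{\bar o}]$, the task reduces to making these filtrations accessible.

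The idea is to replace Rozansky's infinite twist projective resolution $\LCbm{\infFT_{2p}}$ of $\LCbm{I}$ with a smaller, ``bar-like'' projective resolution $P_\bullet \to \LCbm{I}$, built from the cup-cap bimodules $\LCbm{a\caprefl{a}}$ associated to crossingless matchings $a$ of $2p$ endpoints, connected by saddle cobordism maps. The augmentation $P_0 \to \LCbm{I}$ is a direct sum of canonical saddles and is entirely explicit; the essential feature is that every generator of $P_\bullet$ sits in a controlled quantum degree. One then shows that $P_\bullet$ and $\LCbm{\infFT_{2p}}$ are chain homotopy equivalent as resolutions of $\LCbm{I}$, so that both compute the same Hochschild homology but the former is far better suited to filtration calculations.

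Using $P_\bullet$, the Hochschild complex $\LCbm{I} \otimes_{(H_p^{\mathrm{Lee}})^e} P_\bullet$ admits explicit cycles lifting $[\s_o]$ and $[\s_{\bar o}]$. The null-homologous orientation $o$ of $\Fp$ selects a canonical matching $a_o$ pairing each up-strand with a down-strand, and the Lee cycles are supported on the direct summand of $P_0$ indexed by $a_o$. Reading off quantum filtrations from this explicit description, one expects to find $\{\q([\s_o+\s_{\bar o}]),\, \q([\s_o-\s_{\bar o}])\} = \{-2p,\, 2-2p\}$, mirroring the computation for the $2p$-component unlink in $S^3$, whence $s(\Fp) = 1 - 2p$ by Definition \ref{def:s invt}.

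The main obstacle is the construction of the bar-like resolution $P_\bullet$ together with a chain homotopy equivalence to $\LCbm{\infFT_{2p}}$ that identifies the Lee cycles on both sides; equivalently, one must verify that the comparison map $P_\bullet \to \LCbm{\infFT_{2p}}$ preserves the Lee subspaces singled out by Theorem \ref{thm:oriented res is correct}. A secondary technical point is tracking quantum filtration shifts through the saddle maps and delooping isomorphisms used to simplify the Lee complexes. Once these issues are handled, the rest of the argument is a direct quantum filtration calculation on an explicit bar complex, analogous to the standard unlink computation in Lee homology.
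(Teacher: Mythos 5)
Your proposal follows essentially the same route as the paper: pass to the Hochschild-homology picture, replace Rozansky's infinite-twist projective resolution of the identity bimodule by a bar-type resolution built from cup--cap bimodules and saddle maps, and then read off the quantum filtration of the Lee class in homological degree zero. The only substantive remark is that the obstacle you flag as ``main'' --- tracking the specific Lee generators through the comparison map --- is sidestepped in the paper by noting that all null-homologous orientations of $\Fp$ are related by isotopies of $\SSone$, so all Lee generators lie in a single filtration level; the computation then reduces to showing that the all-$x$ generator $\x_a$ (a cycle in minimal filtration level $-2p$) is not homologous to anything of strictly higher filtration, which follows by applying the explicit augmentation $\pi$ to the closed identity diagram.
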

\addtocounter{thm}{-1}
}
\begin{proof}
The Lee complex for the link $\Fp$ is determined by the complex $\LC{\infFT_{2p}}$.  Theorem \ref{thm:infFT is proj res} tells us that the deformed complex $\dKCbm{\infFT}$ is a projective resolution of $\dKCbm{I}$, which implies that $\dKCbm{\infFT}$ is chain homotopy equivalent to any projective resolution of the identity that we like.  We choose to use the bar construction $B$ illustrated below 
(note the $q$-degree shifts, which are determined by the fact that every map consists of $p$ saddles which are of degree -1).
\[B:=
\begin{tikzpicture}[baseline={([yshift=-.7ex]current bounding box.center)},x=12em,y=1.7em]


\node(A) at (3,0)
 {$\underline{
  \displaystyle\bigoplus_{a\in\caps_p} \q^{-p}\left(
  \begin{tikzpicture}[baseline={([yshift=-.7ex]current bounding box.center)},x=1.7em,y=-1.7em]
   \BcapBotst[0]{$\caprefl{a}$}
   \BcapTopst[1.5]{$a$}
  \end{tikzpicture}
  \right)
 }$};

\node(B) at (2,0)
 {$
  \displaystyle\bigoplus_{\substack{b_i\in\caps_p\\ i=1,2}} \q^{-2p}\left(
  \begin{tikzpicture}[baseline={([yshift=-.7ex]current bounding box.center)},x=1.7em,y=-1.7em]
   \BcapBotst[0]{$\caprefl{b_1}$}
   \BcapTopst[1.5]{$b_1$}
   \BcapBotst[1.5]{$\caprefl{b_2}$}
   \BcapTopst[3]{$b_2$}
  \end{tikzpicture}
  \right)
 $};

\node(C) at (1,0)
 {$
  \cdots
  \displaystyle\bigoplus_{\substack{c_i\in\caps_p\\ i =1,2,3}} \q^{-3p} \left(
  \begin{tikzpicture}[baseline={([yshift=-.7ex]current bounding box.center)},x=1.7em,y=-1.7em]
    \BcapBotst[0]{$\caprefl{c_1}$}
    \BcapTopst[1.5]{$c_1$}
    \BcapBotst[1.5]{$\caprefl{c_2}$}
    \BcapTopst[3]{$c_2$}
    \BcapBotst[3]{$\caprefl{c_3}$}
    \BcapTopst[4.5]{$c_3$}
   \end{tikzpicture}
   \right)
   $};
   

\draw[->] (2.35,1) -- node[midway,above]{$s_1$} (A);
\draw[->] (2.35,-1) -- node[midway,below]{$-s_2$} (A);
   
\draw[->] (1.45,1.7) to[out=0,in=180-30] node[midway,above]{$s_1$} (B) ;
\draw[->] (1.45,0) -- (B) node[midway,above]{$-s_2$};
\draw[->] (1.45,-1.7) to[out=0,in=180+30] node[midway,below]{$s_3$} (B);

\end{tikzpicture}
\]
The underlined term is in homological degree zero, where each summand comes equipped with a map to the identity braid via saddles $s:\caprefl{a}a \rightarrow I$ as in the multiplication maps of the arc-algebras.  Similarly, the differentials are matrices made of the various choices for saddles $\pm s_i : \caprefl{a_i}a_i \rightarrow I$ as illustrated.



 


 




We have a chain homotopy equivalence $f':\dKCbm{\infFT} \rightarrow B$ in the deformed category, which corresponds to a degree-zero filtered equivalence on Lee complexes when $t=1$.  When we close the diagrams of $\LC{\infFT}$ in $S^3$, we retrieve $\LC{\Fp}$; we will let $\overline{B}$ denote the corresponding complex associated to closing the diagrams of $B$.  Thus we find that we have a quasi-isomorphism
\[f:\OLC{\Fp} \rightarrow \overline{B}\]
which preserves the spectral sequence and, in particular, the filtration levels of homology.
Normally for links, to say that $f$ preserves the $s$-invariant, we would also need to know that $f$ `preserves' corresponding Lee generators.  Here though, because any null-homologous orientation of $\Fp$ is isotopic to any other (the strands in $\Fp$ can easily be permuted in $\SSone$), all of the corresponding Lee generators for $\Fp$ sit in one filtration level and we do not need to worry about specific generators at all.  Thus to calculate $s(\Fp)$, it is enough to compute the filtration level of the homology of $\overline{B}$, which sits entirely in homological degree zero.

To this end, we consider the complex $\overline{B}$.  In homological degree zero, this complex contains generators coming from diagrams of the form $\overline{\caprefl{a} a}$ consisting of $p$ circles in $D^2$.  Being a projective resolution of the identity bimodule $I$, we have the filtered chain map $\pi$ from such diagrams to the unlink $\overline{I}$ of $2p$ circles via $p$ saddles described above.  All of the saddles split one circle into two, and thus induce co-multiplication maps on the generators.

Let $\x_{a}\in\overline{B}^0$ denote the generator having a label of $x$ on each circle in $\overline{\caprefl{a} a}$.  Together with the $q$-degree shift of $-p$, this generator sits in filtration level $-2p$, which is the minimum (most inclusive) such level.  Being in homological degree zero of $\overline{B}$ means $\x_{a}$ is a cycle.  Because the map $\pi$ consists entirely of co-multiplications, we can compute
\[\pi(\x_a) = \x_I + \y\]
where $\x_I$ denotes the term having a label of $x$ on each circle of $\overline{I}$ (which also sits in filtration level $-2p$), and $\y$ consists of other terms in higher filtration level.
Now suppose $\x_a$ is homologous to some $\z\in\overline{B}^0$ (possibly zero) in higher filtration level; let $\w\in\overline{B}^{-1}$ such that 
\[d\w=\x_a - \z\]
Since $\pi$ is a chain map, we would then have
\[0 = \pi d (\w) = \x_I + \y - \pi(\z),\]
which in turn enforces
\[\x_I = \pi(\z)-\y,\]
which is a contradiction since $\x_I$ is in filtration level $-2p$ and
the right hand side is in higher filtration level.  Thus, the Lee
homology for $\Fp$ contains a generator in minimal filtration level
$-2p$. 
Since $s(L) = s_{\min}(L)+1$ (see Remark \ref{rem:new definition of s}), we find that $s(\Fp)$ is one
greater than this minimum level.
\end{proof}

As a corollary, we can compute the $s$-invariant for the link $\Fp(1) \subset S^3$, shown on the right hand side of Figure~\ref{fig:Fp}.

{
\renewcommand{\thethm}{\ref{thm:sLpp}}
\begin{theorem}
Let $\Fp(1) \subset S^3$ denote the torus link $T(2p,2p)$ with $p$ strands oriented one way, and $p$ strands oriented the other way. Then
\[
s(\Fp(1)) = 1-2p.
\]
\end{theorem}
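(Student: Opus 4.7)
The proof is essentially immediate from two results established earlier in the excerpt, and my plan is to simply combine them. The key observation is that the standard diagram $D$ for $\Fp \subset \SSone$ consists of $2p$ parallel strands threading the single surgery sphere, with no crossings outside the dashed region, so $n_D^+ = 0$. The finite approximation formula from Theorem~\ref{thm:s invt finite approx} (equivalently, Theorem~\ref{thm:fda}) then says that
\[
s(\Fp) \;=\; s(\Fp(\vec{k})) \quad \text{for every } k \geq \intceil{\tfrac{n_D^+ + 2}{2}} = 1.
\]
In particular, the bound is already saturated at $k=1$, which gives $s(\Fp) = s(\Fp(1))$.

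Combining this with Theorem~\ref{thm:s(Fp)}, which was proved via the Hochschild/bar-resolution argument and yields $s(\Fp) = 1-2p$, we immediately conclude $s(\Fp(1)) = 1-2p$.

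The only point I would double-check carefully is that the diagram $\Fp(1)$ appearing in Figure~\ref{fig:Fp} indeed agrees with the link $D(\vec{k})$ produced by the finite-approximation construction of Section~\ref{sec:constructing deformed cx} applied to the standard diagram for $\Fp$ with $\vec{k} = (1)$: namely, that inserting one copy of the full twist $\FT_{2p}$ in place of the surgery sphere yields the oriented torus link $T(2p,2p)$ with $p$ strands oriented each way. This is visual, and is already asserted in the statement and figure caption.

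Thus the plan has essentially no hard step. The genuine content of the theorem lives entirely in Theorem~\ref{thm:s(Fp)}; Theorem~\ref{thm:sLpp} is a direct corollary obtained by observing that the standard diagram of $\Fp$ has no positive crossings, so the finite-approximation estimate of Theorem~\ref{thm:s invt finite approx} already bites at $k=1$.
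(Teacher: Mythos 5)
Your proposal is correct and matches the paper's own proof exactly: the paper also combines Theorem~\ref{thm:s(Fp)} with Theorem~\ref{prop:s invt by finite approx}, noting that $n^+_D = 0$ for the standard diagram of $\Fp$ so that $k=1$ suffices. You have simply spelled out the arithmetic slightly more explicitly.
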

\addtocounter{thm}{-1}
}
\begin{proof}
Combine Theorem \ref{thm:s(Fp)} and Theorem \ref{prop:s invt by finite approx}; for $L=\Fp$, $n^+_L=0$ and $k=1$ in the statement for the finite approximation.
\end{proof}

\section{Adjunction in $\#^r \bCP$}
\label{sec:adj}

\subsection{Orientation and positivity}

Recall the notion of generalized crossing change from \cite{Positive}, which was mentioned in the Introduction; cf.~Figure~\ref{fig:GCC}. 

\begin{remark}
\label{rem:GCC}
Suppose that a link $L^\tw$ is obtained from $L$ by adding a generalized negative crossing, and let $\ell = |L| = |L^\tw|$ be the number of link components. Then there is a cobordism $\Sigma$ in $\bCP\setminus(B^4 \sqcup B^4)$ from $L$ to $L^\tw$, which consists of $\ell$ disjoint cylinders, each of them connecting a component of $L$ to one of $L^\tw$, and which is homologically trivial rel boundary in $\bCP\setminus(B^4 \sqcup B^4)$.
The fact that $\Sigma$ is null-homologous follows from the fact that in the twisting region there are as many strands going up as there are going down.
\end{remark}

\begin{definition}
Given a closed 4-manifold $X$, we say that an $\ell$-component link $L \subseteq S^3$ is \emph{strongly H-slice in $X$} if $L = \de \Sigma$, where $\Sigma \subseteq X^{\circ} := X \sm B^4$ is an embedded surface consisting of $\ell$ disjoint discs, with $[\Sigma] = 0$ in $H_2(X^{\circ}, \de X^{\circ})$.
\end{definition}

\begin{remark}
\label{rem:Ltw}
Let $L^\tw$ be obtained from $L$ by adding a generalized negative crossing. If $L$ is strongly slice (i.e., strongly (H-)slice in $S^4$), then $L^\tw$ is strongly H-slice in $\bCP$. This is because, by Remark~\ref{rem:GCC}, each component of $L^\tw$ is connected by a cylinder to a component of $L$. The cylinder can be glued to the disc in $B^4$ bounded by this component of $L$.

Analogously, if instead $L^\tw$ is obtained from $L$ by adding a generalized \emph{positive} crossing and $L$ is strongly slice, then $L^\tw$ is strongly H-slice in $\CP$.
\end{remark}

\begin{example}
\label{ex:LHT}
The standard diagram of the left handed trefoil $T_{2,-3}$ is obtained from a diagram of the unknot by turning a positive crossing into a negative crossing. Thus, $T_{2,-3}$ is strongly H-slice in $\bCP$.
\end{example}

\begin{example}
The right handed trefoil $T_{2,3}$ is not strongly H-slice in $\bCP$. This is obstructed e.g.~by Theorem \ref{thm:adjunction-tau} and Corollary \ref{cor:adjunction}.
However, $T_{2,3}$ bounds a non-null-homologous disc in $\bCP$. In fact, the crossing change \emph{from negative to positive} can also be realized by adding a positive twist (same as for the crossing change \emph{from positive to negative}), but now both strands in the twisting region point in the same direction, so the surface one gets is no longer null-homologous.
\end{example}

\begin{remark}
$L$ is strongly H-slice in $\bCP$ if and only if $\m L$ is strongly slice in $\CP$. Thus, the right handed trefoil is strongly H-slice in $\CP$.
\end{remark}

\begin{example}
The link $\Fp(1)$ is obtained from the $(2p)$-component unlink in $S^3$ by adding a generalized negative crossing. Thus, $\Fp(1)$ is strongly H-slice in $\bCP$.
\end{example}

\subsection{An adjunction inequality in $\#^r\bCP$}

Ozsv\'ath and Szab\'o proved the following adjunction inequality for the $\tau$ invariant. (An alternative proof based on cobordism maps was given in~\cite{Zgradings}.)

\begin{theorem}[{\cite[Theorem 1.1]{os-tau}}]
\label{thm:adjunction-tau}
Let $W$ be a smooth, oriented four-manifold with $b_2^+(W) = b_1(W) = 0$, and $\de W = S^3$.
Let $K \subset \del W= S^3$ be a knot, and $\Sigma \subset W$ a properly, smoothly embedded oriented connected surface, such that $\del \Sigma =K$. Then
\[
2 \tau(K) \leq 1-\chi(\Sigma) - \left|[\Sigma]\right| - [\Sigma] \cdot [\Sigma].
\]
\end{theorem}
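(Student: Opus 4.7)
The strategy I would follow parallels the approach used later in this section for the $s$-invariant, but in the Heegaard Floer setting. It proceeds by first reducing to the null-homologous case via blow-ups, then reinterpreting the surface as a cobordism in $I \times S^3$ after excising $2$-handles, and finally applying the cobordism bound for $\tau$ combined with a $\tau$-computation for balanced torus links (analogous to Theorem~\ref{thm:sLpp}).

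For the reduction step, I would blow up $W$ repeatedly, replacing $\Sigma$ by its proper transform. Each blow-up at a point of $\Sigma$ keeps $\chi(\Sigma)$ constant but decreases $[\Sigma]^2$ by $1$, while adding an exceptional class $E$ to $H_2$. A judicious sequence of blow-ups (combined with internal tubings using the exceptional spheres to kill the residual homology class) should produce a new pair $(\widetilde{W}, \widetilde{\Sigma})$ with $\widetilde{W} = W \#^N \bCP$ still satisfying $b_1 = b_2^+ = 0$, and with $[\widetilde{\Sigma}] = 0 \in H_2(\widetilde{W}, \del \widetilde{W})$. The inequality for $\widetilde{\Sigma}$ in $\widetilde{W}$ would then reduce to $2\tau(K) \leq 1 - \chi(\widetilde{\Sigma})$, and the bookkeeping should show this recovers the full original inequality when translated back to $(W, \Sigma)$.

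Having reduced to the null-homologous case, I would use that $\widetilde{W}$ admits a handle decomposition with a single $0$-handle and several $2$-handles attached to an unlink in $S^3$. Since $[\widetilde{\Sigma}] = 0$, the surface can be isotoped off the co-cores of the $2$-handles. Excising regular neighborhoods of those co-cores and reversing orientation transforms $\widetilde{\Sigma}$ into a cobordism $\widetilde{\Sigma}' \subset I \times S^3$ from $\m K$ to a cable $J$ of the attaching link; the null-homology condition forces $J$ to be a balanced cable, structurally the same as the torus link $\Fp(1)$ for an appropriate $p$. Applying Ozsv\'ath-Szab\'o's cobordism bound $2\tau(\m K) - 2\tau(J) \leq -\chi(\widetilde{\Sigma}')$ and inserting the computation $2\tau(J) = 1-2p$ for the relevant balanced torus link (mirroring Theorem~\ref{thm:sLpp}, established in the Heegaard Floer setting by direct link Floer calculation) delivers the desired bound after unwinding the Euler characteristic contributions from the co-core removals.

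The main obstacle will be the blow-up bookkeeping: one must track how $[\Sigma]^2$, $|[\Sigma]|$, and $\chi(\Sigma)$ transform simultaneously so that after the reduction the residual inequality for the null-homologous surface matches the stated formula $1 - \chi(\Sigma) - |[\Sigma]| - [\Sigma]\cdot[\Sigma]$ exactly; in particular one must justify that $N$ blow-ups plus the internal tubings cost $N + |[\Sigma]|$ to $-\chi$ and $-N$ to $[\Sigma]^2$ in just the right proportion. A secondary difficulty is the $\tau$-computation for the balanced torus link $\Fp(1)$: in the Heegaard Floer setting this requires direct calculation with grid diagrams or knot Floer homology, whereas the $s$-invariant proof of Section~\ref{sec:Lpp} gives an elegant shortcut via Hochschild homology of the arc algebra that has no obvious Heegaard Floer analog.
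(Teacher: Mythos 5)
The paper does not prove this theorem; it is stated as a black-box citation of Ozsv\'ath--Szab\'o's \cite[Theorem 1.1]{os-tau}, whose actual argument uses the $4$-dimensional cobordism maps on $HF^+$, the surgery exact triangle, and the absolute $\Q$-grading on Heegaard Floer homology, rather than any cabling or cobordism-in-$I\times S^3$ strategy. Your proposal is therefore not a reconstruction of an omitted proof but a new attempted proof, modeled on the paper's treatment of $s$ in Section~\ref{sec:adj}, and it has a genuine gap.

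The gap is in the first reduction step. You cannot, in general, reduce a surface $\Sigma$ with $[\Sigma]\neq 0$ to a null-homologous one while only sacrificing $|[\Sigma]|+[\Sigma]\cdot[\Sigma]$ from $-\chi$. Blowing up at a point of $\Sigma$ changes $[\Sigma]$ to $[\Sigma]-E$, so the $H$-components of the class are untouched, and tubing with exceptional spheres only modifies the $E$-components; this combination cannot annihilate $[\Sigma]$ unless it was already zero. If instead one tries to tube or resolve $\Sigma$ against $(-H)$-spheres, the bookkeeping only comes out right when $\Sigma$ meets each such sphere in exactly $|[\Sigma]\cdot H|$ points, i.e., when the geometric and algebraic intersection numbers agree; in a smooth $4$-manifold one cannot arrange that by a Whitney-type move, and each surplus cancelling pair of intersections costs an extra $4$ to $-\chi$, ruining the estimate. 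This is exactly the obstruction that the paper itself runs into for $s$: Corollary~\ref{cor:adjunction} handles only $[\Sigma]=0$, and the general case (Conjecture~\ref{conj:AdjGeneral}) remains open precisely because one must control $s(F_{p,q}(1))$ for arbitrary $p\neq q$ rather than just the balanced cables $\Fp(1)$, since the co-core removal produces a cable whose strand count records the \emph{geometric}, not algebraic, intersection of $\Sigma$ with $\mathbb{CP}^1$. If your reduction worked, it would immediately upgrade Corollary~\ref{cor:adjunction} to Conjecture~\ref{conj:AdjGeneral}, which the authors do not claim.

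There is also a circularity concern in the second half of your proposal. You would need an independent computation of $\tau$ for the balanced torus links $\Fp(1)$ (and, once the reduction is abandoned, for the unbalanced ones $F_{p,q}(1)$). But the paper's final Remark in Section~\ref{sec:GeneralSurfaces} derives $\tau(F_{p,q}(1)) = \tfrac{(p-q)(p-q-1)}{2}$ \emph{from} Theorem~\ref{thm:adjunction-tau} and its link generalization in \cite{HR}, so within the logic of this paper the dependence runs the opposite way. To make your approach non-circular you would have to supply a direct knot Floer or grid-diagram calculation of these $\tau$ invariants, which is not something the paper or \cite{os-tau} provides.
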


Here $|[\Sigma]|$ and $[\Sigma] \cdot [\Sigma]$ are the $L^1$-norm and the intersection form computed on the homology class $[\Sigma] \in H_2(W,\de W)$. They both vanish if $[\Sigma] = 0$.

\begin{remark}
Theorem~\ref{thm:adjunction-tau} remains true if we drop the assumption $b_1(W)=0$. Indeed, we can reduce this to the case $b_1(W) =0$ by doing surgery on the loops that generate $H_1(W; \Q)$. 
\end{remark}

As we will see, an analogous adjunction inequality holds for Rasmussen's $s$-invariant in $S^3$, in the case of null-homologous surfaces in $\#^t\bCP$ (see Corollary \ref{cor:adjunction}).

\begin{theorem}
\label{thm:strongadj}
Consider an oriented cobordism $\Sigma\subset Z = (I \times \SSr) \# (\#^t\bCP)$ from a null-homologous link $L_1$ to a second null-homologous link $L_2$.
Suppose that $[\Sigma] = 0$ in $H_2(Z, \de Z)$ and that every component of $\Sigma$ has a boundary component in $L_2$.
Then, we have an inequality of $s$-invariants
\[s(L_1) - s(L_2) \geq \chi(\Sigma),\]
where $\chi(\Sigma)$ denotes the Euler characteristic of $\Sigma$.
\end{theorem}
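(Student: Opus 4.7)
The plan is to reduce the cobordism $\Sigma \subset Z$ to one living in $I \times \SSr$, to which the time-reversed form of Theorem~\ref{thm:GenusBoundCylinders} applies directly; the reduction replaces the spheres of the $\bCP$ summands with oriented Hopf cables whose $s$-invariants are computed by Theorem~\ref{thm:sLpp}. Write $Z = (I \times \SSr) \# \#^t \bCP$ and let $S_1, \ldots, S_t$ denote the embedded $(-1)$-spheres generating the new $H_2$-summands contributed by the $\bCP$'s. First I would isotope $\Sigma$ to meet each $S_i$ transversally. Because $[\Sigma] = 0$ in $H_2(Z, \partial Z)$, the algebraic intersection $\Sigma \cdot S_i$ vanishes, so $\Sigma \cap S_i$ consists of $p_i$ positive and $p_i$ negative points for some integer $p_i \geq 0$.

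Next, excavate tubular neighborhoods $N(S_i) \cong \mathcal{O}_{S^2}(-1)$. The intersection $\Sigma \cap N(S_i)$ is a union of $2p_i$ fiber disks whose boundaries are oriented Hopf fibers on $\partial N(S_i) \cong S^3$; tracking intersection signs shows that the resulting link is precisely $\Fpp{p_i}(1)$. Since blowing down each $S_i$ turns $\#^t \bCP$ into $S^4$, the complement $Z \setminus \bigcup_i N(S_i)$ is diffeomorphic to $(I \times \SSr)$ with $t$ open balls removed. Setting $\Sigma' := \Sigma \setminus \bigcup_i (\Sigma \cap N(S_i))$, the removal of $2p_i$ open disks per sphere preserves the connectivity of each component and yields $\chi(\Sigma') = \chi(\Sigma) - 2\sum_i p_i$ together with $\partial \Sigma' = L_1 \sqcup L_2 \sqcup \bigsqcup_i \Fpp{p_i}(1)$. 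I then isotope each $\Fpp{p_i}(1)$ along a path in $Z \setminus \bigcup_i N(S_i)$ to lie on the $L_1$-copy of $\SSr$, tracked by annuli of Euler characteristic zero, and fill the excised balls with standard $B^4$'s (disjoint from the annuli). The result is a surface $\tilde \Sigma \subset I \times \SSr$ with $\chi(\tilde \Sigma) = \chi(\Sigma) - 2\sum_i p_i$, realising a cobordism from $L_1 \sqcup \bigsqcup_i \Fpp{p_i}(1)$ to $L_2$; every component still meets $L_2$, since the original $\Sigma$ did and all modifications are local.

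Finally, Theorem~\ref{thm:GenusBoundCylinders} applied to $\tilde \Sigma$ viewed as a cobordism from $L_2$ (source, touched by every component) to $L_1 \sqcup \bigsqcup_i \Fpp{p_i}(1)$ (target) yields
\[
s\!\left(L_1 \sqcup \bigsqcup_i \Fpp{p_i}(1)\right) - s(L_2) \geq \chi(\tilde \Sigma).
\]
Combining Proposition~\ref{prop:disj union} with the computation $s(\Fpp{p_i}(1)) = 1 - 2p_i$ from Theorem~\ref{thm:sLpp} gives $s\bigl(L_1 \sqcup \bigsqcup_i \Fpp{p_i}(1)\bigr) = s(L_1) - 2\sum_i p_i$, and substituting collapses the above to $s(L_1) - s(L_2) \geq \chi(\Sigma)$ as desired. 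I expect the main obstacle to lie in making precise the diffeomorphism $Z \setminus \bigcup_i N(S_i) \cong (I \times \SSr) \setminus (t \text{ balls})$ together with the orientation convention identifying the boundary link with $\Fpp{p_i}(1)$; a secondary but essential bookkeeping point is that the cables must be pushed to the $L_1$ side rather than the $L_2$ side, since the asymmetry in the hypothesis (only $L_2$ is required to meet every component) forces this choice — pushing to the $L_2$ side would introduce an extra $4\sum_i p_i$ loss and only yield the strictly weaker bound $s(L_1) - s(L_2) \geq \chi(\Sigma) - 4\sum_i p_i$.
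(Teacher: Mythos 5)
Your overall strategy is exactly the paper's: excavate the exceptional spheres, identify the resulting boundary links on the small $S^3$'s, tube them to the $L_1$ end, and apply Theorem~\ref{thm:GenusBoundCylinders} in $I\times\SSr$ together with Proposition~\ref{prop:disj union} and Theorem~\ref{thm:sLpp}. The bookkeeping of Euler characteristics and the disjoint-union formula are also correct, and so is your observation that the cables must be pushed to the $L_1$ side because of the asymmetric connectedness hypothesis.

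However, there is a genuine gap in the orientation tracking, and it is precisely the point on which the paper spends its first sentence. The paper begins by \emph{reversing the orientation of the pair} $(Z,\Sigma)$. This single move does two things at once: (a) it converts the $\bCP$ summands into $\CP$ summands, so that the spheres have self-intersection $+1$ and the boundary of the fiber disks is unambiguously the positive Hopf cable $\Fpp{p_i}(1)$; and (b) it reverses the cobordism direction, so that the hypothesis ``every component meets $L_2$'' lines up with the hypothesis of Theorem~\ref{thm:GenusBoundCylinders}, which concerns the incoming end. Your proposal skips the orientation reversal and tries to recover both effects after the fact. The claim that ``tracking intersection signs shows that the resulting link is precisely $\Fpp{p_i}(1)$'' is asserted rather than checked, and in fact the direct computation in $\bCP$ gives the opposite answer: the normal bundle $\mathcal{O}_{S^2}(-1)$ has Euler number $-1$, so $\bCP$ is $B^4$ with a $(-1)$-framed $2$-handle, and the cable of the attaching circle is $T(2p,-2p)$, i.e.\ the \emph{mirror} $\m\Fpp{p_i}(1)$, not $\Fpp{p_i}(1)$. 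The $s$-invariant of $\m\Fpp{p_i}(1)$ equals $-s_+(\Fpp{p_i}(1))$, which is not computed in the paper and cannot simply be substituted for $1-2p_i$. Likewise, ``viewing $\tilde\Sigma$ as a cobordism from $L_2$ to $L_1\sqcup\cdots$'' is not a free renaming: Theorem~\ref{thm:GenusBoundCylinders} is a statement about the cobordism map $\phi_\Sigma\colon\Lh{L_1}\to\Lh{L_2}$ going from the $\{0\}$ end to the $\{1\}$ end, with the connectedness hypothesis on the source. Flipping the $I$ factor reverses the ambient orientation, and this is exactly the orientation reversal the paper performs at the outset. If you carry it out explicitly and consistently, it mirrors the $\bCP$'s into $\CP$'s at the same moment, making your cable claim correct; but applying the ``backwards'' reading only to the already-excavated cobordism in $I\times\SSr$, while having computed the cables in the unreversed $\bCP$, produces a mismatch. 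In short, you have the right picture, but the two orientation reversals must be performed together, at the start, as in the paper; done piecemeal, the signs do not obviously cancel and you would need a separate computation of $s_+\bigl(\Fpp{p_i}(1)\bigr)$ that the paper does not supply.
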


The special case $r=0$ of Theorem \ref{thm:strongadj} is Theorem \ref{thm:weakadj} from the Introduction.

{
\renewcommand{\thethm}{\ref{thm:weakadj}}
\begin{theorem}
Consider a null-homologous oriented cobordism $\Sigma\subset Z = (\#^t\bCP)\setminus(B^4 \sqcup B^4)$ from a link $L_1$ to a second link $L_2$.
Suppose that every component of $\Sigma$ has a boundary component in $L_2$. Then, we have an inequality of $s$-invariants
\[s(L_1) - s(L_2) \geq \chi(\Sigma),\]
where $\chi(\Sigma)$ denotes the Euler characteristic of $\Sigma$.
\end{theorem}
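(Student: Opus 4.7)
The strategy, as sketched in the Introduction, is to reduce the cobordism in $Z$ to one in $S^3 \times I$ by removing co-core neighborhoods of the 2-handles, and then apply Theorem~\ref{thm:GenusBoundCylinders} together with $s(F_{p,p}(1)) = 1-2p$ from Theorem~\ref{thm:sLpp}. The subtle point is that the co-cores should be arranged (by extension through collar regions if necessary) to have their boundaries on the $L_1$-side of $Z$, so that the resulting cable link winds up alongside $L_1$; this is what makes the arithmetic cancel cleanly.

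View $Z = (\#^t\bCP) \setminus (B^4 \sqcup B^4)$ as a cobordism from $\partial_- Z$ (containing $L_1$) to $\partial_+ Z$ (containing $L_2$), built by attaching $t$ 2-handles along a $t$-component unlink with framing $-1$. Choose properly embedded co-core disks $D_1, \ldots, D_t \subset Z$ with boundaries on $\partial_- Z$; these generate $H_2(Z, \partial Z) \cong \mathbb{Z}^t$. Put $\Sigma$ in general position with respect to the $D_i$. Since $[\Sigma] = 0$, we have $\Sigma \cdot D_i = 0$, so $\Sigma \cap D_i$ consists of $p_i$ positive and $p_i$ negative transverse intersection points for some $p_i \geq 0$. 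Remove small open ball neighborhoods $\nu(D_i)$; the result $Z' := Z \setminus \bigsqcup \nu(D_i)$ is diffeomorphic to $S^3 \times I$, because removing the co-cores undoes the 2-handle attachments. The restricted surface $\Sigma' := \Sigma \cap Z'$ has Euler characteristic $\chi(\Sigma') = \chi(\Sigma) - 2\sum_i p_i$, and on $\partial_- Z' \cong S^3$ its boundary gains $2p_i$ meridian circles around $D_i$ for each $i$. Via the handle-cancellation diffeomorphism, and after possibly passing to the orientation-reversed manifold $\#^t\CP$ with mirrored surface to arrange the framing sign correctly, these meridians form the torus link $F_{p_i,p_i}(1)$ from Figure~\ref{fig:Fp}; let $J := \bigsqcup_i J_i \subset \partial_- Z'$ denote the total new boundary link, which is disjoint from $L_1$.

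Viewing $\Sigma'$ as a cobordism in $S^3 \times I$ from $L_2$ to $L_1 \sqcup J$, the hypothesis that every component of $\Sigma$ has a boundary in $L_2$ transfers to $\Sigma'$, so Theorem~\ref{thm:GenusBoundCylinders} yields
\[
s(L_1 \sqcup J) - s(L_2) \geq \chi(\Sigma').
\]
By Theorem~\ref{thm:sLpp}, $s(J_i) = 1 - 2p_i$, and iterating the disjoint-union formula $s(A \sqcup B) = s(A) + s(B) - 1$ (which follows from the tensor product structure of Lee homology) gives $s(L_1 \sqcup J) = s(L_1) + \sum_i (1 - 2p_i) - t = s(L_1) - 2\sum_i p_i$. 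Substituting this together with $\chi(\Sigma') = \chi(\Sigma) - 2\sum_i p_i$, the $2\sum_i p_i$ terms on both sides cancel exactly, leaving the desired inequality $s(L_1) - s(L_2) \geq \chi(\Sigma)$.

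The main obstacle is the geometric reduction: arranging the co-cores with their boundaries on the $L_1$-side (this positioning is crucial for the arithmetic to cancel, since placing them on the $L_2$-side instead would give a bound weaker by $4\sum p_i$), identifying $Z'$ with $S^3 \times I$ after handle cancellation, and verifying that the resulting cable is precisely $F_{p_i,p_i}(1)$ with the correct orientation and framing pattern. The null-homology hypothesis $[\Sigma]=0$ is essential here: without it, $p_i^+ \neq p_i^-$ and the cable would fail to have the symmetric structure needed for Theorem~\ref{thm:sLpp} to apply.
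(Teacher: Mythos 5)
Your argument is correct and is essentially the paper's own proof (the $r=0$ case of Theorem~\ref{thm:strongadj}): excise the second homology of the $\bCP$ summands, identify the new boundary components as the balanced torus links $F_{p_i,p_i}(1)$ sitting on the $L_1$ side, and combine Theorem~\ref{thm:GenusBoundCylinders}, the disjoint-union formula, and Theorem~\ref{thm:sLpp} so that the $2\sum_i p_i$ terms cancel. The only real difference is cosmetic: the paper reverses the orientation of the pair $(Z,\Sigma)$ at the outset and removes tubular neighborhoods of the $\mathbb{CP}^1$'s in $\#^t\CP$ (whose complements are balls, joined to the appropriate end by arcs), which produces the positively framed Hopf cable $F_{p_i,p_i}(1)$ on the nose and thereby makes precise the framing/orientation issue that you only gesture at with "after possibly passing to the orientation-reversed manifold" --- a point worth being careful about, since the $(-1)$-framed cable would be the mirror $T(2p,-2p)$, whose $s$-invariant is not the one computed in Theorem~\ref{thm:sLpp}.
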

\addtocounter{thm}{-1}
}

By further specializing to the case $L_1 = \varnothing$ (for which $s = 1$), we obtain the following adjunction inequality for $s$.

{
\renewcommand{\thethm}{\ref{cor:adjunction}}
\begin{corollary}[Adjunction inequality for $s$]
Let $W = (\#^t \bCP) \setminus B^4$ for some $t \geq 0$. Let $L \subset \del W= S^3$ be a link, and $\Sigma \subset W$ a properly, smoothly embedded oriented surface with no closed components, such that $\del \Sigma =L$ and $[\Sigma]=0 \in H_2(W, \del W)$. Then 
\[
s(L) \leq 1-\chi(\Sigma).
\]
\end{corollary}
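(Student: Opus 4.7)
The plan is to derive the corollary as an immediate specialization of Theorem~\ref{thm:weakadj} (the $r=0$ case of Theorem~\ref{thm:strongadj}) with $L_1 = \varnothing$ and $L_2 = L$. To set this up, I will first reinterpret the surface $\Sigma \subset W = (\#^t\bCP) \setminus B^4$ as a cobordism in $(\#^t\bCP) \setminus (B^4 \sqcup B^4)$. Picking a small open $4$-ball in the interior of $W$ disjoint from $\Sigma$ (possible because $\Sigma$ is compact) and excising it creates an additional boundary sphere on which the link is empty, turning $\Sigma$ into an oriented cobordism from $\varnothing$ on the new boundary to $L$ on the original $\partial W = S^3$.

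Next, I will verify the hypotheses of Theorem~\ref{thm:weakadj}. Both boundary links are trivially null-homologous in $S^3$; the homological condition $[\Sigma]=0$ in $H_2(W,\partial W)$ transfers verbatim to the pair with the enlarged boundary, since excising an interior ball disjoint from $\Sigma$ does not affect its relative homology class. The ``no closed components'' hypothesis is exactly the condition that every component of $\Sigma$ have a boundary component in $L_2 = L$, because the empty end $L_1 = \varnothing$ carries no boundary. Applying Theorem~\ref{thm:weakadj} then yields
\[
s(\varnothing) - s(L) \geq \chi(\Sigma).
\]

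To finish, I will use the normalization $s(\varnothing) = 1$, which follows from inspecting Definition~\ref{def:s invt}: the Lee complex of the empty diagram is $R[t]$ concentrated in bidegree $(0,0)$, the vacuous Lee generator has $\q$-degree zero, and the standard convention (compatible with the behavior of $s$ on disjoint unions in Proposition~\ref{prop:disj union}) then assigns $s(\varnothing) = 1$. Rearranging the displayed inequality produces $s(L) \leq 1 - \chi(\Sigma)$, as claimed. I anticipate no serious obstacle: Theorem~\ref{thm:strongadj} already packages all the substantive work, and only minimal geometric bookkeeping, around the excision step and the identification of the ``no closed components'' hypothesis with the boundary condition of the theorem, is required.
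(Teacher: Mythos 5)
Your argument is correct and follows exactly the paper's own route: the paper derives Corollary~\ref{cor:adjunction} precisely by specializing Theorem~\ref{thm:strongadj} (equivalently, Theorem~\ref{thm:weakadj} when $r=0$) to $L_1 = \varnothing$ and using $s(\varnothing)=1$. Your additional remarks about excising a second ball disjoint from $\Sigma$ and matching the ``no closed components'' hypothesis to the theorem's boundary condition are just making explicit the bookkeeping the paper leaves implicit.
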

\addtocounter{thm}{-1}
}

The differences between Theorem \ref{thm:adjunction-tau} and Corollary \ref{cor:adjunction} are the following:
\begin{itemize}
\item The negative definite 4-manifold $W$ of Theorem \ref{thm:adjunction-tau} is replaced by the more restrictive case of $(\#^t \bCP) \setminus B^4$;
\item Corollary \ref{cor:adjunction} is only for $[\Sigma] = 0$;
\item Corollary \ref{cor:adjunction} works for \emph{links}, as opposed to knots. (However, Hedden and Raoux \cite{HR} have recently announced a generalization of Theorem~\ref{thm:adjunction-tau} to links.)
\end{itemize}

See Section~\ref{sec:GeneralSurfaces}, and in particular Conjecture~\ref{conj:AdjGeneral} for a discussion of what we expect without the hypothesis $[\Sigma]=0$.

\begin{proof}[{Proof of Theorem \ref{thm:strongadj}}]
By changing orientation, we get $(\overline{Z}, \overline{\Sigma})$, which is now a cobordism from $(\SSr, L_2)$ to $(\SSr, L_1)$, i.e.
\[
\de(\overline{Z}, \overline{\Sigma}) = (-(\SSr, L_2)) \sqcup (\SSr, L_1) = (\overline{\SSr}, \m L_2) \sqcup (\SSr, L_1).
\]
We can view $\overline{Z}$ as the connected sum of $I \times \SSr$ and $\#^t\CP$. Let $C_1, \ldots, C_t$ be the complex projective lines $\mathbb{CP}^1$ sitting in the various copies of $\CP$. We can assume that they intersect $\Sigma$ transversely in a finite number of points. The number of positive intersections matches that of negative intersections, because all algebraic intersection numbers $[\Sigma] \cdot [C_i]$ vanish, since $[\Sigma]=0$.

Consider $\mathbb{CP}^1$ sitting inside $\CP$. The normal bundle $N_{\mathbb{CP}^1|\CP}$ has Euler number $1$, so the sphere bundle $S(N_{\mathbb{CP}^1|\CP})$ is diffeomorphic to $S^3$, and the projection $S^3 \to \mathbb{CP}^1 = S^2$ is the Hopf fibration.

Now remove a neighborhood of each copy of $\mathbb{CP}^1$ from $\overline{Z}$. Let $W$ denote the resulting 4-manifold, and let ${\overline\Sigma}^\circ$ be the punctured surface obtained from $\overline\Sigma$ by removing a neighborhood of each intersection point with the various copies of $\mathbb{CP}^1$ (i.e., a disc near each intersection point). Since $\CP \setminus (N_{\mathbb{CP}^1|\CP}) = B^4$, we have that $W = (I \times \SSr) \# (\#^t B^4)$. Let $S^3_i$ denote the boundary of the $i$-th copy of $B^4$. If the geometric intersection $|\Sigma \cap C_i| = 2p_i$, then $\de{\overline\Sigma}^\circ \cap S^3_i$ consists of $2p_i$ fibers of the Hopf fibration in $S^3$; moreover, $p_i$ fibers must be oriented in one way and $p_i$ must be oriented the other way, since $[\Sigma]\cdot[C_i] = 0$. Thus, $\de{\overline\Sigma}^\circ \cap S^3_i$ is the link $\Fpp{p_i}(1) \subset S^3_i$.

Now choose properly embedded arcs $a_1, \ldots, a_r$ in $W$, such that $a_i$ connects $S^3_i$ to $\{1\} \times \SSr$, and the arcs are pairwise disjoint and disjoint from $\Sigma$. After removing a neighborhood of $a_1, \ldots, a_r$ from $W$, we obtain a 4-manifold $W' \cong I \times \SSr$, and ${\overline\Sigma}^\circ \subset W'$ is a cobordism from $L_2$ to $M:=L_1 \sqcup \Fpp{p_1}(1) \sqcup \cdots \sqcup \Fpp{p_t}(1)$.
Note that each component of ${\overline\Sigma}^\circ$ has boundary in $L_2$ and that
\[
\chi\left({\overline{\Sigma}}^\circ\right) = \chi(\Sigma) - 2(p_1 + \cdots + p_t).
\]
By Theorem \ref{thm:GenusBoundCylinders}, Proposition \ref{prop:disj union}, and Theorem \ref{thm:sLpp}, we have
\begin{align*}
s(L_2) \leq s(M) - \chi \left({\overline{\Sigma}}^\circ\right) &= \left(s(L_1) + s(\Fpp{p_1}) + \cdots + s(\Fpp{p_t}) - t\right) - \left(\chi(\Sigma) - 2(p_1 + \cdots + p_t)\right)\\
&= s(L_1) + \left((1-2p_1) + \cdots + (1-2p_t) - t\right) - \chi(\Sigma) + 2(p_1 + \cdots + p_t)\\
&=s(L_1)-\chi(\Sigma). \qedhere
\end{align*}
\end{proof}

A straightforward application of Corollary \ref{cor:adjunction} is the following.

\begin{corollary}
\label{cor:sHsCP}
If $L$ is strongly H-slice in $\#^t\bCP$, then $s(L) \leq 1-|L|$.
\end{corollary}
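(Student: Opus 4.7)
The plan is to derive this as an immediate consequence of Corollary~\ref{cor:adjunction}. By hypothesis, if $L$ is strongly H-slice in $\#^t\bCP$, then there exists a properly, smoothly embedded surface $\Sigma \subset W := (\#^t\bCP) \setminus B^4$ with $\partial \Sigma = L$, consisting of $\ell = |L|$ disjoint discs, and with $[\Sigma] = 0 \in H_2(W, \partial W)$.

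First I would verify that $\Sigma$ meets all the hypotheses of Corollary~\ref{cor:adjunction}: it is properly embedded with boundary equal to $L \subset S^3$, it has no closed components (each component is a disc, hence has nonempty boundary), and by definition of strong H-sliceness it satisfies $[\Sigma] = 0$ in $H_2(W, \partial W)$. Next I would compute $\chi(\Sigma)$. Since $\Sigma$ is a disjoint union of $\ell$ discs, each having Euler characteristic $1$, we have $\chi(\Sigma) = \ell = |L|$.

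Applying Corollary~\ref{cor:adjunction} to $\Sigma$ then yields
\[
s(L) \leq 1 - \chi(\Sigma) = 1 - |L|,
\]
which is the desired inequality. There is no genuine obstacle here: the entire content has been absorbed into Corollary~\ref{cor:adjunction}, whose proof in turn rests on Theorem~\ref{thm:strongadj} and the calculation of $s(\Fpp{p}(1))$ in Theorem~\ref{thm:sLpp}.
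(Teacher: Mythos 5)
Your proof is correct and matches the paper's intended argument exactly: the paper presents this corollary as ``a straightforward application of Corollary~\ref{cor:adjunction}'' to the $\ell$ disjoint slice discs, with $\chi(\Sigma)=\ell$. Nothing further is needed.
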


There are examples where the inequality of Corollary \ref{cor:sHsCP} is strict. For example, the left-handed trefoil knot $T_{2,-3}$ is strongly H-slice in $\bCP$ (see Example \ref{ex:LHT}) and $s(T_{2,-3}) = -2 < 0$.

If $L$ is a strongly H-slice link in $S^4$, then the inequality of Corollary \ref{cor:sHsCP} is actually an equality. This follows from \cite{beliakova-wehrli}, but for completeness we give a proof below.

\begin{theorem}
\label{thm:sHsS4}
If $L$ is strongly (H-)slice in $S^4$, then $s(L) = 1-|L|$. 
\end{theorem}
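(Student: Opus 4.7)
The plan is to prove the two inequalities $s(L) \leq 1 - |L|$ and $s(L) \geq 1 - |L|$ separately. The upper bound is immediate from Corollary~\ref{cor:sHsCP} applied with $t = 0$: a strongly slice link in $S^4 = \#^0 \bCP$ is automatically strongly H-slice there, since $H_2(B^4, S^3) = 0$ makes the homological condition vacuous.

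For the lower bound, my strategy is to exhibit a smooth concordance in $I \times S^3$ from the $|L|$-component unlink $U_{|L|}$ to $L$ consisting of $\ell := |L|$ disjoint annuli, and then apply Theorem~\ref{thm:GenusBoundCylinders}. Given slicing discs $\Sigma = D_1 \sqcup \cdots \sqcup D_\ell \subset B^4$ with $\partial D_i$ a component of $L$, the key technical step is to isotope $\Sigma$ rel boundary so that the radial function $r \colon B^4 \to [0,1]$ restricts to a Morse function on $\Sigma$ with exactly one critical point on each $D_i$, necessarily a single minimum. This is where I expect the main obstacle: it relies on the standard Morse cancellation of $0$/$1$ and $1$/$2$ critical-point pairs for smoothly embedded discs in the simply connected $B^4$, and the constraint $\chi(D_i) = 1$ forces the minimal configuration to be a single minimum. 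After a small perturbation placing the $\ell$ minima at distinct points of $S^3$, I pick $r_0 \in (0, 1)$ slightly above all minimum values; the slice $\Sigma \cap \{r = r_0\}$ then consists of $\ell$ tiny, pairwise-split, unknotted circles forming an unlink $U_\ell \subset S^3_{r_0}$, and $\Sigma \cap \{r_0 \leq r \leq 1\} \subset [r_0, 1] \times S^3 \cong I \times S^3$ is a disjoint union of $\ell$ annuli realizing the desired concordance.

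Each annulus has a boundary component in $U_\ell$, so Theorem~\ref{thm:GenusBoundCylinders} applied with $L_1 = U_\ell$ and $L_2 = L$ yields $s(L) - s(U_\ell) \geq \chi = 0$. A direct calculation from Definition~\ref{def:s invt}---using that the Lee complex of $U_\ell$ has vanishing differential and that the Lee generators for the two opposite orientations expand as $(\pm 1 + x)^{\otimes \ell}$---gives $\q([\s_o + \s_{\bar o}]) = -\ell$ and $\q([\s_o - \s_{\bar o}]) = 2 - \ell$, hence $s(U_\ell) = 1 - \ell$. Combined with the upper bound, $s(L) = 1 - |L|$. The Morse reduction above is essential for ensuring the cross-section at $r_0$ is precisely $U_\ell$ rather than a larger unlink $U_N$ with $N > \ell$, which would only produce the weaker bound $s(L) \geq 1 + \ell - 2N$; it is the one point in the argument that requires genuinely $4$-dimensional smooth input rather than purely Lee-homological input.
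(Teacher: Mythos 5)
Your upper bound $s(L)\leq 1-\ell$ is fine and matches the paper's. The lower bound, however, contains a genuine error at exactly the step you flag as the crux. It is \emph{not} true that a smoothly embedded slice disc $D_i\subset B^4$ can be isotoped rel boundary so that the radial function has a single critical point on it: if each $D_i$ had one minimum and no saddles or maxima, then the cross-sections $\Sigma\cap S^3_r$ for $r$ above all the minima would consist of $\ell$ tiny split unknots and would remain an unlink all the way up to $r=1$ (no saddles means circles never merge or split), forcing $L$ itself to \emph{be} the unlink. The constraint $\chi(D_i)=\#\mathrm{min}-\#\mathrm{saddle}+\#\mathrm{max}=1$ only constrains the alternating sum; there is no ``standard Morse cancellation'' that eliminates all saddles of an embedded disc in $B^4$ (for a nontrivial slice knot the disc must have saddles --- this is why the fusion/band number is a nontrivial invariant). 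So as written your concordance does not exist, and the cross-section at $r_0$ would in general be a larger unlink $U_N$, giving only the weaker bound you mention.

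The approach is salvageable, because a strongly slice link \emph{is} concordant to $U_\ell$, but the concordance comes from a different construction: choose one interior point $p_i$ on each $D_i$, take small disjoint $4$-balls $N_i$ in which $(N_i, N_i\cap D_i)$ is standard, and tube the $N_i$ together along arcs disjoint from $\Sigma$ to form a single ball $N$ with $(N, N\cap\Sigma)$ a trivial $\ell$-disc system. Then $B^4\setminus \mathrm{int}(N)\cong I\times S^3$ and $\Sigma\setminus\mathrm{int}(N)$ is the desired union of $\ell$ annuli from $U_\ell$ to $L$; applying Theorem~\ref{thm:GenusBoundCylinders} in both directions then gives $s(L)=s(U_\ell)=1-\ell$ outright (your computation $s(U_\ell)=1-\ell$ is correct). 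The paper avoids constructing this concordance altogether: it removes a single small ball to get a cobordism from $L$ to the unknot with $\chi=\ell-1$, obtains $s(L)\leq 1-\ell$ and $s(\m L)\leq 1-\ell$, and closes the argument with the Beliakova--Wehrli inequality $s(L)+s(\m L)\geq 2-2\ell$ (Proposition~\ref{prop:propertiesS3}\eqref{it:s-ANDs+}), which is a purely Lee-homological input rather than a $4$-dimensional one.
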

\begin{proof}
Let $|L|=\ell$. By hypothesis, $L$ bounds $\ell$ disjoint discs in $B^4$. By removing a small ball from $B^4$, one obtains a cobordism $\Sigma$ in $I \times S^3$ from $L$ to the unknot $U$, with $\chi(\Sigma) = \ell-1$. Every component of $\Sigma$ has boundary in $L$, so, by the case $r=0$ of Theorem \ref{thm:GenusBoundCylinders} or Corollary \ref{cor:adjunction} (originally proved in \cite[Equation (7)]{beliakova-wehrli}), we have
\begin{equation}
\label{eq:sHs1}
s(L) \leq 1-\ell.
\end{equation}
The analogous reasoning applied to $\m L$ shows that
\begin{equation}
\label{eq:sHs2}
s(\m L) \leq 1-\ell.
\end{equation}
By combining Equations \eqref{eq:sHs1} and \eqref{eq:sHs2}, we get $s(L) + s(\m L) \leq 2-2\ell$. The opposite inequality follows from \cite[Equation (10)]{beliakova-wehrli}. Thus, Equations \eqref{eq:sHs1} and \eqref{eq:sHs2} must actually by equalities.
\end{proof}

Using Corollary \ref{cor:sHsCP}, we can generalize the proof of Theorem \ref{thm:sHsS4} to show that the $s$-invariant cannot detect links that are strongly H-slice in some exotic Gluck twist, but not in $S^4$.

{
\renewcommand{\thethm}{\ref{cor:Glucktwist}}
\begin{corollary}
Let $X$ be a homotopy 4-sphere obtained by Gluck twist on $S^4$. If a link $L \subset S^3$ is strongly (H-)slice in $X$, then $s(L) = 1-|L|$.
\end{corollary}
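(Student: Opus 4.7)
The strategy is to mimic the proof of Theorem \ref{thm:sHsS4} (the $S^4$ case), exploiting the special property of Gluck twists that $X \# \CP \cong \CP$ and $X \# \bCP \cong \bCP$ as oriented smooth manifolds, to reduce everything to Corollary \ref{cor:sHsCP} applied to $\bCP$. Setting $\ell = |L|$, the plan is to establish both $s(L) \leq 1-\ell$ and $s(\m L) \leq 1-\ell$; combined with the universal lower bound $s(L) + s(\m L) \geq 2-2\ell$ from \cite[Equation~(10)]{beliakova-wehrli}, these two inequalities will collapse to the desired equality $s(L) = 1-\ell$.

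For the first bound, I would begin with a collection of $\ell$ disjoint smoothly embedded disks $D_1, \ldots, D_\ell$ in $X \setminus B^4$ with $\partial(D_1 \sqcup \cdots \sqcup D_\ell) = L$. Because $X$ is a homotopy 4-sphere, $H_2(X \setminus B^4, \partial) = 0$, so the disks are automatically null-homologous. I would then form $X \# \bCP$ by taking the connected sum at a point disjoint from the $D_i$; the disks survive as a disjoint collection bounding $L$ in $(X \# \bCP) \setminus B^4$, and they remain null-homologous because $H_2((X \# \bCP) \setminus B^4, \partial)$ splits along the connected-sum decomposition and the disks lie entirely in the $X$-summand, where they were already null-homologous. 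Invoking the diffeomorphism $X \# \bCP \cong \bCP$ then shows that $L$ is strongly H-slice in $\bCP$, whence $s(L) \leq 1 - \ell$ by Corollary \ref{cor:sHsCP}.

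For the second bound, I would run the mirror version of this argument. Reversing orientations in $X \# \CP \cong \CP$ yields $\overline{X} \# \bCP \cong \bCP$, so it suffices to show $m(L)$ is strongly H-slice in $\overline{X}$. But the identity map $X \to \overline{X}$, viewed as an orientation-reversing diffeomorphism, sends the collection $D_1, \ldots, D_\ell \subset X \setminus B^4$ to a collection of disjoint null-homologous disks in $\overline{X} \setminus B^4$ bounding the reflected link $m(L)$. Repeating the connected-sum step then produces $s(m(L)) \leq 1 - \ell$, and reversal-invariance of $s$ (Theorem \ref{thm:s invt diffeo and rev}) together with $\m L = r(m(L))$ translates this to $s(\m L) \leq 1 - \ell$. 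Adding the two inequalities and comparing with \cite[Equation~(10)]{beliakova-wehrli} forces $s(L) = s(\m L) = 1-\ell$.

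The main point requiring care is the preservation of strong H-slicing under connected sum with $\bCP$ and under the orientation reversal $X \leadsto \overline{X}$: one must verify that the disks remain disjoint and null-homologous after both manipulations, which boils down to the splitting of $H_2$ along a connected sum together with the fact that $X$ contributes nothing to $H_2$. Once this bookkeeping is in place, the rest is a near-verbatim adaptation of the proof of Theorem \ref{thm:sHsS4}, with the two Gluck-twist identities $X \# \CP^{\pm} \cong \CP^{\pm}$ doing the essential work of moving the problem from the possibly exotic homotopy 4-sphere $X$ into the familiar $\bCP$, where Corollary \ref{cor:sHsCP} applies.
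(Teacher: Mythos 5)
Your proposal is correct and follows essentially the same route as the paper: use the Gluck-twist identities $X\#\CP\cong\CP$ and $X\#\bCP\cong\bCP$ to conclude that both $L$ and $\m L$ are strongly H-slice in $\bCP$, apply Corollary~\ref{cor:sHsCP} to get $s(L)\leq 1-\ell$ and $s(\m L)\leq 1-\ell$, and combine with the lower bound from \cite{beliakova-wehrli} as in Theorem~\ref{thm:sHsS4}. The extra bookkeeping you do (null-homology of the disks after connected sum, passing through $\overline{X}$) is correct and just makes explicit what the paper leaves to its earlier Remark relating H-sliceness in $\CP$ and $\bCP$ under mirroring.
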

\addtocounter{thm}{-1}
}
\begin{proof}
By a property of Gluck twists, $X \# \CP \cong \CP$ and $X \# \bCP \cong \bCP$; cf. \cite[Exercise 5.2.7(b)]{GS}. Thus, $L$ is strongly H-slice in $\CP$ and in $\bCP$. It follows that $L$ and $\m L$ are strongly H-slice in $\bCP$. By Corollary \ref{cor:sHsCP}, $s(L) \leq 1-\ell$ and $s(\m L) \leq 1-\ell$, where $\ell = |L|$. These are precisely Equations \ref{eq:sHs1} and \ref{eq:sHs2}. We then conclude the proof as in Theorem \ref{thm:sHsS4}.
\end{proof}

\begin{remark}
One could also ask if it is possible to disprove the smooth 4D Poincar\'e conjecture by using the $s$-invariant to find a link $L$ that bounds a surface of a given topological type in a Gluck twist $X$, but cannot bound a surface of the same type in $S^4$. Once again, the answer is negative, because the inequality involving $s$ and $\chi(\Sigma)$ in Corollary~\ref{cor:adjunction} is the same in both $B^4$ and $\bCP \setminus B^4$. 
\end{remark}

\section{$s$-invariants for links in $S^3$}
\label{sec:sS3}

\subsection{A connected sum formula}
Let $(L_1, p_1)$ and $(L_2, p_2)$ be oriented pointed links in $S^3$. Then the connected sum $L_1 \# L_2$ with respect to the basepoints $p_1$ and $p_2$ is a well-defined oriented link. In fact, the isotopy class of $L_1 \# L_2$ depends only on the components of $L_1$ and $L_2$ that are glued together.
For example, the connected sum of oriented knots is well-defined, with no need of specifying the basepoints.

Recall that the orientation is a fundamental piece of data needed to define the connected sum. For instance, even for knots, the isotopy class of the connected sum of two knots $K_1 \# K_2$ may change if one changes the orientation of either $K_1$ or $K_2$.

Beliakova and Wehrli~\cite[Lemma 6.1]{beliakova-wehrli} proved that for pointed links $(L_1, p_1)$ and $(L_2, p_2)$ in $S^3$
\[
s(L_1) + s(L_2) - 2 \leq s(L_1 \# L_2) \leq s(L_1) + s(L_2).
\]
The following theorem completely describes the behavior of the $s$-invariant under connected sum for links in $S^3$. This is a first step towards Theorem \ref{thm:ConnSumSSr}.

\begin{theorem}
\label{thm:cs}
Let $(L_1, p_1)$ and $(L_2, p_2)$ be oriented pointed links in $S^3$. Then
\[
s(L_1 \# L_2) = s(L_1) + s(L_2).
\]
\end{theorem}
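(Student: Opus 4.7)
The plan is to compute the quantum filtration $\q(\s_{o_1 \# o_2})$ of the Lee generator for the connected-sum diagram directly at the chain level, and then invoke the chain-level identity $\q(\s_o) = s(L) - 1$ from~\eqref{eq:new definition of s}, applied to each of $L_1$, $L_2$, and $L_1 \# L_2$. This bypasses the Beliakova-Wehrli cobordism bounds, which on their own only yield the strict gap $s(L_1) + s(L_2) - 2 \le s(L_1 \# L_2) \le s(L_1) + s(L_2)$.

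Choose diagrams $D_i$ for $L_i$ with basepoints $p_i$ on the outer boundary, and form $D := D_1 \# D_2$ by severing small arcs at $p_1$ and $p_2$ and joining the four endpoints by two orientation-compatible arcs introducing no new crossings. Then $D$ and the disjoint-union diagram $D_1 \sqcup D_2$ share the same positive and negative crossing counts, so the global $\q$-shifts in their Khovanov complexes agree. The key geometric observation is that the oriented resolution $\delta_{o_1 \# o_2}$ of $D$ is obtained from $\delta_{o_1} \sqcup \delta_{o_2}$ by merging the two basepoint-containing circles into a single circle; all other circles are untouched. Since each Lee label $g_C = (-1)^{z(C)} \cdot 1 + x$ contributes $\min(\q(1), \q(x)) = -1$ to the $\q$-filtration regardless of the sign $(-1)^{z(C)}$, the one-circle deficit produces the chain-level identity
\[
\q(\s_{o_1 \# o_2}) \;=\; \q(\s_{o_1 \sqcup o_2}) + 1 \;=\; \q(\s_{o_1}) + \q(\s_{o_2}) + 1 \;=\; \bigl(s(L_1) - 1\bigr) + \bigl(s(L_2) - 1\bigr) + 1,
\]
where the second equality uses the tensor-product structure $\s_{o_1 \sqcup o_2} = \s_{o_1} \otimes \s_{o_2}$ at the chain level. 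Equating this with $\q(\s_{o_1 \# o_2}) = s(L_1 \# L_2) - 1$ gives $s(L_1 \# L_2) = s(L_1) + s(L_2)$.

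The main conceptual point to be verified is that $\s_{o_1 \# o_2}$ is indeed the Lee generator associated to the induced orientation $o_1 \# o_2$ on $L_1 \# L_2$, including the correct nesting sign $(-1)^{z(C)}$ on the merged circle. This requires a bit of planar bookkeeping, but it is harmless for our purposes, since the $\q$-filtration of $\s_{o_1 \# o_2}$ depends only on the number of circles in $\delta_{o_1 \# o_2}$ and not on the signs of the individual Lee labels. The entire proof therefore reduces to the combinatorial observation that a planar connected sum of oriented link diagrams merges exactly two circles of the oriented resolution into one.
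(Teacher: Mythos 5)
There is a genuine gap, and it sits exactly at the step you flag as ``harmless'': the identity $\q(\s_o) = s(L) - 1$ from Equation~\eqref{eq:new definition of s} is a statement about the filtration level of the \emph{homology class} $[\s_o]$ (the maximum of $\q(\xi)$ over all cycles $\xi$ homologous to $\s_o$), not about the chain-level quantum degree of the particular cycle $\s_o$. At the chain level, $\q(\s_o) = n^+_D - 2n^-_D - |\delta_o|$, where $|\delta_o|$ is the number of circles in the oriented resolution; this is diagram-dependent and in general strictly smaller than $s_{\min}(L)$. For the standard $3$-crossing diagram of the left-handed trefoil, for instance, the chain-level value is $-2 + 0 - 6 = -8$, while $s(T_{2,-3}) - 1 = -3$. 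Your circle-counting computation does correctly establish the chain-level relation $\q(\s_{o_1\# o_2}) = \q(\s_{o_1}) + \q(\s_{o_2}) + 1$, but since passing to homology can raise the filtration level of each side by different (and a priori uncontrolled) amounts, this gives only the one-sided bound $\q([\s_{o_1\# o_2}]) \geq \q_{\mathrm{chain}}(\s_{o_1}) + \q_{\mathrm{chain}}(\s_{o_2}) + 1$, which is weaker than either inequality you need. (The chain-level identity does compute $s$ for positive diagrams --- that is Equation~\eqref{eq:s-positive}, whose proof uses that the complex vanishes in negative homological degree so that homologous cycles in degree $0$ are equal --- but no such argument is available for general diagrams.)

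The paper's proof works at the level of homology precisely to get around this. It uses the two short exact sequences relating $\Lh{L_1 \# L_2}$, $\Lh{L_1}\otimes\Lh{L_2}$, and $\Lh{L_1 \# r(L_2)}$, in which the maps $p_*$ and $\de$ are filtered of degree $-1$ and send Lee classes to Lee classes on the nose. Chasing $[\s_o]$ through $p_*$ gives $s(L_1)+s(L_2) \geq s(L_1\# L_2)$, and chasing the $s_{\max}$-representatives $\s_{o_i} + \e_i \s_{\bar o_i}$ through $\de$ in the second sequence gives the reverse inequality. To repair your argument you would need some mechanism of this kind for comparing the \emph{homological} filtration levels of $[\s_{o_1}]\otimes[\s_{o_2}]$ and $[\s_{o_1\# o_2}]$; counting circles in the oriented resolution cannot supply it.
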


\begin{figure}
\resizebox{0.8\textwidth}{!}{
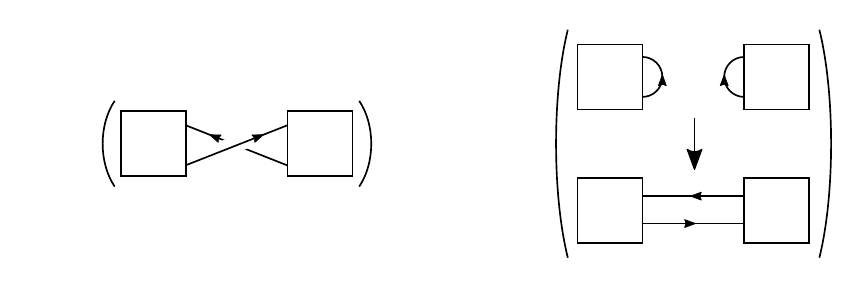
}
\caption{The Lee complex for $D_1 \# D_2$.}
\label{fig:Ras1}
\end{figure}

\begin{figure}
\resizebox{0.8\textwidth}{!}{
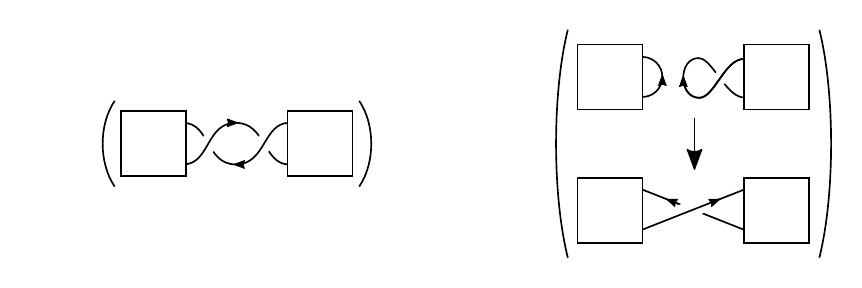
}
\caption{The Lee complex for $D_1 \# r(D_2)$.}
\label{fig:Ras2}
\end{figure}

\begin{proof}
We partially follow Rasmussen's proof for knots~\cite[Proposition 3.12]{Rasmussen}.
Over a field $\F$ (of characteristic other than two), as in \cite[Lemma 3.8]{Rasmussen}, given based links $(L_1,p_1)$ and $(L_2, p_2)$, there is a short exact sequence
\[
0 \longrightarrow \Lh{L_1 \# L_2} \stackrel{p_*}{\longrightarrow} \Lh{L_1} \otimes \Lh{L_2} \stackrel{\de}{\longrightarrow} \Lh{L_1\#r(L_2)} \to 0
\]
where $r(L_2)$ denotes the reverse of $L_2$. See Figure \ref{fig:Ras1}, where $D_1$ and $D_2$ denote diagrams for $L_1$ and $L_2$, respectively, and $r(D_2)$ denotes the diagram $D_2$ with orientation reversed on \emph{all} components. (This operation ensures that the sign of each crossing on $D_2$ matches the corresponding one on $r(D_2)$, so the degree shifts appearing in the definition of Khovanov homology are also the same for $D_2$ and $r(D_2)$.)
As in \cite[Lemma 3.8]{Rasmussen}, $p_*$ and $\de$ are filtered maps of $q$-degree $-1$.
Let $o$ be an orientation of $L_1 \# L_2$, and let $o_1$ and $o_2$ be the orientations $o|_{L_1}$ and $o|_{L_2}$ respectively. Since $o_1$ and $o_2$ are oriented in the same direction near the connected sum point, the Lee generators $\d_{o|{D_1}}$ and $\d_{o|{D_2}}$ have different labels $x+1$ and $x-1$ near the connected sum point.
As in \cite[Proposition 3.12]{Rasmussen}, one can check from the definition that $p_*([\d_{o}]) = [\d_{o|{D_1}}] \otimes [\d_{o|{D_2}}]$. In fact, this is true already on the chain level.
We know that the filtration levels of $[\d_o]$, $[\d_{o_1}]$ and $[\d_{o_2}]$ are $s_{\min}(L_1 \# L_2,o)$, $s_{\min}(L_1,o_1)$, and $s_{\min}(L_2,o_2)$, respectively.
Thus, by the fact that $p_*$ is filtered of degree $-1$, we get
\[
s_{\min}(L_1, o_1) + s_{\min}(L_2, o_2) = s_{\min}(L_1 \sqcup L_2, o_1 \sqcup o_2) \geq s_{\min}(L_1 \# L_2, o) - 1.
\]
From the fact that $s_{\min} = s -1$ we obtain the first inequality
\[
s(L_1, o_1) + s(L_2, o_2) \geq s(L_1 \# L_2, o).
\]
(This was already established \cite{beliakova-wehrli}.)

\begin{figure}
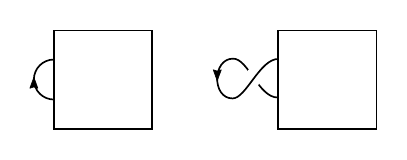
\caption{The same orientation $o_2$ in Figures \ref{fig:Ras1} and \ref{fig:Ras2} points in different directions near the connected sum point.}
\label{fig:Ras3}
\end{figure}
For the other inequality, consider the exact sequence
\[
0 \longrightarrow \Lh{L_1 \# r(L_2)} \stackrel{p_*}{\longrightarrow} \Lh{L_1} \otimes \Lh{L_2} \stackrel{\de}{\longrightarrow} \Lh{L_1\#L_2} \to 0
\]
coming from Figure \ref{fig:Ras2}. In Figure \ref{fig:Ras2}, the orientations $o_1$ and $o_2$ are oriented \emph{in the opposite direction} near the connected sum point, so the corresponding Lee generators are labeled with the same letter near the connected sum point, both $x+1$ or both $x-1$. This is because the clasp changes the local orientation, see Figure \ref{fig:Ras3}. Then, one can check from the definition that $\de([\d_{o_1}] \otimes [\d_{o_2}]) = [\d_o]$, which in fact holds also on the chain level. Note that we also have $\de([\d_{\bar{o_1}}] \otimes [\d_{\bar{o_2}}]) = [\d_{\bar o}]$,
$\de([\d_{\bar{o_1}}] \otimes [\d_{o_2}]) = 0$, and $\de([\d_{o_1}] \otimes [\d_{\bar{o_2}}]) = 0$.
For $i=1,2$, let $\e_i$ be the sign ($+$ or $-$) such that $[\d_{o_i}] + \e_i[\d_{\bar{o_i}}]$ is in filtration level $s_{\max}(L_i, o_i)$. Then, by using the fact that $\de$ is filtered of degree $-1$, we obtain that
\[
\q(\de([\d_{o_1} + \e_1\d_{\bar{o_1}}] \otimes [\d_{o_2} + \e_2\d_{\bar{o_2}}])) \geq s_{\max}(L_1, o_1) + s_{\max}(L_2, o_2) - 1.
\]
We can compute $\de([\d_{o_1} + \e_1\d_{\bar{o_1}}] \otimes [\d_{o_2} + \e_2\d_{\bar{o_2}}]) = [\d_{o} + \e_1 \e_2 \d_{\bar o}]$, so
\[
\q([\d_{o} + \e_1 \e_2 \d_{\bar o}]) \geq s_{\max}(L_1, o_1) + s_{\max}(L_2, o_2) - 1.
\]
The quantum filtration level of $[\d_{o} + \e_1 \e_2 \d_{\bar o}]$ is either $s_{\max}(L_1\# L_2,o)$ or $s_{\min}(L_1\# L_2,o)$. In any case,
\[
\q([\d_{o} + \e_1 \e_2 \d_{\bar o}]) \leq s_{\max}(L_1\# L_2,o).
\]
Thus,
\[
s_{\max}(L_1 \# L_2,o) \geq s_{\max}(L_1, o_1) + s_{\max}(L_2, o_2) - 1.
\]
By using the fact that $s_{\max} = s + 1$, we get the second inequality
\[
s(L_1 \# L_2,o) \geq s(L_1, o_1) + s(L_2, o_2). \qedhere
\]
\end{proof}

\subsection{$s_-$ and $s_+$ in $S^3$}
\begin{definition}
For a link in $S^3$, we define $s_-(L) = s(L)$ and $s_+(L) = -s(\m L)$.
\end{definition}

\begin{example}
\label{ex:knotsinS3}
For a knot $K$ in $S^3$, $s_-(K) = s_+(K)$ is the usual $s$-invariant.
\end{example}

\begin{example}
If $U_\ell$ is the $\ell$-component unlink in $S^3$, then $s_\pm(U_\ell) = \pm(\ell-1)$.
\end{example}

\begin{example}
For the positive Hopf link $\Hopf^+$, $s_\pm (\Hopf^+) = +1$.
\end{example}

We summarize all the properties of the $s$-invariants here. The new results of this paper are items \eqref{it:new1}, \eqref{it:s-ANDs+}, \eqref{it:GCC}, and \eqref{it:new4}.

\begin{proposition}
\label{prop:propertiesS3}
Let $L$ be an $\ell$-component link in $S^3$. Let $L_1$, $L_2$, $L^+$, and $L^-$ be links in $S^3$. Then
\begin{enumerate}
\item \label{it:old1} $s_\pm(L) = s_\pm(r(L)) = -s_\mp(m(L)) = -s_\mp(\m L)$;
\item \label{it:parity} $s_\pm(L) \equiv \ell-1 \pmod 2$;
\item \label{it:new1} $s_\pm(L_1 \# L_2) = s_\pm(L_1) + s_\pm(L_2)$;
\item \label{it:old2} $s_\pm(L_1 \sqcup L_2) = s_\pm(L_1) + s_\pm(L_2) \pm1$;
\item \label{it:s-ANDs+} if $L$ is non-empty, $s_+(L) - 2\ell+2 \leq s_-(L) \leq s_+(L)$;
\item \label{it:old3} if $\Sigma$ is a cobordism from $L_1$ to $L_2$ in $I \times S^3$ such that every component of $\Sigma$ has boundary in $L_1$, then
\[
\pm(s_\pm(L_1) - s_\pm(L_2)) \geq \chi(\Sigma);
\]
\item \label{it:CC} if $L^-$ is obtained from $L^+$ by a crossing change from positive to negative, then
\[
s_\pm(L^-)\leq s_\pm(L^+) \leq s_\pm(L^-) +2;
\]
\item \label{it:GCC} if $L^{\tw}$ is obtained from $L$ by a \emph{generalized} crossing change from positive to negative, then
\[
s_\pm(L^{\tw}) \leq s_\pm(L);
\]
\item \label{it:old4} if $L$ is strongly (H-)slice in $S^4$, then $s_\pm(L) = \pm(|L|-1)$;
\item \label{it:new4} if $L$ is strongly H-slice in both $\#^r\CP$ and $\#^r\bCP$, then $s_\pm(L) = \pm(|L|-1)$.
\end{enumerate}
\end{proposition}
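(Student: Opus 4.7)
The plan is to reduce each item to a result already established in the paper, using the mirror identity $s_+(L) = -s_-(\m L)$ to extend statements about $s_-$ to $s_+$. Items (1) and (2) are immediate from the definitions together with the reversal invariance of $s$ (Theorem~\ref{thm:s invt diffeo and rev}) and the standard parity of the quantum grading of Lee generators. Item (3) for $s_-$ is Theorem~\ref{thm:cs}; for $s_+$, since $\m(L_1 \# L_2) = \m L_1 \# \m L_2$, apply Theorem~\ref{thm:cs} to the mirrors. Item (4) is Proposition~\ref{prop:disj union} at $r=0$, extended to $s_+$ by mirroring. Item (6) for $s_-$ is Theorem~\ref{thm:GenusBoundCylinders} at $r=0$; the $s_+$ version is obtained by running the mirrored cobordism in the opposite direction. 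Item (7) is a classical consequence of the cobordism inequality applied to the two-saddle cobordism relating $L^+$ and $L^-$ through their oriented smoothing. Item (8) for $s_-$ is Theorem~\ref{thm:addFT}; for $s_+$, mirroring turns the move $L \rightsquigarrow L^{\tw}$ into the move $\m L^{\tw} \rightsquigarrow \m L$, which is again a generalized negative crossing addition, so Theorem~\ref{thm:addFT} gives $s(\m L) \leq s(\m L^{\tw})$, i.e., $s_+(L^{\tw}) \leq s_+(L)$. Item (9) is Theorem~\ref{thm:sHsS4}. For item (10), Corollary~\ref{cor:sHsCP} applied to $L$ gives $s_-(L) \leq -(\ell-1)$, and applied to $\m L$ (using that strong H-sliceness of $L$ in $\#^t\CP$ is equivalent to that of $\m L$ in $\#^t\bCP$) gives $s_+(L) \geq \ell-1$; combined with (5), these force equality at both ends.

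The main new ingredient is (5). The left inequality $s_+(L) - 2\ell + 2 \leq s_-(L)$ is equivalent to $s(L) + s(\m L) \geq 2-2\ell$, which is \cite[Equation (10)]{beliakova-wehrli}. For the right inequality $s_-(L) \leq s_+(L)$, the plan is to apply Theorem~\ref{thm:GenusBoundCylinders} to a cobordism of positive Euler characteristic. Write $L = K_1 \sqcup \cdots \sqcup K_\ell$ and consider the link $L' := L \# \m L$ obtained by a single connected sum at $K_1$ and $\m K_1$, which has $2\ell-1$ components. By item~(3), $s_-(L') = s_-(L) + s_-(\m L) = s_-(L) - s_+(L)$. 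We build a cobordism $\Sigma \subset I \times S^3$ from $L'$ to $\varnothing$ in two stages: first perform $\ell - 1$ band moves, each merging one of the remaining pairs $(K_i, \m K_i)$ for $i \geq 2$ into a single component $K_i \# \m K_i$, contributing $-(\ell-1)$ to $\chi(\Sigma)$; then cap off each of the $\ell$ resulting components with a slice disc, contributing $+\ell$ to $\chi(\Sigma)$. The intermediate link $L''$, obtained after all the band moves, has $\ell$ components each of the form $K_i \# \m K_i$. The link $L''$ is strongly slice in $B^4$ by the classical doubling construction: arrange $L$ in the upper half of a diagram of $S^3 = \partial B^4$, let $\m L$ be its mirror in the lower half, attach the $\ell$ band moves across the equatorial symmetry plane to form $L''$, and use the resulting $\mathbb{Z}/2$ reflection symmetry to produce $\ell$ pairwise disjoint ribbon discs in $B^4$. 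Hence $\chi(\Sigma) = -(\ell-1) + \ell = 1$, and every component of $\Sigma$ has boundary in the initial link $L'$. Theorem~\ref{thm:GenusBoundCylinders} (with $r=0$) then yields $s(\varnothing) - s_-(L') \geq 1$, so $s_-(L) - s_+(L) \leq 0$, as required.

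The principal technical obstacle is the pairwise disjointness of the $\ell$ slice discs for $L''$, since its components $K_i \# \m K_i$ are generally linked with each other through the original linking in $L$. This is precisely what the symmetric doubling construction delivers: the $\mathbb{Z}/2$ reflection symmetry of $L''$ preserves each component setwise by exchanging its $K_i$ half with its $\m K_i$ half across the band, so the symmetric ribbon disc of each component can be chosen in a symmetric neighborhood of the corresponding band that is automatically disjoint from those of the other components. Embedding these ribbon discs in $B^4$ introduces no further intersections because the ribbon singularities are of the standard local type.
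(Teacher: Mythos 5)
Your proposal tracks the paper's own proof for items (1)--(4), (6), (8)--(10), and your treatment of item (5) is in substance the same argument the paper gives (the paper exhibits the symmetric surface directly as one disc plus $\ell-1$ annuli with $\chi = 1$, then punctures to a $\chi = 0$ cobordism from $L\#\m L$ to $U_1$; you build the identical surface in two stages as bands followed by discs, cap off the unknot, and arrive at the same bound $s(L\#\m L)\leq 0$).

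There is, however, a genuine gap in item (7). You assert that both inequalities in
\[
s_\pm(L^-)\leq s_\pm(L^+)\leq s_\pm(L^-)+2
\]
are ``a classical consequence of the cobordism inequality applied to the two-saddle cobordism relating $L^+$ and $L^-$ through their oriented smoothing.'' The two-saddle cobordism has Euler characteristic $-2$, and applying item~(6) in both directions yields only the symmetric bound $|s_\pm(L^+) - s_\pm(L^-)| \leq 2$; it does not give the directional inequality $s_\pm(L^-)\leq s_\pm(L^+)$, which is the nontrivial part of the statement. For that one needs an input beyond the $I\times S^3$ cobordism inequality: the paper cites an adaptation of Livingston's argument, or alternatively derives it from item~(8) --- in the case of two oppositely oriented strands, the crossing change \emph{is} a generalized negative crossing and Theorem~\ref{thm:addFT} applies (the underlying geometric content being a null-homologous cylinder in $\bCP\setminus(B^4\sqcup B^4)$ rather than a genus-one cobordism in $I\times S^3$). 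Your sketch should be replaced by one of these routes.

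One minor redundancy: for item~(8) you rederive the $s_+$ version by mirroring, but the paper's Theorem~\ref{thm:addFT} (as restated and proved in Section~\ref{sec:sS3}, via Theorem~\ref{thm:weakadjpm}) already covers both $s_-$ and $s_+$ directly. The extra step is harmless but unnecessary.
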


\begin{example}
By taking connected sums of Hopf links and unlinks, one can easily show that $s_- - s_+$ can achieve all even values between $2-2|L|$ and $0$ (see item \eqref{it:s-ANDs+}).
\end{example}

\begin{proof} We discuss each statement separately.
\begin{enumerate}
\item[\eqref{it:old1}] This follows from Theorem \ref{thm:s invt diffeo and rev} and the definitions of $s_-$ and $s_+$.
\item[\eqref{it:parity}] This follows from the fact that the Khovanov complex is contained in odd (resp.~even) quantum degree if $\ell \equiv 0 \pmod2$ (resp.~$\ell \equiv 1 \pmod 2$); cf.~\cite[Proposition 24]{Kh}.
\item[\eqref{it:new1}] This is the content of Theorem \ref{thm:cs}.
\item[\eqref{it:old2}]
For $s_-$, this is the case $k_1=k_2=0$ of Proposition \ref{prop:disj union} (originally proved in \cite[Equation (8)]{beliakova-wehrli}). For $s_+$, use the same equation on the mirror links.
\item[\eqref{it:s-ANDs+}] The inequality $s_+(L) - 2\ell+2 \leq s_-(L)$ is the first part of \cite[Equation (10)]{beliakova-wehrli}. For the second inequality, consider the pointed links $(L, p)$ and $(\m L, p)$. Their connected sum bounds an embedded surface $\Sigma \subset B^4$ consisting of one disc (whose boundary is the connected sum of the components of $L$ and $\m L$ containing the basepoints), and of $\ell-1$ annuli (one for each non-based component of $L$ and the corresponding component of $\m L$). Note that $\chi(\Sigma) = 1$. After puncturing $B^4$ at a point on $\Sigma$, we get a cobordism ${\Sigma}^\circ$ in $I \times S^3$, with $\chi(\Sigma^\circ) = 0$, from the unknot $U$ to $L \# (\m L)$. By turning it upside down and changing its orientation, we can see it as a cobordism from $L\#(\m L)$ to $U$.
Every component of $\Sigma^\circ$ has a boundary component in $L \#(\m L)$, so, by the case $r=0$ of Theorem \ref{thm:GenusBoundCylinders} (originally proved in \cite[Equation (7)]{beliakova-wehrli}), we get
\[
s(L \# (\m L)) \leq s(U_1) - \chi({\Sigma}^\circ) = 0.
\]
By Theorem \ref{thm:cs}, we get $s(L) + s(\m L) \leq 0$, which means $s_-(L) \leq s_+(L)$.
\item[\eqref{it:old3}] This is the case $r=0$ of Theorem \ref{thm:GenusBoundCylinders} (originally proved in \cite[Equation (7)]{beliakova-wehrli}). Note that for the inequality involving $s_+$, one should apply it to $\overline{\Sigma}$ as a cobordism from $\m{L_1}$ to $\m{L_2}$.
\item[\eqref{it:CC}]
The first inequality can be proved by adapting Livingston's argument~\cite[Corollary 3]{Livingston}. Alternatively, it follows immediately from item \eqref{it:GCC} below.
For the second inequality, note that we can resolve the crossing change to obtain a genus-1 cobordism between $L^-$ and $L^+$.
Thus, by item \eqref{it:old3} (applied to the cobordism in both directions), we get $s_\pm(L^+) \leq s_\pm(L^-) +2$.
\item[\eqref{it:GCC}] This will be proved below (see Theorem \ref{thm:addFT}).
\item[\eqref{it:old4}] This is Theorem \ref{thm:sHsS4} applied to both $L$ and $\m L$.
\item[\eqref{it:new4}] The proof is analogous to the one of Corollary \ref{cor:Glucktwist}. \qedhere
\end{enumerate}
\end{proof}

\subsection{Cobordism properties}

The following theorem is a consequence of Corollary \ref{cor:adjunction}.

\begin{theorem}
\label{thm:positivity}
Consider $Z = (\#^t\bCP)\setminus(B^4 \sqcup B^4)$ as a cobordism from $S^3$ to $S^3$. Suppose that $(Z, \Sigma)$ is a cobordism from $(S^3, L_1)$ to $(S^3, L_2)$, such that
\begin{itemize}
\item $\Sigma$ has no closed components;
\item $[\Sigma] = 0$ in $H_2(Z, \de Z)$;
\item there is a component of $\Sigma$ with boundary both in $L_1$ and in $L_2$.
\end{itemize}
Then
\[
s_-(L_2) \leq s_+(L_1) - \chi(\Sigma).
\]
\end{theorem}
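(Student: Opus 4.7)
The plan is to reduce the claim to Corollary~\ref{cor:adjunction} by converting $\Sigma \subset Z$ into a properly embedded surface $\Sigma'$ in $W := (\#^t \bCP) \setminus B^4$ whose boundary in $\de W \cong S^3$ is the connected sum $\m L_1 \# L_2$. Combined with the connected-sum formula (Theorem~\ref{thm:cs}) and the identity $s(\m L_1) = -s_+(L_1)$, the adjunction bound on $\Sigma'$ will then give exactly $s_-(L_2) \leq s_+(L_1) - \chi(\Sigma)$.

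By hypothesis some component $\Sigma_0 \subset \Sigma$ has boundary in both $L_1$ and $L_2$. First I would pick a properly embedded arc $\alpha \subset \Sigma_0$ joining a point of $\de \Sigma_0 \cap L_1$ to a point of $\de \Sigma_0 \cap L_2$, together with a tubular neighborhood $\nu(\alpha) \subset Z$ chosen so that $\nu(\alpha) \cap \Sigma$ is a standard band $B \subset \Sigma_0$ with core $\alpha$. Setting $Z' := \overline{Z \setminus \nu(\alpha)}$ and $\Sigma' := \overline{\Sigma \setminus B}$, the arc $\alpha$ joins the two boundary $S^3$'s of $Z$, so removing $\nu(\alpha)$ merges the two excised balls of $(\#^t\bCP) \setminus (B^4 \sqcup B^4)$ into a single excised 4-ball. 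Hence $Z' \cong W$, with $\de Z' \cong S^3$.

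To identify the new boundary link, note that one of the two boundary $S^3$'s of $Z$ was incoming and the other outgoing. Their merging into the single outgoing boundary of $Z'$ reverses the ambient orientation on the incoming side, so the induced cobordism boundary $-L_1 \subset -S^3_0$ appears in $\de Z' \cong S^3$ as the mirror reverse $\m L_1$, while $L_2$ is unchanged. The removal of the band $B$ performs a band surgery fusing a component of $\m L_1$ with a component of $L_2$, which, for the appropriate choice of basepoints on the fused components, realizes the connected sum $\m L_1 \# L_2$. A Mayer--Vietoris calculation on $\Sigma = \Sigma' \cup B$ (with intersection two disjoint arcs) gives $\chi(\Sigma) = \chi(\Sigma') + 1 - 2$, hence $\chi(\Sigma') = \chi(\Sigma)+1$; the surface $\Sigma'$ has no closed components, since the band removal only exposes new boundary arcs; and $[\Sigma'] = 0$ in $H_2(W, \de W) \cong \Z^t$, because this group is detected by algebraic intersections with the $t$ standard $\mathbb{CP}^1$ generators in the $\bCP$ summands, and those intersection numbers coincide with those of $\Sigma$ once $\nu(\alpha)$ is chosen disjoint from the generators.

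Applying Corollary~\ref{cor:adjunction} to $\Sigma' \subset W$ then yields $s(\m L_1 \# L_2) \leq 1 - \chi(\Sigma') = -\chi(\Sigma)$, while Theorem~\ref{thm:cs} rewrites the left side as $s(\m L_1) + s(L_2) = s_-(L_2) - s_+(L_1)$, giving the theorem. The main obstacle I anticipate is purely orientational: one must verify carefully that $-L_1 \subset -S^3_0$ is really sent to $\m L_1$ in $\de W$ rather than to $m(L_1)$ or $L_1$, since it is exactly the mirror reverse that lets the connected-sum formula supply the $-s_+(L_1)$ term. The diffeomorphism $Z' \cong W$, the Euler-characteristic count, the vanishing of $[\Sigma']$, and the absence of closed components in $\Sigma'$ are all routine band-and-tube arithmetic.
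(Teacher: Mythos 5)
Your proposal is correct and is essentially identical to the paper's own proof: both remove a neighborhood of an arc on $\Sigma$ joining $L_1$ to $L_2$ to obtain a surface $\widetilde\Sigma$ in $(\#^t\bCP)\setminus B^4$ with boundary $\m L_1 \# L_2$ and $\chi(\widetilde\Sigma)=\chi(\Sigma)+1$, verify $[\widetilde\Sigma]=0$ by Mayer--Vietoris, and then combine Corollary~\ref{cor:adjunction} with Theorem~\ref{thm:cs}. No gaps.
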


Note that the inequality of Theorem \ref{thm:positivity} mixes $s_-$ and $s_+$, but the hypothesis on connectedness is different from that of Theorem \ref{thm:strongadj}.

\begin{proof}[Proof of Theorem \ref{thm:positivity}]
Pick an arc on $\Sigma$ connecting $L_1$ and $L_2$, and remove a regular neighborhood of this arc from $Z$ and from $\Sigma$. What we get is the 4-manifold $W = (\#^t\bCP) \setminus B^4$, with a surface $\widetilde\Sigma$ embedded in it, with $\de\widetilde\Sigma = L_2 \# (-L_1)$. 
A Mayer-Vietoris argument implies that $[\widetilde\Sigma] = 0$, so we can apply Corollary \ref{cor:adjunction} to obtain
\[
s(L_2 \# (\m{L_1})) \leq 1-\chi(\widetilde\Sigma).
\]
To conclude, we note that, by Theorem \ref{thm:cs}, $s(L_2 \# (\m{L_1})) = s(L_2) + s(\m{L_1}) = s_-(L_2) - s_+(L_1)$, and by a simple computation $\chi(\widetilde\Sigma) = \chi(\Sigma)+1$.
\end{proof}

Under some hypothesis on connectedness, we can strengthen the inequality in Theorem \ref{thm:positivity} by having $s_-$ on both sides or $s_+$ on both sides.

\begin{theorem}
\label{thm:weakadjpm}
Consider a null-homologous oriented cobordism $\Sigma\subset Z = (\#^t\bCP) \setminus (B^4 \sqcup B^4)$ from a link $L_1$ to a second link $L_2$. Then:
\begin{itemize}
\item if every component of $\Sigma$ has a boundary component in $L_2$, we have $s_-(L_1) - s_-(L_2) \geq \chi(\Sigma)$;
\item if every component of $\Sigma$ has a boundary component in $L_1$, we have $s_+(L_1) - s_+(L_2) \geq \chi(\Sigma)$.
\end{itemize}
\end{theorem}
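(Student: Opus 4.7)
The plan is as follows. The first item of Theorem~\ref{thm:weakadjpm} is a direct restatement of Theorem~\ref{thm:weakadj} once one notes $s_-(L) = s(L)$, so it requires no further work. All of the content lies in the second item, which I would deduce by running the argument of Theorem~\ref{thm:strongadj} in a ``mirrored'' setup.

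First I would mirror everything. Reversing the orientation of $Z$ and identifying each boundary $S^3$ with a standard one via reflection turns $\Sigma \subset Z$ into a null-homologous cobordism $\overline{\Sigma}$ inside $\overline{Z} = (\#^t\CP) \setminus (B^4 \sqcup B^4)$, going from $\m L_2$ to $\m L_1$. The hypothesis ``every component has a boundary in $L_1$'' becomes ``every component of $\overline{\Sigma}$ has a boundary in $\m L_1$'', and $\m L_1$ is now the \emph{outgoing} end of $\overline{Z}$.

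Next I would run the proof of Theorem~\ref{thm:strongadj} in this $\CP$-setting. Since $\overline{Z}$ already has $\CP$-summands, I can directly remove neighborhoods of the $\mathbb{CP}^1$'s: on each new $S^3_i$ boundary the punctured surface meets the Hopf fibration in the link $F_{p_i,p_i}(1)$, exactly as in Theorem~\ref{thm:strongadj}. The one essential change is the choice of arcs: instead of connecting the $S^3_i$'s to the outgoing end (as in Theorem~\ref{thm:strongadj}), I would connect them to the \emph{incoming} end $\{0\}\times S^3$ of $\overline{Z}$, where $\m L_2$ lies. Removing neighborhoods of these arcs produces $W' \cong I \times S^3$ together with a cobordism $\overline{\Sigma}^\circ$ from
\[
\m L_2 \sqcup F_{p_1,p_1}(1) \sqcup \cdots \sqcup F_{p_t,p_t}(1)
\]
to $\m L_1$, such that every component still has a boundary in $\m L_1$.

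Then I would flip $\overline{\Sigma}^\circ$ upside-down (using the invariance of $s$ under link reversal in $S^3$) so that the boundary condition now sits on the incoming side, and apply Theorem~\ref{thm:GenusBoundCylinders} together with Proposition~\ref{prop:disj union} and Theorem~\ref{thm:sLpp}. By exactly the bookkeeping performed in the proof of Theorem~\ref{thm:strongadj}, the contributions of the $F_{p_i,p_i}(1)$'s telescope against the $2p_i$ disks removed from $\Sigma$, leaving the clean inequality
\[
s(\m L_2) - s(\m L_1) \geq \chi(\Sigma).
\]
Finally, using $s(\m L) = -s_+(L)$, this rearranges to $s_+(L_1) - s_+(L_2) \geq \chi(\Sigma)$.

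The hard part I expect is the initial mirror setup: one must carefully track how the orientations on the two boundary $S^3$'s transform, verify that under the identification $\partial\overline{Z} \cong S^3 \sqcup S^3$ (via reflection) the hypothesis translates to ``boundary in $\m L_1$'' on the outgoing end, and check that $[\overline{\Sigma}]=0$ in $H_2(\overline{Z}, \partial \overline{Z})$ (which is immediate from $[\Sigma]=0$). After this setup, the remainder of the argument is a structural parallel of Theorem~\ref{thm:strongadj}, differing only in that the arcs $a_i$ are attached to the opposite end of the product $I \times S^3$.
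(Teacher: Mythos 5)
Your treatment of the first bullet is correct and matches the paper. For the second bullet, however, there is a genuine gap, and it is worth noting first that the paper's own proof is a two-line reduction: it simply views $(Z,\Sigma)$, \emph{with the same orientation on $Z$}, upside down as a cobordism from $\m L_2$ to $\m L_1$ inside the same $(\#^t\bCP)\setminus(B^4\sqcup B^4)$; the hypothesis then sits on the outgoing end $\m L_1$, which is exactly the hypothesis of Theorem~\ref{thm:weakadj}, and that theorem gives $s(\m L_2)-s(\m L_1)\geq\chi(\Sigma)$ directly. There is no need to redo the surgery on the $\mathbb{CP}^1$'s.

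The gap in your rerun of the Theorem~\ref{thm:strongadj} argument is the final ``flip.'' Turning a cobordism in $I\times S^3$ upside down does not merely reverse the links --- it \emph{mirrors} them (one must compose $(t,x)\mapsto(1-t,x)$ with a reflection of $S^3$ to preserve the ambient orientation), and $s$ is not mirror-invariant. After your flip, the cable links on the small spheres become $\m{\Fpp{p_i}(1)}=\Fpp{p_i}(-1)$, and the telescoping you invoke would require $s(\m{\Fpp{p}(1)})=1-2p$, i.e.\ $s_+(\Fpp{p}(1))=2p-1$. Theorem~\ref{thm:sLpp} computes only $s=s_-$ of $\Fpp{p}(1)$, not $s_+$, so this input is unavailable (and there is no reason to expect $s_-=s_+$ here: for the closely related $\Fp\subset\SSone$ the paper shows $s_+=-s_-$). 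The flip is forced on you by an orientation error at the start: reversing the orientation of $Z$ and reflecting the boundaries yields a cobordism in $(\#^t\CP)\setminus(B^4\sqcup B^4)$ going from $\m L_1$ to $\m L_2$ (direction preserved), not from $\m L_2$ to $\m L_1$. With the correct direction, the hypothesis lands on the \emph{incoming} end $\m L_1$, you attach the arcs to the outgoing end as in Theorem~\ref{thm:strongadj}, Theorem~\ref{thm:GenusBoundCylinders} applies with no flip, the cables stay positively twisted, and the computation closes to give $s(\m L_2)-s(\m L_1)\geq\chi(\Sigma)$. As written, though, the argument does not go through.
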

\begin{proof}
The inequality for $s_-$ is exactly the statement of Theorem \ref{thm:weakadj}.
For the second inequality, note that $(Z, \Sigma)$ (with the same orientation) can also be seen as a cobordism from $\m L_2$ to $\m L_1$. (This operation is usually referred to as `turning the cobordism upside down'.) Then such a cobordism satisfies the connectedness hypothesis of Theorem \ref{thm:weakadj}, which gives the inequality
\[
s(\m L_2) - s(\m L_1) \geq \chi(\Sigma),
\]
which is equivalent to $s_+(L_1) - s_+(L_2) \geq \chi(\Sigma)$.
\end{proof}

As a corollary of Theorem \ref{thm:weakadjpm} we can prove Theorem \ref{thm:addFT} from the Introduction (but now stated for both $s_-$ and $s_+$).
Recall that adding a generalized \emph{negative} crossing corresponds to adding a \emph{positive} full twist on $n$ strands going up and $n$ strands going down, see Figure \ref{fig:GCC}.

{
\renewcommand{\thethm}{\ref{thm:addFT}}
\begin{theorem}
Suppose that a link $L^\tw \subset S^3$ is obtained from $L$ by adding a generalized negative crossing. Then $s_\pm(L^\tw) \leq s_\pm(L)$.
\end{theorem}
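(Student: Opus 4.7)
The plan is to realize the generalized crossing change as a particularly nice cobordism in $\bCP$ and then invoke the adjunction-type inequality already at our disposal, namely Theorem \ref{thm:weakadjpm}.

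First I would appeal to Remark \ref{rem:GCC}: adding a generalized negative crossing to $L$ to produce $L^\tw$ can be realized by a cobordism $\Sigma \subset \bCP \setminus (B^4 \sqcup B^4)$ from $L$ to $L^\tw$ consisting of $\ell = |L| = |L^\tw|$ disjoint cylinders, one per component, and this cobordism is null-homologous rel boundary. In particular $\chi(\Sigma) = 0$, and because $\Sigma$ is a disjoint union of cylinders, every component of $\Sigma$ meets both $L$ and $L^\tw$. This places us exactly in the setting of Theorem \ref{thm:weakadjpm} with $t = 1$.

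Next I would apply both bullets of Theorem \ref{thm:weakadjpm} to the cobordism $\Sigma$ (with $L_1 = L$ and $L_2 = L^\tw$). Since every component of $\Sigma$ contains a boundary circle in $L^\tw$, the first bullet yields $s_-(L) - s_-(L^\tw) \geq \chi(\Sigma) = 0$, and since every component of $\Sigma$ also contains a boundary circle in $L$, the second bullet yields $s_+(L) - s_+(L^\tw) \geq \chi(\Sigma) = 0$. Rearranging gives $s_\pm(L^\tw) \leq s_\pm(L)$, which is the desired inequality.

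Essentially all the difficulty has already been absorbed into Theorem \ref{thm:weakadj} (and its upside-down companion packaged in Theorem \ref{thm:weakadjpm}), whose proof in turn rests on Theorem \ref{thm:sLpp} computing $s(\Fp(1)) = 1-2p$ and on the reduction of $\#^t\bCP$-cobordisms to $I \times S^3$-cobordisms by puncturing along the $\mathbb{CP}^1$'s. The only genuinely new observation here is the geometric one in Remark \ref{rem:GCC}: the twisting region, with equal numbers of up- and down-strands, gives a \emph{null-homologous} cylindrical cobordism in $\bCP$. There is therefore no real obstacle beyond checking that the connectedness hypotheses of Theorem \ref{thm:weakadjpm} hold, which is immediate because a disjoint union of cylinders hits each of the two boundary links in each of its components.
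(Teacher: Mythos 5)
Your proof is correct and is essentially identical to the paper's: the paper likewise invokes Remark \ref{rem:GCC} to produce the null-homologous cylindrical cobordism of Euler characteristic zero in the doubly punctured $\bCP$ and then applies Theorem \ref{thm:weakadjpm}. You have simply spelled out the verification of the connectedness hypotheses in slightly more detail.
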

\addtocounter{thm}{-1}
}

\begin{proof}
By Remark \ref{rem:GCC}, there is a connected cobordism $\Sigma$ in a doubly punctured $\bCP$ from $L$ to $L^\tw$, with $\chi(\Sigma) = 0$. Then, by Theorem \ref{thm:weakadjpm},
\[
s_\pm(L^\tw) \leq s_\pm(L). \qedhere
\]
\end{proof}

\subsection{Whitehead doubles}

\begin{figure}
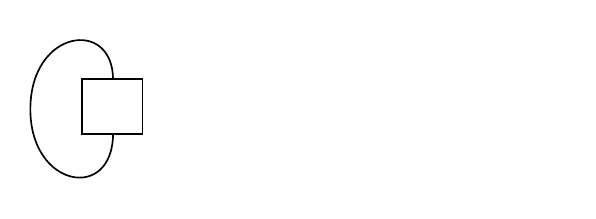
\caption{How to draw a diagram $B$ of $\Wh^+(K,0)$ from a diagram $A$ of $K$. A crossing change from positive to negative on $B$ turns $B$ into a diagram for the unknot.}
\label{fig:WhiteheadDoubleA}
\end{figure}

{
\renewcommand{\thethm}{\ref{thm:sWh+=0}}
\begin{theorem}
Let $K$ be a knot such that $n$ crossing changes from negative to positive turns $K$ into the unknot. Then $\Wh^+(K, 0)$ is $H$-slice in $\CP$ and in $\#^n\bCP$. Thus, $s(\Wh^+(K, 0)) = 0$.
\end{theorem}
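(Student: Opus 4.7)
The plan is to establish the two H-sliceness statements separately and then conclude $s(\Wh^+(K,0)) = 0$ from Proposition~\ref{prop:propertiesS3}(10), noting that H-sliceness in $\CP$ implies H-sliceness in $\#^n\CP$ (by taking the boundary connected sum with $n-1$ trivial copies of $\CP$, keeping the slice disc away from them). So as soon as both H-slicing claims are in hand, the theorem follows immediately.

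For H-sliceness in $\CP$, I would invoke Figure~\ref{fig:WhiteheadDoubleA}: there is a diagram of $\Wh^+(K,0)$ in which a single crossing change, located at the clasp, produces a diagram of the unknot. The two strands meeting at this clasp crossing run antiparallel, because the Whitehead pattern has winding number zero; hence the crossing change is the two-strand instance of ``adding a generalized crossing'' in the sense of Figure~\ref{fig:GCC}. By Remark~\ref{rem:GCC} (with the sign convention that lands us in $\CP$ rather than $\bCP$), this operation is realized by a null-homologous cylindrical cobordism in a twice-punctured $\CP$ from $\Wh^+(K,0)$ to the unknot. Capping the unknot with the standard slice disc in $B^4$ gives the required null-homologous disc in $\CP \setminus B^4$ bounded by $\Wh^+(K,0)$. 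This step does not use the hypothesis on $K$ at all.

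For H-sliceness in $\#^n \bCP$, I would lift the given sequence of crossing changes on $K$ to a sequence of cobordisms on $\Wh^+(K,0)$. The key geometric observation is that at each crossing of $K$ the Whitehead doubling produces four strands of $\Wh^+(K,0)$, split two-up and two-down (again by winding number zero); hence a crossing change on $K$ corresponds to a four-strand local move on $\Wh^+(K,0)$ that is exactly a generalized crossing change in the sense of Figure~\ref{fig:GCC}. By Remark~\ref{rem:GCC} each such move is realized by a null-homologous cylindrical cobordism in a twice-punctured $\bCP$, so concatenating the $n$ cobordisms coming from the $n$ crossing changes of $K$ --- and noting that $\Wh^+(\text{unknot},0)$ is the unknot --- yields a null-homologous cobordism from $\Wh^+(K,0)$ to the unknot inside a twice-punctured $\#^n \bCP$. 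Capping the unknot with a disc in $B^4$ completes the H-slicing.

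The main obstacle is the last step, namely the sign bookkeeping that identifies a negative-to-positive crossing change on $K$ with \emph{adding a generalized negative} crossing on $\Wh^+(K,0)$, so that each resulting cobordism lives in $\bCP$ rather than $\CP$. The subtle point is that the two ``copies'' of $K$ inside the Whitehead pattern are oppositely oriented (to achieve winding number zero), and this orientation reversal flips the sign of the lifted twist relative to the twist on $K$. Verifying this carefully --- perhaps by drawing the four-strand local model, computing writhes, and comparing with the two-strand prescription in the caption of Figure~\ref{fig:GCC} --- is the only non-routine calculation in the proof; once it is done, Proposition~\ref{prop:propertiesS3}(10) finishes the argument.
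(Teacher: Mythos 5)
Your overall strategy matches the paper's: establish $H$-sliceness in $\CP$ via the clasp crossing change, lift the $n$ crossing changes of $K$ to generalized crossing changes on $\Wh^+(K,0)$ to obtain $H$-sliceness in $\#^n\bCP$, and conclude using Proposition~\ref{prop:propertiesS3}. The $\CP$ half is fine. The gap is in the $\#^n\bCP$ half, where you assert that ``a crossing change on $K$ corresponds to a four-strand local move on $\Wh^+(K,0)$ that is exactly a generalized crossing change.'' That claim is false for the literal local move: the blackboard-framed $2$-cable of a single crossing change is an $8$-crossing pure braid, whereas a full twist on four strands has $12$ crossings, so the cable is not a full twist (it equals the full twist with the internal full twist on each of the two pairs deleted). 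Moreover, the actual change in a diagram of $\Wh^+(K,0)$ is not just the cable of the crossing change on $K$: changing a crossing of $K$ from negative to positive raises the writhe by $2$, and the $0$-framed double requires $-\wr$ compensating twists in the pattern, so the twist box also changes by two full twists. Only after combining the cabled crossing change with this framing correction, and sliding the internal twists along the two parallel strands into the twist box, does the net effect collapse to a single generalized crossing change --- which is precisely the point the paper makes by noting that $\wr(A^+) = \wr(A)+2$ compensates the extra twists. You never mention the framing, and without it the correspondence you claim simply does not hold.

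Your diagnosis of the ``sign bookkeeping'' is also off. You attribute the subtlety to the two copies of $K$ being oppositely oriented and this ``flipping the sign of the lifted twist relative to the twist on $K$.'' Reversing strand orientations flips the signs of individual crossings, but it does not flip the handedness of a full twist, and handedness (right- versus left-handed, equivalently blowing down a $-1$- versus $+1$-framed unknot) is what decides $\bCP$ versus $\CP$. The actual reason the lifted move is a single generalized crossing change of the correct handedness is the writhe/framing compensation described above; carrying out the writhe computation you propose as a sanity check would in fact show that the cabled crossing change alone is not a full twist, so your stated ``key geometric observation'' would fail without the framing bookkeeping supplied in the paper's Figure~\ref{fig:WhiteheadDoubleB}.
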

\addtocounter{thm}{-1}
}
\begin{proof}
A single crossing change in the Whitehead clasp turns $\Wh^+(K,0)$ into the unknot. The crossing change is from positive to negative (since we are using the \emph{positive} Whitehead pattern), see Figure \ref{fig:WhiteheadDoubleA}. Thus, $\Wh^+(K,0)$ is obtained from a diagram of the unknot by a (generalized) positive crossing change, so, in view of Remark \ref{rem:Ltw}, $\Wh^+(K,0)$ is $H$-slice in $\CP$.

Given a diagram $A$ with writhe $w$ for $K$, then the $0$-framed Whitehead double is given by taking the blackboard-framed double of $A$, and adding $-w$ full twists, see Figure \ref{fig:WhiteheadDoubleA}. Call $B$ this diagram of $\Wh^+(K,0)$. Let $K^+$ (resp.~$A^+$) be obtained from $K$ (resp.~$A$) by changing a negative crossing into a positive crossing.
Then adding a generalized positive crossing on $B$, in correspondence to the crossing change from $A$ to $A^+$, turns $B$ into a diagram for $\Wh^+(K^+, 0)$: this is because the additional two negative twists coming from the generalized positive crossing are compensated by the change in the writhe: $\wr(A^+) = \wr(A)+2$. See Figure \ref{fig:WhiteheadDoubleB}.

By iterating this procedure on the $n$ given crossing changes, we can turn $\Wh^+(K,0)$ into $\Wh^+(U,0) = U$ with $n$ generalized positive crossings. In other words, a diagram of $\Wh^+(K,0)$ is obtained from one of $U$ by adding $n$ generalized \emph{negative} crossings. Thus, by Remark \ref{rem:Ltw}, $\Wh^+(K,0)$ is $H$-slice in $\#^n\bCP$.

Since $\Wh^+(K,0)$ is $H$-slice in both $\CP$ and $\#^n\bCP$, by part \eqref{it:new4} of Proposition \ref{prop:propertiesS3}, we have $s(\Wh^+(K,0)) = 0$.
\end{proof}

\begin{figure}
\[ \WhiteheadDoubleAandB \]
\caption{Starting from a diagram $A$ for $K$, we arrive at a diagram $A^+$ for $K^+$ by adding a generalized positive crossing (GPC) and cancelling two crossings, which then gives rise to a diagram for $\Wh^+(K^+,0)$.  On the other hand, we may apply Whitehead doubling (Wh) to $A$ to arrive at a diagram $B$ for $\Wh^+(K,0)$, to which we may add a generalized positive crossing to arrive at an equivalent diagram for $\Wh^+(K^+,0)$.}
\label{fig:WhiteheadDoubleB}
\end{figure}

\section{$s$-invariants in $\SSr$}
\label{sec:sMr}

\subsection{The invariants $s_-$ and $s_+$ in $\SSr$}

\begin{definition}
For a null-homologous link $L$ in $M = \#^r(\SSone)$, we define $s_-(L) = s(L)$ and $s_+(L) = -s(\m L)$.
\end{definition}

As a consequence of Theorem \ref{prop:s invt by finite approx}, the $s$-invariants $s_\pm$ can be computed via finite approximation.

\begin{proposition}
\label{prop:stab}
Let $D$ be a diagram for a null-homologous link $L$ in $M = \#^r(\SSone)$. Let $k^\pm = \ceiling{\frac{n^\pm_D+2}{2}}$ and let $\vec k = (k, \ldots, k)$. Then
\begin{enumerate}
\item \label{it:stab1} $s_-(L) = s_-(D(\vec k))$ for all $k \geq k^+$;
\item \label{it:stab2} $s_+(L) = s_+(D(-\vec k))$ for all $k \geq k^-$;
\item \label{it:stab3} $s_-(L) \leq s_-(D(\vec0))$;
\item \label{it:stab4} $s_+(D(\vec0)) \leq s_+(L)$.
\end{enumerate}
\end{proposition}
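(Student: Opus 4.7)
The proof proposal is to assemble Theorems~\ref{thm:fda} and~\ref{thm:addFT} with a careful treatment of mirroring. Items (1) and (2) are essentially direct restatements of Theorem~\ref{thm:fda} applied to $L$ and to $\m L$ respectively, while items (3) and (4) reduce to the monotonicity of $s_\pm$ under generalized crossing changes.

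For item (1), since $D(\vec k)$ is a link in $S^3$, by definition we have $s_-(D(\vec k)) = s(D(\vec k))$. Theorem~\ref{thm:fda} then gives $s_-(L) = s(L) = s(D(\vec k))$ for $k \geq k^+$, which is the claim. For item (2), I would apply Theorem~\ref{thm:fda} to the mirror link $\m L \subset \SSr$, represented by the mirror diagram $\overline D$. Since $n^+_{\overline D} = n^-_D$, the theorem produces $s(\m L) = s(\overline D(\vec k))$ for $k \geq k^-$. The key bookkeeping observation is that mirroring sends a right-handed full twist insertion to a left-handed one, so
\[
\overline{D}(\vec k) \;=\; \m\bigl(D(-\vec k)\bigr).
\]
Therefore $s_+(L) = -s(\m L) = -s\bigl(\m(D(-\vec k))\bigr) = s_+(D(-\vec k))$ for $k \geq k^-$, as required.

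For items (3) and (4), the key observation is that the null-homologous hypothesis on $L$ forces each surgery sphere $S_i^2$ to be intersected in $n_i = 2p_i$ points with $p_i$ strands pointing in each direction (when $p_i = 0$ the $i$-th full twist is vacuous, and this handle can be ignored). Consequently, inserting a single right-handed full twist at the $i$-th handle is exactly the operation from Figure~\ref{fig:GCC} of adding a generalized negative crossing. By Theorem~\ref{thm:addFT}, this weakly decreases $s_\pm$. Iterating across all $r$ handles and all $k$ twists per handle yields
\[
s_\pm(D(\vec k)) \leq s_\pm(D(\vec 0)) \qquad \text{for every } k \geq 0;
\]
combined with item (1), this gives item (3). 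Dually, $D(-\vec k)$ is obtained from $D(\vec 0)$ by iterated insertion of generalized \emph{positive} crossings (Theorem~\ref{thm:addFT} applied to the mirror shows these weakly increase $s_\pm$), so $s_\pm(D(-\vec k)) \geq s_\pm(D(\vec 0))$, and combining with item (2) yields item (4).

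There is no substantial obstacle in this argument, since all of the heavy lifting has been done in the preceding sections. The only real subtlety is keeping track of orientations and mirror conventions carefully enough to confirm the identity $\overline{D}(\vec k) = \m(D(-\vec k))$ used in item (2), and to verify that the full-twist insertions in items (3) and (4) actually fit the hypotheses of Theorem~\ref{thm:addFT} (rather than, say, adding twists along strands all oriented the same way, which would not be a generalized crossing change in the sense of~\cite{Positive}); both of these are immediate from the null-homologous hypothesis on $L$.
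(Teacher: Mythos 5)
Your proof is correct and essentially identical to the paper's: items (1) and (2) are handled exactly the same way (Theorem~\ref{thm:fda} applied to $D$ and to $\m D$, using $\m D(\vec k)=\m(D(-\vec k))$), and for items (3) and (4) you route through Theorem~\ref{thm:addFT} iteratively, while the paper invokes Theorem~\ref{thm:weakadjpm} once with the cobordism of $|L|$ disjoint cylinders in $\#^{kr}\bCP$. Since Theorem~\ref{thm:addFT} is itself proved by that same cylinder cobordism in $\bCP$, the two arguments are the same modulo packaging.
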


\begin{proof}
Item \eqref{it:stab1} is Theorem \ref{prop:s invt by finite approx}. For item \eqref{it:stab2}, consider the mirror diagram $\m D$, obtained by reversing all crossings in $D$. The number of positive crossings of $\m D$ is $n_D^-$. Moreover, it is immediate to check that $(\m D)(\vec k) = \m{(D(-\vec k))}$. Then \eqref{it:stab2} follows from Theorem \ref{prop:s invt by finite approx} applied to $\m D$.

For item \eqref{it:stab3}, note that there is a null-homologous cobordism in $\#^{kr}\bCP$ from $D(\vec0)$ to $D(\vec k)$, consisting of $|L|$ disjoint cylinders. By Theorem \ref{thm:weakadjpm} we have $s_-(D(\vec k))\leq s_-(D(\vec0))$. By item \eqref{it:stab1} we know that $s_-(D(\vec k)) = s_-(L)$ for $k \gg 0$, and thus conclude.

Lastly, item \eqref{it:stab4} follows in the same way as item \eqref{it:stab3}, except that the cobordism is now from $D(-\vec k)$ to $D(\vec 0)$ and we use item \eqref{it:stab2} to conclude.
\end{proof}

\begin{example}
Suppose that $L$ is a local link, i.e.~a link contained in a 3-ball: $L \subset B^3 \subset M$. Let $L_{S^3}$ be the corresponding link in $S^3$, obtained by viewing $B^3 \subset S^3$. Then $s_\pm(L) = s_\pm(L_{S^3})$.
To see this, note that any local link $L$ can be represented by a diagram $D$ that does not go through the handles. Thus, for all $k$, $D(\vec k) = D$ is always the same diagram, which could also be viewed as a diagram for $L_{S^3}$. Thus,
\[
s_\pm(L) = s_\pm(D) = s_\pm(L_{S^3}).
\]
\end{example}

\begin{example}
For the link $\Fp \subset \SSone$ discussed in Section \ref{sec:Lpp}, we have $s_\pm(\Fp) = \pm(2p-1)$. The computation of $s_-$ is done in Theorem \ref{thm:s(Fp)}. For $s_+$, note that $\m{\Fp} = \Fp \subset \SSone$, so $s_+(\Fp) = -s_-(\Fp)$.
\end{example}

\begin{figure}
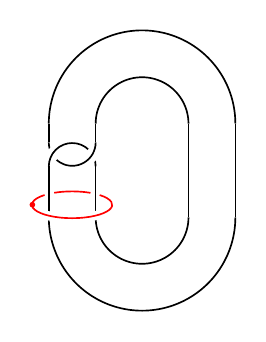
\caption{The positive Whitehead knot $\Wh^+$ in $\SSone$.}
\label{fig:Whitehead}
\end{figure}

\begin{example}
\label{ex:Whitehead}
Let $\Wh^+$ be the positive Whitehead knot in $\SSone$, with diagram $D_{\Wh^+}$ represented in Figure \ref{fig:Whitehead}. Then $s_-(\Wh^+) = 0$ and $s_+(\Wh^+) = 2$. Indeed, for $k \geq 0$, the finite approximation $D_{\Wh^+}(k) \subset S^3$ is the twist knot with an alternating diagram with $2k+2$ crossings. For example, $D_{\Wh^+}(1)$ is the figure-eight knot. All these knots have vanishing signature and, since they are alternating, vanishing $s$-invariant too. It follows that $s_-(\Wh^+) = 0$ by Proposition \ref{prop:stab}.\eqref{it:stab1}. If instead we consider $D_{\Wh^+}(-k)$, for $k<0$, we obtain the positive twist knot with an alternating diagram with $2k+1$ crossings. For instance, $D_{\Wh^+}(-1)$ is the right-handed trefoil knot $T_{2,3}$. All these knots have signature $-2$, thus their $s$-invariant is $2$. It follows by Proposition \ref{prop:stab}.\eqref{it:stab2} that
\[
s_+(\Wh^+) = s_+(D_{\Wh^+}(-k)) = s(D_{\Wh^+}(-k)) = 2,
\]
where in the second equality we used the fact that for knots in $S^3$ we have $s_-(K) = s_+(K) = s(K)$ (see Example \ref{ex:knotsinS3}).
\end{example}

\begin{example}
For the negative Whitehead knot $\Wh^-$ in $\SSone$, which is a mirror image of $\Wh^+$, we have $s_-(\Wh^-) = -2$ and $s_+(\Wh^-) = 0$. This follows from Example \ref{ex:Whitehead} and the definition of $s_{\pm}$.
\end{example}

\begin{remark}
Recall that for a knot $K$ in $S^3$ we have $s_-(K) = s_+(K)$ (see Example \ref{ex:knotsinS3}). This is no longer true for (null-homologous) knots in $\SSone$, since we saw in Example \ref{ex:Whitehead} that the positive Whitehead knot in $\SSone$ has $s_-(\Wh^+)=0$ and $s_+(\Wh^+)=2$.
\end{remark}

\subsection{Properties from $S^3$}

We saw in Section \ref{sec:disj union} that given oriented links $L_1$ and $L_2$ in $\#^{r_1}(\SSone)$ and $\#^{r_2}(\SSone)$ respectively, we can define $L_1 \sqcup L_2 \subset \#^{r_1+r_2}(\SSone)$. In a similar way, given $(L_1,p_1)$ and $(L_2, p_2)$ pointed oriented links in $\#^{r_1}(\SSone)$ and $\#^{r_2}(\SSone)$ respectively, we can define $L_1 \# L_2 \subset \#^{r_1+r_2}(\SSone)$ by performing connected sum near the basepoints.

Suppose that $L$ is a null-homologous link in $M = \SSr$, and $K \subset M$ is an unknot, with $\lk(L, K) = 0$. Then a \emph{generalized crossing change from positive to negative} (also called \emph{adding a generalized negative crossing}) on $L$ along $K$ consists of adding a positive full twist on $L$ on a disc $D$ with $\del D = K$. See Figure \ref{fig:GCC}. The resulting link $L^\tw$ is well-defined up to diffeomorphisms of $M$.

\begin{proposition}
\label{prop:propertiesS1S2}
Let $L$, $L^{\tw}$, $L^+$, and $L^-$ be null-homologous $\ell$-component links in $M = \#^r(\SSone)$.  Let $L_1$, $L_2$ be null-homologous links in $\#^{r_1}(\SSone)$ and $\#^{r_2}(\SSone)$, respectively. Then
\begin{enumerate}
\item \label{it:propertiesS1S2-1} $s_\pm(L) = s_\pm(r(L)) = -s_\mp(m(L)) = -s_\mp(\m L)$;
\item \label{it:parityS1S2} $s_\pm(L) \equiv \ell-1 \pmod 2$;
\item \label{it:propertiesS1S2-2} $s_\pm(L_1 \# L_2) = s_\pm(L_1) + s_\pm(L_2)$;
\item \label{it:propertiesS1S2-3} $s_\pm(L_1 \sqcup L_2) = s_\pm(L_1) + s_\pm(L_2) \pm1$;
\item \label{it:propertiesS1S2-4} if $L$ is non-empty, $s_-(L) \leq s_+(L)$;
\item \label{it:propertiesS1S2-CC} if $L^-$ is obtained from $L^+$ by a crossing change from positive to negative, then
\[
s_\pm(L^-)\leq s_\pm(L^+) \leq s_\pm(L^-) +2;
\]
\item \label{it:propertiesS1S2-GCC} if $L^{\tw}$ is obtained from $L$ by a \emph{generalized} crossing change from positive to negative, then
\[
s_\pm(L^{\tw}) \leq s_\pm(L).
\]
\end{enumerate}
\end{proposition}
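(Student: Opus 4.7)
The plan is to handle items (1)--(4) by direct reduction to results already in the paper, address item (5) via a finite-approximation sandwich, and use the cobordism and adjunction machinery from Section \ref{sec:adj} for items (6) and (7).

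For item (1), I will unfold the definitions $s_-(L) = s(L)$ and $s_+(L) = -s(\m L)$ and apply the reversal and diffeomorphism invariance of $s$ established in Theorem \ref{thm:s invt diffeo and rev}. For item (2), Theorem \ref{prop:s invt by finite approx} lets me compute $s_\pm(L)$ via a finite approximation $D(\vec k) \subset S^3$; since each full twist is a pure braid, $D(\vec k)$ has the same number of components as $L$, so the parity reduces to part (2) of Proposition \ref{prop:propertiesS3}. Item (3) is a direct restatement of Theorem \ref{thm:ConnSumSSr}. For item (4), the $s_-$ formula is the $m = 2$ case of Proposition \ref{prop:disj union}, and the $s_+$ version follows by applying Proposition \ref{prop:disj union} to $\m L_1 \sqcup \m L_2$ together with the identity $\m{(L_1 \sqcup L_2)} = \m L_1 \sqcup \m L_2$.

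For item (5), the idea is to sandwich $L$ between its zero-twist finite approximation $D(\vec 0) \subset S^3$ and invoke the known $S^3$ inequality. Combining the stabilization bounds $s_-(L) \leq s_-(D(\vec 0))$ and $s_+(D(\vec 0)) \leq s_+(L)$ from parts (3) and (4) of Proposition \ref{prop:stab} with the $S^3$ inequality $s_-(D(\vec 0)) \leq s_+(D(\vec 0))$ from part (5) of Proposition \ref{prop:propertiesS3}, one obtains
\[
s_-(L) \;\leq\; s_-(D(\vec 0)) \;\leq\; s_+(D(\vec 0)) \;\leq\; s_+(L).
\]
Non-emptiness of $D(\vec 0)$ is automatic when $L$ is non-empty.

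For item (7), the plan is to realize a generalized negative crossing as a blow-up/blow-down: the null-homologous unknot $K$ along which we twist, with $\lk(L,K) = 0$, becomes a $(-1)$-framed unknot in an added $\bCP$ summand, and blowing it down produces a cobordism $\Sigma \subset (I \times M) \# \bCP$ from $L$ to $L^{\tw}$ consisting of $|L|$ disjoint cylinders. The main step is verifying that $[\Sigma] = 0$ in $H_2((I\times M)\#\bCP, \partial)$; this is the principal obstacle, and it rests on the hypothesis $\lk(L,K) = 0$, equivalently, equal numbers of up- and down-strands through the twisting disk. Granted this, every component of $\Sigma$ meets $L^{\tw}$, so Theorem \ref{thm:strongadj} with $t = 1$ gives $s_-(L) - s_-(L^{\tw}) \geq \chi(\Sigma) = 0$. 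The $s_+$ inequality then follows by observing that $\m L$ is obtained from $\m{L^{\tw}}$ by adding a generalized negative crossing, so the $s_-$ inequality for that mirrored pair yields $s_+(L^{\tw}) \leq s_+(L)$. Finally, for item (6), the first inequality is the special case of (7) in which the twisting disk meets only two strands of opposite orientation, which is an ordinary crossing change up to Reidemeister II. The second inequality comes from the standard Euler-characteristic $-2$ cobordism between $L^+$ and $L^-$ in $I \times M$ obtained by resolving the crossing via two saddles; Theorem \ref{thm:GenusBoundCylinders} applied in both directions handles the $s_-$ bound, and the $s_+$ statement follows by mirroring as above.
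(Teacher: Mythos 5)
Your handling of items (1), (2), (4), (5), (6), and (7) is sound and close in spirit to the paper's proof, which disposes of everything by reducing to the $S^3$ statements of Proposition \ref{prop:propertiesS3} via the stabilization results of Proposition \ref{prop:stab}. Your item (5) argument through $D(\vec 0)$ is a clean variant: the paper instead passes to $D(\vec k)$ and $D(-\vec k)$ for $k \gg 0$ and invokes Theorem \ref{thm:addFT} to compare $s_-(D(\vec k))$ with $s_-(D(-\vec k))$, but both routes rest on the same ingredients (the stabilization bounds are themselves proved with Theorem \ref{thm:weakadjpm}). For item (7) you argue directly in $(I\times M)\#\bCP$ via Theorem \ref{thm:strongadj} rather than reducing to the $S^3$ case by finite approximation as the paper does; this works, since the twisting disc is local, the resulting cobordism consists of cylinders with $\chi=0$ meeting both ends, and its null-homology follows from $\lk(L,K)=0$ exactly as in Remark \ref{rem:GCC}.

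The one genuine problem is item (3). Theorem \ref{thm:ConnSumSSr} is not an independent input here: the paper proves it precisely \emph{as} item (3) of this proposition (the sentence immediately preceding the proof says so explicitly), so citing it is circular. The intended argument is the same stabilization reduction used for the other items: since the surgery handles of the two summands are disjoint, $(D_1 \# D_2)(\vec k) = D_1(\vec k_1)\# D_2(\vec k_2)$, so for $k$ large enough Proposition \ref{prop:stab} reduces the claim to the connected-sum formula for links in $S^3$, which is Theorem \ref{thm:cs} (applied to the diagrams $D_i(\pm\vec k_i)$, and to their mirrors for the $s_+$ statement). With that substitution your proof is complete.
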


Note that Property \eqref{it:propertiesS1S2-2} is Theorem \ref{thm:ConnSumSSr}.

\begin{proof}
Property \eqref{it:propertiesS1S2-1} is immediate from the definition.
Properties \eqref{it:parityS1S2}, \eqref{it:propertiesS1S2-2}, \eqref{it:propertiesS1S2-3}, \eqref{it:propertiesS1S2-CC}, and \eqref{it:propertiesS1S2-GCC} follow from the corresponding properties in $S^3$ (see Proposition \ref{prop:propertiesS3}) via the stabilization properties from Proposition \ref{prop:stab}. (Note that there is a numbering shift: properties of \eqref{it:propertiesS1S2-CC} and \eqref{it:propertiesS1S2-GCC} here correspond to properties \eqref{it:CC} and \eqref{it:GCC} in Proposition \ref{prop:propertiesS3}).

For Property \eqref{it:s-ANDs+}, take a diagram $D$ of $L \subset M$, and suppose that $\vec k$ is big enough so that, by Proposition \ref{prop:stab}, items \eqref{it:stab1} and \eqref{it:stab2}, we have
\begin{equation*}
s_-(L) = s_-(D(\vec k))\qquad \text{and} \qquad s_+(L) = s_+(D(-\vec k)).
\end{equation*}
Then, by Theorem \ref{thm:addFT} and Proposition \ref{prop:propertiesS3}.\eqref{it:s-ANDs+} we have
\[
s_-(L) = s_-(D(\vec k)) \leq s_-(D(-\vec k)) \leq s_+(D(-\vec k)) = s_+(L). \qedhere
\]
\end{proof}

\begin{remark}
Property \eqref{it:s-ANDs+} for links in $S^3$ contained two inequalities (see Proposition \ref{prop:propertiesS3}). The other inequality, namely $s_+(L) - 2\ell+2 \leq s_-(L)$ is no longer true in $\SSone$. For example, the positive Whitehead knot in $\SSone$ has $\ell=1$, $s_+ = 2$ and $s_-=0$. See Corollary \ref{cor:Hedden} for the relevant statement.
\end{remark}

\subsection{Cobordism properties}

The following result is an extension of Theorem \ref{thm:weakadjpm} to $\SSr$.

\begin{theorem}
\label{thm:strongadjpm}
Consider an oriented cobordism $\Sigma\subset Z = (I \times \SSr) \#(\#^t\bCP)$ from a null-homologous link $L_1$ to a second null-homologous link $L_2$, with $[\Sigma] = 0$ in $H_2(Z, \de Z)$.
\begin{itemize}
\item If every component of $\Sigma$ has a boundary component in $L_2$, we have $s_-(L_1) - s_-(L_2) \geq \chi(\Sigma)$;
\item if every component of $\Sigma$ has a boundary component in $L_1$, we have $s_+(L_1) - s_+(L_2) \geq \chi(\Sigma)$.
\end{itemize}
\end{theorem}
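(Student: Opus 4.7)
My plan is to follow the same structure as the proof of Theorem \ref{thm:weakadjpm}, but using Theorem \ref{thm:strongadj} in place of Theorem \ref{thm:weakadj}. The first item of the statement is exactly Theorem \ref{thm:strongadj} (noting that $s_- = s$ by definition), so there is nothing to prove there; all the new work goes into establishing item (2).

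For item (2), I would mimic the ``turn the cobordism upside down'' trick. Concretely, view $(Z,\Sigma)$ as a cobordism from $\m L_2$ to $\m L_1$ in the same oriented 4-manifold $Z = (I \times \SSr) \# (\#^t\bCP)$. To make this precise, one re-identifies the two boundary copies of $\SSr$ via the mirror diffeomorphism $m$ of $\SSr$ (whose existence is discussed in the Behavior under diffeomorphisms subsection). Since this re-identification only affects the boundary labeling and does not change the orientation of the bulk 4-manifold $Z$, we remain inside $(I \times \SSr) \# (\#^t\bCP)$ rather than drifting into $(I \times \SSr) \# (\#^t\CP)$, which is the crucial point for applying item (1). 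Under the new cobordism structure, the condition $[\Sigma]=0 \in H_2(Z,\partial Z)$ is preserved, and the hypothesis ``every component of $\Sigma$ has a boundary component in $L_1$'' becomes ``every component has a boundary component in the new outgoing link $\m L_1$''. Thus item (1) applies, yielding
\[
s_-(\m L_2) - s_-(\m L_1) \geq \chi(\Sigma),
\]
which, by the defining relation $s_+(L) = -s_-(\m L)$, is equivalent to $s_+(L_1) - s_+(L_2) \geq \chi(\Sigma)$.

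The main thing one has to verify is that the reinterpretation of $(Z,\Sigma)$ as a cobordism from $\m L_2$ to $\m L_1$, with the same orientation on $Z$, is legitimate. Concretely, one must produce an orientation-preserving self-diffeomorphism of $Z$ that swaps its two boundary components and restricts to the mirror $m$ on each copy of $\SSr$. Such a diffeomorphism can be built by taking the orientation-preserving involution $(t,x) \mapsto (1-t, m(x))$ on $I \times \SSr$ (which swaps the two boundary components and is orientation-preserving because it reverses orientation of both $I$ and $\SSr$) and extending across each interior connected-sum neighborhood of $\bCP$ using the orientation-preserving complex conjugation involution of $\bCP$; the attaching spheres can be chosen at fixed points so that the involutions match. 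This is the same issue that appears in the proof of Theorem \ref{thm:weakadjpm}, so no essentially new argument is needed in our setting, and the orientation bookkeeping that turns $(L_1, L_2)$ into $(\m L_2, \m L_1)$ proceeds exactly as there.
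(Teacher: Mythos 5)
Your argument is correct and matches the paper's approach: the paper also proves this by citing Theorem~\ref{thm:strongadj} directly for the $s_-$ inequality, and for $s_+$ it invokes the same "turn the cobordism upside down" reduction used in Theorem~\ref{thm:weakadjpm}. The only difference is that the paper leaves the orientation bookkeeping implicit, whereas you spell out the orientation-preserving involution $(t,x)\mapsto(1-t,m(x))$ on $I\times\SSr$ and its extension across the $\bCP$ summands by complex conjugation; this extra detail is accurate (in particular, complex conjugation on $\bCP$ is orientation-preserving since $\CP$ admits no orientation-reversing self-diffeomorphism) and does no harm, though the paper treats it as an already-established step.
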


\begin{proof}
The proof is exactly the same as that of Theorem \ref{thm:weakadjpm}, except that we now use Theorem \ref{thm:strongadj} instead of Theorem \ref{thm:weakadj}.
\end{proof}


\begin{corollary}
\label{cor:cobS1S2}
Consider an oriented cobordism $\Sigma\subset Z = (I \times \SSr)$ from a null-homologous link $L_1$ to a second null-homologous link $L_2$.
\begin{itemize}
\item If every component of $\Sigma$ has a boundary component in $L_2$, we have $\mp(s_\pm(L_1) - s_\pm(L_2)) \geq \chi(\Sigma)$;
\item if every component of $\Sigma$ has a boundary component in $L_1$, we have $\pm(s_\pm(L_1) - s_\pm(L_2)) \geq \chi(\Sigma)$;
\item if every component of $\Sigma$ has a boundary component in $L_1$ as well as a boundary component in $L_2$, we have $|s_\pm(L_1) - s_\pm(L_2)| \leq -\chi(\Sigma)$.
\end{itemize}
\end{corollary}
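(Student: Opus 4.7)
The plan is to derive all three bullets from Theorem~\ref{thm:GenusBoundCylinders} by considering $\Sigma$ together with three related cobordisms. Theorem~\ref{thm:GenusBoundCylinders} yields an $s_-$-inequality whenever every component of an oriented cobordism in $I \times \SSr$ has a boundary component in the \emph{source}, so I would apply it to four variants of $\Sigma$: $\Sigma$ itself, its reverse (viewing it from $L_2$ to $L_1$), its image $\Phi(\Sigma)$ under the orientation-preserving self-diffeomorphism $\Phi := (I\text{-reversal}) \circ (\id_I \times m)$ of $I \times \SSr$ (where $m : \SSr \to \SSr$ is the orientation-reversing mirror), and the reverse of $\Phi(\Sigma)$. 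Under $\Phi$, the source $L_1 \subset \{0\} \times \SSr$ is carried to $\m L_1 \subset \{1\} \times \SSr$ and similarly for $L_2$, so $\Phi(\Sigma)$ is a cobordism in $I \times \SSr$ from $\m L_2$ to $\m L_1$ with Euler characteristic $\chi(\Sigma)$, and the connectivity hypothesis on one end of $\Sigma$ corresponds to the connectivity hypothesis on the opposite end of $\Phi(\Sigma)$.

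Applying Theorem~\ref{thm:GenusBoundCylinders} to these four cobordisms---choosing, in each case, the variant whose source satisfies the required connectivity hypothesis---produces all four required inequalities after translating the two mirror outputs via the identity $s_-(\m L) = -s_+(L)$. In the case where every component of $\Sigma$ has boundary in $L_1$, $\Sigma$ itself gives $s_-(L_2) - s_-(L_1) \geq \chi(\Sigma)$, while the reverse of $\Phi(\Sigma)$ (whose source is $\m L_1$) gives, after translation, $s_+(L_1) - s_+(L_2) \geq \chi(\Sigma)$. In the case where every component of $\Sigma$ has boundary in $L_2$, the reverse of $\Sigma$ (whose source is $L_2$) gives $s_-(L_1) - s_-(L_2) \geq \chi(\Sigma)$, while $\Phi(\Sigma)$ (whose source is $\m L_2$) gives, after translation, $s_+(L_2) - s_+(L_1) \geq \chi(\Sigma)$. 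These four inequalities are exactly the content of the first two bullets.

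Finally, the hypothesis of the third bullet is strictly stronger than that of either of the first two, so for each choice of sign both $s_\pm(L_1) - s_\pm(L_2) \geq \chi(\Sigma)$ and $s_\pm(L_2) - s_\pm(L_1) \geq \chi(\Sigma)$ apply simultaneously, yielding $|s_\pm(L_1) - s_\pm(L_2)| \leq -\chi(\Sigma)$. The main nontrivial point in this plan is verifying that $\Phi$ is orientation-preserving on $I \times \SSr$ and correctly tracking that it both swaps source and target and sends $L_i$ to $\m L_i$ (up to a link reversal that does not affect $s_\pm$); beyond this, the argument is routine cobordism bookkeeping and presents no deeper obstacle.
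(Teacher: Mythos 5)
Your proof is correct and follows essentially the same route as the paper's: both reduce all four inequalities to Theorem~\ref{thm:GenusBoundCylinders} by combining time-reversal of the cobordism with the mirror symmetry $s_-(\m{L}) = -s_+(L)$, and then deduce the third bullet by intersecting the first two. The only organizational difference is that the paper passes through Theorem~\ref{thm:strongadjpm} and therefore must first verify $[\Sigma]=0$ in $H_2(Z,\de Z)$ via the long exact sequence of the pair, a step your direct appeal to Theorem~\ref{thm:GenusBoundCylinders} (which carries no homological hypothesis) renders unnecessary.
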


\begin{proof}
First, let's prove that $[\Sigma] = 0$ in $H_2(Z, \de Z)$. Consider the long exact sequence
\[
H_2(\de Z) \xrightarrow{\iota} H_2(Z) \xrightarrow{} H_2(Z, \de Z) \xrightarrow{\de_*} H_1(\de Z)
\]
The map $\iota$ is surjective, since $Z$ is a strong deformation retract onto any of its boundary components. Thus, by exactness, $\de_*$ is injective. We know that $\de\Sigma$ consists of null-homologous links, so $[\de\Sigma] = \de_*([\Sigma]) = 0$, and by injectivity $[\Sigma] = 0$ in $H_2(Z, \de Z)$.

We now prove the first bullet point. Once we know that $[\Sigma] = 0$, the inequality $s_-(L_1) - s_-(L_2) \geq \chi(\Sigma)$ is exactly the case $t=0$ of Theorem \ref{thm:strongadjpm}. For the other inequality, we change orientation to the cobordism: $(\overline{Z}, \overline{\Sigma})$ is a cobordism from $\m L_1$ to $\m L_2$. By applying Theorem \ref{thm:strongadjpm} to this new cobordism we get the other inequality.

The second bullet point is proved in the same way. The third bullet point follows immediately from the first two.
\end{proof}

\begin{remark}
The extra inequalities of Corollary \ref{cor:cobS1S2} do not hold under the hypotheses of Theorem \ref{thm:strongadjpm}. The reason is that $\bCP$ is not sent to a diffeomorphic copy of itself under orientation reversal.
For example, by Remark \ref{rem:GCC}, in $\bCP$ there is a connected cobordism $\Sigma$ from the unknot $U$ to the left handed trefoil $T_{2,-3}$, with $\chi(\Sigma) = 0$. Since $s_\pm(U) = 0$ and $s_\pm(T_{2,-3}) = -2$, it is immediate to see that the extra inequalities do not hold in this case.
\end{remark}

The following corollary is due to Matthew Hedden.

\begin{corollary}
\label{cor:Hedden}
Let $L \subset \SSr$ be an $\ell$-component null-homologous link.
If the total geometric intersection of $L$ with the co-cores $\coprod \{p\} \times S^2$ is $k$, then
\[
(s_+-s_-)(L) \leq 2(\ell+k-1).
\]
\end{corollary}

\begin{proof}
We argue by induction on $k$. If $k=0$, the link is local, and the statement is Proposition \ref{prop:propertiesS3}.\eqref{it:s-ANDs+}. For the induction step, a saddle cobordism brings the link $L$ to a link $L'$ with geometric intersection $\leq k-2$, and the number of components of $L'$ is $\ell \pm 1$. By Corollary \ref{cor:cobS1S2} and by the induction step applied to $L'$ we get
\begin{align*}
(s_+-s_-)(L) &\leq (s_+-s_-)(L') + 2 \\
&\leq 2((\ell+1)+(k-2)-1) + 2 \\
&= 2(\ell + k - 1). \qedhere
\end{align*}
\end{proof}

The manifold $\SSone$ has two natural fillings, namely $S^1 \times B^3$ and $B^2 \times S^2$. For $M = \#^r(\SSone)$, we consider the fillings $\natural^r(S^1 \times B^3)$ and $\natural^r (B^2 \times S^2)$.

\begin{definition}
\label{def:gSD}
Let $L$ be a null-homologous link in $M = \#^r(\SSone)$. The \emph{$(S^1 \times B^3)$-genus} $\gSD(L)$ is the minimum genus of a properly embedded oriented surface $\Sigma \subset \natural^r(S^1 \times B^3)$ with $\de \Sigma = L \subset M$.
\end{definition}

Note that Definition \ref{def:gSD} does not make sense for links that are homologically essential, since $H_1(M) \cong H_1(\natural^r(S^1 \times B^3))$, so a homologically essential link does not bound any surface in $\natural^r(S^1 \times B^3)$.

\begin{definition}
\label{def:gDS}
Let $L$ be a link in $M = \#^r(\SSone)$. The \emph{$(B^2 \times S^2)$-genus} $\gDS(L)$ is the minimum genus of a properly embedded surface $\Sigma \subset \natural^r(B^2 \times S^2)$ with $\de \Sigma = L \subset M$.
\end{definition}
Note that in Definition \ref{def:gDS} we do not require $[\Sigma] = 0$, even when $L$ is null-homologous. 

\begin{remark}
\label{rem:almostnullhomologous}
Suppose that $L \subset M$ is null-homologous and that $\Sigma \subset X:=\natural^r(B^2 \times S^2)$ is a properly embedded surface with $\de\Sigma = L$. By considering the long exact sequence of the pair $(X, \de X)$
\[
\cdots \to H_2(X) \to H_2(X, \de X) \to H_1(\de X) \to \cdots
\]
it is immediate to check that $[\Sigma] \in H_2(X, \de X)$ is a linear combination of the elements of the kind $(\set{0} \times S^2)_i$, where $(\set{0} \times S^2)_i$ denotes the core of the $i$-th $B^2 \times S^2$ summand of $X$. As a consequence, for every $i=1, \ldots, r$,
\[
[\Sigma] \cdot [(\set{0} \times S^2)_i] = 0.
\]
\end{remark}

Having set up the definition, we now show that $s_-$ gives a lower bound to $\gDS$ and $s_+$ gives a lower bound to $\gSD$.

{
\renewcommand{\thethm}{\ref{thm:genus bounds}}
\begin{theorem}
Let $L \neq \varnothing$ be a null-homologous $\ell$-component link in $M = \#^r(\SSone)$. Then
\begin{align*}
s_-(L) &\leq 2\gDS(L) + \ell - 1\\
\rotatebox[origin=c]{-90}{$\leq$} \hspace{2ex} & \hspace{10ex} \rotatebox[origin=c]{-90}{$\leq$} \\
s_+(L) &\leq 2\gSD(L) + \ell - 1
\end{align*}
\end{theorem}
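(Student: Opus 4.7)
The middle inequality $s_-(L)\leq s_+(L)$ is already established as item (\ref{it:s-ANDs+}) of Proposition~\ref{prop:propertiesS1S2}, so the work lies in the two outer bounds. The common strategy is to start from a minimal-genus surface in one of the two filling $4$-manifolds, convert it into a cobordism inside $I\times\SSr$ (possibly after removing neighborhoods of some embedded spheres), and then apply the appropriate bullet of Corollary~\ref{cor:cobS1S2} together with the $s$-values of standard reference links. In both cases we may assume the filling surface is connected by tubing pairs of components inside the ambient $4$-manifold; such tubes add no genus (the Euler characteristic drops by $2$, matching the loss of one boundary disk from each component and the gain of an annulus), so the total genus is unchanged.

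For the right inequality $s_+(L)\leq 2\gSD(L)+\ell-1$, choose a connected, minimal-genus surface $\Sigma'\subset \natural^r(S^1\times B^3)$ with $\partial\Sigma'=L$ and $g(\Sigma')=\gSD(L)$. Using the dual handle decomposition
\[
\natural^r(S^1\times B^3)\ \cong\ (\SSr\times I)\ \cup\ (r\text{ three-handles})\ \cup\ (\text{four-handle}),
\]
and noting that the cocores of three- and four-handles are $1$- and $0$-dimensional respectively, general position allows us to isotope $\Sigma'$ off all such cocores and hence into $\SSr\times I$, with $\partial\Sigma'=L\subset \SSr\times\{1\}$. Now pick a small unknot $U\subset \SSr\times\{0\}$ (contained in a ball), remove a small interior disk from $\Sigma'$, and connect the resulting small boundary circle to $U$ by an embedded annulus in $\SSr\times I$; such an annulus exists because both circles are null-homotopic unknots in $\SSr\times I$, and two null-homotopic circles in a $4$-manifold always cobound an embedded annulus. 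The resulting connected cobordism $\Sigma\subset I\times\SSr$ from $U$ to $L$ has $\chi(\Sigma)=\chi(\Sigma')-1=1-2g-\ell$. Since the single component of $\Sigma$ has $U$ as a boundary component, the second bullet of Corollary~\ref{cor:cobS1S2} applied to $s_+$ with $L_1=U$ and $L_2=L$ gives $s_+(U)-s_+(L)\geq \chi(\Sigma)$, and $s_+(U)=0$ then yields $s_+(L)\leq 2g+\ell-1$.

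For the left inequality $s_-(L)\leq 2\gDS(L)+\ell-1$, choose a connected, minimal-genus surface $\Sigma'\subset \natural^r(B^2\times S^2)$ with $\partial\Sigma'=L$ and $g(\Sigma')=\gDS(L)$. Let $S_1,\dots,S_r$ denote the essential $2$-spheres generating $H_2(\natural^r(B^2\times S^2))$; each has trivial normal bundle, so a tubular neighborhood is $N(S_i)\cong D^2\times S^2$. By Remark~\ref{rem:almostnullhomologous} we have $[\Sigma']\cdot[S_i]=0$, so we may isotope $\Sigma'$ to meet each $S_i$ transversely in $2p_i$ points with $p_i$ positive and $p_i$ negative. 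Mimicking the construction in the proof of Theorem~\ref{thm:strongadj}, we remove the neighborhoods $N(S_i)$ and then cut along a suitable system of properly embedded arcs joining the newly-created $\SSone$ boundaries (and using that $\natural^r(B^2\times S^2)\setminus \bigcup N(S_i)$ is made of boundary-connected-summed pieces $(B^2\setminus D^2)\times S^2\cong I\times\SSone$) in order to present the result as $I\times\SSr$. Inside this cylinder, $\Sigma''=\Sigma'\setminus \bigcup N(S_i)$ is a cobordism from $L$ to the disjoint union $\bigsqcup_i \Fpp{p_i}\subset\SSr$ (with each $\Fpp{p_i}$ lying in the $i^{\text{th}}$ summand), of Euler characteristic $\chi(\Sigma'')=\chi(\Sigma')-2\sum_i p_i=2-2g-\ell-2\sum_i p_i$. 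Applying the first bullet of Corollary~\ref{cor:cobS1S2} for $s_-$, together with Proposition~\ref{prop:disj union} and Theorem~\ref{thm:s(Fp)} which give $s_-\bigl(\bigsqcup_i\Fpp{p_i}\bigr)=\sum_i(1-2p_i)-r=-2\sum_i p_i$, yields
\[
s_-(L)\ \leq\ s_-\!\left(\bigsqcup_i \Fpp{p_i}\right)-\chi(\Sigma'')\ =\ 2g+\ell-2\ \leq\ 2\gDS(L)+\ell-1.
\]

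The main technical obstacle lies in the $s_-$ step: we must carefully identify the $4$-manifold obtained from $\natural^r(B^2\times S^2)$ after removing $\bigcup N(S_i)$ and the arc neighborhoods as $I\times\SSr$, with the link $\bigsqcup_i \Fpp{p_i}$ appearing on the new $\SSr$ boundary. This is parallel to the proof of Theorem~\ref{thm:strongadj} but subtler, since the $S_i$'s have self-intersection $0$ (so their neighborhood boundaries are $\SSone$ rather than $S^3$), and one must select the arcs so that the resulting connected sums along the new boundary components reconstitute a single copy of $\SSr$ on one side of the cylinder. The $s_+$ step, by contrast, is easier because the dual $3$-handles in $\natural^r(S^1\times B^3)$ have $1$-dimensional cocores, which are automatically disjoint from surfaces after generic isotopy.
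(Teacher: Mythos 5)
Your proposal follows the paper's proof essentially step by step: the paper likewise obtains the $s_+$ bound by pushing the surface off the $1$-dimensional spine of $\natural^r(S^1\times B^3)$ into $I\times\SSr$ (Lemma~\ref{lem:isotopeSigma}) and puncturing it to get a cobordism from an unlink, and obtains the $s_-$ bound by excising a neighborhood of the union of the $S^2$-cores and connecting arcs so that the surface becomes a cobordism from $L$ to $\coprod_i \Fpp{p_i}$ inside $I\times\SSr$ (Lemma~\ref{lem:genus bound DS}); your preliminary tubing to make the surface connected is harmless and replaces the paper's bookkeeping of the number of components. However, you have not proved the fourth inequality in the statement, $\gDS(L)\leq\gSD(L)$ (the right-hand vertical inequality in the square). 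It is an immediate consequence of the general-position fact you already use: a surface in $\natural^r(S^1\times B^3)$ isotoped into $I\times\SSr$ also sits inside $\natural^r(B^2\times S^2)$. But it is part of the theorem and must be stated.

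The one substantive error is the evaluation $s_-\bigl(\bigsqcup_i\Fpp{p_i}\bigr)=\sum_i(1-2p_i)-r=-2\sum_i p_i$, which is off by one. Iterating the two-link formula $s(L_1\sqcup L_2)=s(L_1)+s(L_2)-1$ gives $s\bigl(\coprod_{i=1}^m L_i\bigr)=\sum_i s(L_i)-(m-1)$ (the displayed statement of Proposition~\ref{prop:disj union} is off by one, as the case $m=1$ already shows), so the correct value is $1-2\sum_i p_i$. With your value you conclude $s_-(L)\leq 2g+\ell-2$, which is false in general: for the unknot it would read $0\leq -1$. The corrected value produces exactly $s_-(L)\leq 2g+\ell-1$, as required. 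A final cosmetic point: you label the cobordism as going from $L$ to $\bigsqcup_i\Fpp{p_i}$ and then invoke the first bullet of Corollary~\ref{cor:cobS1S2}, whose hypothesis requires every component to have boundary in the \emph{target}; you should either read the cobordism in the opposite direction (legitimate, since your connected surface meets $L\neq\varnothing$) or cite Theorem~\ref{thm:GenusBoundCylinders} directly, as the paper does.
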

\addtocounter{thm}{-1}
}

Hedden and Raoux~\cite{HR} proved an analogue of Theorem \ref{thm:genus bounds} for the $\tau$ invariants from Heegaard Floer homology.

\begin{example}
\label{ex:Whitehead-genus}
The positive Whitehead knot $\Wh^+$ in Figure \ref{fig:Whitehead} has $s_-(\Wh^+) = 0$ and $s_+(\Wh^+) = 2$. Thus, $\gDS(\Wh^+) \geq 0$ and $\gSD(\Wh^+) \geq 1$. It is not difficult to see that both bounds are sharp.
\end{example}

\begin{example}
\label{ex:Lpp-genus}
The link $\Fp$ in Figure \ref{fig:Fp} has $2p$ components and $s_\pm(\Fp) = \pm(2p-1)$. Thus, Theorem \ref{thm:genus bounds} does not tell anything. Note that $\Fp = S^1 \times \coprod \{x_i\} $ is strongly slice in $B^2 \times S^2$, because it bounds an embedded surface consisting of the $2p$ discs $B^2 \times \{x_i\}$. As for $S^1 \times B^3$, after attaching $p$ bands one get the $p$-component unlink $U_p$. Thus, $\Fp$ bounds an embedded surface consisting of $p$ disjoint annuli in $S^1 \times B^3$, hence it is weakly slice.
\end{example}

We break the proof of Theorem \ref{thm:genus bounds} into some lemmas.

\begin{lemma}
\label{lem:genus bound DS}
Let $L$ be a null-homologous $\ell$-component link in $M = \#^r(\SSone)$, and let $\Sigma$ be a properly embedded surface in $\natural^r(B^2 \times S^2)$, with $\sigma$ components, none of which is closed, and with $\de \Sigma = L$. Then
\[
s_-(L) \leq 1- \chi(\Sigma) = 2g(\Sigma) + \ell - 2\sigma + 1.
\]
\end{lemma}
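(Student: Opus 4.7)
My approach is to adapt the strategy used to prove Theorem~\ref{thm:strongadj}, replacing the negative-definite exceptional spheres $\CP^1 \subset \bCP$ with the null-homologous core spheres $C_i := \{0\} \times S^2 \subset (B^2 \times S^2)_i$ inside $X := \natural^r(B^2 \times S^2)$, and replacing the torus links $\Fpp{p_i}(1) \subset S^3$ with the fiber links $\Fpp{p_i} \subset \SSone$. The computation $s(\Fpp{p}) = 1 - 2p$ (Theorem~\ref{thm:s(Fp)}) will play here the role that $s(\Fpp{p}(1)) = 1 - 2p$ (Theorem~\ref{thm:sLpp}) plays in the proof of Theorem~\ref{thm:strongadj}.

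First, I would perturb $\Sigma$ to be transverse to each core sphere $C_i$. Since each $C_i$ has trivial normal bundle and since $[\Sigma] \cdot [C_i] = 0$ by Remark~\ref{rem:almostnullhomologous}, the geometric intersection $\Sigma \cap C_i$ consists of $2p_i$ points, with $p_i$ positive and $p_i$ negative. Removing open tubular neighborhoods $N(C_i) \cong D^2 \times S^2$ from $X$ produces $X' := X \setminus \bigsqcup_i N(C_i) \cong \natural^r(I \times \SSone)$. The punctured surface $\Sigma^\circ := \Sigma \cap X'$ has Euler characteristic $\chi(\Sigma^\circ) = \chi(\Sigma) - 2\sum_i p_i$, and its boundary is $\de \Sigma^\circ = L \sqcup \bigsqcup_i \Fpp{p_i}$, where each $\Fpp{p_i}$ lies on the new boundary component $\SSone_i := \de N(C_i)$ (the mixture of $p_i$ positively and $p_i$ negatively oriented fibers reflecting the intersection signs).

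Next I would realize $\Sigma^\circ$ as a cobordism in $I \times M$ from $L \subset \{0\} \times M$ to $\td F := \bigsqcup_i \Fpp{p_i} \subset \{1\} \times M$, where each $\Fpp{p_i}$ is placed in the $i$-th $\SSone$ summand of $M = \#^r \SSone$, positioned away from the connected-sum regions. This is the analogue of the arc-attachment step in the proof of Theorem~\ref{thm:strongadj}: the boundary sum $X' = \natural^r(I \times \SSone)$ sits inside $I \times M$ as the complement of $r-1$ connecting tubes along the bottom boundary, and by general position $\Sigma^\circ$ can be isotoped off those tubes. By hypothesis every component of $\Sigma$, and hence of $\Sigma^\circ$, has a boundary component in $L$, so Theorem~\ref{thm:GenusBoundCylinders} yields
\[
s_-(\td F) - s_-(L) \geq \chi(\Sigma^\circ).
\]

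Finally, using Proposition~\ref{prop:disj union} together with Theorem~\ref{thm:s(Fp)}, I would compute $s_-(\td F) = \sum_i s_-(\Fpp{p_i}) - (r-1) = \sum_i (1 - 2p_i) - (r - 1) = 1 - 2\sum_i p_i$. Substituting this identity and $\chi(\Sigma^\circ) = \chi(\Sigma) - 2\sum_i p_i$ into the inequality above produces $s_-(L) \leq (1 - 2\sum_i p_i) - (\chi(\Sigma) - 2\sum_i p_i) = 1 - \chi(\Sigma)$, which is the desired bound. The main technical obstacle will be the second step: making the embedding of $X'$ into $I \times M$ (and the corresponding placement of $\Sigma^\circ$) precise, which, while conceptually clear, requires a careful manipulation of the boundary-sum structure of $X'$ analogous to the arc argument in the proof of Theorem~\ref{thm:strongadj}.
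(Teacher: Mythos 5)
Your proof is correct and takes essentially the same approach as the paper's: perturb $\Sigma$ off the core spheres (using Remark~\ref{rem:almostnullhomologous} to get balanced intersection signs), realize the punctured surface as a cobordism in $I \times M$ to a disjoint union of fiber links $\Fpp{p_i}$, and combine Theorem~\ref{thm:GenusBoundCylinders}, Proposition~\ref{prop:disj union}, and Theorem~\ref{thm:s(Fp)}. The only difference is presentational---the paper defines a single CW complex $A$ (core spheres joined by arcs) whose regular neighborhood complement is identified with $I \times M$ at once, which packages your two-step removal (spheres, then connecting tubes) into a single transversality argument and avoids having to isotope $\Sigma^\circ$ after the fact.
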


\begin{proof}
We consider the following model for $\natural^r(B^2 \times S^2)$. Suppose that we have $r$ copies of $B^2 \times S^2$, denoted by $(B^2 \times S^2)_1, \ldots, (B^2 \times S^2)_r$. For $i=1, \ldots, r-1$, identify the boundary of $(B^2 \times S^2)_i$ near $(1, N)$ with that of $(B^2 \times S^2)_{i+1}$ near $(1, S)$. Here $S$ and $N$ denote the north and south poles of $S^2$, respectively, and $1 \in B^2$ is the point defined from seeing $B^2 \subset \C$.
We now describe a CW complex $A$ inside the aforementioned model of $\natural^r(B^2 \times S^2)$:
\[
A = \bigcup_{i=1}^r (\set0 \times S^2)_i \cup \bigcup_{i=1}^{r-1} ([0,1] \times \set{N})_i \cup \bigcup_{i=2}^r ([0,1] \times \set{S})_i
\]
Loosely speaking, $A$ is the union of the $S^2$-cores of all the copies of $B^2 \times S^2$, joined together with straight arcs. The complement of a neighborhood $\nbd(A)$ of $A$ in $\natural^r(B^2 \times S^2)$ is $I \times \#^r(\SSone)$.

Now consider $\Sigma$ as in the statement of the lemma. By transversality we can assume that $\Sigma$ is disjoint from the 1-skeleton of $A$, and that therefore it avoids all the arcs connecting the various copies of $\set0 \times S^2$. Moreover, by Remark \ref{rem:almostnullhomologous}, the number of positive and negative intersection points between $\Sigma$ and $(\set0 \times S^2)$ is the same. Thus, after removing $\nbd(A)$, we get a cobordism $\Sigma^\circ$ in $I \times M$ from $L$ to a disjoint union of links $\Fpp{p_i} \subset (\SSone)_i$.
Every component of $\Sigma^\circ$ has a boundary component in $L$, so by Theorems \ref{cor:cob inequality} and \ref{thm:s(Fp)}, and Proposition \ref{prop:propertiesS1S2}.\eqref{it:propertiesS1S2-3}, we have that
\begin{equation}
\label{eq:u8Gt2c3<4Na} 
s_-(L) \leq s_-\left(\coprod_{i=1}^r \Fpp{p_i}\right) - \chi(\Sigma^\circ) = 1-\sum_{i=1}^r 2 p_i - \chi(\Sigma^\circ).
\end{equation}
The cobordism $\Sigma^\circ$ has $\sigma$ components, genus $g$, and $\ell+\sum_{i=1}^r 2 p_i$ punctures. Thus,
\[
\chi(\Sigma^\circ) = 2\sigma - 2g - \ell - \sum_{i=1}^r 2 p_i.
\]
We conclude by substituting the formula for $\chi(\Sigma^\circ)$ in Equation \eqref{eq:u8Gt2c3<4Na}.
\end{proof}

\begin{lemma}
\label{lem:isotopeSigma}
Let $\Sigma$ be a properly embedded surface in $\natural^r(S^1 \times B^3)$. Then it can be isotoped rel boundary to a properly embedded surface in $I \times \#^r(\SSone) \subset \natural^r(S^1 \times B^3)$.
\end{lemma}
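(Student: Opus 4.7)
My plan is to push $\Sigma$ off the cores of the one-handles of $W := \natural^r(S^1 \times B^3)$ by a general-position isotopy, after which $\Sigma$ will automatically lie in a collar of $\partial W = \#^r(\SSone)$, and then reparametrize this collar so that the image lands inside the prescribed $I \times \#^r(\SSone)$.

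For $i = 1, \ldots, r$, let $\gamma_i \subset \Int(W)$ be the core circle $S^1 \times \{0\}$ of the $i$-th $S^1 \times B^3$ summand; these are disjoint smooth circles in the interior of $W$. Since $\dim \Sigma + \dim \bigsqcup_i \gamma_i = 2 + 1 = 3 < 4 = \dim W$, and each $\gamma_i$ lies in $\Int(W)$, a standard general-position argument produces an ambient isotopy of $W$, compactly supported in $\Int(W)$ and hence fixing $\partial \Sigma$ pointwise, after which the image of $\Sigma$ is disjoint from $\bigsqcup_i \gamma_i$.

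Next I would establish a diffeomorphism
\[
W \setminus \bigsqcup_{i=1}^r \gamma_i \;\cong\; \partial W \times (0, 1],
\]
sending $\partial W \times \{1\}$ to $\partial W$ identically. On each summand, radial coordinates in the $B^3$-factor give $S^1 \times (B^3 \setminus \{0\}) \cong (S^1 \times S^2) \times (0, 1]$; provided these radial retractions are chosen to be standard (i.e.\ parallel to the boundary) near the identification balls of the boundary connected sum, they glue to a well-defined global diffeomorphism. Since $\Sigma$ is now compact and disjoint from the cores, it is contained in $\partial W \times [\varepsilon, 1]$ for some $\varepsilon \in (0, 1)$. A scaling isotopy of $\partial W \times (0, 1]$ supported away from $\partial W \times \{1\}$ then carries $\partial W \times [\varepsilon, 1]$ diffeomorphically onto the prescribed collar $I \times \#^r(\SSone)$, and applying this isotopy to $\Sigma$ completes the proof.

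The main technical point is arranging the summand-wise radial retractions to paste together into a single diffeomorphism with $\partial W \times (0, 1]$. I expect this to be routine: the identification balls of the boundary connected sum sit in a collar of $\partial(S^1 \times B^3)_i$, where each radial retraction can be chosen to act trivially in the radial direction, making the gluing compatible.
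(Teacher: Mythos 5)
Your plan founders on the claimed diffeomorphism $W \setminus \bigsqcup_i \gamma_i \cong \partial W \times (0,1]$, which is false as soon as $r \geq 2$. Write $W = W_1 \cup_D W_2$ for the boundary connected sum of two copies $W_i = S^1 \times B^3$, glued along a $3$-ball $D$ in their boundaries. Removing the two core circles gives
\[
W \setminus (\gamma_1 \sqcup \gamma_2) = \bigl(S^1 \times S^2 \times (0,1]\bigr) \cup_{D \times \{1\}} \bigl(S^1 \times S^2 \times (0,1]\bigr),
\]
which is homotopy equivalent to $(S^1 \times S^2) \vee (S^1 \times S^2)$ since $D$ is contractible. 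This has $H_3 \cong \Z^2$, whereas $\partial W \times (0,1] \simeq \#^2(S^1 \times S^2)$ has $H_3 \cong \Z$. So the complement of the cores alone is not a collar of the boundary, and no isotopy of $\Sigma$ into that complement can finish the argument. The issue is precisely the ``routine'' gluing compatibility at the identification balls: the radial retractions of the two summands cannot be pasted into a global collar structure, because near the identification ball the radial flows of the two sides point toward different cores.

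The fix is exactly what the paper does. You must push $\Sigma$ off a connected $1$-complex $B$ that is a \emph{spine} of $W$, namely the $r$ core circles together with the arcs joining consecutive cores through the identification regions. Since $W$ deformation retracts onto this $B$, the complement of a regular neighborhood of $B$ really is $I \times \#^r(\SSone)$, and general position (now against a $1$-complex, still codimension $3$) plus compactness of $\Sigma$ completes the proof. Your general-position step and the scaling step at the end are fine once $B$ is corrected; what is missing is the inclusion of the connecting arcs in the set you remove.
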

\begin{proof}
We consider the following model for $\natural^r(S^1 \times B^3)$. Suppose that we have $r$ copies of $S^1 \times B^3$, denoted by $(S^1 \times B^3)_1, \ldots, (S^1 \times B^3)_r$. For $i=1, \ldots, r-1$, identify the boundary of $(S^1 \times B^3)_i$ near $(1, N)$ with that of $(S^1 \times B^3)_{i+1}$ near $(-1, N)$. Here $S$ and $N$ denotes the north pole of $S^2 =\de B^3$. Let $a_N$ be the straight arc in $B^3$ connecting $0$ to $N$.
We now describe a CW complex $B$ inside the aforementioned model of $\natural^r(S^1 \times B^3)$:
\[
B = \bigcup_{i=1}^r (S^1 \times \set0)_i \cup \bigcup_{i=1}^{r-1} (\set1 \times a_N)_i \cup \bigcup_{i=2}^r (\set{-1} \times a_N)_i
\]
Loosely speaking, $B$ is the union of the $S^1$-cores of all the copies of $S^1 \times B^3$, joined together with straight arcs. The complement of a neighborhood $\nbd(B)$ of $B$ in $\natural^r(S^1 \times B^3)$ is $I \times \#^r(\SSone)$.

Observe that $B$ is a CW complex consisting of $0$- and $1$-cells, so by transversality we can assume that $\Sigma$ misses $B$. Thus, $\Sigma$ is isotopic to a surface in the complement of $\nbd(B)$, that is $I \times \#^r(\SSone)$.
\end{proof}

\begin{proof}[Proof of Theorem \ref{thm:genus bounds}]
The fact that $s_-(L) \leq s_+(L)$ is the content of Proposition \ref{prop:propertiesS1S2}\eqref{it:propertiesS1S2-4}.

The inequality $\gDS(L) \leq \gSD(L)$ follows from Lemma \ref{lem:isotopeSigma}. If $\Sigma$ is an embedded surface in $\natural^r(S^1 \times B^3)$, then we can isotope it to an embedded surface in $I \times M \subset \natural^r(S^1 \times B^3)$ without changing its boundary. However, $I \times M$ also sits in $\natural^r(B^2 \times S^2)$, so the same surface $\Sigma$ can also be embedded in $\natural^r(B^2 \times S^2)$.

Let's now consider the inequality $s_-(L) \leq 2\gDS(L) + \ell - 1$. Let $\Sigma$ be a properly embedded surface in $\natural^r(B^2 \times S^2)$ with $\de \Sigma = L$. If we discard all closed components of $\Sigma$, then the genus does not increase, so without loss of generality we can suppose that $\Sigma$ has no closed components. Then, by Lemma \ref{lem:genus bound DS},
\[
s_-(L) \leq 2g(\Sigma) + \ell - 2\sigma + 1 \leq 2g(\Sigma) + \ell - 1,
\]
since $\Sigma$ has at least one component.

Lastly, let's focus on the inequality $s_+(L) \leq 2\gSD(L) + \ell - 1$. Let $\Sigma$ be a properly embedded surface in $\natural^r(S^1 \times B^3)$ with $\de \Sigma = L$, and let $\sigma$ be the number of its components. By Lemma \ref{lem:isotopeSigma} we can suppose that $\Sigma$ sits in $I \times M$. After puncturing each component of $\Sigma$ we get a cobordism $\Sigma^\circ$ from the unlink $U_\sigma$ to $L$. By construction, each component of $\Sigma^\circ$ has a boundary component in $U_\sigma$. The Euler characteristic of $\Sigma^\circ$ is
\[
\chi(\Sigma^\circ) = 2\sigma - 2g(\Sigma) - \ell - \sigma.
\]
By Corollary \ref{cor:cobS1S2} (second bullet point, inequality involving $s_+$) we have
\[
(\sigma-1) - s_+(L) \geq 2\sigma - 2g(\Sigma) - \ell - \sigma.
\]
By simplifying we get the desired conclusion.
\end{proof}

We conclude by noticing that $s_- + s_+$ shares the same ``homomorphism'' properties of $s$ for knots in $S^3$.

\begin{corollary}
\label{cor:shomomorphism}
For a link $L \subset M$, define $\sav(L) := \frac{s_-(L)+s_+(L)}2$. Then:
\begin{itemize}
\item $\sav(L)$ is invariant under strong concordance of links in $M$;
\item $\sav(L_1 \# L_2) = \sav(L_1) + \sav(L_2)$;
\item $\sav(\m L) = -\sav(L)$.
\end{itemize}
\end{corollary}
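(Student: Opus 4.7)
All three statements should follow formally from results already established in the paper; the proof will simply package them together.

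For the first bullet, recall that a \emph{strong concordance} between two null-homologous links $L_1$ and $L_2$ in $M = \SSr$ is a disjoint union of annuli $\Sigma \subset I \times M$, one annulus for each component, with one boundary circle on $L_1$ and the other on $L_2$. In particular $\chi(\Sigma) = 0$ and every component of $\Sigma$ has a boundary circle in both $L_1$ and $L_2$. The third bullet of Corollary~\ref{cor:cobS1S2} then gives $|s_\pm(L_1) - s_\pm(L_2)| \leq 0$, so $s_-(L_1) = s_-(L_2)$ and $s_+(L_1) = s_+(L_2)$, and therefore $\sav(L_1) = \sav(L_2)$.

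For the second bullet, apply Proposition~\ref{prop:propertiesS1S2}\eqref{it:propertiesS1S2-2} to each of $s_-$ and $s_+$ and average:
\[
\sav(L_1 \# L_2) = \frac{s_-(L_1) + s_-(L_2) + s_+(L_1) + s_+(L_2)}{2} = \sav(L_1) + \sav(L_2).
\]
For the third bullet, unwind the definition: $s_+(L) = -s_-(\m L)$ and, applying the same identity to $\m L$ together with $\m{\m L} = L$, we also obtain $s_-(L) = -s_+(\m L)$. Averaging these two equalities yields $\sav(\m L) = -\sav(L)$.

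The only point requiring any care is verifying that Corollary~\ref{cor:cobS1S2} really applies to a strong concordance, but this is immediate since the annuli comprising the concordance manifestly have boundary components in both ends. No further work is needed, and there is no genuine obstacle in the argument.
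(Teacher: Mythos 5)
Your proof is correct and follows essentially the same route as the paper, whose proof is just a one-line citation of Theorem~\ref{thm:GenusBoundCylinders} and Proposition~\ref{prop:propertiesS1S2}; your appeal to Corollary~\ref{cor:cobS1S2} for the concordance invariance is the packaged form of exactly that cobordism inequality, and the other two bullets are handled identically.
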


\begin{proof}
This follows from Theorem~\ref{thm:GenusBoundCylinders} and Proposition~\ref{prop:propertiesS1S2}.
\end{proof}

\subsection{Knotification}

For an $\ell$-component non-empty link $L \subset S^3$, Hedden-Raoux defined $\taut(L)$ and $\taub(L)$ as the top and bottom $\tau$ invariants of the knotification $\kappa(L) \subset \#^{\ell-1} (\SSone)$. As such, by definition, $\taut(L) = \taut(\kappa(L))$ and $\taub(L) = \taub(\kappa(L))$. 

By constrast, the $s$-invariants $s_\pm$ of a $\ell$-component link $L$ in $S^3$ do not seem to be related to the $s$-invariants of $\kappa(L) \subset \#^{\ell-1} (\SSone)$. We give some examples below.

\begin{example}
Let $U_\ell$ be the $\ell$-component unlink. Its knotification is the unknot in $\#^{\ell-1}(\SSone)$ Then $s_\pm(U_\ell) = \pm(\ell-1)$ and $s_\pm(\kappa(U_\ell)) = 0$.
\end{example}

\begin{example}
Let $\Hopf^+$ be the positive Hopf link in $S^3$. Its knotification $\kappa(\Hopf^+)$ is the positive Whitehead knot $\Wh^+ \subset \SSone$. We know that $s_\pm(\Hopf^+) = 1$, $s_-(\Wh^+) = 0$ and $s_+(\Wh^+) = 2$.
\end{example}

\begin{example}
Let $\Hopf^-$ be the negative Hopf link in $S^3$. Its knotification $\kappa(\Hopf^-)$ is the negative Whitehead knot $\Wh^- \subset \SSone$. We know that $s_\pm(\Hopf^-) = -1$, $s_-(\Wh^-) = -2$ and $s_+(\Wh^-) = 0$.
\end{example}

Based on the examples above, we ask the following question.

\begin{question}
Let $L \neq \varnothing$ be an $\ell$-component link in $\SSr$, and let $\kappa(L) \subset \#^{r+\ell-1}(\SSone)$ denote its knotification. Then is
\[
\sav(L) = \sav(\kappa(L)),
\]
where $\sav(L) = \frac{s_-(L) + s_+(L)}2$, as defined in Corollary \ref{cor:shomomorphism}?
\end{question}

\subsection{Positivity}

\begin{definition}
\label{def:positivityM}
We say that a link $L$ in $M=\#^r(\SSone)$ is \emph{positive} if there exists a \emph{positive diagram}, i.e.~a link diagram $D_0 \sqcup K_1 \sqcup \cdots \sqcup K_r$ in $S^3$ such that
\begin{itemize}
\item $K_1 \sqcup \cdots \sqcup K_r$ is a diagram for an $r$-component unlink;
\item $0$-surgery on each $K_i$ turns $D_0$ into a diagram for $L$;
\item $D_0$ is a positive link diagram in $S^3$ (i.e., all crossings of $D_0$ are positive).
\end{itemize}
Given a positive diagram $D = D_0 \sqcup K_1 \sqcup \cdots \sqcup K_r$ for $L$, we define $n^+_D$ to be $n^+_{D_0}$ and $\Seif(D)$ to be the number $\text{Seif}(D_0)$ of Seifert circles of $D_0$, i.e.~ the number of components of the link obtained from $D_0$ by resolving all crossings in an oriented way.
\end{definition}

\begin{remark}
When $K_1 \sqcup \cdots \sqcup K_r$ is the usual diagram for the $r$-component unlink (consisting of $r$ disjoint circles), we recover the kind of standard diagrams used to represent links in $\#^r(\SSone)$, as on the left of Figure~\ref{fig:InsertTwists}; we could then impose a positivity condition on such diagrams. However, Definition~\ref{def:positivityM} is more general, and makes positivity a less restrictive condition.
\end{remark}

\begin{theorem}
\label{thm:positivity2}
Suppose that $D$ is a positive diagram for a null-homologous non-empty link $L$ in $\SSr$. Then
\[
s_-(L) \leq n^+_D - \Seif(D) + 1\leq s_+(L).
\]
\end{theorem}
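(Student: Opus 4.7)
The plan is to prove the two inequalities separately, drawing on two different tools from earlier in the paper: for the upper bound on $s_-$, the genus bound of Lemma~\ref{lem:genus bound DS} applied to the Seifert surface of the positive diagram; for the lower bound on $s_+$, the slice Bennequin inequality (Theorem~\ref{thm:sliceBennequin}) applied to a transverse realization of $L$ coming from $D$.

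First, to prove $s_-(L)\leq n^+_D-\Seif(D)+1$, I would write $D=D_0\sqcup K_1\sqcup\cdots\sqcup K_r$ as in Definition~\ref{def:positivityM} and let $L_0\subset S^3$ be the link with diagram $D_0$. The 4-manifold $\natural^r(B^2\times S^2)$ is built as $B^4\cup H_1\cup\cdots\cup H_r$, where each $H_i$ is a $0$-framed 2-handle attached to $B^4$ along $K_i\subset\partial B^4=S^3$; its boundary $\SSr$ is the result of $0$-surgery on $\bigsqcup K_i$. Under this identification, the link $L\subset\SSr$ coincides with $L_0$ viewed as a subset of $S^3\setminus\bigsqcup \nbd(K_i)$. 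Applying Seifert's algorithm to $D_0$ yields a surface $F\subset S^3$ with $\partial F=L_0$, no closed components, and $\chi(F)=\Seif(D_0)-n^+_{D_0}=\Seif(D)-n^+_D$. Pushing the interior of $F$ slightly into $B^4$ makes $F$ properly embedded in $\natural^r(B^2\times S^2)$ with $\partial F=L$; Lemma~\ref{lem:genus bound DS} then gives $s_-(L)\leq 1-\chi(F)=n^+_D-\Seif(D)+1$.

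For the second inequality, $n^+_D-\Seif(D)+1\leq s_+(L)$, I would realize $L$ as a null-homologous transverse link in $(\SSr,\xistd)$ with $\sl(L)=n^+_D-\Seif(D)$ and invoke Theorem~\ref{thm:sliceBennequin}. One route: first braid $D_0$ into a positive braid closure (Alexander and Markov preserve the quantity $\wr-\Seif$), and realize $L_0$ transversely in $(S^3,\xistd)$ as this braid closure, whose self-linking number is $\wr(D_0)-\Seif(D_0)=n^+_D-\Seif(D)$. Then perturb each $K_i$ to a Legendrian unknot with $\tb(K_i)=-1$, disjoint from the transverse $L_0$; performing contact $(+1)$-surgery on each $K_i$, which is smoothly $0$-surgery, produces $(\SSr,\xistd)$, and $L_0$ descends to a transverse link $L\subset(\SSr,\xistd)$ of the same self-linking number. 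Theorem~\ref{thm:sliceBennequin} then yields $s_+(L)\geq \sl(L)+1=n^+_D-\Seif(D)+1$.

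The step I expect to cost the most care is the transverse realization in the second inequality. One must ensure that the $K_i$ can simultaneously be made Legendrian with $\tb=-1$ while $L_0$ remains transverse and disjoint from them (in spite of possible crossings in the full diagram $D_0\sqcup K_1\sqcup\cdots\sqcup K_r$), and that the contact structure on the $0$-surgered manifold is indeed the standard $\xistd$ on $\SSr$. If that contact-topological bookkeeping becomes too delicate, an alternative is to give a direct diagrammatic definition of the self-linking number of a transverse link in $(\SSr,\xistd)$ from the positive diagram $D$ itself, bypassing the $S^3$ intermediate step, and then apply Theorem~\ref{thm:sliceBennequin} to the resulting transverse representative.
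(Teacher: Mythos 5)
Your upper bound on $s_-$ is correct and genuinely different from the paper's. You apply Seifert's algorithm to the positive diagram $D_0$ to obtain an oriented surface $F$ with no closed components and $\chi(F)=\Seif(D)-n^+_D$, push its interior into the collar of $\partial B^4$ so that $F$ becomes a properly embedded surface in $\natural^r(B^2\times S^2)$ with $\partial F=L$ (the pushed-in interior avoids the $2$-handles since those are attached along $\nbd(K_i)\subset S^3$, at zero depth), and then apply Lemma~\ref{lem:genus bound DS}. The paper instead brings $D$ to standard form $D'$ by Reidemeister moves, observes that $D'(\vec 0)$ and $D_0$ represent the same $S^3$-link $L_0$, and combines Proposition~\ref{prop:stab}.\eqref{it:stab3} with the formula $s_-(D_0)=n^+_{D_0}-\Seif(D_0)+1$ for positive diagrams in $S^3$. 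Your route is a clean geometric argument in the filling; the paper's is purely diagrammatic and reuses the stabilization machinery already built.

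The lower bound on $s_+$, however, has a real gap, which you correctly flag. To use Theorem~\ref{thm:sliceBennequin} you need a transverse representative of $L$ in $(\SSr,\xistd)$ with $\sl = n^+_D-\Seif(D)$. Your plan braids $D_0$ transversely in $(S^3,\xistd)$, Legendrian-realizes each $K_i$ with $\tb(K_i)=-1$ in the complement of the transverse $L_0$, and performs contact $(+1)$-surgery. But achieving the maximal $\tb=-1$ for the unknot components $K_i$ while staying disjoint from $L_0$ is not a $C^0$-small perturbation and may simply fail: the maximal $\tb$ of an unknot confined to a fixed link complement can be strictly less than $-1$, so the required Legendrian unlink need not exist in the complement, and even if it does one must further verify that the unlink is Legendrian isotopic (rel $L_0$) to the one producing $\xistd$ under surgery, and that $\sl$ is unchanged. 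Your fallback of reading $\sl$ off $D$ directly does not close the gap either: Kawamuro's formula~\eqref{eq:KP} is stated for a braid winding around the surgery axes as in Figure~\ref{fig:transverseS1S2}, and converting a general positive diagram $D=D_0\sqcup K_1\sqcup\cdots\sqcup K_r$ to such a braid can change $\wr-\Seif$. The paper's $s_+$ bound avoids all of this: it is the one-line chain $s_+(L)\geq s_+(D'(\vec 0))\geq s_-(D'(\vec 0))=s_-(D_0)=n^+_D-\Seif(D)+1$, using Proposition~\ref{prop:stab}.\eqref{it:stab4} and Proposition~\ref{prop:propertiesS3}.\eqref{it:s-ANDs+}. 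It is also worth noting that the paper's alternative proof of Theorem~\ref{thm:sliceBennequin} itself \emph{invokes} Theorem~\ref{thm:positivity2}; deducing the present theorem from slice Bennequin thus runs against the intended order of the two results, while the direct argument above uses strictly weaker inputs.
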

\begin{proof}
Let $D = D_0 \sqcup K_1 \sqcup \cdots \sqcup K_r$. Since $K_1 \sqcup \cdots \sqcup K_r$ is an $r$-component unlink, after performing some Reidemeister moves we can move $D$ to a standard diagram $D'$ in $\SSr$ for the same link $L$. Then $D'(\vec 0)$ and $D_0$ are diagrams for the same link $L_0$ in $S^3$. Thus, $s_-(D'(\vec 0)) = s_-(D_0)$.
By Proposition \ref{prop:stab}.\eqref{it:stab3} we know that
\[
s_-(L) \leq s_-(D'(\vec 0)) = s_-(D_0),
\]
and by Propositions \ref{prop:stab}.\eqref{it:stab4} and \ref{prop:propertiesS3}.\eqref{it:s-ANDs+} we know that
\[
s_+(L) \geq s_+(D'(\vec 0)) \geq s_-(D'(\vec 0)) = s_-(D_0).
\]
To conclude the proof, we use the fact that for a positive diagram $D_0$ of a link $L_0$ in $S^3$ we have
\begin{equation}
\label{eq:s-positive}
s_-(D_0) = n^+_{D_0} - \text{Seif}(D_0) + 1.
\end{equation}
Equation \eqref{eq:s-positive} can be computed explicitly as follows.
Since the diagram $D_0$ has only positive crossing, from the definition it is immediate to check that the Lee complex $\OLC{D_0}$ is concentrated in non-negative homological grading. Thus, if two cycles in $\OLC[0]{D_0}$ are homologous they must be equal, and therefore the $\q$-filtration level of a cycle $\xi \in \OLC[0]{D_0}$ is also the $\q$-filtration level of its homology class $[\xi]$.
The Lee generator $\s_{D_0}$ is a cycle in homological grading $0$, so
\[
s(D_0) = s_{\min}(D_0) + 1 = \q([\s_{D_0}]) + 1 = \q(\s_{D_0}) + 1 = n^+_{D_0} - \text{Seif}(D_0) + 1,
\]
where $\q(\s_{D_0})$ is computed directly from the definition: for each Seifert circle, either label $x+1$ or $x-1$ contributes $-1$ to the $\q$-filtration level, and there is a global $\q$-shift by $n^+_{D_0}-2n^-_{D_0} = n^+_{D_0}$.
\end{proof}

\begin{remark}
An alternative way to prove Equation \eqref{eq:s-positive} is by using \cite[Theorem 1.3]{AT} (first proved for knots in \cite[Section 5.2]{Rasmussen}) and \cite[Corollary 4.1]{Cromwell}.
\end{remark}

\begin{example}
\label{ex:Wh-positive}
For the positive Whitehead knot $\Wh^+ \subset \SSone$, which is a positive link (see Figure \ref{fig:Whitehead} for a positive diagram $D_{\Wh^+}$), $s_-(\Wh^+) = 0$, $s_+(\Wh^+) = 2$, $n^+_{D_{\Wh^+}} = 2$, and $\Seif(D_{\Wh^+}) = 3$. Thus, for $\Wh^+$, the inequality of Theorem \ref{thm:positivity2} involving $s_+$ is strict, while the one involving $s_-$ is actually an equality.
\end{example}

In fact, based on Example \ref{ex:Wh-positive} and the examples below, we formulate the following conjecture.

\begin{conjecture}
Suppose that $D$ is a positive diagram for a null-homologous link $L$ in $\SSr$. Then
\begin{equation}
\label{eq:positivity}
s_-(L) = n^+_D - \Seif(D) + 1.
\end{equation}
\end{conjecture}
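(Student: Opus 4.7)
By Theorem~\ref{thm:positivity2}, the upper bound $s_-(L) \leq n^+_D - \Seif(D) + 1$ is already established, so the task is to prove the matching lower bound. Equivalently, the Lee class $[\s_o] \in \Lh{L}$ must sit at $\q$-filtration exactly $n^+_D - \Seif(D)$. The chain-level representative $\s_o$ already achieves this $\q$-filtration by a direct Seifert-circle count---the all-$x$ component dominates, contributing $-1$ per Seifert circle of the oriented resolution---so what must be ruled out is a representative of $[\s_o]$ at strictly higher $\q$-filtration. In the $S^3$ case (the argument recalled inside the proof of Theorem~\ref{thm:positivity2}) this is automatic because $\OLC[-1]{D_0} = 0$ for a positive diagram, but in our setting $\OLC[-1]{D}$ is non-trivial, populated by elements coming from the non-positive homological-degree portions of the infinite-twist insertions $\CSharpLee(\infFT_{n_i})$.

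My plan is to adapt the Hochschild-homology argument from the proof of Theorem~\ref{thm:s(Fp)} (which establishes the conjecture in the special case $T = I$, $L = \Fp$, where $n^+_D = 0$ and $\Seif(D) = 2p$). For $r = 1$, present $L$ as the closure in $\SSone$ of a positive $(2p,2p)$-tangle $T$. By Corollary~\ref{cor:Hochschild}, $\Lh{L} \cong H^*(\overline{B \otimes T})$, where $B \simeq \LCbm{I}$ is the bar resolution of the identity bimodule and $\overline{\,\cdot\,}$ denotes the planar closure used in the $\Fp$ proof. The Lee generator $[\s_o]$ is represented in this model by an explicit cycle $\xi$ in degree zero at $\q$-filtration $n^+_D - \Seif(D)$. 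The quasi-isomorphism $B \to \LCbm{I}$ induces a filtered degree-zero chain map $\pi \colon \overline{B \otimes T} \to \overline{T} = \LC{D_0}$ landing in the Lee complex of the positive $S^3$ link $L_0$, and a saddle-by-saddle analysis analogous to that in Theorem~\ref{thm:s(Fp)} should yield $\pi(\xi) = \s_{o,D_0} + (\text{terms of strictly higher } \q\text{-filtration})$, where $\s_{o,D_0}$ is the classical Lee generator of $L_0$. If $\xi$ were homologous in $\overline{B \otimes T}$ to a cycle $\z$ of strictly higher $\q$-filtration, then $\pi(\z)$ would be a strictly higher-filtration representative of $[\s_{o,D_0}]$ in $\Lh{L_0}$, contradicting the $S^3$ positivity argument.

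For $r > 1$, the construction must be iterated: place a bar resolution at each surgery sphere and interleave with the positive tangle pieces of $T$ between successive handles, composing the projections to obtain a filtered chain map $\pi$ from the iterated complex to $\LC{D_0}$; the same filtration comparison then carries through. The principal obstacle lies in verifying the leading-term identification $\pi(\xi) = \s_{o,D_0} + (\text{higher filtration})$ in full generality and in organizing the iterated construction. Even in the $r = 1$ case this requires extending the $\Fp$ saddle-count to arbitrary positive tangles, and for $r > 1$ one must also track how saddles interact across bar differentials at different surgery spheres while maintaining filtration-compatibility of the iterated closure. I expect this interleaving to be the main technical hurdle, which is presumably why the statement is left as a conjecture.
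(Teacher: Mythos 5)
This statement is a \emph{conjecture} in the paper (not a theorem), so the paper contains no proof to compare against; the authors only verify it for $r=0$, for local links, and for $\Fp$, and observe that the upper bound is established by Theorem~\ref{thm:positivity2}. You have therefore proposed a plan for an open problem, and you rightly flag the leading-term identification $\pi(\xi)=\s_{o,D_0}+(\text{higher filtration})$ and the $r>1$ iteration as unresolved. Your outline---replace the infinite-twist resolution by the bar resolution, map via the augmentation to $\LC{D_0}$, and exploit that $\LC[-1]{D_0}=0$ for positive $D_0$ to trap the filtration level---is a sensible generalization of the proof of Theorem~\ref{thm:s(Fp)} and is the natural thing to try.

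There is, however, a gap that you do not flag and that I believe is the more fundamental obstacle, even in the case $r=1$. In the proof of Theorem~\ref{thm:s(Fp)}, the authors never actually identify $[\x_a]$ with a specific Lee generator $[\s_o]$; they explicitly sidestep this by invoking the symmetry that every null-homologous orientation of $\Fp$ is isotopic to every other, so that all $[\s_o]$ sit at a single common filtration level, and it suffices to locate that level by exhibiting any class that achieves the minimum. For a general positive diagram in $\SSone$ this symmetry is gone: for different $o\in O(L)$ the re-oriented diagrams $D_o$ have different numbers of positive crossings and Seifert circles, so one expects the $[\s_o]$ to sit at genuinely different filtration levels, and the degree-zero homology of $\overline{B\otimes T}$ may also contain nontrivial classes far below any $[\s_o]$ (since, unlike in the $T=I$ case, the degree-zero part now receives contributions from $B^{-k}\otimes T^{k}$ for all $k\ge 0$, not just $B^0\otimes T^0$). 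Your argument shows that a particular explicit cycle $\xi$ cannot be pushed to higher filtration, but it does not pin down $[\xi]$ as the specific class $[\s_o]$ whose filtration level defines $s_-(L)$. To close this one would need an analogue of Theorem~\ref{thm:oriented res is correct} for the bar-resolution quasi-isomorphism $f\colon\OLC{L}\to\overline{B\otimes T}$, i.e.\ a proof that $f$ sends Lee generators to unit multiples of the corresponding explicit bar-model cycles; but Lemma~\ref{lem:simps on Lee} only handles the geometric simplifications appearing in the infinite-twist construction, not the homotopy equivalence $\dKCbm{\infFT}\simeq B$ that the Hochschild argument relies on. This generator-tracking problem, together with the leading-term computation and the $r>1$ case, is what leaves the statement open.
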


\begin{example}
When $r=0$, i.e.~when the link $L$ is in $S^3$, Equation \eqref{eq:positivity} holds by \cite[Theorem 1.3]{AT} (or the computation in the proof of Theorem \ref{thm:positivity2}).
\end{example}

\begin{example}
More generally, let $L$ be a local link in $M$ with a positive diagram. Then we can find a diagram $D$ that is simultaneously local and positive. This is because we can suppose that the $S^2$'s given by the co-cores of the 1-handles can be chosen disjoint from the 3-ball in which $L$ is contained. This implies that, given any diagram of $L$, the unknotted components $K_1, \ldots, K_r$ bound discs disjoint from $L$. Thus, given a positive diagram for $L$, we move the unknotted components by shrinking them following these discs, while leaving the projection of $L$ unchanged, until we get a diagram $D$ for $L \subset M$ where the unknotted components $K_1, \ldots, K_r$ are disjoint from $L$ and form the standard surgery presentation of $M=\SSr$. Thus, $D$ is positive and local, so we can view it as a diagram of a link $L_{S^3}$ in $S^3$, and $s(L_{S^3}) = s(D) = s(L)$. Thus, Equation \eqref{eq:positivity} follows again from \cite[Theorem 1.3]{AT} (or the computation in the proof of Theorem \ref{thm:positivity2}).
\end{example}

\begin{example}
The link $\Fp$ in $\SSone$ from Section \ref{sec:Lpp} has a crossingless diagram $D$ (see Figure \ref{fig:Fp}), which therefore is positive. We have $s_-(\Fp) = 1-2p$ by Theorem \ref{thm:s(Fp)}. It is immediate to check that $n^+_D=0$ and $\Seif(D) = 2p$. Thus, Equation \eqref{eq:positivity} holds.
\end{example}

\subsection{The slice Bennequin inequality}
The slice Bennequin inequality in $S^3$ was first proved by Rudolph~\cite{SB1} using gauge theory.  

\begin{theorem}[{Slice Bennequin inequality in $S^3$~\cite{SB1}}]
\label{thm:SB1}
Let $L$ be a transverse link in $S^3$, and let $\Sigma \subset B^4$ be a smoothly embedded oriented surface with no closed components, such that $\de \Sigma = L$. Then
\[
\sl(L) \leq -\chi(\Sigma).
\]
\end{theorem}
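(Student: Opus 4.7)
The plan is to observe that Theorem \ref{thm:SB1} is the $r=0$ specialization of Corollary \ref{cor:sliceBennequin}, which has already been established in the paper: since $\#^0(\SSone) = (S^3, \xistd)$, $\natural^0(S^1 \times B^3) = B^4$, and every link in $S^3$ is automatically null-homologous, the inequality $\sl(L) \leq -\chi(\Sigma)$ is an immediate specialization. However, to make the proof self-contained in $S^3$, one can unpack the two-step Shumakovitch-style argument that underlies it, which is what I will sketch below.

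The first step is the Bennequin-type inequality $\sl(L) + 1 \leq s(L)$. Represent the transverse link $L$ as the closure of a braid $\beta$ on $n$ strands, so $\sl(L) = \wr(\beta) - n$. The braid closure diagram $D$ has $n_+$ positive and $n_-$ negative crossings with $\wr(\beta) = n_+ - n_-$, and its oriented resolution $\delta_o$ consists of exactly $n$ Seifert circles in homological degree $0$ (positive crossings get the $0$-resolution, negative crossings get the $1$-resolution, so $|\rho_o| = n_-$). The Lee cycle $\s_o = \bigotimes_C g_C$ with $g_C = (-1)^{z(C)} + x$ has cycle filtration determined by its lowest-degree term (all labels equal to $x$), namely
\[
\q(\s_o) \;=\; -n + |\rho_o| + n_+ - 2n_- \;=\; -n + n_- + n_+ - 2n_- \;=\; \wr(\beta) - n \;=\; \sl(L).
\]
Since the filtration level of a homology class is at least that of any representing cycle, and since $\q([\s_o]) = s_{\min}(L) = s(L) - 1$ by \eqref{eq:new definition of s}, we conclude $s(L) \geq \sl(L) + 1$.

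The second step is the slice genus bound: the $t=0$ case of Corollary \ref{cor:adjunction} (equivalently, the $r=0$, $L_1 = \varnothing$ case of Theorem \ref{thm:GenusBoundCylinders}) gives $s(L) \leq 1 - \chi(\Sigma)$ for any properly embedded oriented surface $\Sigma \subset B^4$ with no closed components and $\de \Sigma = L$. Chaining the two bounds yields $\sl(L) + 1 \leq s(L) \leq 1 - \chi(\Sigma)$, hence $\sl(L) \leq -\chi(\Sigma)$. The main obstacle is the grading bookkeeping in the first step: one must verify carefully that the oriented resolution of the braid closure contributes exactly $n$ Seifert circles in homological degree zero, and that the shift $|\rho_o| + n_+ - 2n_-$ combines with the $n$ contributions of $-1$ from the $x$-summands of $g_C$ to yield precisely $\wr(\beta) - n = \sl(L)$. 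Everything else relies only on the classical fact that every transverse link in $(S^3, \xistd)$ is transversely isotopic to a braid closure with $\sl = \wr - n$, together with the already-established slice genus bound for $s$.
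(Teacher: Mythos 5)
Your proof is correct and follows exactly the route the paper indicates: the paper states Theorem~\ref{thm:SB1} as Rudolph's result and then remarks that it follows combinatorially by combining Shumakovitch's Theorem~\ref{thm:SB3} ($\sl(L)+1\leq s(L)$) with the slice genus bound for $s$ from Beliakova--Wehrli. Your grading computation for the Lee cycle over a braid closure is a faithful reproduction of Shumakovitch's argument for~\ref{thm:SB3}, and the conclusion via Corollary~\ref{cor:adjunction} at $t=0$ is precisely the slice genus bound the paper invokes.
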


Lisca-Mati\'c extended the proof to transverse knots in any contact 3-manifold with a Stein filling. They formulated their result in terms of Legendrian knots, but one can re-state it in terms of transverse knots (as we do here), by considering push-offs.

\begin{theorem}[{Slice Bennequin inequality~\cite{SB2}}]
Let $W$ be a Stein 4-manifold with boundary and $F \hookrightarrow W$ a smoothly embedded oriented connected surface transversal to $\de W$ with $K = \de F \subset \de W$ connected and transverse. Then
\[
\sl(K) \leq -\chi(\Sigma).
\]
\end{theorem}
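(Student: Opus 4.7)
The plan is to follow the gauge-theoretic strategy of Lisca--Mati\'c, combining Legendrian approximation with the adjunction inequality for the canonical $\mathrm{Spin}^c$ structure of a Stein surface. First, I would replace the transverse knot $K$ by a Legendrian approximation $L \subset \de W$, arranged so that $\sl(K) = \tb(L) - \rot(L)$, and isotope $F$ near the boundary so that $\de F = L$.

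Next, I would enlarge $W$ by attaching a Weinstein $2$-handle $h$ along $L$ with framing $\tb(L) - 1$. By Eliashberg's handle attachment theorem, the resulting $4$-manifold $W' = W \cup h$ carries a Stein structure extending that of $W$. Capping off $F$ with the core disk $D \subset h$ yields a closed, connected, oriented surface $\hat{F} := F \cup D \subset W'$ with $[\hat{F}]\cdot[\hat{F}] = \tb(L) - 1$ and $\chi(\hat{F}) = \chi(F) + 1$.

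The core of the argument is the adjunction inequality for the canonical $\mathrm{Spin}^c$ structure $\mathfrak{s}_{\mathrm{can}}$ of $W'$. A standard computation of Gompf gives $\langle c_1(\mathfrak{s}_{\mathrm{can}}), [\hat{F}]\rangle = -\rot(L)$, and the gauge-theoretic input needed is
\[
\langle c_1(\mathfrak{s}_{\mathrm{can}}), [\hat{F}]\rangle + [\hat{F}]\cdot[\hat{F}] \leq -\chi(\hat{F}).
\]
This is established by embedding $W'$ symplectically into a closed symplectic $4$-manifold $X$ with $b_2^+(X) \geq 2$---using, e.g., the K\"ahler embedding technique of Lisca--Mati\'c or Eliashberg--Etnyre symplectic caps---and then combining Taubes' theorem that $c_1(\mathfrak{s}_{\mathrm{can}})$ extends to a Seiberg--Witten basic class of $X$ with the Kronheimer--Mrowka adjunction inequality for basic classes applied to $\hat{F}$.

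Assembling the pieces,
\[
\sl(K) = \tb(L) - \rot(L) = [\hat{F}]\cdot[\hat{F}] + 1 + \langle c_1(\mathfrak{s}_{\mathrm{can}}), [\hat{F}]\rangle \leq 1 - \chi(\hat{F}) = -\chi(F),
\]
as desired. The hard part will be the gauge-theoretic input: producing the symplectic embedding of $W'$ into a closed symplectic manifold with $b_2^+ \geq 2$, and verifying that $\hat{F}$ satisfies the hypotheses of the Kronheimer--Mrowka adjunction (in particular, that it is not a sphere of non-negative self-intersection). The degenerate case in which $F$ is a disk, which forces $\hat{F}$ to be a sphere, can be handled separately by first stabilizing $F$ with an extra handle before running the argument, or by a direct contact-topological argument near $K$.
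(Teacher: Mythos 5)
This theorem is not proved in the paper at all: it is quoted verbatim from Lisca--Mati\'c \cite{SB2} as background, so there is no internal proof to compare against. Your sketch is the standard gauge-theoretic argument for it (Legendrian approximation with $\sl(K)=\tb(L)-\rot(L)$, Eliashberg--Weinstein $2$-handle attachment along $L$ with framing $\tb(L)-1$, Gompf's computation of $c_1$ of the resulting Stein structure, embedding into a closed symplectic/K\"ahler $4$-manifold with $b_2^+\geq 2$, and Taubes plus the adjunction inequality for basic classes), and this is essentially how Lisca--Mati\'c argue. Two technical points deserve more care than your sketch gives them: besides the sphere case you already flag, the adjunction inequality for basic classes in the form $\left|\langle c_1,[\hat F]\rangle\right|+[\hat F]\cdot[\hat F]\leq 2g(\hat F)-2$ requires either $[\hat F]\cdot[\hat F]\geq 0$ or the Ozsv\'ath--Szab\'o extension to negative self-intersection (or a blow-up/stabilization trick), and when $\tb(L)-1<0$ you are in the latter situation. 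What the paper itself contributes in this direction is different in kind: Theorem~\ref{thm:sliceBennequin} and Corollary~\ref{cor:sliceBennequin} give a purely combinatorial proof of the special case of null-homologous transverse links in $(\SSr,\xistd)$ relative to the Stein filling $\natural^r(S^1\times B^3)$, in the spirit of Shumakovitch, by bounding $\sl(L)+1\leq s_+(L)$ and then invoking the genus bound of Theorem~\ref{thm:genus bounds}; that route avoids gauge theory entirely but does not recover the general Stein-filling statement you are proving.
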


In \cite{SB3}, Shumakovitch gave a combinatorial proof of Theorem \ref{thm:SB1}, based upon the following result.

\begin{theorem}[{\cite[Theorem 1.C]{SB3}}]
\label{thm:SB3}
Let $L$ be a transverse link in $S^3$. Then
\[
\sl(L) + 1 \leq s(L).
\]
\end{theorem}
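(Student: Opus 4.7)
The plan is to follow Plamenevskaya's approach, computing the quantum filtration of the Lee generator coming from a braid representative. First I would invoke the transverse Markov/Bennequin theorem to represent $L$ as the closure $\hat{\beta}$ of a braid $\beta$ on $n$ strands, so that Bennequin's formula gives $\sl(L) = e(\beta) - n$, where $e(\beta) = n_+ - n_-$ is the exponent sum (writhe) of $\beta$. Let $D$ denote this closed-braid diagram.

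Next I would analyze the Lee generator $\s_o$ associated to the oriented resolution $\delta_o$ of $D$. Since $D$ is a closed braid, the oriented resolution consists of exactly $n$ Seifert circles (one per braid strand); these resolutions use the $0$-smoothing at each positive crossing and the $1$-smoothing at each negative crossing, so $\delta_o$ sits in homological grading $r - n_- = n_- - n_- = 0$ after the shift. By the formula $g_C = (-1)^{z(C)} \cdot 1 + x$ and the convention that $1$ and $x$ have quantum gradings $+1$ and $-1$, the quantum filtration of $g_C$ on each circle equals $-1$, and hence the chain element $\s_o = \bigotimes_C g_C$ has
\[
\q(\s_o) \;=\; -n + r + (n_+ - 2n_-) \;=\; -n + n_- + n_+ - 2n_- \;=\; (n_+ - n_-) - n \;=\; \sl(L).
\]

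Finally, by Lee's computation, $\s_o$ is a cycle in $\OLC{D}$, and Equation~\eqref{eq:new definition of s} identifies the filtration grading of the homology class $[\s_o] \in \Lh{L}$ with $s_{\min}(L)$; moreover $s(L) = s_{\min}(L) + 1$. The key general fact about quantum filtrations is that for any cycle $\xi$ the filtration of the homology class satisfies $\q([\xi]) \geq \q(\xi)$, because any other chain representative differs from $\xi$ by a boundary and the filtration of a class is defined as the maximum filtration over representatives. Applying this to $\xi = \s_o$ gives
\[
s(L) \;=\; \q([\s_o]) + 1 \;\geq\; \q(\s_o) + 1 \;=\; \sl(L) + 1,
\]
which is the desired inequality.

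There is no serious obstacle: all ingredients (transverse Markov, Bennequin's formula, Lee's cycles, the tautological bound on filtration levels of homology classes) are standard. The only thing requiring care is the bookkeeping of the quantum and homological grading shifts so that $\q(\s_o)$ really evaluates to $\sl(L)$; this is the same computation that underlies Equation~\eqref{eq:s-positive} in the special case of positive braids, where the inequality becomes an equality.
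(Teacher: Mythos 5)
Your argument is correct, and it is essentially the standard proof of this result: the paper itself does not reprove Theorem~\ref{thm:SB3} but cites Shumakovitch, whose argument (building on Plamenevskaya's computation) is exactly what you describe — represent $L$ as a braid closure, check that the Lee cycle $\s_o$ has quantum filtration level $\wr(\beta)-\br(\beta)=\sl(L)$, and use that the filtration level can only increase on passing to the homology class, so $s_{\min}(L)=\q([\s_o])\geq\q(\s_o)=\sl(L)$. Your grading bookkeeping ($r=n_-$ one-smoothings, $n$ Seifert circles, global shift $n_+-2n_-$) checks out, so there is nothing to add.
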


From here one can prove Theorem \ref{thm:SB1} by using the slice genus bound given by the $s$-invariant~\cite[Equation (7)]{beliakova-wehrli}.

In this section we give a combinatorial proof of the slice Bennequin inequality in the case of null-homologous transverse links in $\SSr$, endowed with the standard tight contact structure $\xistd$, relative to the Stein filling $\natural^r (S^1 \times B^3)$. Other proofs of the slice Bennequin inequality in $\#^r(\SSone)$ are given by Lambert-Cole~\cite{Lambert-Cole} and Hedden-Raoux~\cite{HR}. (Lambert-Cole's proof does not use gauge theory or Heegaard Floer homology, but rather reduces it to the slice Bennequin inequality in $S^3$ using some techniques from contact geometry.)

Let us start by describing an open book decomposition for the standard contact structure on $\SSone$. The binding is given by $S^1 \times \set{N,S}$ and the pages are annuli given by $\set{S^1 \times \mi \,\middle|\, \text{$\mi$ meridian of $S^2$}}$. Here by $N$ and $S$ we mean the north and south poles of $S^2$, respectively, and by ``meridian'' of $S^2$ we mean an arc of a great circle from $N$ to $S$.

This open book decomposition can be envisioned as follows. 
First, consider $S^3$ as the compactification of $\R^3$, with a standard coordinate system $(x,y,z)$. The sphere $S^3$ admits a (standard) open book decomposition where the binding is the circle
\[
C = \set{(x,y,z) \,\middle|\, x^2+y^2=1, z=0}
\]
and the pages are discs. See Figure \ref{fig:OBD}. The compactification in $S^3$ of the $z$-axis is an unknot $U$.
Note that the pages of the open book decomposition are perpendicular to $U$, so the open book decomposition can be approximated near $U$ with horizontal planes (i.e., planes parallel to the $xy$-plane). We let $\nbd(U)$ denote a tubular neighborhood of $U$ in $S^3$.
Performing $0$-surgery on $U$ yields $S^1 \times S^2$.

\begin{figure}
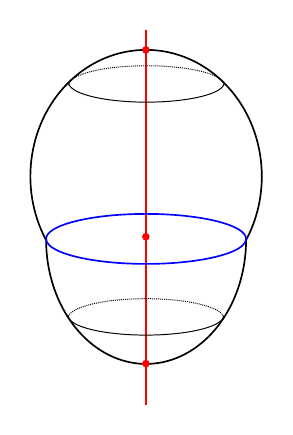
\caption{The standard open book decomposition of $S^3$. The blue circle $C$ is the binding. Two pages (discs) are sketched. The unknot $U$ is shown in red.}
\label{fig:OBD}
\end{figure}

Alternatively, one can see $S^1 \times S^2$ as obtained by taking two copies of $S^3 \sm \nbd(U)$ (oriented in opposite ways) and by gluing them along the boundary via the identity map $\de\nbd(U) \to \de \nbd(U)$. The aforementioned open book decomposition of $S^3$ extends to an open book decomposition of $S^1 \times S^2$. In $S^1 \times S^2$ there are two binding components (the two copies of the circle $C$), and the pages are annuli (each annulus is obtained by gluing two punctured discs together along the punctures). 

Suppose now that $L$ is a transverse link in $\SSone$, and that $\SSone$ is envisioned with the open book decomposition described above. Up to transverse isotopy, we can assume that $L$ misses the binding of the open book and is transverse to the pages. Indeed, Mitsumatsu and Mori \cite{Mitsumatsu} and, independently, Pavelescu \cite{Pavelescu} proved that, given a transverse link in a contact $3$-manifold, and an open book decomposition of the manifold compatible supporting the contact structure, the link can be transversely braided with respect to that open book decomposition. 

By pushing $L$ further, we can assume that it is contained in a single copy of $S^3 \sm \nbd(U)$, and away from $C$. Recall that the pages away from $C$ can be approximated with horizontal planes. Hence, in this picture, transverse links appear as closures of braids oriented from the bottom to the top. We can further suppose that the braid is always on the right of $U$, except when it winds with it. See Figure \ref{fig:transverseS1S2}.

\begin{figure}
\begingroup%
  \makeatletter%
  \providecommand\color[2][]{%
    \errmessage{(Inkscape) Color is used for the text in Inkscape, but the package 'color.sty' is not loaded}%
    \renewcommand\color[2][]{}%
  }%
  \providecommand\transparent[1]{%
    \errmessage{(Inkscape) Transparency is used (non-zero) for the text in Inkscape, but the package 'transparent.sty' is not loaded}%
    \renewcommand\transparent[1]{}%
  }%
  \providecommand\rotatebox[2]{#2}%
  \newcommand*\fsize{\dimexpr\f@size pt\relax}%
  \newcommand*\lineheight[1]{\fontsize{\fsize}{#1\fsize}\selectfont}%
  \ifx\svgwidth\undefined%
    \setlength{\unitlength}{344.98916339bp}%
    \ifx\svgscale\undefined%
      \relax%
    \else%
      \setlength{\unitlength}{\unitlength * \real{\svgscale}}%
    \fi%
  \else%
    \setlength{\unitlength}{\svgwidth}%
  \fi%
  \global\let\svgwidth\undefined%
  \global\let\svgscale\undefined%
  \makeatother%
  \begin{picture}(1,0.41677265)%
    \lineheight{1}%
    \setlength\tabcolsep{0pt}%
    \put(0,0){\includegraphics[width=\unitlength,page=1]{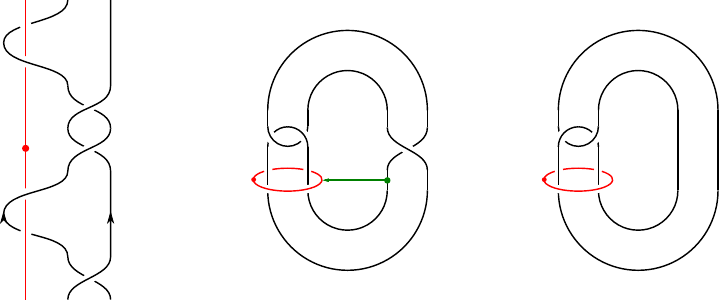}}%
    \put(0.24524002,0.19279287){\color[rgb]{0,0,0}\makebox(0,0)[lt]{\lineheight{0}\smash{\begin{tabular}[t]{l}\Large $\simeq$\end{tabular}}}}%
    \put(0.66699266,0.19279287){\color[rgb]{0,0,0}\makebox(0,0)[lt]{\lineheight{0}\smash{\begin{tabular}[t]{l}\Large $\simeq$\end{tabular}}}}%
  \end{picture}%
\endgroup%

\caption{On the left hand side is a braid $\beta$ in $\SSone$. After sliding the knot over the handle as illustrated above, its closure is seen to be $\Wh^+$, the positive Whitehead knot.}
\label{fig:transverseS1S2}
\end{figure}

Next, consider the connected sum $\SSr$. The standard contact structure on $\SSr$ is obtained from those on $\SSone$ by taking their connected sums. In general, an open book decomposition for a connected sum of contact $3$-manifolds can be constructed by the Murasugi sum of their open books along boundary parallel arcs; see for example \cite[Lemma 2.2]{Etnyre}.
Concretely, by summing up the open books for $\SSone$ we get an open book for $\SSr$, where the binding consists of $r+1$ circles, the pages are $r$-punctured disks, and the monodromy is the identity. We picture transverse links in $\SSr$ as braids just as in Figure~\ref{fig:transverseS1S2}, except that now they can wrap around $r$ vertical red lines. 

Kawamuro~\cite[Equation (1.6)]{Kawamuro} computed the self-linking number of any null-homologous transverse link $L \subset (\SSr, \xistd)$ to be
\begin{equation}
\label{eq:KP}
\sl(L) = \sl(L(\vec 0)) = \wr(\beta) - \br(\beta),
\end{equation}
where $\beta$ is a braid whose closure is $L$. Here $L(\vec 0)$ denotes the transverse link obtained by viewing $L$ in $S^3$, without performing dot-surgeries on $\SSr$. Note that $L(\vec 0)$ is the closure of the same braid $\beta$, this time viewed in $S^3$. By $\wr(\beta)$ and $\br(\beta)$ we denote the writhe and the braid index of $\beta$, respectively.

{
\renewcommand{\thethm}{\ref{thm:sliceBennequin}}
\begin{theorem}
Let $L \neq \varnothing$ be a null-homologous transverse link in $(\SSr, \xistd)$. Then
\[
\sl(L)+1 \leq s_+(L).
\]
\end{theorem}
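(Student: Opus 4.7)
The plan is to adapt Shumakovitch's combinatorial proof of the slice Bennequin inequality in $S^3$~\cite{SB3} to $\SSr$, by combining the positivity bound from Theorem~\ref{thm:positivity2} with the crossing-change estimate in Proposition~\ref{prop:propertiesS1S2}.\eqref{it:propertiesS1S2-CC}. The slogan is that both $\sl(L)+1$ and $s_+(L)$ change in lock-step under a crossing change from positive to negative in a braid presentation, and that they agree when the braid is already positive.

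To implement this, I would first use the theorem of Mitsumatsu--Mori and Pavelescu cited in the text to represent $L$ as the closure of a braid $\beta$ with respect to the open book decomposition of $\SSr$ described just before the statement. Letting $n^-$ be the number of negative crossings of $\beta$, I would change them all into positive crossings, producing a positive braid $\beta^+$ whose closure $L^+ \subset \SSr$ is still null-homologous, because each local crossing change leaves the geometric intersection numbers of the link with the co-core spheres of the $1$-handles unchanged. Kawamuro's formula~\eqref{eq:KP} then gives
\[
\sl(L^+) \;=\; \wr(\beta^+) - \br(\beta^+) \;=\; \sl(L) + 2n^-,
\]
while $n^-$ iterations of Proposition~\ref{prop:propertiesS1S2}.\eqref{it:propertiesS1S2-CC} yield $s_+(L^+) \leq s_+(L) + 2n^-$. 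Next, I would redraw $L^+$ in the standard Kirby picture for $\SSr$ by closing up the transverse braid together with the $r$ vertical binding lines, producing a diagram $D = D_0 \sqcup K_1 \sqcup \cdots \sqcup K_r$ as in Definition~\ref{def:positivityM}, where $D_0$ is the closure of $\beta^+$ in $S^3$ (a positive link diagram with $n^+_D = \wr(\beta^+)$ crossings and $\Seif(D) = \br(\beta^+)$ Seifert circles) and $K_1 \sqcup \cdots \sqcup K_r$ is the $0$-framed unlink on which surgery yields $\SSr$. Theorem~\ref{thm:positivity2} then delivers
\[
s_+(L^+) \;\geq\; n^+_D - \Seif(D) + 1 \;=\; \wr(\beta^+) - \br(\beta^+) + 1 \;=\; \sl(L^+) + 1,
\]
and chaining the three inequalities gives $\sl(L)+1 \leq s_+(L)$.

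The single step that deserves care, and that I expect to be the main point to justify, is the translation between the open book picture in which the transverse braid lives and the Kirby-style picture required by Definition~\ref{def:positivityM}. Concretely, one must verify that the $r$ vertical binding arcs of the open book close up in $S^3$ as an $r$-component unlink of $0$-framed unknots without creating any crossings among themselves or extra crossings with the braid closure, so that the identifications $n^+_D = \wr(\beta^+)$ and $\Seif(D) = \br(\beta^+)$ hold on the nose. Everything else in the argument --- Kawamuro's formula, the crossing change property, and Theorem~\ref{thm:positivity2} --- is then applied as a black box.
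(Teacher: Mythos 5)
Your argument is correct and is essentially identical to the ``alternative proof'' given in the paper: both start from the positive braid $\beta^+$, apply Kawamuro's formula~\eqref{eq:KP} and Theorem~\ref{thm:positivity2} to get $\sl(L^+)+1\leq s_+(L^+)$, and then undo the crossing changes using Proposition~\ref{prop:propertiesS1S2}.\eqref{it:propertiesS1S2-CC}, noting that $\sl+1$ drops by exactly $2$ while $s_+$ drops by at most $2$ at each step. (The paper's primary proof is even shorter: it applies Shumakovitch's inequality in $S^3$ to the finite approximation $L(\vec 0)$ and then uses $s_-\leq s_+$ together with $s_+(D(\vec 0))\leq s_+(L)$ from Proposition~\ref{prop:stab}.)
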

\addtocounter{thm}{-1}
}
\begin{proof}
We represent $L$ as the closure of a braid $\beta$ in $S^3$. By Equation \eqref{eq:KP}, Theorem \ref{thm:SB3}, Proposition \ref{prop:propertiesS3}.\eqref{it:s-ANDs+}, and Proposition \ref{prop:stab}.\eqref{it:stab4}, we have
\[
\sl(L) + 1 = \sl(L(\vec 0)) + 1 \leq s_-(L(\vec 0)) \leq s_+(L(\vec 0)) \leq s_+(L). \qedhere
\]
\end{proof}

There is another combinatorial proof, more similar in spirit to Shumakovitch's one~\cite{SB3}.

\begin{proof}[Alternative proof]
Let $L$ be obtained as the closure of a braid $\beta$ as in Figure \ref{fig:transverseS1S2} (winding around $r$ red lines). Let $\beta^+$ denote the braid obtained by replacing every negative crossing of $\beta$ with a positive crossing. The link $L^+$ defined as the closure of $\beta^+$ is positive, so, by Equation \eqref{eq:KP} and Theorem \ref{thm:positivity2} we have
\[
\sl(L^+)+1 = \wr(\beta^+) - \br(\beta^+) + 1 = n^+_{D_+} - \Seif(D_+) + 1 \leq s_+(L^+),
\]
where $D_+$ denotes the (positive) diagram obtained by taking the braid closure of $\beta^+$.
Whenever we change a crossing of $\beta^+$ from positive to negative, the left hand side of the inequality decreases by exactly $2$, whereas Proposition~\ref{prop:propertiesS1S2}.\eqref{it:propertiesS1S2-CC} tells us that the right hand side decreases by at most $2$.
\end{proof}

\begin{remark}
Theorem \ref{thm:sliceBennequin} is not true if one replaces $s_+$ with $s_-$.
A counterexample is given by the closure of the braid $\beta$ in Figure \ref{fig:transverseS1S2}.
It is straightforward to check that $\wr(\beta) = 3$ and $\br(\beta) = 2$, thus $\sl(\beta) = 1$.
As JungHwan Park pointed out to us, the knot type of the closure of $\beta$ is $\Wh^+$, the positive Whitehead knot.
From Example~\ref{ex:Whitehead} we see that $s_-(\Wh^+) = 0$. However, $1+1\not\leq0$.
\end{remark}

We finally deduce the slice Bennequin inequality in $\SSr$.

{
\renewcommand{\thethm}{\ref{cor:sliceBennequin}}
\begin{corollary}[Slice Bennequin inequality in $\SSr$]
Let $L$ be a null-homologous transverse link in $(\SSr, \xistd)$, and let $\Sigma$ be a properly embedded surface in $\natural^r(S^1 \times B^3)$ with no closed components and $\de \Sigma = L$. Then
\[
\sl(L) \leq -\chi(\Sigma).
\]
\end{corollary}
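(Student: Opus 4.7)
The plan is to reduce to the connected case by splitting $\Sigma$ into its connected components, apply the chain of inequalities from Theorems~\ref{thm:sliceBennequin} and~\ref{thm:genus bounds} to each piece separately, and then recombine using additivity of the self-linking number. Note that the direct combination $\sl(L)+1 \leq s_+(L) \leq 2\gSD(L) + |L| - 1$ only yields $\sl(L) \leq 2g(\Sigma) + |L| - 2$, which equals $-\chi(\Sigma)$ exactly when $\Sigma$ is connected; handling the disconnected case requires some extra care.

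First I would decompose $\Sigma = \Sigma_1 \sqcup \dots \sqcup \Sigma_\sigma$ into connected components. Since no component is closed, each $\Sigma_i$ has non-empty boundary $L_i$, giving a splitting $L = L_1 \sqcup \dots \sqcup L_\sigma$. The inclusion $\SSr \hookrightarrow \natural^r(S^1 \times B^3)$ induces an isomorphism on $H_1$ (both groups are $\mathbb{Z}^r$), so each $L_i$ is null-homologous in $\SSr$ and inherits a transverse structure from $L$, making $\sl(L_i)$ well-defined. Applying Theorem~\ref{thm:sliceBennequin} to $L_i$ together with the $s_+$ bound of Theorem~\ref{thm:genus bounds} using the specific connected surface $\Sigma_i$ yields $\sl(L_i) + 1 \leq s_+(L_i) \leq 2 g(\Sigma_i) + |L_i| - 1 = -\chi(\Sigma_i) + 1$, so $\sl(L_i) \leq -\chi(\Sigma_i)$ for each $i$.

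To conclude I would use additivity of the self-linking number: for null-homologous transverse links $L_1, L_2$ in $(\SSr, \xistd)$ one has $\sl(L_1 \sqcup L_2) = \sl(L_1) + \sl(L_2) + 2\lk(L_1, L_2)$, and iterating gives $\sl(L) = \sum_i \sl(L_i) + 2 \sum_{i<j} \lk(L_i, L_j)$. Summing the per-component inequalities would then give $\sl(L) \leq -\chi(\Sigma)$, provided all pairwise linking numbers vanish. The main obstacle, then, is the claim that $\lk(L_i, L_j) = 0$ whenever $L_i$ and $L_j$ bound disjoint surfaces in $\natural^r(S^1 \times B^3)$. My plan for this step is to push $L_j$ slightly along $\Sigma_j$ into the interior of the filling to obtain a parallel curve $L_j' \subset \Sigma_j$: the intersection $\Sigma_i \cdot L_j'$ vanishes because $\Sigma_i$ and $\Sigma_j$ are disjoint, and this four-dimensional intersection equals $\lk(L_i, L_j)$ by a standard argument comparing $\Sigma_i$ with a Seifert surface $F_i \subset \SSr$ of $L_i$ through $H_2(\natural^r(S^1 \times B^3)) = 0$.
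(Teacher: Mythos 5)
Your overall strategy — decompose $\Sigma$ into connected components, apply Theorems~\ref{thm:sliceBennequin} and~\ref{thm:genus bounds} componentwise, and recombine via additivity of $\sl$ — is genuinely different from the paper's. The paper avoids the disjoint‐union arithmetic entirely: it first treats the case where $L$ is a knot (and $\Sigma$ therefore connected), then reduces a general link to a knot by attaching $\ell-1$ positively half‐twisted bands to the braid; this operation changes $\sl$ and $\chi(\Sigma)$ by the same amount $\ell-1$, so the knot inequality passes directly to links with no linking-number bookkeeping at all. Your route requires two supporting facts that the paper does not need: the disjoint-union formula $\sl(L_1\sqcup L_2)=\sl(L_1)+\sl(L_2)+2\lk(L_1,L_2)$ in $(\SSr,\xistd)$, and the vanishing of $\lk(L_i,L_j)$ for components bounding disjoint surfaces in $\natural^r(S^1\times B^3)$. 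Both are true, so your plan can be made to work, but it is heavier than the paper's.

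The concrete gap is in the justification of $\lk(L_i,L_j)=0$. You write that the ``four-dimensional intersection'' $\Sigma_i\cdot L_j'$ vanishes because $\Sigma_i$ and $\Sigma_j$ are disjoint, and that this intersection number equals $\lk(L_i,L_j)$. This does not make dimensional sense: a surface and a curve in a $4$-manifold have expected intersection dimension $2+1-4=-1$, so generic representatives are disjoint for reasons having nothing to do with $\Sigma_i\cap\Sigma_j=\varnothing$, and $\Sigma_i\cdot L_j'$ is not a homological invariant that could ever compute a linking number. The correct argument pairs objects of complementary dimension: either compare the $0$-dimensional intersection $\Sigma_i\cdot\Sigma_j=0$ with $\lk(L_i,L_j)$ (using that both $[\Sigma_i],[\Sigma_j]$ vanish in $H_2(W,\de W)=0$ for $W=\natural^r(S^1\times B^3)$), or produce a $3$-chain $T$ with $\de T = \Sigma_i - F_i$ (possible since $H_2(W)=0$), make $T$ transverse to $\Sigma_j$ while pushing it off $L_j$, and observe that the signed boundary points of the $1$-manifold $T\cap\Sigma_j$ must sum to zero; the only contributions come from $F_i\cap\Sigma_j$, whose signed count is $\lk(L_i,L_j)$, forcing $\lk(L_i,L_j)=0$. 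Either version requires a bit of care with boundary/corner behavior near $L_i$, which is precisely the subtlety the paper's band-attaching argument sidesteps. If you rewrite that paragraph correctly and also verify the disjoint‐union formula for $\sl$ (e.g.\ directly from Kawamuro's identity $\sl(L)=\wr(\beta)-\br(\beta)$), your proof goes through.
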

\addtocounter{thm}{-1}
}

\begin{proof}
Following Shumakovitch's strategy~\cite{SB3}, we first suppose that $L$ is a knot. In such a case by Theorems \ref{thm:sliceBennequin} and \ref{thm:genus bounds} we have
\[
\sl(L) + 1 \leq s_+(L) \leq 2g(\Sigma) = 1-\chi(\Sigma),
\]
which concludes the proof in the case of knots.

If $L$ is an $\ell$-component transverse link represented by a braid $\beta$, then we can change it into a knot $K$ by attaching $\ell-1$ positively half-twisted bands on the braid $\beta$. Moreover, $K$ bounds a surface $\Sigma'$ in $\natural^r(S^1 \times B^3)$ with $\chi(\Sigma') = \chi(\Sigma) - (\ell-1)$, obtained by attaching the bands to $\Sigma$. Since $K$ is a knot we know that
\begin{equation}
\label{eq:slK1}
\sl(K) \leq -\chi(\Sigma') = -\chi(\Sigma)+(\ell-1).
\end{equation}
The braid $\beta'$ defining $K$ has $\br(\beta') = \br(\beta)$ and $\wr(\beta') = \wr(\beta) + (\ell-1)$. Thus, by Equation \eqref{eq:KP} we have
\begin{equation}
\label{eq:slK2}
\sl(K) = \sl(L) + (\ell-1).
\end{equation}
By combining Equations \eqref{eq:slK1} and \eqref{eq:slK2} we get the conclusion.
\end{proof}

\begin{question}
Is there a counterexample to Corollary \ref{cor:sliceBennequin} if $\natural^r(S^1 \times B^3)$ is replaced by $\natural^r(B^2 \times S^2)$?
\end{question}

\subsection{Genus bounds and Dehn twists}

Let $\DT \colon \SSone \to \SSone$ denote the Dehn twist.

\begin{proposition}
\label{prop:afterD}
Given a null-homologous link $L \subset \SSone$, $\gSD(L) = \gSD(\DT(L))$.
\end{proposition}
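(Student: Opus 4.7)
The plan is to reduce the proposition to the existence of a self-diffeomorphism $\Phi \colon S^1 \times B^3 \to S^1 \times B^3$ whose boundary restriction is the Dehn twist $\DT$. Once such a $\Phi$ is produced, pushing forward a genus-minimizing surface $\Sigma \subset S^1 \times B^3$ with $\partial \Sigma = L$ yields $\Phi(\Sigma)$, a properly embedded surface of the same genus whose boundary is $\DT(L)$; this shows $\gSD(\DT(L)) \leq \gSD(L)$. The reverse inequality is obtained symmetrically by applying $\Phi^{-1}$ to a realizer of $\gSD(\DT(L))$.

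To construct $\Phi$, I would first note that the twisting sphere may be taken to be $\{1\} \times S^2 \subset \SSone$, which bounds the properly embedded $3$-ball $\{1\} \times B^3 \subset S^1 \times B^3$, with trivial normal bundle. Choose a bicollar neighborhood $(-\varepsilon, \varepsilon) \times S^2 \subset \SSone$ of the twisting sphere, and a smooth path $\phi \colon (-\varepsilon, \varepsilon) \to SO(3)$ that is the identity near the endpoints and represents the nontrivial element of $\pi_1(SO(3)) = \Z/2$. Up to isotopy (see below), $\DT$ agrees with the model map $(e^{i\theta}, x) \mapsto (e^{i\theta}, \phi(\theta) \cdot x)$ supported on this bicollar. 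Since the action of $SO(3)$ on $\R^3$ preserves the unit ball, the formula $(e^{i\theta}, y) \mapsto (e^{i\theta}, \phi(\theta) \cdot y)$ gives a diffeomorphism of the thickened bicollar $(-\varepsilon, \varepsilon) \times B^3 \subset S^1 \times B^3$ that restricts to the model $\DT$ on the boundary and is the identity near $\{\pm \varepsilon\} \times B^3$; extending by the identity gives the desired $\Phi$.

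The only genuine point to verify is that the model map above is isotopic to the abstractly defined Dehn twist along $\{1\} \times S^2$. This is standard: by Smale's theorem $\Diff^+(S^2) \simeq SO(3)$, so self-diffeomorphisms of $S^2 \times I$ that are the identity on the boundary are classified up to isotopy by $\pi_1(SO(3)) = \Z/2$, and both the model and the standard Dehn twist realize the nontrivial class. I expect no genuine obstacle beyond this bookkeeping, since the key topological input---that the twisting sphere bounds a $3$-ball in the filling $S^1 \times B^3$---makes the extension problem essentially trivial.
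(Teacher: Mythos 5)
Your proof is correct and follows the same route as the paper: the whole content is that $\DT$ extends to a self-diffeomorphism of $S^1\times B^3$, after which one pushes a genus-minimizing surface forward and backward to get both inequalities. The paper simply asserts the extension, whereas you construct it explicitly via the $SO(3)$-rotation model on a bicollar of the twisting sphere; this is a legitimate filling-in of the same argument rather than a different approach.
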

\begin{proof}
Note that $\DT$ extends to a diffeomorphism on $S^1 \times B^3$. Thus, given a minimum genus surface $\Sigma$ for $L$, $\DT(\Sigma)$ is an embedded surface for $\DT(L)$. Thus, $\gSD(\DT(L)) \leq \gSD(L)$. The other inequality is analogous.
\end{proof}

Proposition \ref{prop:afterD} does not seem to hold for $\gDS$, although we could not find a counterexample.
JungHwan Park noticed that if such a counterexample exists, the geometric intersection number between the offending link $L$ and any $S^2$ fiber must be at least 4. This is because if $L$ intersects an $S^2$ fiber in exactly two points, then $L$ and $\sigma(L)$ are isotopic in $S^1 \times S^2$, where the isotopy is obtained by taking one local strand of $L$ and dragging it all around the $S^2$ fiber.

%
%
%
\section{Open problems} \label{sec:Problems}

\subsection{Inserting full twists}
In this paper we defined and studied the Rasmussen-type invariant $s$ for null-homologous links  $L \subset M = \SSr$. According to Theorem~\ref{thm:fda}, if we represent $L$ by a diagram $D$, then $s(D)$ can be understood as the ordinary $s$ invariant for the link in $S^3$ represented by the diagram $\Lk$ for $k \gg 0$.

Consider now an arbitrary link $L \subset M$ (represented by a diagram $D$), and let $\eta_i \in \Z$ be its algebraic intersection number with the two-sphere $\{*\} \times S^2$ in the $i$th copy of $\SSone$, for $i=1, \dots, r$. The homology class $[L] \in H_1(M; \Z)$ is determined by the numbers $\eta_i$.

In general, it is not true that $s(\Lk)$ stabilizes as $k \to \infty$. However, we could ask the following:
\begin{question}
\label{question:shifteds}
Let $\vec{k}=(k,\dots,k)$. Is the quantity
\begin{equation}
\label{eq:shifteds}
 s(\Lk) - k  \sum_{i=1}^r |\eta_i| (|\eta_i|-1)
 \end{equation}
independent of $k$ for $k \gg 0$? 
\end{question}

The examples below are what led us to Question~\ref{question:shifteds}. 

\begin{example}
When $[L]=0$, all the $\eta_i$ vanish and we know that $s(\Lk)$ stabilizes by Theorem~\ref{thm:fda}.
\end{example}

\begin{example}
If the answer to Question \ref{question:shifteds} is affirmative for diagrams $D_1$ and $D_2$ for links $L_1 \subset \#^{r_1}(S^1 \times S^2)$ and $L_2 \subset \#^{r_2}(S^1 \times S^2)$, then the answer is also affirmative for $D_1 \sqcup D_2$ and $D_1 \# D_2$ (for any choice of basepoints on $D_1$ and $D_2$), using Proposition \ref{prop:propertiesS1S2}, items \eqref{it:propertiesS1S2-2} and \eqref{it:propertiesS1S2-3}.
\end{example}

\begin{example}
\label{ex:Fpq}
Consider the link $F_{p,q} \subset \SSone$, consisting of $p+q$ fibers $S^1 \times \{x_i\}$, with $p$ of them oriented one way and $q$ oriented the other way. We represent it by a diagram with no crossings, just as the one in Figure~\ref{fig:Fp}, but with different orientations. (By a slight abuse of notation, we will also use $F_{p,q}$ to denote this diagram.) Observe that, if we forget the orientations, then $F_{p,q}(k)$ is the torus link $T_{p+q, k(p+q)}$. Assume $p \geq q$. By computer calculations using the program UniversalKh from the {\em Knot Atlas} \cite{KAT, UniversalKh}, we found that 
$$ s(F_{p,q}(k)) - k(p-q)(p-q-1) =  1-p-q$$
for small values of $p, q$ and $k \geq 1$. Thus, the expression \eqref{eq:shifteds} appears to already stabilize at $k=1$.
\end{example}

\begin{example}
\label{ex:cable}
Consider the $(p,q)$-cable pattern $C_{p,q} \subset \SSone$ given by 
$$ C_{p,q} = \{(e^{ipt}, e^{iqt}) \mid t \in \R\} \subset S^1 \times S^1 \subset \SSone.$$
We represent this by a diagram (denoted the same way) with $p$ strands going through the handle,  all in the same direction, and wrapping around $q$ times. Then $C_{p,q}(k)$ is the torus link $T_{p, q+kp}$, with all strands oriented in one direction. Since $T_{p, q+kp}$ is a positive link, its $s$-invariant can be computed using the formula in \cite[Theorem 1.3]{AT}; see Equation~\eqref{eq:s-positive}. We have
$$ s(C_{p,q}(k)) - kp(p-1)= (p-1)(q-1).$$
\end{example}

\begin{remark}
When $[L] \neq 0$, the expression~\eqref{eq:shifteds} does not give an invariant of the link $L \subset \SSr$. For example, in Example~\ref{ex:cable}, the links $C_{p,0}$ and $C_{p, 2p}$ are isotopic in $\SSone$, because they differ by the square $\sigma^2 \cong \id$ of the Dehn twist diffeomorphism $\sigma$. However, the expressions~\eqref{eq:shifteds} for the corresponding diagrams are $1-p$ and $(p-1)(2p-1)$, respectively.

In fact, recall from Theorem~\ref{thm:deformedKh} that, when $[L] \neq 0$, the deformed Khovanov-Lee homology is only an invariant of $L$ up to grading shifts. Thus, we could not expect the $s$-invariant (which captures some grading information) to be well-defined. 
\end{remark}

\subsection{Surfaces in negative definite four-manifolds}
\label{sec:GeneralSurfaces}
Let $W$ be a smooth, oriented four-manifold with $b^+_2(W) =0$ and $\del W = S^3$. Let $K \subset \del W$ be a knot, and $\Sigma \subset W$ a smoothly embedded oriented surface with no closed components, such that $\del \Sigma =K $.  As noted in Section~\ref{sec:adj} (cf. Theorem~\ref{thm:adjunction-tau}), in \cite{os-tau}, Ozsv\'ath and Szab\'o proved an adjunction inequality for their $\tau$ invariant:
\begin{equation}
\label{eq:tauadj}
 2 \tau(K) \leq 1-\chi(\Sigma) - \left|[\Sigma]\right| - [\Sigma] \cdot [\Sigma].
 \end{equation}
\begin{question}
\label{q:sadj}
Does the inequality~\eqref{eq:tauadj} continue to hold with $2\tau(K)$ replaced by the Rasmussen invariant $s(K)$? 
\end{question}
Of course, one could also ask this in the case where $\del \Sigma$ is a link $L$, and we use $s(L)$.

Less speculatively, let us focus on the manifold $W = (\#^n \bCP) \setminus B^4$, where we proved the  adjunction inequality for $s$ in Corollary~\ref{cor:adjunction}, in the case where $\Sigma$ is null-homologous. For arbitrary surfaces, we expect the following.
\begin{conjecture}
\label{conj:AdjGeneral}
Let $W = (\#^r \bCP) \setminus B^4$ for some $r \geq 0$. Let $L \subset \del W= S^3$ be a link, and $\Sigma \subset W$ a properly, smoothly embedded oriented surface with no closed components, such that $\del \Sigma =L$. Then:
\[
s(L) \leq 1-\chi(\Sigma) - \left|[\Sigma]\right| - [\Sigma] \cdot [\Sigma].
\]
\end{conjecture}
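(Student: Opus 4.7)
The plan is to mimic the proof of Theorem~\ref{thm:strongadj} without its hypothesis $[\Sigma]=0$. Write $[\Sigma]=\sum_i a_ie_i \in H_2(W)$ with $e_i=[\mathbb{CP}^1_i]$, and isotope $\Sigma$ to meet each $\mathbb{CP}^1_i$ transversely in $q_i^+$ positive and $q_i^-$ negative points, with $q_i^+-q_i^-=\pm a_i$. As in the proof of Theorem~\ref{thm:strongadj}, remove a tubular neighborhood of each $\mathbb{CP}^1_i$ (replacing each $\bCP_i$ by a $B^4$) and then absorb the resulting $r$ extra $S^3$-boundaries into $\partial W=S^3$ via disjoint arcs in $W\setminus\Sigma$. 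This produces a cobordism $\Sigma^\circ\subset I\times S^3$ from $L$ to $M:=\bigsqcup_i F_{q_i^+,q_i^-}(1)$, with Euler characteristic $\chi(\Sigma^\circ)=\chi(\Sigma)-\sum_i(q_i^++q_i^-)$. Because $\Sigma$ has no closed components, every component of $\Sigma^\circ$ still has a boundary arc in $L$, so Theorem~\ref{thm:GenusBoundCylinders} applies.

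Granting the formula
\begin{equation}\label{eq:fpqone-plan}
s(F_{p,q}(1))\;=\;(p-q)(p-q-1)+1-(p+q),\qquad p\geq q\geq 0,
\end{equation}
suggested by the computer evidence of Example~\ref{ex:Fpq}, the Beliakova--Wehrli disjoint union formula gives $s(M)=\sum_i|a_i|(|a_i|-1)-\sum_i(q_i^++q_i^-)+1$. Plugging into $s(L)\leq s(M)-\chi(\Sigma^\circ)$, the $(q_i^++q_i^-)$ contributions cancel and, using $[\Sigma]\cdot[\Sigma]=-\sum_ia_i^2$ and $|[\Sigma]|=\sum_i|a_i|$, what remains is
\[
s(L)\;\leq\;1-\chi(\Sigma)+\sum_i|a_i|(|a_i|-1)\;=\;1-\chi(\Sigma)-|[\Sigma]|-[\Sigma]\cdot[\Sigma],
\]
which is exactly the conjecture. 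Notably, the particular geometric-intersection data $q_i^\pm$ drop out, leaving only the algebraic class; in particular one need not isotope to minimize geometric intersection (which would require a Whitney-trick argument for embedded surfaces in dimension four that is not available).

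The entire content of the argument is therefore reduced to~\eqref{eq:fpqone-plan}, which will be the main obstacle. My approach would be to extend the Hochschild-homology argument used to prove Theorem~\ref{thm:s(Fp)}. Let $H_{p,q}$ denote the variant of Khovanov's arc algebra built from crossingless matchings of $2(p+q)$ points split into $p$ upward-oriented and $q$ downward-oriented endpoints on each side. The untwisting argument of Theorem~\ref{thm:infFT is proj res} is entirely local in the strand structure and should go through verbatim to show that $\OdKC{\infFT_{p+q}}$ projectively resolves the identity $H_{p,q}$-bimodule. Replacing this resolution by the analogous bar resolution $B_{p,q}$ and closing off in $S^3$ would yield a complex quasi-isomorphic to $\OLC{F_{p,q}(1)}$. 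The oriented resolution of the full twist acquires the grading shift $\h^{-\eta^2/2}\q^{-3\eta^2/2}$ with $\eta=p-q$ from Theorem~\ref{thm:KC'(infFT)}; tracking this through the bar-resolution computation of the minimum-filtration cycle in $\overline{B_{p,q}}^{\,0}$ should yield the filtration level $(p-q)(p-q-1)-(p+q)$ predicted by~\eqref{eq:fpqone-plan} via the relation $s=s_{\min}+1$.

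The delicate step will be non-triviality: in Theorem~\ref{thm:s(Fp)} the key fact was that the co-multiplication map $B\to\OKCbm{I}$ sends the minimum-filtration cycle $\x_a$ to $\x_I+(\text{higher filtration})$, forcing $\x_a$ to survive in homology. In the unbalanced setting some resolutions will contain disoriented circles, and one must confirm that a surviving $\x_I$-like term still occurs in the correct filtration. If a direct extension of the Hochschild argument proves too intricate, a viable alternative is to exploit the remarkable numerical identity $s(F_{p,q}(1))=s(F_{p-q,0}(1)\sqcup F_{q,q}(1))$---which holds termwise and combines the positive-braid formula $s(T_{n,n})=(n-1)^2$ with Theorem~\ref{thm:s(Fp)}---by constructing an explicit $\chi=0$ cobordism in $I\times S^3$ between these two links whose components all meet $F_{p,q}(1)$. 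Via Theorem~\ref{thm:GenusBoundCylinders}, such a cobordism would directly give the upper-bound half of~\eqref{eq:fpqone-plan}, which is all the conjecture actually requires.
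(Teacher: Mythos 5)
This statement is explicitly labeled a \emph{conjecture} in the paper; the authors do not prove it, and neither do you. Your reduction of the conjecture to the formula $s(F_{p,q}(1)) \leq (p-q)^2 - 2p + 1$ (for $p \geq q \geq 0$) is correct and is precisely the reduction the paper records (Equation~\eqref{eq:sab} and the surrounding discussion). The bookkeeping you do with $q_i^\pm$, $|a_i|$, and the disjoint-union formula checks out, including the edge case where some $\Sigma \cap \mathbb{CP}^1_i$ is empty.

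The gap is the crucial inequality $s(F_{p,q}(1)) \leq (p-q)^2 - 2p + 1$ for $q \geq 1$, and your two proposed routes to it are both sketches, not proofs. The Hochschild route would require building the deformed arc-algebra bimodule theory for unbalanced orientations (so $\eta = p-q \neq 0$), tracking the nonzero grading shifts $\h^{-\eta^2/2}\q^{-3\eta^2/2}$ through the bar resolution, and verifying that a minimum-filtration cycle actually survives despite disoriented circles appearing in some resolutions---none of which is carried out, and each is a genuine obstacle the paper's balanced argument (Theorem~\ref{thm:s(Fp)}) does not encounter. Your alternative route---a $\chi=0$ cobordism in $I\times S^3$ from $F_{p,q}(1)$ to $F_{p-q,0}(1)\sqcup F_{q,q}(1)$ with every component meeting $F_{p,q}(1)$---is not exhibited, and it is not obvious that one exists; if it did exist, it would give the needed bound immediately via Theorem~\ref{thm:GenusBoundCylinders}. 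What the paper \emph{does} prove, by a pair-of-pants cobordism to $F_{p-1,q-1}(1)\sqcup U$ and induction, is only the weaker upper bound $s(F_{p,q}(1)) \leq (p-q-1)^2 = (p-q)^2 - 2(p-q) + 1$, which falls short of the needed inequality by $2q$; the matching lower bound $(p-q)^2-2p+1 \leq s(F_{p,q}(1))$ is also proved, but it points in the wrong direction for the conjecture. So your proposal reconstructs the paper's reduction but does not close the gap that the authors left open.
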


Just as Corollary~\ref{cor:adjunction} was derived (through Theorem \ref{thm:strongadj}) from calculating the $s$ invariant of the cable link $\Fp(1)$, by the same argument Conjecture~\ref{conj:AdjGeneral} boils down to showing that
  \begin{equation}
 \label{eq:sab}
  s(F_{p,q}(1)) = (p-q)^2 - 2p+1,
  \end{equation}
 where $F_{p,q} \subset \SSone$ is the link from Example~\ref{ex:Fpq}, and $p \geq q \geq 0$.
   
 We have checked by computer that \eqref{eq:sab}  holds for small values of $p$ and $q$ (namely, when $p+q \leq 4$). We can also prove the following bounds.
 \begin{proposition}
 For the link $F_{p,q}(1) \subset S^3$, with $p \geq q \geq 0$, we have
 $$ (p-q)^2 - 2p+ 1 \leq s(F_{p,q}(1)) \leq (p-q)^2 -2(p-q)+1.$$
 \end{proposition}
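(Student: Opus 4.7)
Both bounds follow from cobordism inequalities between $F_{p,q}(1)$ and the torus link $T := T_{p-q,p-q} = F_{p-q,0}(1)$. Since $T$ is a positive braid closure with $n^+ = (p-q)(p-q-1)$ positive crossings and $p-q$ Seifert circles, Theorem~\ref{thm:positivity2} applied in $S^3$ gives $s(T) = (p-q-1)^2 = (p-q)^2 - 2(p-q) + 1$ (interpreting $s(\varnothing) = 1$ when $p = q$). Recall that $F_{p,q}(1)$ is the closure of $\Delta^2_{p+q}$ with $p$ strands oriented upwards and $q$ strands oriented downwards; distinct components have pairwise linking numbers $\lk(\text{up}_i, \text{up}_j) = \lk(\text{down}_i, \text{down}_j) = +1$ and $\lk(\text{up}_i, \text{down}_j) = -1$. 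I partition the $p+q$ components into $q$ up-down pairs and $p-q$ remaining up-components. The key construction is a cobordism $\Sigma_0 \subset I \times S^3$ from $F_{p,q}(1)$ to $T$ of Euler characteristic $0$, consisting of $p-q$ product cylinders for the remaining up-components together with $q$ annuli cobounding the pairs. Each pair annulus is built in a small $B^4 \subset I \times S^3$ by first performing one saddle merging the pair (a negative Hopf sub-link) into a single unknot and then capping that unknot with a disk in $B^4$; the result is an embedded genus-$0$ surface with both boundary circles in $F_{p,q}(1)$. Disjointness of the annuli among themselves and from the cylinders is arranged via the Whitney trick in the simply connected manifold $I \times S^3$, which applies because all relevant algebraic intersection numbers vanish: an annulus meets any cylinder algebraically in $\lk(\text{up}_i + \text{down}_i, \text{up}_j) = (+1) + (-1) = 0$ points, and an analogous computation yields zero for two pair annuli.

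Every component of $\Sigma_0$ has a boundary circle in $L_1 = F_{p,q}(1)$, so Theorem~\ref{cor:cob inequality} gives
\[ s(T) - s(F_{p,q}(1)) \;\geq\; \chi(\Sigma_0) \;=\; 0, \]
yielding the upper bound $s(F_{p,q}(1)) \leq (p-q)^2 - 2(p-q) + 1$. For the lower bound, assume $p > q$ (the case $p = q$ is covered directly by Theorem~\ref{thm:sLpp}); I would modify $\Sigma_0$ by attaching two additional saddles per pair annulus, each merging the annulus with one of the $p-q$ cylinders (allowing repetition of cylinders if $p < 2q$). The modified cobordism $\Sigma_-$ has $\chi(\Sigma_-) = -2q$, and every component of $\Sigma_-$ now contains a cylinder terminating in a strand of $T$; in particular every component has a boundary circle in $L_2 = T$. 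Theorem~\ref{cor:cob inequality} applied in this direction gives
\[ s(F_{p,q}(1)) - s(T) \;\geq\; \chi(\Sigma_-) \;=\; -2q, \]
yielding the lower bound $s(F_{p,q}(1)) \geq s(T) - 2q = (p-q)^2 - 2p + 1$.

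The main obstacle will be the topological verification that the annulus construction, the Whitney moves, and the extra merging saddles can be assembled compatibly into smoothly embedded cobordisms with the stated Euler characteristics and boundary structures. The Whitney trick step crucially relies on the simple connectivity of $I \times S^3$ and the vanishing algebraic intersection numbers computed from the linking data; the saddle modifications for the lower bound must in turn be arranged so that the merging really does produce a cobordism each of whose components has a boundary circle in $T$, without inadvertently disconnecting any piece from the $t = 1$ end.
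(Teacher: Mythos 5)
Your numerical bounds and your strategic idea---compare $F_{p,q}(1)$ to $T_{p-q,p-q}$ via a genus-zero cobordism, and correct for the annuli by tubing them to cylinders before applying the cobordism inequality in the reverse direction---are consistent with the paper's approach and give the right Euler characteristics. However, the justification of the embeddings is flawed in a way that cannot be patched as stated.

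The central gap is the appeal to the Whitney trick in $I\times S^3$. The Whitney trick does \emph{not} hold for smooth surfaces in a $4$-manifold: the Whitney disk used to cancel a pair of intersection points is itself $2$-dimensional, so by general position it generically meets the surfaces one is trying to separate, and there is no smooth removal procedure in dimension $4$ even when the manifold is simply connected and all algebraic intersection numbers vanish. (This failure is exactly what makes smooth $4$-manifold topology hard; Freedman's techniques only give topological, not smooth, embeddings.) So the sentence ``Disjointness of the annuli among themselves and from the cylinders is arranged via the Whitney trick'' does not establish the existence of the embedded cobordism $\Sigma_0$, and since $\Sigma_-$ is built from $\Sigma_0$, the lower-bound argument inherits the same gap. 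A smaller but related confusion: the Hopf sub-link $(\mathrm{up}_i,\mathrm{down}_i)$ is interlinked with every other component of $F_{p,q}(1)$, so it cannot be isotoped into a ``small $B^4$'' disjoint from the cylinders; the Seifert annulus you push into $4$-space necessarily threads through the region occupied by the other strands, which is precisely why one needs an argument for disjointness.

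The paper sidesteps the issue entirely by never constructing $\Sigma_0$ in one piece. Instead it performs a single explicit band move on $F_{p,q}$, joining one up-fiber to one down-fiber, which yields an embedded cobordism in $I\times S^3$ between $F_{p-1,q-1}(1)\sqcup U$ and $F_{p,q}(1)$ of Euler characteristic $-1$, consisting of one pair-of-pants and $p+q-2$ product annuli. Every component of that cobordism meets both ends, so Theorem~\ref{cor:cob inequality} can be applied in both directions, giving
\[
s(F_{p,q}(1))+1 \;\geq\; s(F_{p-1,q-1}(1))-1 \;\geq\; s(F_{p,q}(1))-1,
\]
and iterating $q$ times down to $F_{p-q,0}(1)=T_{p-q,p-q}$ (whose $s$-invariant is $(p-q-1)^2$ by the positivity formula) gives exactly your two bounds. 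If you stack those $q$ elementary cobordisms and cap off the unknots, you \emph{do} obtain an explicit embedded realization of your $\Sigma_0$, and tubing as you describe realizes $\Sigma_-$. So the surfaces you want do exist and your Euler characteristic bookkeeping ($\chi(\Sigma_0)=0$, $\chi(\Sigma_-)=-2q$) is correct; what is missing is replacing the Whitney-trick step by this explicit band-by-band construction.
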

 \begin{proof}
 When $q=0$, we have that $F_{p,0}(1)$ is the positive torus link $T_{p,p}$. By Equation~\eqref{eq:s-positive}, we have
 $$ s(T_{p,p}) = (p-1)^2.$$

When $q\geq 1$, consider a band $B$ joining two oppositely oriented strands of $F_{p,q}$. This gives a cobordism between $F_{p-1, q-1}(1) \sqcup U$ and $F_{p,q}(1)$, where $U$ is the unknot. Topologically, the cobordism is a disjoint union of a pair-of-pants and some annuli, with Euler characteristic $-1$. Moreover, note that every component of this cobordism has boundary components both in $F_{p-1, q-1}(1) \sqcup U$ and in $F_{p,q}(1)$. Applying Theorem~\ref{thm:GenusBoundCylinders} in both directions, we find that
 \begin{equation}
   \label{eq:spq1'}
     s(F_{p,q}(1))+1 \geq s(F_{p-1,q-1}(1) \sqcup U)= s(F_{p-1,q-1}(1)) -1\geq s(F_{p,q}(1))-1.
 \end{equation}
 
Proceeding inductively in $q \geq 0$, with $p-q$ fixed, and using \eqref{eq:spq1'} at the inductive step, we obtain the desired bounds for $s(F_{p,q}(1))$.
 \end{proof}
 
 \begin{remark}
 In Heegaard Floer theory, the analogue of $s$ for links is $2\tau(L) + 1- |L|$. Using the generalization of  \eqref{eq:tauadj} to links in \cite{HR}, we can prove that
 $$ \tau(F_{p,q}(1)) = \frac{(p-q)(p-q-1)}{2},$$
 that is,
 $$2\tau(F_{p,q}(1)) + 1-p-q= (p-q)^2 - 2p + 1,$$
 which is the analogue of \eqref{eq:sab}.
 \end{remark}
 
 \subsection{Homotopy $4$-spheres}
 Recall from Corollary~\ref{cor:Glucktwist} that the Freedman-Gompf-Morrison-Walker strategy for disproving the smooth 4D Poincar\'e conjecture \cite{mnm} cannot work for homotopy $4$-spheres obtained from $S^4$ by Gluck twists. It remains an open question whether the strategy can work for other homotopy $4$-spheres. This can be viewed as a particular case of Question~\ref{q:sadj}, when $W$ is a homotopy $4$-ball. 
 \begin{question}
 Let $X$ be a homotopy 4-sphere. If a link $L \subset S^3$ is strongly slice in $X$, do we have $s(L) = 1-|L|$?
 \end{question}
 
When $X$ was obtained by a Gluck twist from $S^4$, the key facts we used to answer this question in the affirmative were that $X  \# \CP \cong \CP$ and $X \# \bCP \cong \bCP$. The same result would hold for any homotopy 4-sphere $X$ satisfying $X\#(\#^r\CP)=\#^r\CP$ and $X\#(\#^r\bCP)=\#^r\bCP$ for some $r$.

Apart from Gluck twists, there are various other potential counterexamples to the smooth 4D Poincar\'e conjecture in the literature. For example, a large family is obtained by considering finite balanced presentations $P$ of the trivial group, constructing a four-manifold $Z(P)$ by adding one-handles and two-handles according to the generators and relations in $P$, and then taking the double $D(P)$ of $Z(P)$. Then, $D(P)$ is a homotopy $4$-sphere, which is easily seen to be $S^4$ when the presentation $P$ is Andrews-Curtis trivial. See \cite[Exercises 4.6.4(b) and 5.1.10(b)]{GS}.

\begin{question}
Let $P$ be any finite balanced presentation of the trivial group. Does the homotopy four-sphere $D(P)$ satisfy $D(P)  \#(\#^r\CP) \cong \#^r\CP$ and $D(P) \#(\#^r\bCP) \cong \#^r\bCP$ for some $r$?
\end{question}

\subsection{Spectra}
In \cite{LS}, Lipshitz and Sarkar constructed a stable homotopy refinement of Khovanov homology; see also \cite{HKK}, \cite{LLS}. Furthermore, in \cite{LS2}, they showed that stable cohomology operations (such as the Steenrod squares) produce new Rasmussen-type invariants for knots in $S^3$, and in some cases these give bounds on the slice genus that are strictly stronger than those from $s$.

\begin{question}
Can one extend the constructions in \cite{LS}, \cite{HKK} or \cite{LLS} to produce a stable homotopy refinement of the Khovanov homology for links in $\SSr$ defined in \cite{Roz}, \cite{MW}?
\end{question}

\begin{question}
For null-homologous links in $\SSr$, can one define and use the Steenrod squares on Khovanov homology to construct new Rasmussen-type invariants, which would improve the bounds on $\gDS$ and $\gSD$ from Theorem~\ref{thm:genus bounds}?
\end{question}



\bibliographystyle{alpha}
\bibliography{biblio}

\end{document}